\newcommand{\concat}[0]{\textrm{\^{}}}
\newcommand{\cea}[0]{CEA\xspace}
\newcommand{\infi}[0]{\mathtt{in}}
\newcommand{\comp}[0]{\mathtt{c}}
\newcommand{\omegaext}[0]{\omega^+}
\newcommand{\POne}[0]{\textrm{I}\xspace}
\newcommand{\PTwo}[0]{\textrm{II}\xspace}
\newcommand{\loc}[0]{L}
\newcommand{\game}[1]{\mc{G}_{#1}}
\newcommand{\basicgame}[1]{\mc{G}^b_{#1}}
\newcommand{\CLoc}[0]{\textbf{(CLoc)}\xspace}
\newcommand{\COneStar}[0]{\textbf{(C1\textsuperscript{*})}\xspace}
\newcommand{\CTwoStar}[0]{\textbf{(C2\textsuperscript{*})}\xspace}
\newcommand{\CThreeStar}[0]{\textbf{(C3\textsuperscript{*})}\xspace}
\newcommand{\CFourStar}[0]{\textbf{(C4\textsuperscript{*})}\xspace}
\newcommand{\COne}[0]{\textbf{(C1)}\xspace}
\newcommand{\CTwo}[0]{\textbf{(C2)}\xspace}
\newcommand{\CThree}[0]{\textbf{(C3)}\xspace}
\newcommand{\CFour}[0]{\textbf{(C4)}\xspace}
\newcommand{\At}[0]{\textbf{(At)}\xspace}
\newcommand{\Ext}[0]{\textbf{(Ext)}\xspace}
\newcommand{\Surj}[0]{\textbf{(Surj)}\xspace}
\newcommand{\WS}[0]{\textbf{(WS)}\xspace}
\newcommand{\BOne}[0]{\textbf{(B1)}\xspace}
\newcommand{\BTwo}[0]{\textbf{(B2)}\xspace}
\newcommand{\COneDagger}[0]{\textbf{(C1\textsuperscript{\dag})}\xspace}
\newcommand{\CTwoDagger}[0]{\textbf{(C2\textsuperscript{\dag})}\xspace}
\newcommand{\CThreeDagger}[0]{\textbf{(C3\textsuperscript{\dag})}\xspace}
\newcommand{\CFourDagger}[0]{\textbf{(C4\textsuperscript{\dag})}\xspace}
\newcommand{\AtDagger}[0]{\textbf{(At\textsuperscript{\dag})}\xspace}
\newcommand{\ExtOneDagger}[0]{\textbf{(Ext1\textsuperscript{\dag})}\xspace}
\newcommand{\ExtTwoDagger}[0]{\textbf{(Ext2\textsuperscript{\dag})}\xspace}
\newcommand{\CB}[0]{\textbf{(CB\textsuperscript{\dag})}\xspace}
\newcommand{\noreps}[1]{\langle #1 \rangle}
\newcommand{\yes}[1]{\left[#1 \right]}
\newcommand{\no}[1]{\left\llcorner #1 \right\lrcorner}
\newcommand{\mc}{\mathcal }
\DeclareMathSymbol{\mlq}{\mathord}{operators}{``}
\DeclareMathSymbol{\mrq}{\mathord}{operators}{`'}
\DeclareMathOperator{\dom}{dom}
\DeclareMathOperator{\ran}{ran}
\DeclareMathOperator{\dgSp}{dgSp}
\newtheorem{theorem}{Theorem}[chapter]
\newtheorem{lem}[theorem]{Lemma}
\newtheorem{thm}[theorem]{Theorem}
\newtheorem{corollary}[theorem]{Corollary}
\newtheorem{prop}[theorem]{Proposition}
\newtheorem{claim}[theorem]{Claim}
\theoremstyle{definition}
\newtheorem{defn}[theorem]{Definition}
\newtheorem{exam}[theorem]{Example}
\newtheorem{question}[theorem]{Question}
\newcounter{cases}[theorem]
\newtheorem{case}[cases]{Case}
\newtheorem{subcase}{Subcase}
\theoremstyle{remark}
\newtheorem{remark}[theorem]{Remark}
\numberwithin{section}{chapter}
\numberwithin{equation}{chapter}
\numberwithin{subcase}{cases}
\numberwithin	{figure}{chapter}
\begin{document}

\frontmatter

\title{Degree Spectra of Relations on a Cone}

\author{Matthew Harrison-Trainor\\Group in Logic and the Methodology of Science\\University of California, Berkeley\\USA 94720\\\texttt{matthew.h-t@berkeley.edu}}
\date{\today}

\maketitle

\tableofcontents

\chapter*{Abstract}
Let $\mathcal{A}$ be a mathematical structure with an additional relation $R$. We are interested in the degree spectrum of $R$, either among computable copies of $\mathcal{A}$ when $(\mathcal{A},R)$ is a ``natural'' structure, or (to make this rigorous) among copies of $(\mathcal{A},R)$ computable in a large degree \textbf{d}. We introduce the partial order of degree spectra \textit{on a cone} and begin the study of these objects. Using a result of Harizanov---that, assuming an effectiveness condition on $\mc{A}$ and $R$, if $R$ is not intrinsically computable, then its degree spectrum contains all c.e.\ degrees---we see that there is a minimal non-trivial degree spectrum on a cone, consisting of the c.e.\ degrees. We show that this does not generalize to d.c.e.\ degrees by giving an example of two incomparable degree spectra on a cone. We also give a partial answer to a question of Ash and Knight: they asked whether (subject to some effectiveness conditions) a relation which is not intrinsically $\Delta^0_\alpha$ must have a degree spectrum which contains all of the $\alpha$-CEA degrees. We give a positive answer to this question for $\alpha = 2$ by showing that any degree spectrum on a cone which strictly contains the $\Delta^0_2$ degrees must contain all of the 2-CEA degrees. We also investigate the particular case of degree spectra on the structure $(\omega,<)$. This work represents the beginning of an investigation of the degree spectra of ``natural'' structures, and we leave many open questions to be answered.

\section*{Acknowledgments}

The author would like to thank Julia Knight, Alexander Melnikov, Noah Schweber, and especially Antonio Montalb\'an for their helpful comments and conversations. The author was partially supported by the NSERC PGS D, the NSERC Julie Payette Research Scholarship, and the Berkeley Fellowship.


\mainmatter

\chapter{Introduction}
The aim of this monograph is to introduce the study of ``nice'' or ``natural'' relations on a computable structure via the technical device of relativizing to a cone of Turing degrees.

Let $\mc{A}$ be a mathematical structure, such as a graph, poset, or vector space, and $R \subseteq A^n$ an additional relation on that structure (i.e., not in the diagram). The relation $R$ might be the set of nodes of degree three in a graph or the set of linearly independent pairs in a vector space. The basic question we ask in the computability-theoretic study of such relations is: how do we measure the complexity of the relation $R$? One way to measure the complexity of $R$ is the \textit{degree spectrum} of $R$. As is often the case in computability, many examples of relations with pathological degree spectra have been constructed in the literature but these tend to require very specific constructions. In this work, we restrict our attention to natural relations to capture those structures and relations which tend to show up in normal mathematical practice. We find that the degree spectra of natural relations are much better behaved than those of arbitrary relations, but not as well-behaved as one might hope.

The study of relations on a structure began with Ash and Nerode \cite{AshNerode81} who showed that, given certain assumptions about $\mc{A}$ and $R$, the complexity of the formal definition of $R$ in the logic $\mc{L}_{\omega_1 \omega}$ is related to its intrinsic computability. For example, $R$ has a computable $\Sigma_n$ definition if and only if $R$ is intrinsically $\Sigma_n$, that is, for any computable copy $\mc{B}$ of $\mc{A}$, the copy of $R$ in $\mc{B}$ is $\Sigma_n$.

Harizanov \cite{Harizanov87} introduced the degree spectrum of $R$ to capture a finer picture of the relation's complexity. The degree spectrum of $R$ is the collection of all Turing degrees of copies of the relation $R$ inside computable copies $\mc{B}$ of $\mc{A}$. The degree spectra of particular relations have been frequently studied, particularly with the goal of finding as many possible different degree spectra as possible. For example, Harizanov \cite{Harizanov93} has shown that there is a $\Delta^0_2$ (but not c.e.)\ degree \textbf{a} such that $\{0,\textbf{a}\}$ is the degree spectrum of a relation. Hirschfeldt \cite{Hirschfeldt00} has shown that for any $\alpha$-c.e.\ degree $\textbf{b}$, with $\alpha \in \omega \cup \{\omega\}$, $\{0,\textbf{b}\}$ is the degree spectrum of a relation. Hirschfeldt has also shown that for any c.e.\ degree \textbf{c} and any computable ordinal $\alpha$, the set of $\alpha$-c.e.\ degrees less than or equal to \textbf{c} is a degree spectrum. A number of other papers have been published showing that other degree spectra are possible---see for example Khoussainov and Shore \cite{KhoussainovShore98} and Goncharov and Khoussainov \cite{GoncharovKhusainov97}.

These results require complicated constructions and one would not expect relations which one finds in nature to have such degree spectrum. Instead, we expect to find simpler degree spectra such as the set of all c.e.\ degrees, the set of all d.c.e.\ degrees, or the set of all $\Delta^0_2$ degrees. The goal of this paper is to begin to answer the question of what sorts of degree spectra we should expect to find in nature. Since we cannot formally describe what we mean by a relation found in nature, we will prove our results relative to a cone. One expects a result which holds on a cone to hold for any ``nice'' or ``natural'' relations and structures because natural properties tend to relativize. Such structures include vector spaces and algebraically closed fields, but not first-order arithmetic. We hope to be able to convince the reader that the study of relations relative to a cone is an interesting and useful way of approaching the study of relations that one might come across in nature. Our results are the beginning and there is a large amount of work still to be done. An interesting picture is already starting to emerge.

We will introduce the definition, suggested by Montalb\'an, of a (relativized) degree spectrum of a relation. The results in this paper can be viewed as studying the partial ordering of degree spectra on a cone. The following is a simplification of the full definition which will come later in Chapter \ref{Preliminaries}.

\begin{defn}[Montalb\'an]\label{Montalban-Defn}
Let $\mc{A}$ and $\mc{B}$ be structures with relations $R$ and $S$ respectively. We say that $R$ and $S$ have the same degree spectrum ``on a cone'' if, for all sets $C$ on a cone,
\[ \{d(R^{\tilde{\mathcal{A}}} \oplus C) : \tilde{\mathcal{A}} \cong \mathcal{A} \text{ and } \tilde{\mathcal{A}} \leq_T C \} = \{d(S^{\tilde{\mathcal{B}}} \oplus C) : \tilde{\mathcal{B}} \cong \mathcal{B} \text{ and } \tilde{\mathcal{B}} \leq_T C \} \]
where $d(D)$ is the Turing degree of the set $D$.
\end{defn}

We are particularly interested in whether or not there are ``fullness'' results for particular types of degree spectra, by which we mean results which say that degree spectra on a cone must contain many degrees. This is in opposition to the pathological examples of many small (even two-element) degree spectra that can be constructed when not working on a cone. There are a small number of previous ``fullness'' results, though not in the language which we use here, starting with Harizanov \cite{Harizanov91} who proved that (assuming $(*)$ below), as soon as a degree spectrum contains more than the computable degree, it must contain all c.e.\ degrees:

\begin{thm}[{Harizanov \cite[Theorem 2.5]{Harizanov91}}]\label{Harizanov}
Let $\mc{A}$ be a computable structure and $R$ a computable relation on $\mc{A}$ which is not relatively intrinsically computable. Suppose moreover that the effectiveness condition $(*)$ holds of $\mc{A}$ and $R$. Then for every c.e.\ set $C$, there is a computable copy $\mc{B}$ of $\mc{A}$ such that $R^\mc{B} \equiv_T C$.

\begin{itemize}[label=$(*)$]
	\item For every $\bar{a}$, we can computably find $a \in R$ such that for all $\bar{b}$ and quantifier-free formulas $\theta(\bar{z},x,\bar{y})$ such that $\mc{A} \models \theta(\bar{a},a,\bar{b})$, there are $a' \notin R$ and $\bar{b}'$ such that $\mc{A} \models \theta(\bar{a},a',\bar{b}')$
\end{itemize}
\end{thm}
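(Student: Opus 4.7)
The plan is to build a computable copy $\mc{B}$ of $\mc{A}$ stage by stage, using the swapping guaranteed by $(*)$ to code $C$ into $R^{\mc{B}}$. Fix a computable enumeration $C_0 \subseteq C_1 \subseteq \cdots$ with $C = \bigcup_s C_s$. Take the domain of $\mc{B}$ to be $\omega$, and designate an infinite computable set of ``coding elements'' $b_0, b_1, \ldots \in \omega$; the goal is to arrange $b_n \in R^{\mc{B}}$ iff $n \notin C$. All other elements of $\omega$ will be ``anchored,'' meaning that when they are introduced, their image under a partial isomorphism $f : \mc{B} \to \mc{A}$ is fixed permanently.

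Maintain at each stage $s$ a finite partial isomorphism $f_s : \mc{B}_s \to \mc{A}$ defined on the finite portion built so far. Interleave three kinds of actions: (i) a standard back-and-forth step that adds a new element of $\omega$ to $\mc{B}_s$ and extends $f_s$ by an anchored image (possible because $\mc{A}$ is computable); (ii) introduce a new coding element $b_{s+1}$, and use $(*)$ applied to $\bar a = f_s(\text{anchored elements})$ to obtain an element $e_{s+1} \in R^{\mc{A}}$ which can later be swapped with some $a' \notin R^{\mc{A}}$, setting $f_{s+1}(b_{s+1}) = e_{s+1}$; (iii) for each $n \leq s$ with $n \in C_{s+1} \setminus C_s$, invoke $(*)$ to swap $f(b_n)$ from $e_n$ to some $a_n' \notin R^{\mc{A}}$, possibly simultaneously re-assigning the images of other coding elements whose q.f.\ types get disturbed.

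The main obstacle is maintaining consistency of the atomic diagram of $\mc{B}$ across swaps; this is exactly what $(*)$ was designed for. When we propose to swap $b_n$, the q.f.\ facts about $\mc{B}_s$ already committed to involve anchored elements (with fixed $f$-images $\bar a$), the element $b_n$ itself, and other coding elements (with tentative images $\bar b$). The condition $(*)$ guarantees that for every q.f.\ formula $\theta(\bar z, x, \bar y)$ and every $\bar b$, if $\mc{A} \models \theta(\bar a, e_n, \bar b)$ then some $a_n' \notin R$ and $\bar b'$ realize the same formula in $\mc{A}$; this lets us swap $b_n$'s image while re-assigning the flexible coding-element images as needed. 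We commit a q.f.\ fact about a coding element to the diagram of $\mc{B}$ only when that element is subsequently anchored, which we may postpone. Consequently the committed atomic diagram of $\mc{B}$ is consistent and, in fact, computable, since at each stage we check the required q.f.\ facts in the computable structure $\mc{A}$.

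For the degree calculation: because anchored elements have fixed images under $f$ and each coding element $b_n$ satisfies $f(b_n) \in R^{\mc{A}}$ iff $n \notin C$, the isomorphism $f$ is computable from $C$; together with computability of $R^{\mc{A}}$ this gives $R^{\mc{B}} = f^{-1}(R^{\mc{A}}) \leq_T C$. Conversely the coding yields $n \in C \iff b_n \notin R^{\mc{B}}$, so $C \leq_T R^{\mc{B}}$. Hence $R^{\mc{B}} \equiv_T C$, as required.
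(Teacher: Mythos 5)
Your coding idea is the right one, but there is a genuine gap in the direction $C \leq_T R^{\mc{B}}$. When $n$ enters $C$ and you invoke $(*)$ to replace $f(b_n)=e_n$ by some $a'\notin R$, the condition $(*)$ also forces you to replace the tentative images $\bar b$ of the \emph{other} coding elements $b_m$ by some $\bar b'$ --- and $(*)$ gives you no control over whether $\bar b'$ meets $R$ the way $\bar b$ did. So the swap for $b_0$ can silently move $f(b_1)$ out of $R^{\mc{A}}$ even though $1\notin C$, and there is in general no way to move it back: the quantifier-free facts already committed to $D(\mc{B})$ may now pin $b_1$'s image outside $R$. Hence the biconditional ``$n\in C \iff b_n\notin R^{\mc{B}}$'' fails for coding locations fixed in advance. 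This is precisely the injury that Harizanov's proof handles with a finite-injury priority argument: each coding location is chosen by applying $(*)$ over a parameter tuple $\bar a$ that includes all higher-priority coding images (so that a lower-priority swap cannot disturb them), and the location for $n$ is \emph{re-chosen} as a fresh element whenever a higher-priority location acts. Correspondingly, the decoding of $C$ from $R^{\mc{B}}$ is staged: one first decides $C\upharpoonright n$, waits for the stage by which those numbers have all entered $C$, and only then reads off the (now stable) location for $n$. Your write-up has neither the injury mechanism nor the staged decoding, so $C\leq_T R^{\mc{B}}$ is not established.

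A secondary problem is your statement that a quantifier-free fact about a coding element is committed ``only when that element is subsequently anchored.'' A coding element $b_n$ with $n\notin C$ can never be anchored (you can never certify that $n$ will not enter $C$ later), so on this reading the atomic diagram of $\mc{B}$ is never totally decided and $\mc{B}$ is not a computable structure. What is needed instead is to commit, at stage $s$, all atomic facts of G\"odel number at most $s$ about the \emph{current} tentative images, and to note that $(*)$, applied with $\theta$ the conjunction of everything committed so far, guarantees that the post-swap images still satisfy all committed facts.
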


The result is stated using the effectiveness condition $(*)$ which says that $R$ must be a nice relation in some particular way. When we relativize to a cone, the effectiveness condition trivializes and we are left with the statement:

\begin{corollary}[Harizanov]\label{cor:Harizanov1}
Relative to a cone, every degree spectrum either is the computable degree, or contains all c.e.\ degrees.
\end{corollary}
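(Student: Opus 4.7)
The plan is to deduce the corollary as a cone-relativization of Theorem \ref{Harizanov}. Fix $\mc{A}$ and $R$, and consider $C$ on the cone above an appropriate Turing jump of the atomic diagram of $(\mc{A}, R)$.

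By Martin's cone theorem applied to the degree-invariant predicate ``$R$ is relatively intrinsically $C$-computable,'' one of two cases holds uniformly on a sub-cone. In Case 1 the predicate holds on the cone: every $\tilde{\mathcal{A}} \cong \mc{A}$ with $\tilde{\mathcal{A}} \leq_T C$ satisfies $R^{\tilde{\mathcal{A}}} \leq_T C$, so the relativized spectrum is the singleton $\{d(C)\}$, the ``computable degree'' in the cone-relativized sense of Definition \ref{Montalban-Defn}. In Case 2 the predicate fails on the cone, and I would apply the $C$-relativization of Theorem \ref{Harizanov} to $(\mc{A}, R)$: for each $C$-c.e.\ set $X$, it produces a $C$-computable copy $\mc{B} \cong \mc{A}$ with $R^\mc{B} \equiv_T X$, whence $d(R^\mc{B} \oplus C) = d(X \oplus C)$. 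Since every degree c.e.\ in $C$ has the form $d(X \oplus C)$ for some $C$-c.e.\ $X$, the spectrum on the cone then contains all such degrees.

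The substantive step is verifying that the cone-relativization of $(*)$ holds automatically in Case 2; this is where passing to a cone pays off. The ``computably find'' clause becomes a $C$-computable search, and because $C$ computes satisfaction of quantifier-free formulas in $(\mc{A}, R)$ and further jumps can be absorbed into the cone, the $\Pi^0_2$ condition on $a$ displayed in $(*)$ is $C$-decidable, so the search succeeds whenever witnesses exist. The main obstacle is establishing existence of a witness for every $\bar{a}$: one must convert the Case 2 hypothesis into the statement that every $\bar{a}$ admits some $a \in R$ whose quantifier-free type over $\bar{a}$ is realized by some $a' \notin R$. My intended route is contrapositive—if some $\bar{a}$ had no such $a$, then every $a \in R$ would be separated from $\neg R$ over $\bar{a}$ by a quantifier-free formula, yielding a definability scheme for $R$ that, on the sub-cone above $\bar{a} \oplus C$, would force $R$ to be relatively intrinsically $C$-computable, contradicting the Case 2 hypothesis. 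Organizing this existence argument cleanly and uniformly enough to meet the quantification over $C$ in Definition \ref{Montalban-Defn} is the bulk of the work; once it is in hand, the remainder is routine relativization.
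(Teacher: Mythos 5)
Your overall route---relativize Theorem \ref{Harizanov} to a base degree $C$ above $(\mc{A}\oplus R)''$, observe that the ``computably find'' half of $(*)$ becomes a $C$-decidable search because freeness is a $\Pi^0_2$ condition, and then deduce the existence half of $(*)$ from the non-intrinsic-computability hypothesis---is the route the paper intends when it says the effectiveness condition trivializes on a cone. The gap is in your contrapositive for the existence step. If for some $\bar{a}$ no $a\in R$ has the freeness property, then each $a\in R$ satisfies an existential formula over $\bar{a}$ all of whose solutions lie in $R$; taking the disjunction gives a $\Sigma^\comp_1$ definition of $R$ over $\bar{a}$, which makes $R$ relatively intrinsically $\Sigma^0_1$, \emph{not} relatively intrinsically computable. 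Being c.e.\ in every copy is strictly weaker than being computable in every copy, so your contrapositive does not contradict the Case 2 hypothesis, and the argument as written does not close.

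The repair is the $R$--$\neg R$ symmetry, which your proposal never invokes. The Case 2 hypothesis says $R$ is not relatively intrinsically $\Delta^0_1$ on the cone, hence $R$ cannot be both relatively intrinsically $\Sigma^0_1$ and relatively intrinsically $\Pi^0_1$. If $R$ is not relatively intrinsically $\Sigma^0_1$, the existence half of $(*)$ holds for $R$ and your argument goes through. Otherwise $\neg R$ is not relatively intrinsically $\Sigma^0_1$, so the existence half of $(*)$ holds with $\neg R$ in place of $R$; apply the relativized Theorem \ref{Harizanov} to $\neg R$ and use $R^\mc{B}\equiv_T(\neg R)^\mc{B}$ to transfer the Turing-degree conclusion back to $R$. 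With that case split inserted, the rest of your proposal---Martin's cone theorem for the dichotomy, $C$-decidability of the $\Pi^0_2$ freeness condition on the cone, and the accounting of degrees $d(X\oplus C)$ for $X$ c.e.\ in $C$---is sound.
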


\noindent This result stands in contrast to the state of our knowledge of degree spectra when not working on a cone, where we know almost no restrictions on what sets of degrees may be degree spectra.

Ash and Knight tried to generalize Harizanov's result in the papers \cite{AshKnight95} and \cite{AshKnight97}. They wanted to replace ``c.e.'' by ``$\Sigma^0_\alpha$''. In our language of degree spectra on a cone, they wanted to show that every degree spectrum is either contained in the the $\Delta_\alpha$ degrees or contains all of the $\Sigma_\alpha$ degrees. However, they discovered that this was false: there is a computable structure $\mc{A}$ with a computable relation $R$ where $R$ is intrinsically $\Sigma_\alpha$, not intrinsically $\Delta^0_\alpha$, and for any computable copy $\mc{B}$, $R^\mc{B}$ is $\alpha$-CEA. Moreover, the proof of this relativizes.

So instead of asking whether ``c.e.'' can be replaced by $\Sigma^0_\alpha$, Ash and Knight asked whether ``c.e.'' can be replaced by ``$\alpha$-CEA''. A set $S$ is $n$-CEA if there are sets $S_0,S_1,S_2,\ldots,S_n = S$ such that $S_0$ is c.e., $S_1$ is c.e.\ in and above $S_0$, $S_2$ is c.e.\ in and above $S_1$, and so on. For now, the reader can ignore what this means for an infinite ordinal $\alpha$ (the definition in general will follow in Chapter \ref{Preliminaries}). Ash and Knight were able to show that Harizanov's result can be extended in this manner when the coding is done relative to a $\Delta^0_\alpha$-complete set (note that, modulo $\Delta^0_\alpha$, every $\Sigma^0_\alpha$ set is $\alpha$-CEA):
\begin{thm}[{Ash-Knight \cite[Theorem 2.1]{AshKnight97}}]
Let $\mc{A}$ be an $\alpha$-friendly computable structure with an additional computable relation $R$. Suppose that $R$ is not relatively intrinsically $\Delta^0_\alpha$ and moreover, for all $\bar{c}$, we can find $a \notin R$ which is effectively $\alpha$-free over $\bar{c}$. Then for any $\Sigma^0_\alpha$ set $C$, there is a computable copy $\mc{B}$ of $\mc{A}$ such that
\[ R^\mc{B} \oplus \Delta^0_\alpha \equiv_T C \oplus \Delta^0_\alpha \]
where $\Delta^0_\alpha$ is a $\Delta^0_\alpha$-complete set.
\end{thm}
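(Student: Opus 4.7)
The plan is to build $\mc{B}$ as a computable copy of $\mc{A}$ in which a sequence of chosen ``witness'' elements are toggled in and out of $R^\mc{B}$ so as to encode $C$ modulo $\Delta^0_\alpha$. Since $C$ is $\Sigma^0_\alpha$, fix a $\Delta^0_\alpha$ approximation $\{C_s\}$ to $C$: for each $n$, we have a guess as to whether $n \in C$ that may change $\Sigma^0_\alpha$-often, and is eventually correct. The intended meaning is: for each $n$ we designate a witness $b_n \in B$, and we arrange that $b_n \in R^\mc{B}$ iff $n \in C$. Then from $R^\mc{B}$ together with a $\Delta^0_\alpha$-complete oracle (used only to locate the witnesses and to sort through the final approximation) we will recover $C$, and conversely the construction itself shows that $R^\mc{B}$ is produced computably from $C \oplus \Delta^0_\alpha$.

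To carry out the toggling I would invoke Ash's metatheorem for $\alpha$-systems, exploiting both $\alpha$-friendliness (which gives us uniform computable enumerations of the back-and-forth relations $\leq_\beta$ for $\beta < \alpha$) and the effectiveness hypothesis, which asserts that for each finite tuple $\bar c$ already embedded into $\mc{B}$ we can computably find $a \notin R$ which is effectively $\alpha$-free over $\bar c$. The $\alpha$-freeness of $a$ means that in the back-and-forth tree we can, with $\Sigma^0_\alpha$ oracle calls, either commit $a$ to its current role (as an element outside $R$) or swap it out for an element $a' \in R$ realizing the same $\beta$-type over $\bar c$ for every $\beta < \alpha$, and do so uniformly. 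Running the $\alpha$-system, I would have each witness position for $n$ maintain a current candidate that is swapped whenever the $\Sigma^0_\alpha$ approximation to ``$n \in C$'' changes; Ash's metatheorem then converts the $\Sigma^0_\alpha$-correct instructions into a computable construction of $\mc{B}$, with an isomorphism $\mc{A} \to \mc{B}$ that is itself $\Delta^0_\alpha$.

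Having finished the construction, verification of $R^\mc{B} \oplus \Delta^0_\alpha \equiv_T C \oplus \Delta^0_\alpha$ splits into two directions. The inclusion $R^\mc{B} \leq_T C \oplus \Delta^0_\alpha$ is immediate from the construction, since $R^\mc{B}$ is produced with only $\Sigma^0_\alpha$-oracle guidance, and $\Sigma^0_\alpha \leq_T \Delta^0_\alpha$. For the converse, from $R^\mc{B}$ together with the $\Delta^0_\alpha$-complete oracle one can identify the eventual witness $b_n$ for each $n$ (the approximation stabilizes at a $\Delta^0_\alpha$ stage) and then read off whether $b_n \in R^\mc{B}$ to recover $n \in C$. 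This is essentially the same recovery that Harizanov used at level $1$, relativized one level up.

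The main obstacle, as always in this line of work, is the bookkeeping needed to keep $\mc{B}$ computable while the coding decisions live at the $\Sigma^0_\alpha$ level. Concretely, one must verify that the ``effectively $\alpha$-free'' hypothesis is strong enough to feed an $\alpha$-system whose instructions are the $\Sigma^0_\alpha$ approximation to $C$, and that Ash's metatheorem applies in the form needed -- in particular, that the swaps required for successive changes of the approximation respect the back-and-forth relations at every lower level $\beta < \alpha$, uniformly in the tuples already placed. For limit $\alpha$ this requires careful choice of the notation for $\alpha$ and a routine but delicate verification that the system's runs correspond to genuine isomorphisms; for successor $\alpha$ it reduces to a standard priority argument relativized to $\emptyset^{(\alpha-1)}$.
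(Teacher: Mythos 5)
This theorem is quoted in the paper from Ash and Knight \cite{AshKnight97} and is not proved here, so there is no in-paper argument to compare against. That said, your sketch does match the spirit of the Ash--Knight argument: code $C$ via designated witnesses whose $R$-membership is toggled, drive the toggling with a $\Sigma^0_\alpha$ approximation, and discharge the computability obligations through Ash's $\alpha$-system metatheorem using $\alpha$-friendliness and effective $\alpha$-freeness. You also put your finger on exactly why $\Delta^0_\alpha$ must appear on both sides of the equivalence: the decoder needs $\Delta^0_\alpha$ to locate the (injury-surviving) witnesses and to know when the approximation has settled, and the paper itself emphasizes this as precisely the obstruction that its own Theorem \ref{relativized-2} overcomes at level $\alpha = 2$.

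Two small cautions. First, your gloss of $\alpha$-freeness --- ``swap it out for an element $a' \in R$ realizing the same $\beta$-type over $\bar c$ for every $\beta < \alpha$'' --- is not the right definition: $\alpha$-freeness is a nested, recursive condition (for every finitary existential $\varphi$ and every $\beta < \alpha$ there is $\bar a'$ satisfying $\varphi$, on the opposite side of $R$, which is itself $\beta$-free), and this recursion is what makes the $\alpha$-system machinery work; a ``same $\beta$-type for all $\beta < \alpha$'' requirement would be far too strong and would in fact prevent the swap. Second, the throwaway line ``$\Sigma^0_\alpha \leq_T \Delta^0_\alpha$'' is false as stated (already at $\alpha = 1$); what you actually use is that the construction is computable and its outcome is recoverable from the oracle $C \oplus \Delta^0_\alpha$, not that $C$ itself reduces to $\Delta^0_\alpha$. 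Neither slip damages the high-level plan, but both should be corrected before the sketch is expanded into a proof.
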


\noindent But this theorem is not enough to show that, on a cone, every degree spectrum is either contained in the the $\Delta_\alpha$ degrees or contains all of the $\alpha$-CEA degrees. A much better result, and one which would be sufficient, would be to show that $R^\mc{B} \equiv_T C$ rather than $R^{\mc{B}} \oplus \Delta^0_\alpha \equiv_T C \oplus \Delta^0_\alpha$. This was the goal of Knight in \cite{Knight98} where she showed that it could be done with strong assumptions on the relation $R$, namely that one could find $\alpha$-free elements which did not interfere with each other too much. The general question, without these strong assumptions, was left unresolved.

One of our main results in this paper in Chapter \ref{SigmaTwoSection} is a positive answer to this question in the case $\alpha = 2$.

\begin{thm}\label{relativized-2}
Let $\mc{A}$ be a structure, and let $R$ be an additional relation. Suppose that $R$ is not intrinsically $\Delta^0_2$ on any cone. Then, on a cone, the degree spectrum of $R$ contains the 2-\cea sets.
\end{thm}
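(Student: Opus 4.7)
First I would relativize everything to a sufficiently large cone so that the effectiveness conditions appearing in Ash--Knight \cite{AshKnight97} and Knight \cite{Knight98} become automatic. On this cone, the hypothesis that $R$ is not intrinsically $\Delta^0_2$ yields a uniform, effective supply of ``2-free'' parameters over any given tuple $\bar c$: a pair $a,a'$ that the back-and-forth can swap via a $\emptyset'$-computable automorphism, with exactly one of $a,a'$ lying in $R$. This is the $\alpha=2$ analogue of Harizanov's condition $(*)$, and is the only place the hypothesis on $R$ is used.

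Given a 2-\cea set $S$, decompose $S = S_0 \oplus W_e^{S_0}$ with $S_0$ c.e.\ and $W_e^{S_0}$ c.e.\ in $S_0$. I would build a copy $\mc B \cong \mc A$ computable in $S$ by a two-layer coding. For each $n$, designate a 1-free element $a_n$ (toggled once, upon $n$'s entry into $S_0$, as in Harizanov's theorem) and a 2-free element $b_n$ (toggled at each mind-change in the $S_0$-approximation of $n \in W_e^{S_0}$), so that at the end $a_n \in R^{\mc B} \iff n \in S_0$ and $b_n \in R^{\mc B} \iff n \in W_e^{S_0}$. The $a_n$ and $b_n$ are made identifiable directly from the atomic diagram of $\mc B$ (by prescribing them distinguishing neighborhoods during the construction), so that reading off these designated elements recovers $S$ computably from $R^{\mc B}$, while $S$ computably tells us which to put in $R^{\mc B}$.

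The main obstacle, and the precise sense in which this is a genuine strengthening of the Ash--Knight theorem quoted above, is ruling out the appearance of a jump on either side of the reduction $R^{\mc B} \equiv_T S$. A naive 2-free construction needs $\emptyset'$ to detect stabilization of the $b_n$ back-and-forth, giving only $R^{\mc B} \oplus \emptyset' \equiv_T S \oplus \emptyset'$. To sharpen this, the mind-changes for $b_n$ must be perfectly synchronized with the canonical $S_0$-enumeration of $W_e^{S_0}$: a $b_n$-swap is executed exactly when $S_0$ changes below the use of $n$'s enumeration in $W_e^{S_0}$, a condition $S$ monitors directly without any jump. This is an ``$\alpha=2$ permitting'' argument in the spirit of Friedberg permitting, and the delicate point is showing that each permission leaves enough room in the back-and-forth to perform the required swap at $b_n$ without disturbing the isomorphism with $\mc A$ or the earlier commitments made for $n' < n$; this is where the uniform effective supply of 2-free parameters from the first paragraph is used in its full strength.
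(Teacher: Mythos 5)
Your outline reproduces the Ash--Knight/Knight architecture (relativize to a cone, extract 2-free witnesses from the failure of a $\Sigma_2^\comp\wedge\Pi_2^\comp$ definition, toggle them following an approximation to the 2-\cea set), and you correctly locate the obstruction: the naive version only yields $R^{\mc B}\oplus \mathbf{d}'\equiv_T S\oplus\mathbf{d}'$. But your proposed repair does not remove that obstruction. The permitting/synchronization idea only helps with the direction $S\geq_T R^{\mc B}$ --- letting $S$ trace the construction --- which is the easy direction (in the paper it follows from a lemma showing that a 2-\cea set computes a canonical ``true path'' through its own $\Sigma^0_2$ approximation). The hard direction is decoding $S$ from $R^{\mc B}\oplus\mathbf{d}$. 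A 2-free $b_n$ can be pushed into $R$ and pulled back via $\leq_1$, but each round trip surrenders control: everything added to the isomorphism during an excursion is dragged along by the back-and-forth, so its final $R$-status is arbitrary. When the $\Sigma^0_2$ approximation to $n\in W_e^{S_0}$ recants a second time you must open a fresh excursion with a fresh witness, and $R^{\mc B}$ alone cannot then distinguish ``first excursion still active'' from ``second excursion active, first excursion's witnesses have drifted.'' That is a $\mathbf{d}'$ question, which is exactly where Ash--Knight stopped. ``Prescribing distinguishing neighborhoods'' begs this question: in an arbitrary structure you cannot stamp addresses onto elements, and the domain positions you would use as addresses are precisely the ones whose images are re-chosen each time an excursion is abandoned. (Two smaller slips: the copy must be computable in $\mathbf{d}$, not merely in $S$; and for $n\notin W_e^{S_0}$ the $\Sigma^0_2$ approximation need not stabilize, so ``toggle at each mind-change'' does not converge unless the limit is organized along the true path of a tree of outcomes.)

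What is missing is the actual new content of the theorem. The paper attaches to the $k$-th excursion of each coding task an auxiliary tuple with a committed target valuation $m(\sigma)\in\{-1,1\}^{k(\sigma)}$, arranged so that the tuple realizes $m(\sigma)$ exactly while that excursion is active and fails it once the excursion is abandoned (conditions \CThreeStar and \CFourStar); $R^{\mc D}\oplus\mathbf{d}$ then decodes by searching for the unique excursion whose auxiliary tuple matches its target. Showing such tuples exist is done coordinate by coordinate via a game: \POne plays continuations of the partial isomorphism against an opponent who plays the future of the approximation and the elements forced into the range; one proves that for a 2-free $x\notin R$ and a $y\in R$ with $\cdots x\leq_1\cdots y$, \POne wins the augmented game with one of the two $R$-values for the new coordinate, and that a win for $x$ transfers to a win for $y$, so the winning coordinate can always be taken with the $R$-value the coding needs. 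Without some substitute for this step the reduction $S\leq_T R^{\mc B}\oplus\mathbf{d}$ is not established, and the argument collapses back to the jump-relativized statement you started from.
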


\noindent The proof uses an interesting method which we have not seen before and which we think is of independent interest. We will describe the method briefly here. During the construction, we are presented with two possible choices of how to continue, but it is not clear which will work. We are able to show that one of the two choices must work, but in order to find out which choice it is we must consider a game in which we play out the rest of the construction against an opponent who attempts to make the construction fail. By finding a winning strategy for this game, we are able to decide which choice to make.

Up to this point, degree spectra on a cone are looking very well-behaved, and in fact one might start to hope that they are linearly ordered. However, this is not the case as we see by considering Ershov's hierarchy. Suppose (once again working on a cone) that there is a computable copy $\mc{B}$ such that $R^{\mc{B}}$ is not of c.e.\ degree. Is it necessarily the case that for every d.c.e.\ set $W$, there is a computable copy $\mc{C}$ such that $R^\mc{C} \equiv_T W$? We will show in Chapter \ref{DCESection} that this is not the case. Moreover, we will show that there is a computable structure $\mc{A}$ with relatively intrinsically d.c.e.\ relations $R$ and $S$ which have incomparable degree spectra relative to every oracle.

\begin{thm}\label{thm-incomparable-dce}
There is a computable structure $\mc{A}$ and relatively intrinsically d.c.e.\ relations $R$ and $S$ such that neither $R$ nor $S$ are intrinsically of c.e.\ degree, even relative to any cone, and the degree spectra of $R$ and $S$ are incomparable relative to any cone (i.e., the degree spectrum of $R$ is not contained in that of $S$, and vice versa).
\end{thm}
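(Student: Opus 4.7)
The plan is to build \(\mc{A}\) as essentially the disjoint union of two independent components, \(\mc{A}_R\) carrying \(R\) and \(\mc{A}_S\) carrying \(S\). Because automorphisms act independently on the two halves, the degree spectrum of \(R\) on \(\mc{A}\) coincides with its spectrum on \(\mc{A}_R\) alone, and similarly for \(S\), so it suffices to design two separate relations whose spectra are incomparable on every cone and then glue.

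On each half I would use a standard ``flip-once'' gadget to force the relation to be relatively intrinsically d.c.e.: partition the universe of \(\mc{A}_R\) into countably many small finite gadgets, each containing two candidate vertices \(a,b\) of which exactly one lies in \(R\), with enough quantifier-free scaffolding that membership in \(R\) is determined by a \(\Sigma^0_2\) formula admitting at most one retraction (so d.c.e.\ in any copy) but not by any \(\Sigma^0_1\) formula (so not intrinsically c.e.). Deciding the \(R\)-position inside each gadget is the mechanism by which one codes a d.c.e.\ set into \(R^{\mc{B}}\). The identical scheme produces \(\mc{A}_S\) and \(S\), but with different gadgets tuned to a different ``coding format''.

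The core combinatorial work is to choose two \emph{different} coding formats that produce incomparable realizable classes of d.c.e.\ degrees. For each oracle \(C\) on the cone I would construct, in a Kleene--Post style done uniformly in \(C\), a pair of \(C\)-d.c.e.\ sets \(X\) and \(Y\) with neither \(\leq_T\) the other over \(C\), and then arrange the gadgets so that (i) some \(C\)-computable copy of \(\mc{A}\) has \(R^{\mc{B}} \oplus C \equiv_T X\), while the structural form of the \(\mc{A}_S\)-gadgets forbids \(S^{\mc{C}'} \oplus C\) from ever reaching the degree of \(X\) in any \(C\)-computable copy \(\mc{C}'\), and (ii) symmetrically with \(Y\) and the roles of \(R\) and \(S\) exchanged. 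The requirement that neither \(R\) nor \(S\) is intrinsically of c.e.\ degree on any cone is met simultaneously by diagonalization against all c.e.-in-\(C\) reductions to the relation, folded into the priority construction in the style of Hirschfeldt's d.c.e.\ examples.

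The main obstacle is cone-uniformity. A vanilla priority argument only addresses unrelativized reductions, whereas the theorem demands incomparability relative to every \(C\). I would address this by keeping both the coding and the diagonalization strictly \emph{local} to gadgets: the coding inside each gadget is a purely local move, so it relativizes automatically and gives, for every \(C\), a \(C\)-copy realizing the chosen \(C\)-d.c.e.\ set; and the diagonalization constraints can be phrased as structural restrictions on the gadget types that are oracle-absolute. The delicate part is designing the two gadget types so that the ``asymmetry'' between what \(R\)-gadgets and \(S\)-gadgets can realize survives under arbitrary oracles, i.e.\ so that the gap between the two realizable classes is not closed by adding information to the base cone. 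Arranging this while still allowing \(R\) and \(S\) to code enough d.c.e.\ information to sit outside the c.e.\ degrees is, I expect, the technical heart of the proof.
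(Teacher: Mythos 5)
There is a genuine gap, and it sits exactly where you say you expect the ``technical heart'' to be: you never supply the mechanism that makes the two spectra incomparable, and the architecture you propose works against you. First, if $\mc{A}_R$ is a disjoint union of countably many independent finite gadgets (infinitely many of each isomorphism type, freely interchangeable), then the coding is purely local and, as you note, relativizes automatically --- but for the same reason the resulting spectrum tends to be a \emph{standard, upward-robust} class (typically all d.c.e.\ degrees, or all degrees of a fixed Ershov level), and two such constructions with ``different coding formats'' will produce equal or nested spectra, not incomparable ones. The incomparability in the paper's proof (Examples \ref{exam1} and \ref{exam2}) comes precisely from \emph{global} structure: in both structures the d-free elements are the nodes $0^n$ of a single tree and are linearly ordered, so that coding with one d-free element destroys the freeness of all those below it; the asymmetry that drives the two directions is that in $\mc{M}$ the tree is presented as a partial order (so one may later insert a new d-free element \emph{above} an existing one) while in $\mc{A}$ it is presented by the child relation (so one may not), together with the fact that in $\mc{M}$ the elements of $S$ other than $0^n1$ are existentially recognizable (type $\langle 1,1\rangle$). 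Independent gadgets cannot reproduce this interaction.

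Second, your plan to first build $C$-d.c.e.\ sets $X \perp_T Y$ by Kleene--Post and then show that ``the structural form of the $\mc{A}_S$-gadgets forbids $S^{\mc{C}'}\oplus C$ from ever reaching the degree of $X$ in any $C$-computable copy $\mc{C}'$'' inverts the quantifiers in an unworkable way: omitting a \emph{prescribed} degree from the spectrum of $S$ requires diagonalizing against every copy of the $S$-structure and every pair of reductions, and there is no reason a degree produced by an unrelated Kleene--Post argument should be omittable. The paper instead builds the witnessing copy $\mc{N}$ of $\mc{M}$ \emph{while} diagonalizing against all computable copies $\mc{B}_e$ of $\mc{A}$ and all pairs $(\Phi_i,\Phi_j)$ witnessing $S^{\mc{N}}\equiv_T R^{\mc{B}_e}$ (Proposition \ref{M-not-A}), and symmetrically with an infinite-injury argument using subrequirements for the other direction (Proposition \ref{prop:incomp-2}); the degree of $S^{\mc{N}}$ is whatever emerges from the construction, never fixed in advance. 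Relativization is then immediate because the constructions are uniform in an oracle, which recovers the cone statement without any separate ``oracle-absoluteness'' argument. To repair your proposal you would need (i) structures whose d-free elements are globally constrained in two genuinely different ways, and (ii) a direct diagonalization against copies of the opposite structure rather than against a pre-chosen target degree.
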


In proving this, we will also give a structural condition equivalent to being \textit{intrinsically of c.e.\ degree} (which, as far as we are aware, is a new definition; we mean that in any computable copy, the relation has c.e.\ degree). The structural condition works for relations which are intrinsically d.c.e., and it does not seem difficult to extend it to work in more general cases.

The following is a summary of all that we know about the possible degree spectra of relations on a cone:
\begin{enumerate}
	\item there is a smallest degree spectrum: the computable degree,
	\item there is a smallest degree spectrum strictly containing the computable degree: the c.e.\ degrees (Corollary \ref{cor:Harizanov1}, see \cite{Harizanov91}),
	\item there are two incomparable degree spectra both strictly containing the c.e.\ degrees and strictly contained in the d.c.e. degrees (Theorem \ref{relativized-2}),
	\item any degree spectrum strictly containing the $\Delta^0_2$ degrees must contain all of the 2-CEA degrees (Theorem \ref{thm-incomparable-dce}).
\end{enumerate}
Figure \ref{fig:intro} gives a graphical representation of all that is known. There are many more questions to be asked about degree spectra on a cone. In general, at least at the lower levels, there seem to be far fewer degree spectra on a cone than there are degrees (or degree spectra not on a cone). We expect this pattern to continue. However, an interesting phenomenon is that not all degree spectra are ``named'' (e.g., as the $\Sigma^0_1$ or $\Delta^0_2$ degrees are), though perhaps this is just because we do not understand enough about them to name them. The hope would be to classify and name all of the possible degree spectra.

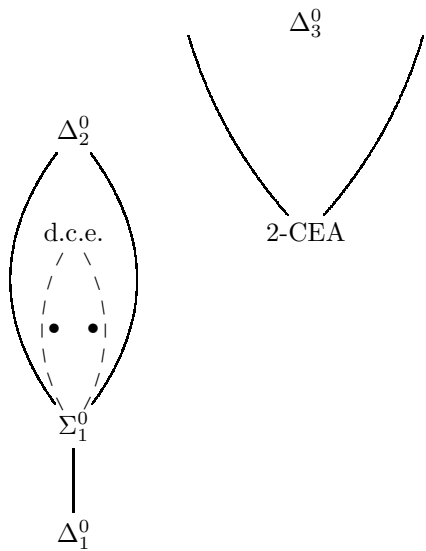
\begin{figure}[htb]
	\[\xymatrix{
&\,&\Delta_{3}^{0}&\,\\\Delta_{2}^{0}&&&\\\mbox{d.c.e.}&&2\mbox{-CEA}\ar@{-}@/^/[uul]\ar@{-}@/_/[uur]&\\\bullet\quad\bullet&&&\\\Sigma^0_{1}\ar@{--}@/_1pc/[uu]\ar@{--}@/^1pc/[uu]\ar@{-}@/_2pc/[uuu]\ar@{-}@/^2pc/[uuu]&&&\\\Delta_{1}^{0}\ar@{-}[u]&&&
}\]
	\caption{A visual summary of everything we know so far (including results from this paper) about the possible degree spectra of relations on a cone. The possible degree spectra are labeled or contained within one of the two enclosed areas (i.e., between $\Sigma^0_1$ and $\Delta^0_2$, or above 2-CEA). The two strictly d.c.e.\ degrees shown are incomparable.}
	\label{fig:intro}
\end{figure}

In this paper, we will also consider the special case of the structure $(\omega,<)$. This special case has been studied previously by Downey, Khoussainov, Miller, and Yu \cite{DowneyKhoussianovMillerYu09}, Knoll \cite{Knoll09}, and Wright \cite{Wright13}. Knoll showed that for the standard copy of $(\omega,<)$, the degree spectrum of any unary relation which is not intrinsically computable consists of exactly the $\Delta^0_2$ degrees. So while the counterexamples of Theorem \ref{thm-incomparable-dce} exist in general, such counterexamples may not exist for particular structures. Wright \cite{Wright13} later independently found this same result about unary relations, and also showed that for $n$-ary relations on $(\omega,<)$, any degree spectrum which contains a non-computable set contains all c.e.\ sets. In Chapter \ref{OmegaSection}, we begin an investigation of what the partial order of degree spectra on a cone looks like when we restrict ourselves to relations on $(\omega,<)$. We show that every relation which is intrinsically $\alpha$-c.e.\ is intrinsically of c.e.\ degree. We also introduce the notion of what it means for a relation to \textit{uniformly} have its degree spectrum, and show that either
\begin{enumerate}
	\item there is a computable relation $R$ on $(\omega,<)$ whose degree spectrum strictly contains the c.e.\ degrees but does not contain all of the d.c.e.\ degrees, or
	\item there is a computable relation $R$ on $(\omega,<)$ whose degree spectrum is all of the $\Delta^0_2$ degrees but does not have this degree spectrum uniformly.
\end{enumerate}
Both (1) and (2) are interesting situations, and one or the other must occur. It seems likely, but very difficult to show, that (1) occurs.

In this monograph, we would like to advocate for this program of studying degree spectra on a cone. We hope that some of the results in this paper will support the position that this is a fruitful view. We believe that the study of degree spectra on a cone is interesting beyond the characterization of the degree spectra of naturally occurring structures. Even when considering structures in general, knowing which results can and cannot be proven on a cone is still illuminating. If a result holds in the computable case, but not on a cone, then that means that the result relies in some way on the computable presentation of the original structure (for example, diagonalization arguments can often be used to produce pathological examples; such arguments tend not to relativize). Understanding why certain a result holds for computable structures but fails on a cone is a way of understanding the essential character of the proof of that result.

We will begin by making some preliminary definitions in Chapter \ref{Preliminaries}. In Chapter \ref{DCESection}, we will consider d.c.e.\ degrees. We begin by introducing the notion of being intrinsically of c.e.\ degree and give a characterization of such relations. We then prove Theorem \ref{thm-incomparable-dce}. In Chapter \ref{OmegaSection}, we apply some of the results from Chapter \ref{DCESection} to the structure $(\omega,<)$ and study the degree spectra of such relations. Chapter \ref{SigmaTwoSection} is devoted to the proof of Theorem \ref{relativized-2} on 2-CEA degrees. Finally, in Chapter \ref{QuestionSection} we will state some interesting open questions and describe what we see as the future of this program. We also include a discussion of the technical details around the definition of degree spectra on a cone, in particular in relation to relativizing Harizanov's results on c.e.\ degrees, in Appendix \ref{CESection}.

\chapter{Preliminaries}
\label{Preliminaries}
In this chapter, we will introduce some background from computability theory and computable structure theory before formally defining what we mean by ``on a cone.''

\section{Computability Theory}

We will assume that the reader has a general knowledge of computability theory. For the most part, the reader will not need to know about computable ordinals. Occasionally we will state general questions involving a computable ordinal $\alpha$, but the reader should feel free to assume that $\alpha$ is a finite number $n$. See Chapter 4 of \cite{AshKnight00} for a reference on computable ordinals.

There are two classes of sets which will come up frequently which we will define here. Ershov's hierarchy \cite{Ershov68a,Ershov68b,Ershov70} generalizes the c.e.\ and d.c.e.\ sets to classify sets by how many times their computable approximation is allowed to change.

\begin{defn}[Ershov's hierarchy]
A set $X$ is \textit{$\alpha$-c.e.}\ if there are functions $g: \omega \times \omega \to \{0,1\}$ and $n: \omega \times \omega \to \{ \beta : \beta \leq \alpha\}$ such that for all $x$ and $s$,
\begin{enumerate}
	\item $g(x,0) = 0$,
	\item $n(x,0) = \alpha$,
	\item $n(x,s+1) \leq n(x,s)$,
	\item if $g(x,s+1) \neq g(x,s)$ then $n(x,s+1) < n(x,s)$, and
	\item $\lim_{s \to \infty} g(x,s) = X(x)$.
\end{enumerate}
\end{defn}

The function $g$ guesses at whether $x \in X$, with $n$ counting the number of changes. We could instead have made the following equivalent definition:

\begin{defn}[Ershov's hierarchy, alternate definition]\label{def:Ershov-alternate}
$X$ is $\alpha$-c.e.\ if there are uniformly c.e.\ families $(A_\beta)_{\beta < \alpha}$ and $(B_\beta)_{\beta < \alpha}$ such that
\[ X = \bigcup_{\beta < \alpha} (A_\beta - \bigcup_{\gamma < \beta} B_\gamma) \]
and if $x \in A_\beta \cap B_\beta$, then $x \in A_\gamma \cup B_\gamma$ for some $\gamma < \beta$.
\end{defn}
See Chapter 5 of \cite{AshKnight00} for more on $\alpha$-c.e.\ sets.

A set $X$ is \textit{c.e.\ in and above} (CEA in) a set $Y$ if $X$ is c.e.\ in $Y$ and $X \geq_T Y$. We can easily generalize this to any finite $n$ by iterating the definition: $X$ is $n$-CEA in $Y$ if there are $X_0 = Y,X_1,\ldots,X_n = X$ such that $X_i$ is CEA in $X_{i-1}$ for each $i = 1,\ldots,n$. We can even generalize this to arbitrary $\alpha$:

\begin{defn}[Ash-Knight \cite{AshKnight95}]
A set $X$ is \textit{$\alpha$-CEA} in a set $Y$ if there is a sequence $(X_\beta)_{\beta \leq \alpha}$ such that
\begin{enumerate}
	\item $X_0$ is recursive,
	\item $X_{\beta+1}$ is CEA in $X_\beta$,
	\item $X_\beta \leq X_\delta$ for $\delta > \beta$ a limit ordinal,
	\item $X_\delta$ is c.e.\ in $\bigoplus_{\beta < \delta} X_\beta$, and
	\item $X_\alpha = X$.
\end{enumerate}
\end{defn}

Ash and Knight note that a set which is $\alpha$-CEA is $\Sigma^0_\alpha$, but that the converse is not necessarily true (one can see that this follows from the existence of a minimal $\Delta^0_2$ set; a minimal $\Delta^0_2$ set is not c.e.\ and hence not 2-CEA).

\section{Computable Structure Theory}

We will consider only countable structures, so we will say ``structure'' when we mean ``countable structure.'' For an introduction to computable structures as well as much of the other background, see \cite{AshKnight00}. We view the atomic diagram $D(\mc{A})$ of a structure $\mc{A}$ as a subset of $\omega$, and usually we will identify $\mc{A}$ with its diagram. A computable presentation (or computable copy) $\mc{B}$ of a structure $\mc{A}$ is another structure $\mc{B}$ with domain $\omega$ such that $\mc{A} \cong \mc{B}$ and the atomic diagram of $\mc{B}$ is computable.

The infinitary logic $\mc{L}_{\omega_1 \omega}$ is the logic which allows countably infinite conjunctions and disjunctions but only finite quantification. If the conjunctions and disjunctions of a formula $\varphi$ are all over computable sets of indices for formulas, then we say that $\varphi$ is computable. We use $\Sigma_\alpha^\infi$ and $\Pi_\alpha^\infi$ to denote the classes of all infinitary $\Sigma_\alpha$ and all $\Pi_\alpha$ formulas respectively. We will also use $\Sigma_\alpha^\comp$ and $\Pi_\alpha^\comp$ to denote the classes of computable $\Sigma_\alpha$ and $\Pi_\alpha$ formulas. These formulas will often involve finitely many constant symbols from the structure. See Chapter 6 of \cite{AshKnight00} for a more complete description of computable formulas.

By a relation $R$ on a structure $\mc{A}$, we mean a subset of $\mc{A}^n$ for some $n$. We say that $R$ is \textit{invariant} if it is fixed by all automorphisms of $R$. It is a theorem, following from the Scott Isomorphism Theorem \cite{Scott65}, that a relation is invariant if and only if it is defined in $\mc{A}$ by a formula of $\mc{L}_{\omega_1 \omega}$. All of the relations that we will consider will be invariant relations. If $\mc{B}$ is a computable copy of $\mc{A}$, then there is a unique interpretation $R^\mc{B}$ of $R$ in $\mc{B}$, either by using the $\mc{L}_{\omega_1 \omega}$-definition of $R$, or using the invariance of $R$ under automorphisms (so that if $f: \mc{A} \to \mc{B}$ is an isomorphism, $f(R)$ is a relation on $\mc{B}$ which does not depend on the choice of the automorphism $f$).

The study of invariant relations began with Ash and Nerode \cite{AshNerode81}. They made the following definition: if $\Gamma$ is some property of sets, then $R$ is intrinsically $\Gamma$ if among all of the computable copies $\mc{B}$ of $\mc{A}$, $R^\mc{B}$ is $\Gamma$. Usually we will talk about relations which are intrinsically computable (or more generally $\Delta_\alpha$), intrinsically c.e. (or more generally $\Sigma_\alpha$ or $\Pi_\alpha$), intrinsically $\alpha$-CEA, or intrinsically $\alpha$-c.e. Ash and Nerode showed that (making some assumptions on the structure and on the relation) a relation is intrinsically c.e.\ if and only if is defined by a $\Sigma_1^\comp$ formula:

\begin{thm}[{Ash-Nerode \cite[Theorem 2.2]{AshNerode81}}]\label{AshNerode}
Let $\mc{A}$ be a computable structure and $R$ a relation on $\mc{A}$. Suppose that for any tuple $\bar{c} \in \mc{A}$ and any finitary existential formula $\varphi(\bar{c},\bar{x})$, we can decide whether or not there is $\bar{a} \notin R$ such that $\mc{A} \models \varphi(\bar{c},\bar{a})$. Then the following are equivalent:
\begin{enumerate}
	\item $R$ is intrinsically c.e.\,
	\item $R$ is defined by a $\Sigma_1^\comp$ formula with finitely many parameters from $\mc{A}$ (we say that $R$ is \textit{formally $\Sigma_1$} or \textit{formally c.e.}).
\end{enumerate}
\end{thm}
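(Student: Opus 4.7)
The easy direction $(2) \Rightarrow (1)$ unwinds the definitions. If $R$ is defined in $\mc{A}$ by a $\Sigma_1^\comp$ formula
\[
\Phi(\bar{c},\bar{x}) \;=\; \bigvee_{i \in I} \exists \bar{y}\, \theta_i(\bar{c},\bar{x},\bar{y})
\]
with $I$ a computable index set and each $\theta_i$ quantifier-free, then in any computable copy $\mc{B}$ of $\mc{A}$ I enumerate $R^\mc{B}$ by dovetailing a search through $i \in I$ and tuples $\bar{b},\bar{d}$ in $\mc{B}$ for witnesses of $\theta_i(\bar{c}^\mc{B},\bar{b},\bar{d})$, which is decidable from the atomic diagram of $\mc{B}$.

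For the hard direction $(1) \Rightarrow (2)$ the plan is to argue by contrapositive using a structural lemma together with an effective back-and-forth. The key \emph{swapping lemma} is: $R$ is formally $\Sigma_1$ over parameters $\bar{c}$ if and only if for every $\bar{a} \in R$ there is a finitary existential formula $\psi(\bar{c},\bar{x})$ with $\mc{A} \models \psi(\bar{c},\bar{a})$ such that no $\bar{a}' \notin R$ satisfies $\psi(\bar{c},\bar{x})$. The substantive ``if'' direction uses the decidability hypothesis: the collection of existential $\psi(\bar{c},\bar{x})$ with no outside-$R$ witness is a computable family, and its disjunction is a $\Sigma_1^\comp$ definition of $R$. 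Taking the contrapositive, if $R$ is not formally $\Sigma_1$ over any finite parameter list, then for every $\bar{c}$ there is some $\bar{a} \in R$ for which every existential formula over $\bar{c}$ satisfied by $\bar{a}$ is also satisfied by some $\bar{a}' \notin R$; moreover, such alternative witnesses can be located effectively, again by the hypothesis.

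Using this effective swapping principle, I construct a computable copy $\mc{B}$ of $\mc{A}$ by a stage-by-stage back-and-forth interleaved with diagonalization requirements $R_e : R^\mc{B} \neq W_e$. To meet $R_e$ at the first opportune stage, I pick a fresh $b \in \omega$ destined for $\mc{B}$; with $\bar{c}$ the parameters already committed, I invoke the swapping lemma to locate a pair $a \in R$ and $a' \notin R$ in $\mc{A}$ realizing the same finite quantifier-free formula over $\bar{c}$, and commit to mapping $b$ to whichever of $a, a'$ forces $R^\mc{B}(b) \neq W_e(b)$ at the current $W_e$-approximation. Since $W_e$ is c.e., this disagreement is permanent. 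The ordinary forward and backward extension steps, made effective by the same decidability hypothesis, guarantee that the union of the finite partial maps is an isomorphism onto a computable copy of $\mc{A}$.

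The main obstacle is the swapping lemma itself---specifically, showing that failure of ``formally $\Sigma_1$'' delivers a \emph{single} $\bar{a} \in R$ whose entire finitary existential type over $\bar{c}$ is realized outside $R$, rather than merely that each individual existential true of some $\bar{a}_\psi \in R$ admits its own outside-$R$ witness. This stronger uniform statement is exactly what allows the decidability-closed family of existential formulas with no outside-$R$ witness to cover all of $R$ in the forward direction, and what supplies the uniform flexibility needed for the back-and-forth in the contrapositive direction. Once the swapping lemma is in place, the back-and-forth with diagonalization is routine.
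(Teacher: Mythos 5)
The paper does not prove this theorem; it cites Ash--Nerode, so your proposal has to be measured against the standard argument. Your skeleton matches it: $(2)\Rightarrow(1)$ is the routine enumeration, and for $(1)\Rightarrow(2)$ your ``swapping lemma'' is exactly the standard characterization --- under the decidability hypothesis the family of existential formulas over $\bar{c}$ with no solution outside $R$ is computable, its disjunction is $\Sigma_1^\comp$, and it defines $R$ unless some $\bar{a} \in R$ is free over $\bar{c}$, i.e.\ every existential formula over $\bar{c}$ true of $\bar{a}$ has a solution outside $R$. The uniformity worry you flag at the end is not actually an obstacle: the existence of a single such $\bar{a}$ is the immediate contrapositive of ``every $\bar{a} \in R$ satisfies some safe formula,'' which is all the forward direction needs.

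The genuine gap is in the diagonalization. You pick, at a single stage, whichever of $a \in R$, $a' \notin R$ disagrees with the current approximation $W_{e,s}(b)$ and assert the disagreement is permanent because $W_e$ is c.e. That holds in only one of the two cases. If $b \in W_{e,s}$, mapping $b$ to $a' \notin R$ does win. But if $b \notin W_{e,s}$, you map $b$ to $a \in R$, and nothing stops $b$ from entering $W_e$ later, erasing the disagreement. The correct strategy is two-phase: map $b$ to a free $\bar{a} \in R$ and wait; if $b$ never enters $W_e$ you win with $b \in R^{\mc{B}} \setminus W_e$; if $b$ enters $W_e$ at stage $s$, only then swap to some $\bar{a}' \notin R$. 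Moreover, the swap must preserve the entire finite existential diagram accumulated up to stage $s$ --- including elements placed into $\mc{B}$ after $b$ was chosen --- not merely the quantifier-free formula over the parameters present at the moment of choice. This is precisely why freeness quantifies over all existential formulas true of $\bar{a}$ over $\bar{c}$: the later elements get remapped as the new existential witnesses, and it is this remapping that forces the finite-injury bookkeeping you dismiss as routine.
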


In practice, most naturally occurring structures and relations satisfy the effectiveness condition from this theorem. However, there are structures which do not have the effectiveness condition, and some of these structures are counterexamples to the conclusion of the theorem.

Barker \cite{Barker88} later generalized this to a theorem about intrinsically $\Sigma_\alpha$ relations. Ash and Knight \cite{AshKnight96} also proved a result for intrinsically $\alpha$-c.e.\ relations (with the formal definition being of the form of Definition \ref{def:Ershov-alternate} above).

Ash, Knight, Manasse, and Slaman \cite{AshKnightManasseSlaman89} and independently Chisholm \cite{Chisholm90} considered a relativized notion of intrinsic computability. We say that $R$ is \textit{relatively intrinsically $\Sigma_\alpha$} (or $\Pi_\alpha$, etc.) if, in every copy $\mc{B}$ of $\mc{A}$, $R^\mc{B}$ is $\Sigma^0_\alpha(\mc{B})$ ($\Pi^0_\alpha(\mc{B})$, etc.). Then they were able to prove a theorem similar to Theorem \ref{AshNerode} above but without an effectiveness condition: 
\begin{thm}[Ash-Knight-Manasse-Slaman \cite{AshKnightManasseSlaman89}, Chisholm \cite{Chisholm90}]\label{AKMSC}
Let $\mc{A}$ be a computable structure and $R$ a relation on $\mc{A}$. The following are equivalent:
\begin{enumerate}
	\item $R$ is relatively intrinsically $\Sigma_\alpha$,
	\item $R$ is defined by a $\Sigma_\alpha^\comp$ formula with finitely many parameters from $\mc{A}$.
\end{enumerate}
\end{thm}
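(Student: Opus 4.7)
The plan is to prove the two directions separately. The easier direction is $(2) \Rightarrow (1)$. Suppose $R$ is defined in $\mc{A}$ by a $\Sigma_\alpha^\comp$ formula $\varphi(\bar{c},\bar{x})$ with parameters $\bar{c}$. I would argue by induction on $\beta \leq \alpha$ that for any structure $\mc{B}$, satisfaction of any $\Sigma_\beta^\comp$ formula in $\mc{B}$ is uniformly $\Sigma^0_\beta(\mc{B})$ (and symmetrically for $\Pi_\beta^\comp$): atomic formulas are decidable from the atomic diagram, which is computable from $\mc{B}$ itself, and a $\Sigma_\beta^\comp$ formula is by definition a c.e.\ disjunction of existentially quantified $\Pi_\gamma^\comp$ formulas with $\gamma < \beta$, so the induction step is immediate. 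Applying this at $\beta = \alpha$ in any copy $\mc{B}$ of $\mc{A}$ shows that $R^\mc{B}$ is $\Sigma^0_\alpha(\mc{B})$, uniformly in $\mc{B}$.

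For $(1) \Rightarrow (2)$ I would argue by contrapositive: assuming that no $\Sigma_\alpha^\comp$ formula with finitely many parameters from $\mc{A}$ defines $R$, I would construct a copy $\mc{B}$ of $\mc{A}$ with $R^\mc{B}$ not $\Sigma^0_\alpha(\mc{B})$. The construction is a forcing whose conditions are finite partial isomorphisms from $\mc{A}$ into a new copy with domain $\omega$, refined according to the standard back-and-forth relations on tuples. The key combinatorial lemma, which I would prove first, is: if no $\Sigma_\alpha^\comp$ formula over parameter tuple $\bar{c}$ defines $R$ in $\mc{A}$, then for every tuple $\bar{c}$ there exist $\bar{a} \in R$ and $\bar{b} \notin R$ which are $\alpha$-back-and-forth equivalent over $\bar{c}$. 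This is the Scott-style content of Theorem \ref{AshNerode} extended to level $\alpha$ and, importantly, proved without any effectiveness condition. Such a pair supports diagonalization against the $e$-th potential $\Sigma^0_\alpha(\mc{B})$-enumeration of $R^\mc{B}$: reserve a fresh tuple $\bar{x}$ in $\omega$, and arrange the isomorphism to send whichever of $\bar{a}, \bar{b}$ forces disagreement with the enumeration's eventual value on $\bar{x}$.

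The main obstacle is organizing these diagonalizations across transfinite $\alpha$ and especially at limit stages. For finite $\alpha$ a straightforward wait-and-commit strategy suffices, but for limit $\alpha$ the construction must approximate the truth of the would-be $\Sigma^0_\alpha$ condition using finite $\beta$-back-and-forth commitments for all $\beta < \alpha$ while simultaneously preserving full $\alpha$-back-and-forth equivalence between the competing extensions. I would cast the construction as an instance of Ash's metatheorem on $\alpha$-systems: states code the current partial isomorphism together with records of pending diagonalization commitments and their back-and-forth level, runs represent a full play producing a copy of $\mc{A}$, and the existence-of-response hypothesis is discharged using the key lemma above. A minor technical point is that $\mc{B}$ need not be computable; the construction proceeds over a sufficiently generic oracle $X$, producing $\mc{B} \leq_T X$ with $R^\mc{B}$ not $\Sigma^0_\alpha(\mc{B})$, contradicting the relative intrinsic $\Sigma_\alpha$ hypothesis.
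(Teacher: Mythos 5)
Your direction $(2)\Rightarrow(1)$ is correct and standard. The paper itself does not prove this theorem (it is quoted from \cite{AshKnightManasseSlaman89} and \cite{Chisholm90}), so the comparison is with the standard proof, which runs the hard direction \emph{forwards} by forcing rather than by your contrapositive diagonalization; and your version of the hard direction has a genuine gap.

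The key lemma you rely on is false. You claim that if no $\Sigma_\alpha^\comp$ formula over $\bar{c}$ defines $R$, then there are $\bar{a}\in R$ and $\bar{b}\notin R$ that are $\alpha$-back-and-forth \emph{equivalent} over $\bar{c}$. Already at $\alpha=1$ the paper's Example \ref{simple-example} refutes this: there $R$ (the type-$\langle 1\rangle$ elements) is not relatively intrinsically c.e., hence by the very theorem under discussion not $\Sigma_1^\comp$-definable with parameters, yet no element of $R$ has the same existential type as any element of $\neg R$ --- a type-$\langle 1\rangle$ element is separated from every type-$\langle 0\rangle$ element by ``related to at least one element of $C$'' and from every type-$\langle 2\rangle$ element by ``related to at least two.'' The correct combinatorial content (this is what Theorem \ref{AshNerode} and its level-$\alpha$ versions actually use, and what the paper's notions of free, d-free, $2$-free and $\alpha$-free tuples encode) is asymmetric and has nested quantifier alternation: $\bar{a}$ is $\alpha$-free over $\bar{c}$ when for every extension $\bar{a}_1$ and every $\beta<\alpha$ there are $\bar{a}',\bar{a}_1'$ on the other side of $R$ with $\bar{c}\bar{a}\bar{a}_1\leq_\beta\bar{c}\bar{a}'\bar{a}_1'$ and $\bar{a}'$ again $\beta$-free. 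This is one-directional $\leq_\beta$ domination with alternation over the side tuples, not $\equiv_\alpha$ of a single pair, and your diagonalization strategy (``send $\bar{x}$ to whichever of $\bar{a},\bar{b}$ forces disagreement'') must be rebuilt around it.

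Even with the lemma corrected there is a second obstruction to the contrapositive route: the negation of (2) permits $R$ to be definable by a non-computable $\Sigma^{\infi}_\alpha$ formula while having no $\Sigma_\alpha^\comp$ definition. In that case every tuple of $R$ is isolated within $R$ by some infinitary $\Sigma_\alpha$ formula, there are no $\alpha$-free tuples at all, and a back-and-forth diagonalization has nothing to exploit. The standard proof avoids both problems by going forwards: force with finite partial enumerations of $\mc{A}$, show forcing equals truth for sufficiently generic copies, observe that ``$p$ forces $\bar{x}$ into the $e$-th $\Sigma^0_\alpha$ set over the generic diagram'' is uniformly a computable $\Sigma_\alpha$ condition on $\bar{x}$ and $p$, and extract the $\Sigma_\alpha^\comp$ definition (with parameters from the condition forcing the equivalence of $R^\mc{B}$ with that set) directly from hypothesis (1). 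The computability of the resulting definition comes from the computability of the forcing relation, which is precisely the ingredient a free-tuple argument does not supply.
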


\noindent These theorems say that the computational complexity of a relation is strongly tied to its logical complexity.

In order to give a finer measure of the complexity of a relation, Harizanov \cite{Harizanov87} introduced the degree spectrum.
\begin{defn}[Degree Spectrum of a Relation]
The degree spectrum of a relation $R$ on a computable structure $\mc{A}$ is the set
\[ \dgSp(R) = \{ d(R^\mc{B}) : \mc{B} \text{ is a computable copy of } \mc{A} \}. \]
\end{defn}

\section{Relativizing to a Cone}

In this section, we will formally describe what we mean by working on a cone, and by the degree spectrum of a relation on a cone. Consider the degree spectrum of a relation. For many natural structures and relations, the degree spectrum of a relation is highly related to the model-theoretic properties of the relation $R$. However, for more pathological structures (and first-order arithmetic), the degree spectra of relations can often be badly behaved. Some examples of such relations were given in the introduction. On the other hand, Theorem \ref{Harizanov} says that many relations have degree spectra which are nicely behaved (of course, there are relations which do not satisfy the effectivity condition from this theorem and which do not satisfy the conclusion---see \cite{Harizanov91}).

This is a common phenomenon in computable structure theory that there are unnatural structures which are counterexamples to theorems which would otherwise hold for natural structures. This unnatural behaviour tends to disappear when the theorem is relativized to a sufficiently high cone; in the case of Harizanov's result, relativizing the conclusion to any degree above $0''$ allows the theorem to be stated without the effectivity condition (since $0''$ can compute what is required by the effectivity condition).

A \textit{Turing cone} is collection of sets of the form $\{X : X \geq_T A\}$ for some fixed set $A$. A collection of sets is Turing invariant if whenever $X$ is in the collection, and $X \equiv_T Y$, then $Y$ is in the collection (i.e., the collection is a set of Turing degrees). Martin \cite{Martin68} noticed that any Turing invariant collection $A$ which is \textit{determined}\footnote{A set $A$ (viewed as set of reals in Cantor space $2^\omega$) is determined if one of the two players has a winning strategy in the Gale-Stewart game $G_A$, where players \POne and \PTwo alternate playing either $0$ or $1$; \POne wins if the combined sequence of plays is in $A$, and otherwise \PTwo wins. See \cite{GaleStewart53} and \cite{Jech03}.} either contains a cone, or contains a cone in its complement. Note that only one of these can happen for a given set $A$, as any two cones intersect non-trivially and contain a cone in their intersection. Moreover, by Borel determinacy (see \cite{Martin75}) every Borel invariant set is determined. Thus we can form a $\{0,1\}$-valued measure on the Borel sets of Turing degrees, selecting as ``large'' those sets of Turing degrees which contain a cone.

Given a statement $P$ which relativizes to any degree \textbf{d}, we say that $P$ holds \textit{on a cone} if the set of degrees \textbf{d} for which the relativization of $P$ to \textbf{d} holds contains a cone. If $P$ defines a Borel set of degrees in this way, then either $P$ holds on a cone or $\neg P$ holds on a cone. If $P$ holds on a cone, then $P$ holds for \textit{most} degrees, or for \textit{sufficiently high} degrees.

We say that $R$ is \textit{intrinsically $\Sigma_\alpha$ on a cone} if for all degrees \textbf{d} on a cone, and all copies $\mc{B}$ of $\mc{A}$ with $\mc{B}$ computable in \textbf{d}, $R^\mc{B}$ is $\Sigma^0_\alpha(\textbf{d})$. Then by relativizing previous results (Theorem \ref{AshNerode} or Theorem \ref{AKMSC}), we see that $R$ is intrinsically $\Sigma_\alpha$ on a cone if and only if it is defined by a $\Sigma^\infi_\alpha$ formula, without any computability-theoretic assumptions on either $\mc{A}$ and $R$ or the $\Sigma^\infi_\alpha$ formula. This is a relativisation to a cone of both intrinsically $\Sigma^0_\alpha$ and relatively intrinsically $\Sigma^0_\alpha$ simultaneously.

Note that when we work on a cone, we do not need to assume that the structure $\mc{A}$ or the relation $R$ are computable, because we can consider only cones with bases above $\mc{A} \oplus R$.

Now we will define what we mean by the degree spectrum of a relation on a cone. First, there is a natural relativisation of the degree spectrum of a relation to a degree \textbf{d}. The \textit{degree spectrum of $R$ below the degree \textbf{d}} is
\[ \dgSp(\mc{A},R)_{\leq \textbf{d}} = \{ d(R^\mc{B}) \oplus \textbf{d} : (\mc{B},R^\mc{B}) \text{ is an isomorphic copy of } (\mc{A},R) \text{ with } \mc{B} \leq_T \textbf{d}\}. \]
An alternate definition would require the isomorphic copy $\mc{B}$ to be Turing equivalent to \textbf{d}, rather than just computable in \textbf{d}:
\[ \dgSp(\mc{A},R)_{\equiv \textbf{d}} = \{ d(R^\mc{B}) \oplus \textbf{d} : (\mc{B},R^\mc{B}) \text{ is an isomorphic copy of } (\mc{A},R) \text{ with } \mc{B} \equiv_T \textbf{d}\}. \]
If $\mc{B} \leq_T \textbf{d}$, then by Knight's theorem on the upwards closure of the degree spectrum of structures (see \cite{Knight86}), there is an isomorphic copy $\mc{C}$ of $\mc{B}$ with $\mc{C} \equiv_T \textbf{d}$ and a $\textbf{d}$-computable isomorphism $f:\mc{B} \to \mc{C}$. Then $R^\mc{C} \oplus \textbf{d} \equiv_T R^\mc{B} \oplus \textbf{d}$. So these two definitions are equivalent.

The proof of Theorem \ref{Harizanov} relativizes to show that for any degree $\textbf{d} \geq 0''$, if $\dgSp(\mc{A},R)_{\leq \textbf{d}}$ contains a degree which is not computable in $\textbf{d}$, then it contains every degree CEA in \textbf{d}. One could also have defined the degree spectrum of a relation to be the set
\[ \dgSp^*(\mc{A},R)_{\leq  \textbf{d}} = \{ d(R^\mc{B}) : (\mc{B},R^\mc{B}) \text{ is an isomorphic copy of } (\mc{A},R) \text{ with } \mc{B} \leq_T \textbf{d}\}. \]
In this case, Harizanov's proof of Theorem \ref{Harizanov} does not relativize. In Appendix \ref{CESection}, we will consider a new proof of Harizanov's result which relativizes in the correct way for this definition of the degree spectrum. However, our proof becomes much more complicated than Harizanov's original proof. For the other results in the paper, we will not consider $\dgSp^*(\mc{A},R)_{\leq  \textbf{d}}$. Though it is quite possible that there are similar ways to extend our proofs, it would distract from main content of those results. We are interested in whether there is any real difference between $\dgSp(\mc{A},R)_{\leq  \textbf{d}}$ and $\dgSp^*(\mc{A},R)_{\leq  \textbf{d}}$, or whether any result provable about one transfers in a natural manner to the other. For example, is it always the case that restricting $\dgSp^*(\mc{A},R)_{\leq  \textbf{d}}$ to the degrees above \textbf{d} gives $\dgSp(\mc{A},R)_{\leq  \textbf{d}}$ for sufficiently high \textbf{d}?

Now we want to make our relativisation of the degree spectrum independent of the degree \textbf{d}. Thus we turn to Definition \ref{Montalban-Defn} due to Montalb\'an, which we will now develop more thoroughly. To each structure $\mc{A}$ and relation $R$, we can assign the map $f_R \colon \textbf{d} \mapsto \dgSp(\mc{A},R)_{\leq\textbf{d}}$. Given two pairs $(\mc{A},R)$ and $(\mc{B},S)$, for any degree \textbf{d}, either $\dgSp(\mc{A},R)_{\leq\textbf{d}}$ is equal to $\dgSp(\mc{B},S)_{\leq\textbf{d}}$, one is strictly contained in the other, or they are incomparable. By Borel determinacy, there is a cone on which exactly one of these happens. Thus we get a pre-order on these functions $f_R$, and taking the quotient by equivalence, we get a partial order on degree spectrum. Denote the elements of the quotient by $\dgSp_{rel}(\mc{A},R)$. We call $\dgSp_{rel}(\mc{A},R)$ the \textit{degree spectrum of $R$ on a cone}.

For many classes $\Gamma$ of degrees which relativize, for example the $\Sigma_\alpha$ degrees, there is a natural way of viewing them in this partial ordering by considering the map $\Gamma: \textbf{d} \to \Gamma(\textbf{d})$. By an abuse of notation, we will talk about such a class $\Gamma$ as a degree spectrum (in fact, it is easy to see for many simple classes of degrees that they are in fact the degree spectrum of some relation on some structure). Thus we can say, for example, that the degree spectrum, on a cone, of some relation contains the $\Sigma^0_\alpha$ degrees, or is equal to the d.c.e.\ degrees, and so on.

In particular, using this notation, we see that Theorem \ref{Harizanov} yields:
\begin{corollary}[Harizanov]\label{cor:Harizanov}
Let $\mc{A}$ be a structure and $R$ a relation on $\mc{A}$. Then either:
\begin{enumerate}
	\item $\dgSp_{rel}(\mc{A},R) = \Delta^0_1$ or
	\item $\dgSp_{rel}(\mc{A},R) \supseteq \Sigma^0_1$.
\end{enumerate}
\end{corollary}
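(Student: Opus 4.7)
The strategy is to derive the corollary from Theorem \ref{Harizanov} by showing that, after moving to a sufficiently high base of the cone, the effectiveness hypothesis $(*)$ is automatically satisfied. First I would restrict attention to degrees $\textbf{d} \geq_T \mc{A} \oplus R$, so that both $\mc{A}$ and $R$ are $\textbf{d}$-computable and the $\mc{A}$-computability and $R$-computability hypotheses of Theorem \ref{Harizanov} relativized to $\textbf{d}$ are met. If on this cone $R$ is already intrinsically computable, then $\dgSp_{rel}(\mc{A},R) = \Delta^0_1$ and conclusion (1) holds, so I may assume otherwise.

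Using the discussion immediately preceding the corollary---that intrinsic $\Sigma^0_\alpha$-ness on a cone coincides with definability by a $\Sigma^\infi_\alpha$ formula via the relativization of Theorem \ref{AKMSC}---the failure of $R$ to be intrinsically computable on a cone means that $R$ is not defined by a formula that is simultaneously $\Sigma^\infi_1$ and $\Pi^\infi_1$. Hence one of $R$ or $\neg R$ is not definable by any $\Sigma^\infi_1$ formula, and since passing to the complement preserves Turing degrees, I may assume without loss of generality that $R$ itself is not relatively intrinsically c.e. The standard back-and-forth argument behind the hard direction of Theorem \ref{AKMSC} at $\alpha = 1$ then yields: for every tuple $\bar{c}$ from $\mc{A}$ there exists $a \in R$ that is \emph{1-free} over $\bar{c}$, in the sense that for every quantifier-free $\theta(\bar{z},x,\bar{y})$ and every $\bar{b}$ with $\mc{A} \models \theta(\bar{c},a,\bar{b})$ there are $a' \notin R$ and $\bar{b}'$ with $\mc{A} \models \theta(\bar{c},a',\bar{b}')$.

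This 1-freeness condition is $\Pi^0_2(\mc{A} \oplus R)$ uniformly in $a$ and $\bar{c}$, so above $(\mc{A} \oplus R)''$ it is decidable. Searching through a fixed enumeration, the first 1-free $a \in R$ over $\bar{c}$ can therefore be located $\textbf{d}$-computably, uniformly in $\bar{c}$, for any $\textbf{d} \geq_T (\mc{A} \oplus R)''$. This is exactly condition $(*)$ relative to $\textbf{d}$, so the relativization of Theorem \ref{Harizanov} to $\textbf{d}$ applies: for every set $C$ that is c.e.\ in $\textbf{d}$, there is a $\textbf{d}$-computable copy $\mc{B}$ of $\mc{A}$ with $R^\mc{B} \oplus \textbf{d} \equiv_T C \oplus \textbf{d}$. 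Since the collection of joins $d(C) \oplus \textbf{d}$ with $C$ c.e.\ in $\textbf{d}$ is precisely $\Sigma^0_1(\textbf{d})$, this gives $\dgSp(\mc{A},R)_{\leq \textbf{d}} \supseteq \Sigma^0_1(\textbf{d})$ for all $\textbf{d}$ in the cone above $(\mc{A} \oplus R)''$, which is exactly $\dgSp_{rel}(\mc{A},R) \supseteq \Sigma^0_1$.

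The main step, and the only real obstacle, is the translation in the second paragraph from ``not relatively intrinsically c.e.'' to the uniform existence of 1-free elements; this is the content of the AKMS/Chisholm back-and-forth argument from \cite{AshKnightManasseSlaman89,Chisholm90}, and the only care needed is to notice that once 1-freeness is pushed to a $\Pi^0_2$ oracle the resulting search is uniform in $\bar{c}$, which recovers not merely the existential content but also the ``we can computably find'' clause of $(*)$. Everything else in the proof is bookkeeping of which oracle makes each piece of the argument effective.
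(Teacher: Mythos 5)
Your proof is correct and follows the same route the paper takes: relativize Theorem \ref{Harizanov}, observing that above $(\mc{A}\oplus R)''$ the effectiveness condition $(*)$ becomes decidable (it is $\Pi^0_2(\mc{A}\oplus R)$) and hence vacuous, which is precisely the remark the paper makes when identifying the base of the cone as $(\mc{A}\oplus R)''$. The only cosmetic slip is that free elements of the form demanded by $(*)$ (some $a\in R$ movable to $a'\notin R$) witness that $R$ is not relatively intrinsically \emph{co-}c.e.\ rather than not relatively intrinsically c.e.; since you take the without-loss-of-generality over $R$ versus $\neg R$ anyway and complementation preserves the degree spectrum, this does not affect the argument.
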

The cone on which this theorem holds is $(\mc{A} \oplus R)''$---i.e., one could replace $\dgSp_{rel}$ with $\dgSp_{\leq \textbf{d}}$ for any degree $\textbf{d} \geq_T (\mc{A} \oplus R)''$.

We also get the following restatements of Theorems \ref{relativized-2} and \ref{thm-incomparable-dce}:

\begin{corollary}
Let $\mc{A}$ be a structure and $R$ a relation on $\mc{A}$. Then either:
\begin{enumerate}
	\item $\dgSp_{rel}(\mc{A},R) \subseteq \Delta^0_2$ or
	\item $\dgSp_{rel}(\mc{A},R) \supseteq \text{2-\cea}$.
\end{enumerate}
\end{corollary}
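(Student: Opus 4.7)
The plan is to derive this corollary from Theorem \ref{relativized-2} via a Martin-style dichotomy on whether $R$ is intrinsically $\Delta^0_2$ on a cone. I would begin by considering the Turing-invariant predicate on degrees $\textbf{d}$ given by $P(\textbf{d}) \equiv$ ``every copy $\mc{B}$ of $\mc{A}$ with $\mc{B} \leq_T \textbf{d}$ satisfies $R^\mc{B} \in \Delta^0_2(\textbf{d})$.'' As discussed throughout this chapter, such a predicate defines a Borel set of degrees, so by Martin's cone theorem either $P$ holds on a cone or $\neg P$ does.

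Suppose first that $P$ holds on a cone with base $\textbf{a}$. Then for every $\textbf{d} \geq_T \textbf{a}$ we have $\dgSp(\mc{A},R)_{\leq \textbf{d}} \subseteq \Delta^0_2(\textbf{d})$. Under the notational convention introduced just before the corollary, by which we identify a relativizing class such as $\Delta^0_2$ with the map $\textbf{d} \mapsto \Delta^0_2(\textbf{d})$, this is exactly the assertion $\dgSp_{rel}(\mc{A},R) \subseteq \Delta^0_2$, giving alternative~(1).

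Now suppose instead that $\neg P$ holds on a cone. This is precisely the hypothesis that $R$ is not intrinsically $\Delta^0_2$ on any cone: given any putative base $\textbf{b}$, one can find $\textbf{d} \geq_T \textbf{b}$ inside the cone on which $\neg P$ holds, yielding a copy $\mc{B} \leq_T \textbf{d}$ with $R^\mc{B} \notin \Delta^0_2(\textbf{d})$, so that $R$ cannot be intrinsically $\Delta^0_2$ on the cone above $\textbf{b}$. Hence Theorem \ref{relativized-2} applies, telling us that on a cone the degree spectrum of $R$ contains every 2-\cea set; equivalently $\dgSp_{rel}(\mc{A},R) \supseteq \text{2-\cea}$, which is alternative~(2).

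The entire content is therefore Theorem \ref{relativized-2}; the corollary is purely a translation of that theorem into the $\dgSp_{rel}$ language. There is no real obstacle to overcome here --- the only point that needs checking is that the predicate $P$ is genuinely Borel, so that Martin's cone theorem applies, but this is exactly the background machinery the chapter has already set up for passing between fixed-degree statements like $\dgSp(\mc{A},R)_{\leq\textbf{d}}$ and the cone-independent object $\dgSp_{rel}(\mc{A},R)$.
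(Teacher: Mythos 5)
Your proposal is correct and follows the same route the paper takes: the paper presents this corollary as a direct restatement of Theorem \ref{relativized-2} in the $\dgSp_{rel}$ notation, and you have simply filled in the routine Martin-dichotomy bookkeeping (either $R$ is intrinsically $\Delta^0_2$ on a cone, giving case (1), or it is not intrinsically $\Delta^0_2$ on any cone, and Theorem \ref{relativized-2} gives case (2)) that the paper leaves implicit.
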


\begin{corollary}
There is a structure $\mc{A}$ and relations $R$ and $S$ on $\mc{A}$ such that $\dgSp_{rel}(\mc{A},R)$ and $\dgSp_{rel}(\mc{A},S)$ contains the c.e.\ degrees and are contained within the d.c.e.\ degrees, but neither $\dgSp_{rel}(\mc{A},R) \subseteq \dgSp_{rel}(\mc{A},S)$ nor $\dgSp_{rel}(\mc{A},S) \subseteq \dgSp_{rel}(\mc{A},R)$.
\end{corollary}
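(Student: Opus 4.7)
The plan is to construct a single computable structure $\mc{A}$ carrying two d.c.e.\ relations $R$ and $S$ built from two distinct families of ``coding gadgets,'' where each family forces its relation's spectrum to realize a kind of d.c.e.\ coding pattern unavailable to the other. First I would make $\mc{A}$ a disjoint union of simple combinatorial pieces; $R$ will be defined by a $\Sigma_2^\comp$ formula of the form $\exists \bar{y}\,\varphi(\bar{x},\bar{y}) \wedge \neg \exists \bar{z}\,\psi(\bar{x},\bar{z})$ (a difference of two $\Sigma_1$ definitions), so that $R$ is relatively intrinsically d.c.e.; $S$ will have a syntactically similar definition on a separate family of gadgets but with a structurally different shape. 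Each gadget will permit us, by choosing a computable copy $\mc{B}$ of $\mc{A}$, to control whether certain elements land in $R$ (resp.\ $S$), with membership allowed to change at most once. By Corollary \ref{cor:Harizanov}, once $R$ and $S$ are shown not to be intrinsically computable (which follows directly from the nontrivial coding available in each gadget), the spectra of $R$ and $S$ on a cone automatically contain all c.e.\ degrees. The upper bound by d.c.e.\ degrees is immediate from the difference-of-$\Sigma_1$ form.

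Second, and this is the heart of the argument, I would use the structural characterization of ``intrinsically of c.e.\ degree'' established earlier in Chapter \ref{DCESection} to exhibit specific d.c.e.\ degrees witnessing incomparability. I would engineer the $R$-gadgets so that, for some distinguishing d.c.e.\ set $X_R$, one can arrange a computable copy $\mc{B}$ with $R^\mc{B} \equiv_T X_R$ (by letting the bits of $X_R$'s approximation drive the ``overturn'' pattern of $R$ along its gadgets); symmetrically one obtains $X_S$ realized in a copy by $S$. The obstruction direction is the structural one: by the characterization, any isomorphic copy of $\mc{A}$ producing $S^\mc{C}$ is constrained --- via a back-and-forth between automorphic $S$-gadgets --- to have an approximation pattern whose shape is incompatible with $X_R$'s degree, and vice versa. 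I choose $X_R$ and $X_S$ so that their ``shape'' aligns with the $R$- but not the $S$-gadgets, and dually.

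Finally, the incomparability must hold on every cone, which means the structural obstruction has to relativize cleanly. This is where I expect the main obstacle: the combinatorial invariants that distinguish ``$R$-codable'' from ``$S$-codable'' d.c.e.\ degrees must be robust under passing to an arbitrary oracle $C$, so that no additional computational power available to a $C$-computable copy can subvert the obstruction. The strategy will be to analyze, for each $C$ on the cone, the orbits of $S$-gadgets in any $C$-computable copy of $\mc{A}$ and to show that the distinguishing feature of $S$ persists uniformly --- i.e., the argument that $S^\mc{C}$ cannot have degree $X_R$ survives when both the copy and the approximations are allowed $C$ as an oracle. Packaging this as a cone-uniform statement, verifying that the structural condition from Chapter \ref{DCESection} continues to apply relative to $C$, and then invoking symmetry between $R$ and $S$, completes the proof.
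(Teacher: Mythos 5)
Your outline gets the easy parts right (containment of the c.e.\ degrees via Corollary \ref{cor:Harizanov}, the d.c.e.\ upper bound from a difference-of-$\Sigma_1$ definition, and the reduction of ``one structure with two relations'' to two structures via a disjoint union), but the heart of your argument rests on an idea that does not work and is not how the paper proceeds. You propose to fix in advance two distinguished d.c.e.\ degrees $X_R$ and $X_S$ whose ``shape'' matches one family of gadgets but not the other, realize $X_R$ by coding its approximation into a copy, and then derive a static structural obstruction showing no copy's $S$ can have degree $X_R$. Two problems. First, the tool you cite --- the characterization of being intrinsically of c.e.\ degree via d-free tuples --- cannot supply the obstruction: both relations in any such example must fail to be intrinsically of c.e.\ degree (both spectra strictly contain the c.e.\ degrees), so both structures have d-free tuples over every $\bar{c}$, and that characterization sees no difference between them. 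Second, there is no known degree-theoretic invariant (``shape'') separating the two spectra; the paper explicitly poses finding such a description as an open question. Your plan presupposes exactly the classification that is missing.

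What the paper actually does is a direct diagonalization with no pre-specified target degree: it builds a computable copy $\mc{N}$ of $\mc{M}$ by a finite-injury priority construction so that $S^{\mc{N}} \not\equiv_T R^{\mc{B}}$ for \emph{every} computable copy $\mc{B}$ of $\mc{A}$ and every pair of functionals, and symmetrically (with an infinite-injury argument using subrequirements) a copy $\mc{B}$ of $\mc{A}$ defeating every copy of $\mc{M}$. The degree witnessing incomparability is whatever the construction produces. The asymmetry exploited is dynamic, not a degree invariant: the d-free elements of both structures lie along the spine $0^n$ and are linearly ordered, and moving one d-free element destroys the freeness of those below it; the two structures differ in that $\mc{M}$'s tree is presented as a partial order (so new elements can be inserted \emph{above} a committed d-free element), and $\mc{M}$ carries an extra marking relation $W$ that pins down most of $S$ existentially. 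The construction forces the opponent's copy to burn through its d-free elements and then diagonalizes. If you want to salvage your approach, you would need either to actually produce the separating invariant of d.c.e.\ degrees (a substantial new result) or to replace your ``static obstruction'' step with a genuine priority construction against all copies of the other structure; relativization to a cone is then routine for such a construction, rather than the separate obstacle you anticipate.
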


Note that these two concepts that we have just introduced---intrinsic computability on a cone and degree spectra on a cone---are completely independent of the presentations of $\mc{A}$ and $R$. Moreover, the intrinsic computability of a relation $R$ is completely dependent on its model-theoretic properties. So by looking on a cone, we are able to look at more model-theoretic properties of relations while using tools of computability theory.

The reader should always keep in mind the motivation behind this work. The theorems we prove are intended to be applied to naturally occurring structures. For well-behaved structures, ``property $P$'' and ``property $P$ on a cone'' should be viewed as interchangeable.

This work was originally motivated by a question of Montalb\'an first stated in \cite{Wright13}. There is no known case of a structure $(\mc{A},R)$ where $\dgSp(\mc{A},R)_{\leq \textbf{d}}$ does not have a maximum degree for \textbf{d} sufficiently large. When the degree spectrum does contain a maximum degree, define the function $f_{\mc{A},R}$ which maps a degree \textbf{d} to the maximum element of $\dgSp(\mc{A},R)_{\leq \textbf{d}}$. This is a degree-invariant function\footnote{Technically, the function we are considering maps a set $C$ to some set $D$ which is of maximum degree in $\dgSp(\mc{A},R)_{\leq d(C)}$. For now, we can ignore which sets we choose.}, and hence the subject of Martin's conjecture. Montalb\'an has asked whether Martin's conjecture is true of this function $f_{\mc{A},R}$, that is, is it true that for every structure $\mc{A}$ and relation $R$, there is an ordinal $\alpha < \omega_1$ such that for all $\textbf{d}$ on a cone, $\textbf{d}^{(\alpha)}$ is the maximal element of $\dgSp(\mc{A},R)_{\leq \textbf{d}}$?

Recall the question of Ash and Knight from the introduction, which we stated as: is it true that every degree spectrum is either contained in the the $\Delta_\alpha$ degrees or contains all of the $\alpha$-\cea degrees? If this was true, then Montalb\'an's question would (almost) be answered positively, as every relation $R$ has a definition which is $\Sigma_\alpha^\infi$ and $\Pi_\alpha^\infi$ for some $\alpha < \omega_1$; choosing $\alpha$ to be minimal, if $\alpha$ is a successor ordinal $\alpha = \beta + 1$, then for all degrees $\textbf{d}$ on a cone, there is a complete $\Delta^0_\alpha(\textbf{d})$ degree which is $\beta$-CEA above \textbf{d} and hence is the maximal element of $\dgSp(\mc{A},R)_{\leq \textbf{d}}$. The relativized version of Harizanov's Theorem \ref{Harizanov} answers Ash and Knight's question (and hence Montalb\'an's question) for relations which are defined by a $\Sigma_1^\infi$ formula, and our Theorem \ref{relativized-2} answers these questions for relations which are $\Sigma_2^\infi$-definable\footnote{Note that the proofs of these results also show that $f_{\mc{A},R}$ is uniformly degree-invariant for these relations, which also implies that Martins conjecture holds for these $f_{\mc{A},R}$---see \cite{Steel82} and \cite{SlamanSteel88}. What we mean is that, given sets $C$ and $D$ of degree \textbf{d} (with $C \equiv_T D$), Theorem \ref{Harizanov} (respectively Theorem \ref{relativized-2}) provide $\mc{C},\mc{D}$ isomorphic copies of $\mc{A}$ with $\mc{C} \leq_T C$ and $\mc{D} \leq_T D$ with $R^\mc{C} \equiv_T C'$ and $R^\mc{D} \equiv_T D'$ (resp. $R^\mc{C} \equiv_T C''$ and $R^\mc{D} \equiv_T D''$) and these Turing equivalences are uniform in $C'$ and $D'$ (resp. $C''$ and $D''$). So given an index for the equivalence $C \equiv_T D$, we can effectively find an index for the equivalence $R^\mc{C} \equiv_T R^\mc{D}$.}.

\chapter[D.C.E. Degree Spectra]{Degree Spectra Between the C.E. Degrees and the D.C.E. Degrees}
\label{DCESection}
We know that every degree spectrum (on a cone) which contains a non-comput\-able degree contains all of the c.e.\ degrees. In this section, we will consider relations whose degree spectra strictly contain the c.e.\ degrees. The motivating question is whether any degree spectrum on a cone which strictly contains the c.e.\ degrees contains all of the d.c.e.\ degrees. We will show that this is false by proving Theorem \ref{thm-incomparable-dce} which says that there are two incomparable degree spectra which contain only d.c.e.\ degrees. In the process, we will define what it means to be \textit{intrinsically of c.e.\ degree} (as opposed to simply being c.e.) and give a characterization of the relatively intrinsically d.c.e.\ relations which are intrinsically of c.e.\ degree, and at the same time a sufficient (but not necessary) condition for a relation to not be intrinsically of c.e.\ degree.

\section[Conditions to be Intrinsically of C.E. Degree]{Necessary and Sufficient Conditions to be Intrinsically of C.E. Degree}

We begin by defining what it means to be intrinsically of c.e.\ degree.
\begin{defn}$R$ on $\mc{A}$ is \textit{intrinsically of c.e.\ degree} if in every computable copy $\mc{B}$ of $\mc{A}$, $R^\mc{B}$ is of c.e.\ degree.
\end{defn}
\noindent We can make similar definitions for \textit{relatively intrinsically of c.e.\ degree} and \textit{intrinsically of c.e.\ degree on a cone}. As far as we are aware, these are new definitions.

Any relation which is intrinsically c.e.\ is intrinsically of c.e.\ degree, but the following example shows that the converse implication does not hold (even on a cone).

\begin{exam}\label{simple-example}
Let $\mc{A}$ be two-sorted with sorts $B$ and $C$. There is a relation $S$ in the signature of $\mc{A}$ of type $B \times C$. The sort $B$ is a directed graph, each connected component of which consists of two elements and one directed edge. Each element of $B$ is related via $S$ to zero, one, or two elements of $C$, and the additional relation $R$ (not in the signature of $\mc{A}$) consists of those elements of $B$ which are related to exactly one element of $C$. $\mc{A}$ consists of infinitely many copies of each of these three connected components and nothing else, with the edge adjacency relation and $S$:

\[\xymatrix@C=1em{
0 \ar[r] & 1 & & 1 \ar[r] & 2 & & 2 \ar[r] & 2
}\]

\noindent The numbers show how many elements of $C$ a particular element of $B$ is related to. For example, $0 \to 1$ is a two element connected component with a single directed edge, and the first element is not related to any elements of $C$, while the second element is related to a single element of $C$. In any copy $\mc{B}$ of $\mc{A}$, the set $T^{\mc{B}}$ of elements related to exactly two elements of $C$ is c.e.\ in $\mc{B}$. We claim that this set has the same Turing degree as $R^\mc{B}$. Let $a$ and $b$ be elements in $\mc{A}$, with a directed edge from $a$ to $b$. Then there are three possibilities:
\begin{enumerate}
	\item $a \notin R^{\mc{B}}$, $b \in R^{\mc{B}}$ and $a \notin T^{\mc{B}}$, $b \notin T^{\mc{B}}$,
	\item $a \in R^{\mc{B}}$, $b \notin R^{\mc{B}}$ and $a \notin T^{\mc{B}}$, $b \in T^{\mc{B}}$, or
	\item $a \notin R^{\mc{B}}$, $b \notin R^{\mc{B}}$ and $a \in T^{\mc{B}}$, $b \in T^{\mc{B}}$.
\end{enumerate}
Each of these three possibilities is distinct from the others both in terms of $R$ and also in terms of $T^{\mc{B}}$. So knowing whether $a \in R^\mc{B}$ and $b \in R^\mc{B}$ determines whether $a \in T^{\mc{B}}$ and $b \in T^{\mc{B}}$, and vice versa. Hence $R^\mc{B} \oplus \mc{B} \equiv T^{\mc{B}} \oplus \mc{B}$. Since $T^{\mc{B}}$ is c.e., $R^\mc{B}$ is of c.e.\ degree in $D(\mc{B})$. Note that $R^\mc{B}$ is always d.c.e.\ in $D(\mc{B})$, but one can show using a standard argument that $R^\mc{B}$ is not always c.e.\ in $D(\mc{B})$.
\end{exam}

We will begin by finding a necessary and sufficient condition for a relation to be intrinsically of c.e.\ degree. We will assume, for one of the directions, that the relation is relatively intrinsically d.c.e. A relation which is not intrinsically $\Delta_2$ cannot be intrinsically of c.e.\ degree (and, assuming sufficient effectiveness conditions, the same is true for the relative notions). We leave the question open for relations which are relatively intrinsically $\Delta_2$ but not relatively intrinsically d.c.e.

An important idea in most of the results in this work are the free tuples from the theorem of Ash and Nerode on intrinsically computable relations \cite{AshNerode81}, and other variations.
\begin{defn}
Let $\bar{c}$ be a tuple from $\mc{A}$. We say that $\bar{a} \notin R$ is \textit{free} (or \textit{1-free}) over $\bar{c}$ if for any finitary existential formula $\psi(\bar{c},\bar{x})$ true of $\bar{a}$ in $\mc{A}$, there is $\bar{a}' \in R$ which also satisfies $\psi(\bar{c},\bar{x})$.
\end{defn}
\noindent Such free elements, and many variations, have been used throughout the literature, including in many of the results we referenced in the previous chapters. We will only use 1-free elements in Appendix \ref{CESection}, but we will use other variants in Chapters \ref{DCESection}, \ref{OmegaSection}, and \ref{SigmaTwoSection}.

In the spirit of the definitions made just before Propositions 2.2 and 2.3 of \cite{AshKnight96}, we will make the following definition of what it means to be \textit{difference-free}, or \textit{d-free}. Let $\mc{A}$ be a computable structure and $R$ a computable relation on $\mc{A}$. We begin with the case where $R$ is unary, where the condition is simpler to state. We say that $a \notin R$ is \textit{d-free over $\bar{c}$} if for every $b_1,\ldots,b_n$ and existential formula $\varphi(\bar{c},u,v_1,\ldots,v_n)$ true of $a,b_1,\ldots,b_n$, there are $a' \in R$ and $b_1',\ldots,b_n'$ which satisfy $\varphi(\bar{c},u,v_1,\ldots,v_n)$ such that for every existential formula $\psi(\bar{c},u,v_1,\ldots,v_n)$ true of them, there are $a'',b_1'',\ldots,b_n''$ satisfying $\psi$ with $a'' \notin R$ and $b_i \in R \Leftrightarrow b_i'' \in R$.

Note that this is different from the 2-free elements which are defined just before Propositions 2.2 and 2.3 in \cite{AshKnight96}. The definitions are the same, except that for $a$ to be 2-free over $\bar{c}$, there is no requirement on the $b_i$ and $b_i''$. Note that an element $a$ may be 2-free over $\bar{c}$, but not d-free over $\bar{c}$ (but if $a$ is d-free over $\bar{c}$, then it is 2-free over $\bar{c}$).

Now suppose that $R$ is an $m$-ary relation. We say that $\bar{a}$ is d-free over $\bar{c}$ if for every $\bar{b}$ and existential formula $\varphi(\bar{c},\bar{u},\bar{v})$ true of $\bar{a},\bar{b}$, there are $\bar{a}'$ and $\bar{b}'$ which satisfy $\varphi(\bar{c},\bar{u},\bar{v})$ such that $R$ restricted to tuples from $\bar{c}\bar{a}'$ is not the same as $R$ restricted to tuples from $\bar{c}\bar{a}$ and also such that for every existential formula $\psi(\bar{c},\bar{u},\bar{v})$ true of them, there are $\bar{a}'',\bar{b}''$ satisfying $\psi$ and such that $R$ restricted to $\bar{c}\bar{a}''\bar{b}''$ is the same as $R$ restricted to $\bar{c}\bar{a}\bar{b}$. If $R$ is unary, a tuple $\bar{a}$ is d-free over $\bar{c}$ if and only if one of its entries $a_i$ is.

Under sufficient effectiveness conditions we will show---for a formally d.c.e.\ relation $R$ on a structure $\mc{A}$---that $R$ is not intrinsically of c.e.\ degree if and only if for each tuple $\bar{c}$ there is some $\bar{a}$ which is d-free over $\bar{c}$ (note that under the effectiveness conditions of Proposition 2.2 of Ash and Knight \cite{AshKnight96}, a relation is formally d.c.e.\ if and only if it is intrinsically d.c.e.). In fact, the existence of a tuple $\bar{c}$ over which no tuple $\bar{a}$ is d-free will imply that $R$ is not intrinsically of c.e.\ degree even if $R$ is not formally d.c.e. We will use this in Theorem \ref{thm:weird-relation-on-omega} of Chapter \ref{OmegaSection}.

When stated in terms of degree spectra on a cone, our result is:
\begin{prop}
Let $\mc{A}$ be a structure and $R$ a relation on $\mc{A}$. Then if, for each tuple $\bar{c}$, there is $\bar{a}$ which is d-free over $\bar{c}$, then the degree spectrum $\dgSp_{rel}(\mc{A},R)$ on a cone strictly contains the c.e.\ degrees. Moreover, if $R$ is formally d.c.e., then this is a necessary condition.
\end{prop}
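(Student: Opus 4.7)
The plan is to prove the two directions separately, the sufficient direction by a Lachlan-style back-and-forth construction and the necessary direction by a witness-enumeration argument modelled on Example \ref{simple-example}.

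For the sufficient direction, I would first observe that the hypothesis already forces $R$ not to be intrinsically c.e.\ on a cone: if $R$ were defined by some $\Sigma_1^\infi$ formula with parameters $\bar{c}$, then any tuple's $R$-pattern would be determined by its existential type over $\bar{c}$, directly contradicting the flipping clause of d-freeness applied to any $\bar{a}$ d-free over (an extension of) $\bar{c}$. By the relativized version of Corollary \ref{cor:Harizanov}, this places all c.e.\ degrees in $\dgSp_{rel}(\mc{A},R)$. It then remains to produce, on a cone, a copy $\mc{B}$ of $\mc{A}$ in which $R^\mc{B}$ is of properly d.c.e.\ degree. I would carry out a priority back-and-forth construction whose requirements diagonalize, for each index $e$ and each pair of Turing functionals $(\Phi,\Psi)$, against the possibility that $\Phi^{R^\mc{B}}=W_e$ and $\Psi^{W_e}=R^\mc{B}$. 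Each requirement is met by a Lachlan-style ``in-out-in'' manipulation of $R^\mc{B}(\bar{a})$ for a target tuple $\bar{a}$: the d-free hypothesis over the current parameters $\bar{c}$ delivers first a tuple $\bar{a}'$ realising the current existential commitment $\varphi$ but with a different $R$-pattern, and then a tuple $\bar{a}''$ realising any subsequent existential commitment $\psi$ while restoring the original $R$-pattern. Working relative to a sufficiently large base collapses all effectiveness issues and makes the back-and-forth bookkeeping routine.

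For the necessary direction, assume $R$ is formally d.c.e.\ and fix $\bar{c}$ over which no $\bar{a}$ is d-free. Write the defining formula as $R(\bar{x}) \leftrightarrow \exists\bar{y}\,\theta(\bar{c},\bar{x},\bar{y}) \wedge \neg\exists\bar{z}\,\eta(\bar{c},\bar{x},\bar{z})$ with $\theta,\eta$ quantifier-free, and let $P^\mc{B},Q^\mc{B}$ be the c.e.-in-$\mc{B}$ sets defined by the two existential parts, so $R^\mc{B}=P^\mc{B}\setminus Q^\mc{B}$. For any copy $\mc{B}$, it suffices to exhibit a c.e.-in-$\mc{B}$ set $T^\mc{B}$ with $R^\mc{B}\equiv_T T^\mc{B}\oplus\mc{B}$. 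For each $\bar{a}$, the failure of d-freeness over $\bar{c}$ supplies either an existential formula that pins down $R^\mc{B}(\bar{a})$ directly (when the first clause of d-freeness fails, so the existential type of $\bar{a}$ over $\bar{c}$ determines its $R$-pattern), or a companion tuple $\bar{a}^*$ together with existential data that force the $R$-pattern of $\bar{a}^*$ to be correlated with that of $\bar{a}$ in the manner of the partner trick of Example \ref{simple-example} (when the second clause fails). Enumerating these witnesses into $T^\mc{B}$ yields the desired c.e.-in-$\mc{B}$ set: from $T^\mc{B}$ one decides $R^\mc{B}$ by looking up the relevant witness, and from $R^\mc{B}$ one enumerates $T^\mc{B}$ by confirming the forced correlations.

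The main obstacle will be the necessary direction: one must perform a careful case analysis on the two ways d-freeness can fail and, in each case, convert the failure into a uniformly enumerable witness packaging up exactly the bit of information needed to recover $R^\mc{B}$ from a c.e.-in-$\mc{B}$ set. In particular, one must verify that the d.c.e.\ form of $R$ and the absence of d-free tuples interact so that no tuple $\bar{a}\in Q^\mc{B}\setminus R^\mc{B}$ is left ambiguous after the enumeration of $T^\mc{B}$. The sufficient direction is conceptually more straightforward — essentially Lachlan's properly d.c.e.\ construction performed inside a back-and-forth — since the d-free hypothesis is engineered precisely to make each required ``in-out-in'' move available at the appropriate stage.
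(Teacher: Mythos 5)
Your sufficient direction is essentially the paper's argument and is fine: the existence of d-free tuples over every $\bar{c}$ rules out a $\Sigma^\infi_1$ or $\Pi^\infi_1$ definition, so the relativization of Corollary \ref{cor:Harizanov} puts all c.e.\ degrees into the spectrum, and the Lachlan-style ``in--out--in'' finite-injury construction you describe is exactly Proposition \ref{ce-condition} (the d-free tuple is moved into $R$ to break $R^\mc{B}=\Phi_i^{W_e}$, forcing $W_e$ to change, and then moved back out to restore $R^\mc{B}$ below the use of $W_e=\Phi_j^{R^\mc{B}}$).

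The necessary direction has a genuine gap. Your set $T^\mc{B}$ is assembled from witnesses that are determined by the existential diagram of $\mc{B}$ --- an existential formula pinning down $R^\mc{B}(\bar{a})$, or a companion tuple whose $R$-pattern is existentially correlated with that of $\bar{a}$. The paper's five-component example (the one following Proposition \ref{ce-degree-condition}) shows that no such ``syntactically determined'' set can work in general: there the relation is intrinsically of c.e.\ degree, yet no non-trivial formally $\Sigma_1$ set is formally $\Pi_1(R)$, so any witness set defined from the existential diagram alone will fail to be computable from $R^\mc{B}$. Concretely, the problem is the direction $R^\mc{B}\geq_T T^\mc{B}$: since $T^\mc{B}$ is c.e.\ in $\mc{B}$, you must show its complement is c.e.\ in $R^\mc{B}$, i.e.\ that $R^\mc{B}$ can \emph{recognize that a witness will never appear}; ``confirming the forced correlations'' only re-enumerates $T^\mc{B}$ positively, which the empty oracle already does. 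The paper's proof of Proposition \ref{ce-degree-condition} resolves this with a set $A$ that depends on the \emph{order of enumeration} of $\mc{B}$: $d$ enters $A$ when there are stages $s<t$ at which $d$ appears to be in $R^\mc{B}$ (per the d.c.e.\ approximation $\alpha\wedge\neg\beta$) and every threat $\varphi_a(d,\bar{e})$ seen by stage $s$ has been neutralized by a matching $\psi_{a,a',\bar{b}'}(d,\bar{e})$ by stage $t$. Then $R^\mc{B}\subseteq A$, $A\setminus R^\mc{B}$ is c.e., and --- this is the step your sketch is missing --- the complement of $A$ is c.e.\ in $R^\mc{B}$ because, for $d\notin R^\mc{B}$, one can search for a stage at which $d$ does not appear to be in $R^\mc{B}$ together with a witness $(a,\bar{e})$ satisfying $\varphi_a(d,\bar{e})$ whose $\bar{e}$ has the same $R^\mc{B}$-pattern as $\bar{b}^a$; finding such data certifies $d\notin A$, and one of the two searches must terminate. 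Without this timing mechanism (and the d.c.e.\ approximation it relies on), the ambiguous tuples in $P^\mc{B}\cap Q^\mc{B}$ that you correctly flag remain undecidable from $R^\mc{B}$.
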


The (relativizations of) the next two propositions prove the two directions of this using the appropriate effectiveness conditions.
\begin{prop}\label{ce-degree-condition}
Let $R$ be a formally d.c.e.\ relation on a computable structure $\mc{A}$. Suppose that there is a tuple $\bar{c}$ over which no $\bar{a}$ is d-free. Assume that given tuples $\bar{a}$ and $\bar{c}$, we can find witnesses $\bar{b}$ and $\varphi(\bar{c},\bar{u},\bar{v})$ to the fact that $\bar{a}$ is not d-free over $\bar{c}$ (and furthermore, given $\bar{a}'$ and $\bar{b}'$ satisfying $\varphi$, find $\psi(\bar{c},\bar{u},\bar{v})$) as in the definition of d-freeness. Then for every computable copy $\mc{B}$ of $\mc{A}$, $R^\mc{B}$ is of c.e.\ degree.
\end{prop}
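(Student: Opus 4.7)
The plan is to show, given any computable copy $\mc{B}$ of $\mc{A}$, that $R^\mc{B}$ has the same Turing degree as a c.e.\ set. Since $R$ is formally d.c.e., the uniform d.c.e.\ approximation of $R^\mc{B}$ produces two c.e.\ sets: $A^\mc{B}$ (tuples ever enumerated into the approximation) and $L^\mc{B}$ (tuples subsequently removed), with $R^\mc{B} = A^\mc{B} \setminus L^\mc{B}$ and $A^\mc{B} = R^\mc{B} \cup L^\mc{B}$. Trivially $R^\mc{B} \leq_T A^\mc{B} \oplus L^\mc{B}$. I aim to prove the reverse reduction $L^\mc{B} \leq_T R^\mc{B} \oplus D(\mc{B})$ (which then gives $A^\mc{B} \oplus L^\mc{B} \leq_T R^\mc{B} \oplus D(\mc{B})$), so that $T := A^\mc{B} \oplus L^\mc{B}$ is c.e.\ with $R^\mc{B} \equiv_T T$ modulo the computable $D(\mc{B})$.

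The reverse reduction is where non-d-freeness is used. From the effectiveness hypothesis I precompute computable maps: $\bar{a} \in \mc{A} \mapsto (\bar{b}_{\bar{a}}, \varphi_{\bar{a}}, \tau_{\bar{a}})$ giving the non-d-free witnesses and the $R$-type $\tau_{\bar{a}} := R|_{\bar{c}\bar{a}\bar{b}_{\bar{a}}}$; and $(\bar{a}, \bar{a}', \bar{b}') \mapsto \psi$ giving the excluding existential formula from case (ii). Given a tuple $\bar{a}'$ in $\mc{B}$, I first use the $R^\mc{B}$ oracle to read off $R^\mc{B}|_{\bar{c}\bar{a}'}$ and then match $\bar{a}'$ with some $\bar{a} \in \mc{A}$ of the same $R$-type on $\bar{c}\bar{a}$; by the isomorphism $\mc{A} \cong \mc{B}$, a witness $\bar{b}'$ in $\mc{B}$ with $\mc{B} \models \varphi_{\bar{a}}(\bar{c},\bar{a}',\bar{b}')$ exists, and I locate it by a c.e.\ search. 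Non-d-freeness then constrains $R^\mc{B}|_{\bar{c}\bar{a}'\bar{b}'}$: either it agrees with $\tau_{\bar{a}}$ (case (i)), or case (ii) produces an excluding existential $\psi$ true of $\bar{a}',\bar{b}'$ in $\mc{B}$, detectable c.e. Since I may vary the candidate $\bar{a}$ among all tuples in $\mc{A}$ of the correct $R$-type on $\bar{c}\bar{a}$ and cross-check the presence or absence of excluding $\psi$'s directly against the $R^\mc{B}$ oracle, I can pin down $R^\mc{B}|_{\bar{c}\bar{a}'\bar{b}'}$. From this full local $R$-configuration, the formally d.c.e.\ definition of $R$ lets me determine whether the $\Sigma_1^\comp$-witness for the ``enter'' clause ever fires at $\bar{a}'$ (i.e., $\bar{a}' \in A^\mc{B}$) and whether the ``leave'' clause does (i.e., $\bar{a}' \in L^\mc{B}$).

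The main obstacle is the last bookkeeping step: correctly translating knowledge of $R^\mc{B}$ on the combined tuple $\bar{c}\bar{a}'\bar{b}'$ into membership in $A^\mc{B}$ and $L^\mc{B}$. The subtle point is that case (ii) only guarantees an excluding $\psi$ when the $R$-type on $\bar{a}'$ differs from $\tau_{\bar{a}}$ — and the \emph{absence} of such $\psi$ is only a $\Pi_1$ statement — but since I have $R^\mc{B}$ itself as an oracle, I verify the ``correct'' $R$-type directly rather than by ruling exclusions out. The argument must also be insensitive to the choice of matching candidate $\bar{a}$: different valid choices should yield the same constraints on $R^\mc{B}|_{\bar{c}\bar{a}'\bar{b}'}$, which is precisely what the non-d-freeness condition is designed to ensure. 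Threading the excluding-$\psi$ verifications through the isomorphism $\mc{A} \cong \mc{B}$ and the $\Sigma_1^\comp$-structure of the formally d.c.e.\ definition of $R$ is the delicate technical step.
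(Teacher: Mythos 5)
Your overall shape is right---the proof does reduce to exhibiting c.e.\ sets $A \supseteq R^\mc{B}$ and $B = A - R^\mc{B}$ with $R^\mc{B} \geq_T A$---but your choice of $A^\mc{B}$ and $L^\mc{B}$ does not work, and the reduction $L^\mc{B} \leq_T R^\mc{B} \oplus D(\mc{B})$ is precisely the step that fails. Take $d \notin R^\mc{B}$ satisfying both the $\Sigma^\comp_1$ ``enter'' formula $\alpha$ and the ``leave'' formula $\beta$. Whether $d$ is ever enumerated into the approximation depends on whether the $\alpha$-witness or the $\beta$-witness shows up first in the enumeration of $\mc{B}$, and this fact about enumeration order leaves no trace in $R^\mc{B}$: two such elements can have identical $R$-configurations yet differ on membership in your $A^\mc{B}$. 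The paper flags exactly this obstacle ($R^\mc{B}$ cannot in general distinguish an element of $\tilde{B}$ from an element outside $\tilde{A}$), and the example following the proposition shows that no syntactically defined pair of c.e.\ sets can work; your enumeration-dependent variant fares no better, because certifying ``$d$ never appears in the approximation'' is a $\Pi_1$ fact that the oracle has no handle on.

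The missing idea is that $A$ must be defined with a built-in \emph{confirmation} clause that the oracle can refute. The paper puts $d$ into $A$ only if there are stages $s<t$ at which $d$ appears to be in $R^\mc{B}$ and, for every pair $(a,\bar{e})$ with $a \notin R$, $\bar{e} \in \mc{B}$, and $\varphi_a(d,\bar{e})$ observed by stage $s$, a confirming formula $\psi_{a,a',\bar{b}'}(d,\bar{e})$ has been observed by stage $t$. Then for $d \notin R^\mc{B}$ the oracle runs a race: either $d$ enters $A$, or one finds $(a,\bar{e})$ with $\varphi_a(d,\bar{e})$ and $e_j \in R^\mc{B} \Leftrightarrow b_j^a \in R$ (checked against the oracle) at a stage where $d$ does not appear to be in $R^\mc{B}$ and has not yet entered $A$; the latter permanently excludes $d$ from $A$, since any later confirmation $\psi_{a,a',\bar{b}'}(d,\bar{e})$ would force $d \in R^\mc{B}$. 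Without the confirmation clause, finding $(a,\bar{e})$ refutes nothing. A secondary gap: you match $\bar{a}'$ in $\mc{B}$ with some $\bar{a} \in \mc{A}$ ``of the same $R$-type'' and then search for a witness to $\varphi_{\bar{a}}(\bar{c},\bar{a}',\bar{v})$, but agreement of $R$-types does not imply that $\bar{a}'$ satisfies $\varphi_{\bar{a}}$ (only the image under the actual isomorphism is guaranteed to), so that c.e.\ search need not terminate. The paper avoids choosing a matching altogether by quantifying over \emph{all} pairs $(a,\bar{e})$ for which $\varphi_a(d,\bar{e})$ is actually observed in the partial existential diagram of $\mc{B}$.
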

\begin{proof}
Let $\mc{A}$ and $R$ be as in the statement of the proposition. We will assume that $R$ is unary. The proof when $R$ is not unary uses exactly the same ideas, but is a little more complicated as we cannot ask whether individual elements are or are not in $R^\mc{B}$, but instead we must ask about tuples (including tuples which may include elements of $\bar{c}$). The translation of the proof to the case when $R$ is not unary requires no new ideas, and considering only unary relations will make the proof much easier to understand.

Let $\bar{c} \in \mc{A}$ be such that no $a \notin R$ is d-free over $\bar{c}$. We may omit any reference to $\bar{c}$ by assuming that it is in our language. Let $\mc{B}$ be a computable copy of $\mc{A}$. We must show that $R^\mc{B}$ is of c.e.\ degree. We will use $a$, $a'$, $\bar{b}$, $\bar{b}'$, etc. for elements of $\mc{A}$, and $d$, $e$, etc.\ for elements of $\mc{B}$.

We will begin by making some definitions, following which we will explain the intuitive idea behind the proof. Finally, we will define two c.e.\ sets $A$ and $B$ such that $R^\mc{B} = A - B \equiv_T A \oplus B$.

Since there are no d-free elements, for each $a \notin R$ there is an existential formula $\varphi_{a}(u,v_1,\ldots,v_{n_a})$ and a tuple $\bar{b}^a = b_1^a,\ldots,b_{n_a}^a$ such that
\[ \mc{A} \models \varphi_{a}(a,\bar{b}^a) \]
which witness the fact that $a$ is not d-free.

Now let $a \notin R$, $a' \in R$, and $\bar{b}'$ be such that
\[ \mc{A} \models \varphi_{a}(a',\bar{b}'). \]
By choice of $\varphi_{a}$ and $\bar{b}^a$, there is an existential formula $\psi(u,\bar{v})$ extending $\varphi_{a}(u,\bar{v})$ such that for all $a''$ and tuples $\bar{b}'' = (b_1'',\ldots,b_{n_a}'')$ with
\[ \mc{A} \models \psi(a'',\bar{b}''), \]
if $b_k \in R \Leftrightarrow b_k'' \in R$ for all $k$ then $\bar{a}'' \in R$. Note that $\psi$ depends only on $a$, $a'$, and $\bar{b}'$ (as $\bar{b}$ depends on $a$). Let $\psi_{a,a',\bar{b}'}$ be this formula $\psi$. We can find $\bar{b}^a$, $\varphi_a$, and $\psi_{a,a',\bar{b}}$ effectively using the hypothesis of the theorem.

Let $R$ be defined by $\alpha(u) \wedge \neg \beta(u)$ with $\alpha(u)$ and $\beta(u)$ being $\Sigma^\comp_1$ formulas with finitely many parameters (which we may assume are included in our language). We may assume that every solution of $\psi_{a,a',\bar{b}'}$ for every $a$, $a'$, and $\bar{b}'$ is a solution of $\alpha(u)$ by replacing each formula by its conjunction with $\alpha(u)$. Let $\alpha_s(u)$ and $\beta_s(u)$ be the finitary existential formulas which are the disjunctions of the disjuncts enumerated in $\alpha(u)$ and $\beta(u)$ respectively by stage $s$.

We can effectively enumerate the $\Sigma^\comp_1$ formulas which are true of elements of $\mc{B}$. At each stage $s$, we have a list of formulas which we have found to be true so far. This is the partial existential diagram of $\mc{B}$ at stage $s$, which we denote by $D_{\exists,s}(\mc{B})$. We say that an element $d \in \mc{B}$ appears to be in $R^{\mc{B}}$ at stage $s$ if one of the disjuncts of $\alpha_s(d)$ is in $D_{\exists,s}(\mc{B})$, and no disjunct of $\beta_s(d)$ is in $D_{\exists,s}(\mc{B})$. Otherwise, we say that $d$ appears to be in $\neg R^{\mc{B}}$ at stage $s$.

Now note that $\alpha$ defines a c.e.\ set $\tilde{A}$ in $\mc{B}$, and $\beta$ defines a c.e.\ set $\tilde{B}$, and $R^\mc{B} = \tilde{A} - \tilde{B}$. Then $\tilde{A} \oplus \tilde{B} \geq_T R^\mc{B}$. If in fact we had $\tilde{A} \oplus \tilde{B} \equiv_T R^\mc{B}$, then $R^\mc{B}$ would be of c.e.\ degree. However, $R^\mc{B}$ may not compute $\tilde{A} \oplus \tilde{B}$ because it may not be able to tell the difference between an element of $\tilde{B}$ and an element not in $\tilde{A}$. So we will come up with appropriate sets $A$ and $B$ where $R^\mc{B}$ \textit{can} tell the difference, i.e. $R^\mc{B} \geq_T A$ and $R^\mc{B} \geq_T B$. We can always assume that $B \subseteq A$ and hence $B = A - R^\mc{B}$, so it suffices to show that $R^\mc{B} \geq_T A$.

The set $A$ will consist of the of elements $d \in \mc{B}$ with the following property: for some stages $t > s$,
\begin{enumerate}
	\item $d$ appears to be in $R^{\mc{B}}$ at stage $s$ and at stage $t$, and
	\item for every $a \in \mc{A}$ with $a \notin R$ and $\bar{e}=(e_1,\ldots,e_{n_a}) \in \mc{B}$ we have found at stage $s$ with the property that $\varphi_a(d,\bar{e})$ is in $D_{\exists,s}(\mc{B})$, by stage $t$ we have found an $a' \in \mc{A}$ and $\bar{b}' \in \mc{A}$ with $\psi_{a,a',\bar{b}'}(d,\bar{e})$ in $D_{\exists,t}(\mc{B})$.
\end{enumerate}
When we say that at stage $s$ we have found $a \in \mc{A}$ and $\bar{e} \in \mc{B}$, we mean that $a$ comes from the first $s$ elements of $\mc{A}$ and $\bar{e}$ from the first $s$ elements of $\mc{B}$.

\begin{claim}
$R^\mc{B} \subseteq A$ and thus $R^\mc{B} = A - (A - R^\mc{B})$.
\end{claim}
\begin{proof}
Let $d$ be an element of $R^\mc{B}$. There is some stage $s$ at which $d$ appears to be in $R^\mc{B}$ and moreover, at any stage $t > s$, $d$ still appears to be in $R^\mc{B}$. For any $a \in \mc{A}$ and $\bar{e} \in \mc{B}$ with
\[ \mc{B} \models \varphi_a(d,\bar{e}), \]
there are $a' \in \mc{A}$ and $\bar{b}' \in \mc{A}$ with
\[ \mc{B} \models \psi_{a,a',\bar{b}'}(d,\bar{e}). \]
If $f: \mc{A} \to \mc{B}$ is an isomorphism, then $a' = f^{-1}(d)$ and $\bar{b}' = f^{-1}(\bar{e})$ are one possible choice. This suffices to show that $d \in A$, and so $R^\mc{B} \subseteq A$.

The second part of the claim follows immediately.
\end{proof}

\begin{claim}
$A$ and $A - R^\mc{B}$ are c.e.
\end{claim}
\begin{proof}
$A$ is c.e.\ because to check whether $d \in A$, we search for $s$ and $t$ satisfying the two conditions in the definition of $A$. For a given $s$ and $t$, these two conditions are computable to check.

$A - R^\mc{B}$ is c.e.\ because it is just equal to the elements of $A$ which satisfy the formula $\beta$, as every element of $A$ satisfies $\alpha$.
\end{proof}

\begin{claim}
$R^\mc{B} \geq_T A$.
\end{claim}
\begin{proof}
Given $d$, we want to check (using $R^\mc{B}$ as an oracle) whether $d \in A$. First ask $R^\mc{B}$ whether $d \in R^\mc{B}$. If the answer is yes, then we must have $d \in A$.

Otherwise, $d \notin R^\mc{B}$. Now, since $A$ is c.e., it suffices to show that checking whether $d$ is in its complement is also c.e. Suppose that at some stage $s$, $d$ has not yet been enumerated into $A$, and we find $a \in \mc{A}$ and $\bar{e} \in \mc{B}$ such that:
\begin{enumerate}
	\item $d$ does not appear to be in $R^\mc{B}$ at stage $s$,
	\item $a$ and $\bar{e}$ are from among the first $s$ elements of $\mc{A}$ and $\mc{B}$ respectively,
	\item $\varphi_a(d,\bar{e})$ is in $D_{\exists,s}(\mc{B})$, and
	\item the oracle $R^\mc{B}$ tells us that $e_j \in R^\mc{B}$ if and only if $b_j^a \in R$.
\end{enumerate}
We claim that $d \notin A$. Suppose to the contrary that $d \in A$. Then there is a stage $s'$ at which $d$ appears to be in $R^\mc{B}$, and a stage $t'$ at which $d$ enters $A$ (i.e., $s'$ and $t'$ are the $s$ and $t$ from the definition of $A$). Now at these two stages $s'$ and $t'$, $d$ appears to be in $R^\mc{B}$. Moreover, we know that at stage $s$, $d$ has not yet entered $A$, and so $s < t'$. Then we must have $s < s'$ since between stages $s'$ and $t'$, $d$ appears to be in $R^\mc{B}$, and this is not the case at stage $s$. Then there must be elements $a',\bar{b}' \in \mc{A}$ with $\psi_{a,a',\bar{b}'}(d,\bar{e})$ in $D_{\exists,t'} (\mc{B})$. This is a contradiction, since by choice of $\psi_{a,a'\bar{b}'}$ we cannot have $d \notin R^\mc{B}$ and $e_j \in R^\mc{B} \Leftrightarrow b_j \in R$.

Now, since $d \notin R^\mc{B}$, there exists some $a$ and $\bar{e}$ such that
\[ \mc{B} \models \varphi_a(d,\bar{e}) \]
and $e_j \in R^\mc{B}$ if and only if $b^a_j \in R$. If $f : \mc{A} \to \mc{B}$ is an isomorphism, $a = f^{-1}(d)$ and $\bar{e} = f(\bar{b}^a)$ are one possible choice for $a$ and $\bar{e}$. Then one of two things must happen first---either $d$ is enumerated into $A$, or we find $a$ and $\bar{e}$ as above at some stage $s$ when $a$ does not appear to be in $R^\mc{B}$ and hence $d$ is not in $A$. Since finding such $a$ and $\bar{e}$ is a computable search, the complement of $A$ is c.e.\ in $R^\mc{B}$.
\end{proof}

Now since $R^\mc{B} \geq_T A$, $R^\mc{B} \geq_T A - R^\mc{B}$. Thus $R^\mc{B} \geq_T A \oplus (A - R^\mc{B})$. It is trivial to see that $A \oplus (A - R^\mc{B}) \geq R^\mc{B}$. Finally, $A$ and $A - R^\mc{B}$ are c.e.\ sets, and so their join is of c.e.\ degree. Thus $R^\mc{B}$ is of c.e.\ degree.
\end{proof}

For the second direction, we do not have to assume that $R$ is formally d.c.e. We will use this direction in Section 4.2 and in Chapter 5, and the style of the proof will be a model for Propositions \ref{M-not-A} and \ref{prop:incomp-2}, and Theorem \ref{thm:weird-relation-on-omega} later on.

\begin{prop}\label{ce-condition}
Let $\mc{A}$ be a computable structure and $R$ a computable relation on $\mc{A}$. Suppose that for each $\bar{c}$, there is $\bar{a} \notin R$ d-free over $\bar{c}$. Also, suppose that for each tuple $\bar{c}$, we can effectively find a tuple $\bar{a}$ which is d-free over $\bar{c}$, and moreover we can find $\bar{a}'$, $\bar{b}'$, and $\bar{a}''$ as in the definition of d-freeness. Then there is a computable copy $\mc{B}$ of $\mc{A}$ such that $R^\mc{B}$ is not of c.e.\ degree.
\begin{proof}
We will assume that $R$ is a unary relation, and hence that for each $\bar{c}$, there is $a \notin R$ d-free over $\bar{c}$. We begin by describing a few conventions. We denote by $X[0, \ldots, n]$ the first $n+1$ elements of $X$. We say that a computation $\Phi^X = Y$ has use $u$ if it takes fewer than $u$ steps and it only uses the oracle $X[0, \ldots, u]$.

The proof will be to construct a computable copy $\mc{B}$ of $\mc{A}$ which diagonalizes against every possible Turing equivalence with a c.e.\ set. The construction will be a finite-injury priority construction. We use the d-free elements to essentially run a standard proof that there are d.c.e.\ degrees which are not c.e.\ degrees.

We will construct $\mc{B}$ with domain $\omega$ by giving at each stage $s$ a tentative finite isomorphism $F_s:\omega \to \mc{A}$. In the limit, we get $F:\omega \to \mc{A}$ a bijection, and $\mc{B}$ is the pullback along $F$ of the structure on $\mc{A}$. We will maintain values $n_{e,i,j}[s]$, $u_{e,i,j}[s]$, $t_{e,i,j}[s]$, and $v_{e,i,j}[s]$ which reference computations that have converged. See Figure \ref{fig:prop4-7} for a visual representation of what these values mean.

We will meet the following requirements:
\begin{description}
	\item[$\mathcal{R}_{e,i,j}$] If $\Phi_i$ and $\Phi_j$ are total, then either $R^\mc{B} \neq \Phi_i^{W_e}$ or $W_e \neq \Phi_j^{R^\mc{B}}$.
	\item[$\mathcal{S}_{i}$] The $i$th element of $\mc{A}$ is in the image of $F$.
\end{description}
Put a priority ordering on these requirements.

At each stage, each requirement $R_{e,i,j}$ will be in one of four states: \textsc{initialized}, \textsc{waiting-for-computation}, \textsc{waiting-for-change}, or \textsc{diagonalized}. A requirement will move through these four stages in order, and be satisfied when it enters the state \textsc{diagonalized}. If it is injured, a requirement will return to the state \textsc{initialized}.

\begin{figure}[b]
	\includegraphics{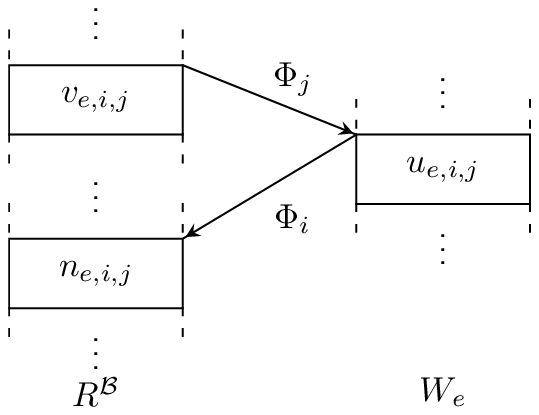}
	\caption{The values associated to a requirement for Proposition \ref{ce-condition}. An arrow shows a computation converging. The computations use an oracle and compute some initial segment of their target. The tail of the arrow shows the use of the computation, and the head shows the length. So, for example, we will have $R^\mc{B}[0,\ldots,n_{e,i,j}] = \Phi_i^{W_e}[0,\ldots,n_{e,i,j}]$ with use $u_{e,i,j}$.}
	\label{fig:prop4-7}
\end{figure}

We are now ready to describe the construction.

\vspace*{10pt}
\noindent\textit{Construction.}
\vspace*{10pt}

At stage 0, let $F_s = \varnothing$ and for each $e,i,j$, let $n_{e,i,j}[0]$, $u_{e,i,j}[0]$, $t_{e,i,j}[0]$, and $v_{e,i,j}[0]$ be $0$ (i.e., undefined).

At a stage $s+1$, let $F_s:\{0,\ldots,\xi_s\} \to \mc{A}$ be the partial isomorphism determined in the previous stage, and let $\mc{B}_s$ be the finite part of the diagram of $\mc{B}$ which has been determined so far. We have an approximation $R^{\mc{B}_s}$ to $R^\mc{B}$ which we get by taking $k \in R^{\mc{B}_s}$ if $F_s(k) \in R$.

We will deal with a single requirement---the highest priority requirement which requires attention at stage $s+1$. We say that a requirement $\mc{S}_i$ \textit{requires attention at stage $s+1$} if the $i$th element of $\mc{A}$ is not in the image of $F_s$. If $\mc{S}_i$ is the highest priority requirement which requires attention, then let $a$ be the $i$th element of $\mc{A}$. Let $F_{s+1}$ extend $F_s$ with $F_{s+1}(\xi_s+1)=a$. Set $\xi_{s+1} = \xi_s + 1$. Injure each requirement of higher priority.

The conditions for a requirement $\mc{R}_{e,i,j}$ to \textit{require attention at stage $s+1$} depends on the state of the requirement. Below, we will list for each possible state of $\mc{R}_{e,i,j}$, the conditions for $R_{e,i,j}$ to require attention, and the action that the requirement takes if it is the highest priority requirement that requires attention.

\begin{description}[font=\sc]

\item[{Initialized}]
The requirement has been initialized, so $n_{e,i,j}[0]$, $u_{e,i,j}[0]$, $t_{e,i,j}[0]$, and $v_{e,i,j}[0]$ are all $0$.

\medskip{}

\begin{description}[font=\it,labelindent=-15pt,leftmargin = 0pt]
	\item[Requires attention] The requirement always requires attention.
	\item[Action] Choose a new element $a$ of $\mc{A}$ which is d-free over the image of $F_{s}$. Note that $a \notin R$. Set $F_{s+1}(\xi_{s} + 1) = a$, $n_{e,i,j}[s+1] = \xi_{s}$, and $\xi_{s+1} = \xi_{s} + 1$.
\end{description}

\item[{Waiting-for-computation}]
We have set $n_{e,i,j}$, so we need to wait for the computations \eqref{eq1:R-W} and \eqref{eq1:W-R} below to converge. Once they do, we can use the fact that $F(n_{e,i,j}) = a \notin R$ was chosen to be d-free to modify $F$ so that $F(n_{e,i,j}) \in R$ to break the computation \eqref{eq1:R-W}.

\medskip{}

\begin{description}[font=\it,labelindent=-15pt,leftmargin = 0pt]
	\item[Requires attention] The requirement requires attention if there is a computation
\begin{equation}\label{eq1:R-W} R^{\mc{B}_s}[0, \ldots, {n_{e,i,j}}[s]] = \Phi_{i,s}^{W_{e,s}}[0, \ldots, {n_{e,i,j}}[s]] \end{equation}
with use $u$, and a computation
\begin{equation}\label{eq1:W-R} W_{e,s}[0, \ldots, u] = \Phi_{j,s}^{R^{\mc{B}_s}}[0, \ldots, u] \end{equation}
with use $v$.

	\item[Action] Set $u_{e,i,j}[s+1] = u$, $t_{e,i,j}[s+1] = s$, and $v_{e,i,j}[s+1] = v$. Let
\begin{align*}
\bar{c} &= (F_{s}(0),\ldots,F_{s}(n_{e,i,j}[s] - 1)) \\
a &= F_{s}(n_{e,i,j}[s]) \\
\bar{b} &= (F_{s}(n_{e,i,j}[s]+1),\ldots,F_{s}(\xi_s)).
\end{align*}
Write $\bar{c} = (c_0,\ldots,c_{n_{e,i,j}[s]-1})$ and $\bar{b} = (b_{n_{e,i,j}[s]+1},\ldots,b_{\xi_s})$. We will have ensured during the construction that $a$ is d-free over $\bar{c}$. So we can find $a' \in R$ and $\bar{b}'$ such that $\bar{c},a',\bar{b'}$ satisfies the same quantifier-free formulas determined so far in $\mc{B}_s$ as $\bar{c},a,\bar{b}$, and so that for any further existential formula $\psi(\bar{c},u,\bar{v})$ true of $\bar{c},a',\bar{b}'$, there are $a'' \notin R$ and $\bar{b}''(b_{n_{e,i,j}[s]+1}'',\ldots,b_{\xi_s}'')$ satisfying $\psi$ and with $b_k'' \in R$ if and only if $b_k \in R$. Define
\begin{align*}
F_{s+1}(k) &= c_k & &\text{for } 0 \leq k < n_{e,i,j}[s] \\
F_{s+1}(n_{e,i,j}[s]) &= a' & &\\
F_{s+1}(k) &= b_k' & &\text{for } n_{e,i,j}[s] < k \leq \xi_s.
\end{align*}
Set the state of this requirement to \textsc{waiting-for-change}. Each requirement of lower priority has been injured. Reset the state of all such requirements to \textsc{initialized} and set all of the corresponding values to $0$.
\end{description}

\item[{Waiting-for-change}]
In the previous state, $F$ was modified to break the computation \eqref{eq1:R-W}. If we are to have $R^{\mc{B}} = \Phi_i^{W_e}$, then $W_e$ must change below the use of that computation. In this state, we wait for this to happen, and then use the fact that $a$ was chosen to be d-free in state \textsc{initialized} to return $R^{\mc{B}}$ to the way it was for the computation \eqref{eq1:W-R}.

\medskip{}

\begin{description}[font=\it,labelindent=-15pt,leftmargin = 0pt]
	\item[Requires attention] Let $u = u_{e,i,j}[s]$, $v = v_{e,i,j}[s]$ and $t = t_{e,i,j}[s]$. The requirement requires attention if
\[ W_{e,s}[0, \ldots, u] \neq W_{e,t}[0, \ldots, u]. \]

	\item[Action] Let
\begin{align*}
\bar{c} &= (F_{s}(0),\ldots,F_{s}(n_{e,i,j}[s] - 1)) \\
a' &= F_{s}(n_{e,i,j}[s]) \\
\bar{b}' &= (F_{s}(n_{e,i,j}[s]+1),\ldots,F_{s}(v)) \\
\bar{d}' &= (F_{s}(v+1),\ldots,F_{s}(\xi_s))
\end{align*}
As before, write $\bar{c} = (c_0,\ldots,c_{n_{e,i,j}[s]-1})$, $\bar{b}' = (b_{n_{e,i,j}[s]+1}',\ldots,b_{v}')$, and $\bar{d}'=(d_{v+1}',\ldots,d_{\xi_s}')$. Now $a'$ was chosen in state \textsc{waiting-for-computation}. So we can choose $a'' \notin R$, $\bar{b}''=(b_{n_{e,i,j}[s]+1}'',\ldots,b_{v}'')$, and $\bar{d}''=(d_{v+1}'',\ldots,d_{\xi_s}'')$ such that $\bar{c}a''\bar{b}''\bar{d}''$ satisfies any formula determined by $\mc{B}_s$ to be satisfied by $\bar{c}a'\bar{b}'\bar{d}'$, and moreover $b_k'' \in R$ if and only if $F_t(k) \in R$ (note that $F_t(k)$ is the value $b_k$ from state \textsc{waiting-for-computation}). Define
\begin{align*}
F_{s+1}(k) &= c_k &&\text{for } 0 \leq k < n_{e,i,j}[s] \\
F_{s+1}(n_{e,i,j}[s]) &= a'' && \\
F_{s+1}(k) &= b_k'' &&\text{for } n_{e,i,j}[s] < k \leq v \\
F_{s+1}(k) &= d_k'' &&\text{for } v < k \leq \xi_s.
\end{align*}
Then we will have
\[ R^{\mc{B}_{s+1}}[0, \ldots, v] = R^{\mc{B}_{t}}[0, \ldots, v]. \]
So
\[ \Phi_j^{R^\mc{B}_{s+1}}[0, \ldots, u] = \Phi_j^{R^\mc{B}_{t}}[0, \ldots, u] = W_{e,t}[0, \ldots, u] \neq W_{e,s+1}[0, \ldots, u] \]
since the use of this computation at stage $t$ was $v$. Set the state of this requirement to \textsc{diagonalized}. Each requirement of lower priority has been injured. Reset the state of all such requirements to \textsc{initialized} and set all of the corresponding values to $\varnothing$.
\end{description}

\item[{Diagonalized}]
In this state, $R^{\mc{B}}$ is the same as it was under the use $v$ in the computation \eqref{eq1:W-R} from state \textsc{waiting-for-computation}. By \eqref{eq1:W-R}, if we are to have $W_e = \Phi_j^{R^\mc{B}}$, then $W_e$ restricted to the elements $0,\ldots,u$ must be the same as it was then. But this cannot happen, because some such element entered $W_e$ during the previous state. So we have satisfied the requirement $\mc{R}_{e,i,j}$.
\medskip{}
\begin{description}[font=\it,labelindent=-15pt,leftmargin = 0pt]
	\item[Requires attention] The requirement never requires attention.
	\item[Action] None.
\end{description}
\end{description}

\noindent Set $\mc{B}_{s+1}$ to be the atomic and negated atomic formulas true of $0,\ldots,\xi_{s+1}$ with G\"odel number at most $s$.

\vspace*{10pt}
\noindent\textit{End construction.}
\vspace*{10pt}

Note that at any stage $s$, the $n_{e,i,j}$ are ordered by the priority of the corresponding requirements. This is because if a requirement is injured, each lower priority requirement is injured at the same time, and then new values of $n_{e,i,j}$ are defined in order of priority. Moreover, if $\mc{R}_{e,i,j}$ is of higher priority than $\mc{R}_{e',i',j'}$ and $v_{e,i,j}$ is defined, then $n_{e,i,j} < v_{e,i,j} < n_{e',i',j'}$. 

If $\mc{R}_{e,i,j}$ is never injured after some stage $s$, then it only acts at most three times---once in each of the stages \textsc{initialized}, \textsc{waiting-for-computation}, and \textsc{waiting-for-change}, in that order---and it never moves backwards through the stages. A requirement $\mc{S}_{i}$ acts only once if it is not injured. So every requirement is injured only finitely many times.

It remains to show that every requirement is eventually satisfied. Suppose to the contrary that some requirement is not satisfied. There must be some least such requirement. First, suppose that it is a requirement $\mc{S}_i$. Then there is a stage $s$ after which each lower requirement never acts. Then at the next stage, $\mc{S}_i$ acts, and is never again injured. So $\mc{S}_i$ is satisfied.

Now suppose that $\mc{R}_{e,i,j}$ is the least requirement which is not satisfied, and let $s$ be a stage after which each lower requirement never acts. So $\mc{R}_{e,i,j}$ is never injured after the stage $s$. Also, since $\mc{R}_{e,i,j}$ is not satisfied, we have
\[ W_e = \Phi_j^{R^\mc{B}} \text{ and } R^\mc{B} = \Phi_i^{W_e}. \]

If $R_{e,i,j}$ was in state \textsc{initialized} at stage $s$, then at a later stage, $n_{e,i,j}$ is defined and the requirement enters stage \textsc{waiting-for-computation}. Eventually, at a stage $t$, the following computations must converge:
\begin{align}
\label{R-W} R^{\mc{B}_t}[0, \ldots, n_{e,i,j}] &= \Phi_{i,t}^{W_{e,t}}[0, \ldots, n_{e,i,j}] && \textrm{with use $u$}\\
\label{W-R} W_{e,t}[0, \ldots, u] &= \Phi_{j,t}^{R^{\mc{B}_t}}[0, \ldots, u]  && \textrm{with use $v$}.
\end{align}
Then $\mc{R}_{e,i,j}$ requires attention at stage $t+1$. We modify $F$ to have $F(n_{e,i,j}) \in R$, breaking computation \eqref{R-W}. Requirement $\mc{R}_{e,i,j}$ also moves to state \textsc{waiting-for-change}.

Since $R^{\mc{B}} = \Phi_i^{W_e}$, eventually at some stage $t'$, $W_e$ must change below the use $u$ of the computation \eqref{R-W}. Then $\mc{R}_{e,i,j}$ requires attention at stage $t'+1$. We modify $F$ by moving $F(n_{e,i,j})$ back to $\neg R$ and ensuring that
\[ R^{\mc{B}_{t'+1}}[0, \ldots, v] = R^{\mc{B}_t}[0, \ldots, v]. \]
But for every stage $t'' > t'$ we have $W_{e,t''}[0, \ldots, u] \neq W_{e,t}[0, \ldots, u]$ since $W_e$ is a c.e.\ set and \[ W_{e,t'}[0,\ldots,u] \neq W_{e,t}[0, \ldots, u].\] This, together with computation \eqref{W-R} contradicts the assumption that $R^{\mc{B}} = \Phi_i^{W_e}$. So every requirement is satisfied.
\end{proof}
\end{prop}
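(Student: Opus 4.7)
The plan is to build $\mc{B}$ as the pullback of $\mc{A}$ along a bijection $F\colon\omega\to\mc{A}$ constructed stage-by-stage by a finite-injury priority argument. For each triple $(e,i,j)$ I assign a requirement $\mc{R}_{e,i,j}$ ensuring that not both $R^\mc{B}=\Phi_i^{W_e}$ and $W_e=\Phi_j^{R^\mc{B}}$ hold; for each $i$ I assign a surjectivity requirement $\mc{S}_i$ placing the $i$th element of $\mc{A}$ into the image of $F$. Since any c.e.\ degree witnessing $R^\mc{B}$ being of c.e.\ degree is given by some such triple, meeting all $\mc{R}_{e,i,j}$ suffices.

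To meet $\mc{R}_{e,i,j}$ I first reserve a position $n=n_{e,i,j}$ in $\omega$ and set $F(n)=a$ for some $a\notin R$ chosen to be d-free over the tuple $\bar{c}$ of all elements currently committed to higher-priority requirements; the effectiveness hypothesis makes this choice computable. I then wait for a stage $s$ at which the computations $R^{\mc{B}_s}[0,\ldots,n]=\Phi_i^{W_{e,s}}[0,\ldots,n]$ with use $u$, and $W_{e,s}[0,\ldots,u]=\Phi_j^{R^{\mc{B}_s}}[0,\ldots,u]$ with use $v$, both converge. The first clause of d-freeness then yields $a'\in R$ and a replacement $\bar{b}'$ for $F(n+1),\ldots,F(v)$ realizing the same existential information that has been committed so far; redefining $F$ with $F(n)=a'$ flips $n$ into $R^\mc{B}$ and breaks the first computation. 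For $R^\mc{B}=\Phi_i^{W_e}$ to survive, $W_e$ must later change below $u$. When it does, at a stage $s'$, I apply the second clause of d-freeness, producing $a''\notin R$ and $\bar{b}''$ such that the pattern of $R$-membership on positions $n+1,\ldots,v$ is restored to what it was at stage $s$. Thus $R^{\mc{B}_{s'+1}}[0,\ldots,v]=R^{\mc{B}_s}[0,\ldots,v]$, so $\Phi_j^{R^\mc{B}}[0,\ldots,u]$ still computes $W_{e,s}[0,\ldots,u]$, which no longer agrees with $W_{e,s'+1}[0,\ldots,u]$; the requirement is then permanently satisfied and takes no further action.

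Priorities are handled in the usual way: whenever a requirement acts, all lower-priority requirements are initialized and their bookkeeping erased. Each $\mc{R}_{e,i,j}$ passes through at most three action phases between injuries (reserve, flip, restore), so the standard finite-injury bookkeeping shows every requirement stabilizes. The requirements $\mc{S}_i$ are met by appending the $i$th element of $\mc{A}$ to a fresh position of $F$ whenever $\mc{S}_i$ is the highest-priority unmet requirement. Combined with the preservation of realized existential types at each redefinition of $F$, this ensures that $F$ converges to an isomorphism and the diagram of $\mc{B}$ is computable.

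The main obstacle I foresee is coordinating the two applications of d-freeness with the priority structure. When $a$ is chosen in the reserve phase, neither the witnesses $\bar{b}$ nor the particular existential formula $\varphi$ appearing later is known in advance, so d-freeness must be taken over a tuple $\bar{c}$ large enough to absorb every parameter that any higher-priority requirement might later fix. The natural fix is to place higher-priority positions strictly to the left of lower-priority positions in $F$ and initialize all lower priorities whenever a higher priority acts, so that the d-free element reserved for $\mc{R}_{e,i,j}$ lies safely to the right of the frozen parameters, and the second-clause witnesses $a''$ and $\bar{b}''$ can be chosen inside the region of $\mc{B}$ currently controlled by $\mc{R}_{e,i,j}$. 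The uniform effectiveness of the hypothesis is precisely what makes this loop computable end to end.
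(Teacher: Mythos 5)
Your proposal matches the paper's proof essentially step for step: the same finite-injury construction with requirements $\mc{R}_{e,i,j}$ and $\mc{S}_i$, the same reserve--flip--restore cycle using the two clauses of d-freeness to first break $R^\mc{B}=\Phi_i^{W_e}$ and then, after $W_e$ changes below the use, restore $R^{\mc{B}}[0,\ldots,v]$ so that $\Phi_j^{R^\mc{B}}$ computes the outdated $W_e$ and yields the contradiction. The coordination issue you flag at the end is handled in the paper exactly as you suggest, by taking the d-free element over the entire current image of $F$ and initializing all lower-priority requirements whenever a higher one acts.
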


In Proposition \ref{ce-degree-condition}, we showed that a condition about the non-existence of free elements is equivalent to a condition on the possible degrees of the relation $R^\mc{B}$ in computable copies $\mc{B}$ of $\mc{A}$. In particular, we showed that the relation $R^\mc{B}$ is Turing equivalent to the join of c.e.\ sets. In, for example, the theorems of Ash and Nerode \cite {AshNerode81} and Barker \cite{Barker88} there are two parts: first, that a condition on the existence of free tuples is equivalent to a condition on the possible computability-theoretic properties of $R^\mc{B}$; and second, that a condition on the existence of free tuples is equivalent to a syntactic condition on the relation $R$. In Propositions \ref{ce-degree-condition} and \ref{ce-condition}, we are missing this second syntactic part.

We might hope that there is a syntactic condition which is equivalent (under some effectiveness conditions) to being intrinsically of c.e.\ degree. For example, one candidate (and certainly a sufficient condition) would be that there are formally $\Sigma^0_1$ sets $A \supseteq R$ and $B = A - R$ such that $A$ is formally $\Sigma^0_1$ and $\Pi^0_1$ relative to $R$.

In the proof of Proposition \ref{ce-degree-condition}, we found c.e.\ sets $A$ and $B$ in our particular copy $\mc{B}$ of $\mc{A}$ such that $R^\mc{B} = A - B$ and $R^\mc{B} \geq_T A$. These c.e.\ sets were not necessarily definable by $\Sigma^0_1$ formulas, but instead depended on the enumeration of $\mc{B}$. When $R$ was defined by $\alpha(x) \wedge \neg \beta(x)$, whether or not an element $a \notin R^\mc{B}$ which satisfied both $\alpha(x)$ and $\beta(x)$ was in $A$ depended on the order in which we discovered certain facts in $\mc{B}$.

The following example should be taken as (very strong) evidence that we cannot find an appropriate syntactic condition.

\begin{exam}
Consider a structure as in Example \ref{simple-example}, except that the connected components are different. There are infinitely many copies of each of the following five connected components:
\[
\xymatrix@C=1.5em@R=1.5em{
&0&&0&&&1&&&0&&1\\0\ar[ru]\ar[r]\ar[rd]\ar[rdd]&1&1\ar[ru]\ar[r]\ar[rd]\ar[rdd]&1\ar[r]&0&2\ar[ru]\ar[r]\ar[rd]\ar[rdd]&1\ar[r]&0&2\ar[ru]\ar[r]\ar[rd]\ar[rdd]&1&2\ar[ru]\ar[r]\ar[rd]\ar[rdd]&1\\&1&&1&&&1&&&1&&1\\&\vdots&&\vdots&&&\vdots&&&\vdots&&\vdots
}
\]
\noindent The formally d.c.e.\ relation $R$ consists of the nodes which are labeled $1$. Note that the elements at the center of their connected component are definable in both a $\Sigma^0_1$ and a $\Pi^0_1$ way (in a $\Sigma^0_1$ way as they are the only nodes of degree at least three, and in a $\Pi^0_1$ way because they are the unique such nodes in their connected components, and so they are the only node which are not connected to some other node of degree at least three). In particular, given a connected component, we can compute the center.

\begin{claim}
$R$ is relatively intrinsically of c.e.\ degree. 
\end{claim}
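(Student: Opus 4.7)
The plan is to invoke the relativisation of Proposition \ref{ce-degree-condition} applied to $\mc{A}$ and $R$. Since $R(x)$ is defined by $(\exists y\,S(x,y))\wedge\neg(\exists y_1\neq y_2\,S(x,y_1)\wedge S(x,y_2))$, the relation $R$ is formally d.c.e., so the hypothesis of the proposition applies provided we exhibit a tuple $\bar{c}$ over which no $a\notin R$ is d-free. A direct ``identify $\tilde A$ from $R^\mc{B}$'' approach, in the spirit of Example \ref{simple-example}, seems not to work here because the centers of components $1$ and $4$ both sit outside $R^\mc{B}$ and their components are graph-theoretically indistinguishable, so $R^\mc{B}$ alone cannot recover $\tilde A$. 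Proposition \ref{ce-degree-condition}, which produces a more subtle c.e.\ set depending on the enumeration of $\mc{B}$, is therefore the right tool.

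I would take $\bar c=\varnothing$ and verify non-d-freeness by case analysis on the role of $a\notin R$ within its connected component: $a$ is either a length-$1$ leaf with no $C$-neighbour (possible in components $1$, $2$, $4$), a terminal of a length-$2$ branch (components $2$, $3$), a center of $C$-label $0$ (component $1$), or a center of $C$-label $2$ (components $3$, $4$, $5$). For each such $a$, I would choose $\bar b$ to consist of the center of $a$'s connected component together with a few additional neighbouring nodes — enough that the existential formula $\varphi(u,\bar v)$ describing this local configuration already pins down which of the five component types $a$ inhabits. Any $a'\in R$ and $\bar b'$ realising $\varphi$ must then sit in a single specific component type (the unique one in which an $R$-element can play the role of $a$). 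The key combinatorial observation is that once $a'$ is so constrained, any existential $\psi(u,\bar v)$ extending $\varphi$ and naming even one additional node of $a'$'s component forces any prospective swap-back $a''\notin R$, $\bar b''$ satisfying $\psi$ to come from a \emph{different} component type, and the rigid labelling patterns of the five components ensure that the new tuple $\bar b''$ must disagree with $\bar b$ on the $R$-membership of some coordinate.

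The main obstacle is the combinatorial case check itself: one must go through each pair (type of $a$, possible type of $a'$) and verify that the labelling discrepancies between the relevant components can always be witnessed by choosing $\bar b$ and $\psi$ appropriately — the trickiest case being $a$ a center of $C$-label $2$ in component $4$ (whose graph structure matches that of component $1$), where $\bar b$ must be taken to include a branch leaf of $C$-label $0$ so that any swap to a center of $C$-label $1$ (forced to be in component $2$) and then back to $\notin R$ (forced to be component $3$) flips that leaf's $R$-membership because component $3$ has no label-$0$ leaves. Once this case analysis is complete, the straightforward relativisation of Proposition \ref{ce-degree-condition} to $D(\mc{B})$, which trivialises its effectiveness hypothesis, yields that $R^\mc{B}$ is of c.e.\ degree over $D(\mc{B})$ in every copy $\mc{B}$ of $\mc{A}$, which is exactly the statement that $R$ is relatively intrinsically of c.e.\ degree.
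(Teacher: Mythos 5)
Your overall strategy---invoking (the relativisation of) Proposition \ref{ce-degree-condition} with $\bar c=\varnothing$ and showing by a case analysis on the position of $a\notin R$ within its connected component that no element is d-free---is exactly the paper's. However, the one case you work out in detail is argued incorrectly, and the case that actually carries the content of the claim is the one you defer to ``the combinatorial case check.''

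For $a$ a center labeled $2$ (components $3$, $4$, $5$), non-d-freeness is immediate and requires no tracking of swaps: $a$ satisfies the existential formula $\exists y_1\exists y_2\,(y_1\neq y_2\wedge S(u,y_1)\wedge S(u,y_2))$, which no element of $R$ (label $1$) satisfies, so there is no $a'\in R$ realising the existential formulas true of $a$ at all, and the defining clause of d-freeness fails vacuously. The chain of swaps you describe for the component-$4$ center---to a label-$1$ center of component $2$ and then back---cannot occur, precisely because the first swap must preserve existential formulas and ``related to two elements of $C$'' is existential. The genuinely delicate case is instead $a$ the label-$0$ center of component $1$: there an $a'\in R$ realising the relevant existential formula does exist (the center of component $2$), and one must put the label-$0$ leaf $b$ of $a$'s component into $\bar b$ and exploit that component $2$ has a length-two branch. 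That branch is an existential fact $\psi$ about $a'$, so any $a''\notin R$ satisfying $\psi$ together with ``center with a neighbour $v$'' must be the center of component $3$, all of whose neighbours are labeled $1$, forcing $b''\in R$ while $b\notin R$. Your general framework accommodates this argument, but since it is the only case where the swap-and-swap-back scenario genuinely arises, it needs to be spelled out rather than left implicit.
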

\begin{proof}
We will use Proposition \ref{ce-degree-condition}.\footnote{This will also give us our first non-trivial application of Proposition \ref{ce-degree-condition}.} First, we claim that no tuple $\bar{a}$ is d-free over $\varnothing$. Since $R$ is unary, it suffices to show that no single element $a$ is d-free over $\varnothing$. If $a$ is not in the center of its connected component, then there is an existential formula $\varphi(u)$ which witnesses this. If $a' \in R$ also satisfies $\varphi(u)$, then $\varphi(u) \wedge \psi(u)$ is true of $a'$ where $\psi(u)$ is the existential formula which says that $u$ is labeled ``1'' or ``2''. Every solution of $\varphi(u) \wedge \psi(u)$ is in $R$. So $a$ is not d-free over $\varnothing$. Now suppose that $a \notin R$ is the center element of its connected component. If $a$ is labeled ``2'', then it is not d-free over $\varnothing$. So suppose that $a$ is labeled ``0''. There is a $b \notin R$ which is connected to $a$, and an existential formula $\varphi(u,v)$ which says that $u$ is the center element of its connected component and $v$ is connected to $u$. Now if $a' \in R$ satisfies $(\exists v) \varphi(u,v)$, then $a'$ also satisfies $\psi(u)$ which says that there is a chain of length two leading off of $a'$. Now suppose that $a'' \notin R$ satisfies $\psi(u) \wedge (\exists v) \varphi(u,v)$. Let $b''$ be such that we have $\psi(a'',b'')$. Now, since $a''$ satisfies $\psi(u)$, any such $b''$ must be labeled ``1'' and hence be in $R$. But we had $b \notin R$. Thus $a$ cannot have been d-free. So there is no tuple $\bar{a}$ which is d-free over $\varnothing$. The effectiveness conditions of Proposition \ref{ce-degree-condition} are immediate, because everything we did above is computable. Moreover, this relativizes. Thus $R$ is relatively intrinsically of c.e.\ degree.
\end{proof}

Now we argue that there is no syntactical fact about $R$ which explains this. Such a syntactical fact should say that $R$ is intercomputable with a join of formally $\Sigma_1$ sets. So it should say something like: there is a join $S$ of formally $\Sigma_1$ sets which is formally $\Pi_1(R)$, and $R$ is formally $\Sigma_1(S)$ and $\Pi_1(S)$. We will show that there are no non-trivial sets which are formally $\Sigma^0_1$ and $\Pi^0_1(R)$.

Let $A$, $B$, $C$, $D$, and $E$ be the sets of points which are at the center of connected components of the first, second, third, fourth and fifth types respectively. It is not hard to check that the formally $\Sigma^0_1$ sets are:
\begin{enumerate}
	\item $A \cup B \cup C \cup D \cup E$,
	\item $B \cup C \cup D \cup E$,
	\item $B \cup C$,
	\item $C \cup D \cup E$, and
	\item $C$.
\end{enumerate}
The formally $\Sigma^0_1(R)$ sets are those above, and in addition:
\begin{enumerate}
	\item $B$,
	\item $A \cup D$,
	\item $A \cup B \cup D$,
	\item $A \cup C \cup D \cup E$.
\end{enumerate}
Every $\Sigma_1(R)$ set containing $E$ also contains $C$ and $D$. If a set $X$ and its complement $\bar{X}$ are both $\Sigma_1(R)$ and one of them is $\Sigma_1$, then one of them must contain $E$, and hence $C$ and $D$. Then the other must be contained within $A \cup B$. The only possibilities for $X$ and $\bar{X}$ are $B$ and $A \cup C \cup D \cup E$, but neither of these are formally $\Sigma_1$.

So there are no formally $\Sigma_1$ sets which are also formally $\Pi_1(R)$. Thus it seems impossible to have a syntactic condition which is necessary and sufficient for a relation to be intrinsically of c.e.\ degree.
\end{exam}

\section{Incomparable Degree Spectra of D.C.E.\ Degrees}

In this section our goal is to prove the following theorem, which will yield Theorem \ref{thm-incomparable-dce}:

\begin{thm}\label{thm:incomp-dce}
There are structures $\mc{A}$ and $\mc{M}$ with relations $R$ and $S$ respectively which are relatively intrinsically d.c.e.\ such that the degree spectra of $R$ and of $S$ are incomparable even relative to every cone.
\end{thm}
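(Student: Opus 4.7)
Plan: I would build structures $\mathcal{A}$ and $\mathcal{M}$ that each admit d-free tuples in the sense of Proposition \ref{ce-condition}, so that by Propositions \ref{ce-degree-condition} and \ref{ce-condition} each spectrum strictly contains the c.e.\ degrees while remaining inside the d.c.e.\ degrees. The key design goal is to equip them with \emph{dual} d-freeness patterns. A natural candidate is a refinement of the connected-component construction of Example \ref{simple-example}, where the component types of $\mathcal{A}$ are chosen so that moving a d-free element of $\mathcal{B}$ into $R^{\mathcal{B}}$ is essentially unconstrained but moving it back out of $R^{\mathcal{B}}$ forces specific coordinated changes on auxiliary elements (and the side relations tracking them). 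Dually, $\mathcal{M}$ is designed so that transitions out of $S^{\mathcal{C}}$ are free while transitions back in force coordinated changes. This asymmetry is invariant under isomorphism, so it holds in every copy.

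The core of the proof will then be a finite-injury priority construction, modelled on the one in Proposition \ref{ce-condition}, which builds a specific computable copy $\mathcal{B}$ of $\mathcal{A}$ meeting, for each triple $(e,i,j)$, the requirement
\[ \mathcal{R}_{e,i,j}\colon \text{if } \Phi_e \text{ codes a computable copy } \mathcal{C} \text{ of } \mathcal{M}, \text{ then } R^{\mathcal{B}} \neq \Phi_i^{S^{\mathcal{C}}} \text{ or } S^{\mathcal{C}} \neq \Phi_j^{R^{\mathcal{B}}}. \]
As in the earlier proof, each requirement reserves a d-free witness and, when the hypothesized computations converge, forces a transition of that witness in $R^{\mathcal{B}}$ in the \emph{direction} permitted by $\mathcal{A}$'s d-freeness. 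The asymmetry in $\mathcal{M}$ is chosen so that the matching transition of $S^{\mathcal{C}}$ required to preserve the Turing equivalence would have to be in the direction \emph{not} permitted by any copy of $\mathcal{M}$, so no legal enumeration of $\mathcal{C}$ can produce it, and the requirement is eventually satisfied. A symmetric construction produces $\mathcal{N} \cong \mathcal{M}$ with $d(S^{\mathcal{N}}) \notin \dgSp(\mathcal{A},R)$. Because every step of the construction uses only existential formulas, atomic diagrams, and computable bookkeeping, it relativizes to any oracle, giving incomparability on every cone.

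The main obstacle is handling the opposing side of each requirement: unlike Proposition \ref{ce-condition}, where we diagonalize against an arbitrary c.e.\ set $W_e$ whose enumeration we can simply observe, here the ``opponent'' is $S^{\mathcal{C}}$ for an arbitrary and largely unconstrained computable copy $\mathcal{C}$ of $\mathcal{M}$. We must show that the invariant asymmetry we built into $\mathcal{M}$ genuinely forbids, in \emph{every} isomorphic enumeration of $\mathcal{M}$, the very $S^{\mathcal{C}}$-transitions that would be needed to repair the assumed Turing equivalence after we perturb $R^{\mathcal{B}}$. This demands a careful combinatorial analysis of how the designated auxiliary relations constrain the permissible approximations to $S^{\mathcal{C}}$ from an enumeration of $\mathcal{C}$, and a verification that this constraint interacts cleanly with the finite-injury priority construction---keeping track of which witnesses can still be used when a lower-priority requirement has already forced several transitions. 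This bookkeeping, together with the symmetric construction on the other side, is where essentially all of the technical work resides.
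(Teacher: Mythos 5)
Your high-level outline — exhibit two structures with a built-in structural asymmetry, run priority constructions in each direction, and observe that everything relativizes — matches the paper's shape, and your instinct to model the argument on Proposition~\ref{ce-condition} and Example~\ref{simple-example} is correct. But there is a real gap in the central idea, and the two directions of the proof are not at all as symmetric as the proposal assumes.

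The proposed asymmetry (``dual d-freeness patterns'': moving \emph{into} $R$ is free in $\mc{A}$ while moving \emph{out} of $S$ is free in $\mc{M}$) does not exist as a usable notion. By definition, d-freeness is a fixed-direction property: $\bar{a}\notin R$ is d-free over $\bar{c}$ when one can pass to some $\bar{a}'\in R$ over $\bar{c}$ and then back to some $\bar{a}''\notin R$ while \emph{preserving} the $R$-status of the witnesses seen in between. Both structures need d-free tuples for their respective relations $R$ and $S$ if their spectra are to strictly exceed the c.e.\ degrees (Proposition~\ref{ce-condition}), so both structures have the same $\neg R \to R \to \neg R$ pattern and there is no ``directional'' flip to exploit. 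The paper's actual asymmetries are orthogonal to this. First, $\mc{A}$ presents the tree $\omega^{<\omega}$ with the ``is a child of'' relation while $\mc{M}$ presents it only as a partial order; consequently, when building a copy of $\mc{M}$, one can always insert a fresh d-free element $y$ \emph{above} an existing d-free element $x$, so that $x$ sits in the subtree below $y$, whereas for $\mc{A}$ the parent structure is rigid. Second, $\mc{M}$ carries an extra relation $W$ under which almost every $S$-element (all those of type $\langle 1,1 \rangle$) is confirmed \emph{in} $S$ by a positive existential fact, making $S$-membership eventually observable in a way that $R$-membership in $\mc{A}$ is not. The first asymmetry drives one direction (Proposition~\ref{M-not-A}, a finite-injury argument). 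The second drives the other (Proposition~\ref{prop:incomp-2}), and this direction is genuinely harder: it is an infinite-injury construction with a requirement split into infinitely many subrequirements $\mc{R}^n_{e,i,j}$, each monitoring whether the putative copy of $\mc{M}$ produces a new element that looks d-free, and using the fact that $\mc{M}$ has only finitely many such elements outside any fixed subtree to force non-isomorphism. Nothing in your proposal anticipates the need for subrequirements or for showing that the opponent's copy of $\mc{M}$ is eventually forced to be non-isomorphic (as opposed to merely producing an incorrect computation), which is the key escape from the apparent symmetry. So while the outline is right-headed, the identified mechanism would not lead anywhere, and the asymmetry between the two directions is essential rather than a matter of bookkeeping.
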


This is a surprising result which is interesting because it says that there is no ``fullness'' result for d.c.e.\ degrees over the c.e.\ degrees, that is, no result that says that any degree spectrum on a cone which strictly contains the c.e.\ degrees contains all of the c.e.\ degrees. Also, it implies that the partial ordering of degree spectra on a cone is not a linear order. The two structures $\mc{A}$ and $\mc{M}$, and the relations $R$ and $S$, are as follows.

\begin{exam}\label{exam1}
Let $\mc{A}$ be two-sorted with sorts $A_1$ and $A_2$. The first sort $A_1$ will be the tree $\omega ^ {< \omega}$ with the relation ``is a child of'' and the root node distinguished. The second sort $A_2$ will be an infinite set with no relations. There will be a single binary relation $U$ of type $A_1 \times A_2$. Every element of $A_2$ will be related by $U$ to exactly one element of $A_1$, and each element of $A_1$ will be related to zero, one, or two elements of $A_2$. The only elements of $A_1$ related to no elements of $A_2$ are those of the form $0^n = \underbrace{0\ldots0}_{n}$. Any other element of the form $\sigma \concat a$ is related to one element of $A_2$ if $a$ is odd, and to two if $a$ is even. The structure $\mc{A}$ consists of these two sorts $A_1$ and $A_2$, the ``is a child of'' relation, the root of the tree, and $U$.

We say that an element of $A_1$ is of \textit{type $\langle n \rangle$} (so possibly of \textit{type $\langle 0 \rangle$}, \textit{type $\langle 1 \rangle$}, or \textit{type $\langle 2 \rangle$}) if it is related by $U$ to exactly $n$ elements of $A_2$. The relation $R$ on $\mc{A}$ is the set of elements of $A_1$ which are related by $U$ to exactly one element of $A_2$, that is, the elements of type $\langle 1 \rangle$.
\end{exam}


\begin{exam}\label{exam2}
Let $\mc{M}$ be a three-sorted model with sorts $M_1$, $M_2$, and $M_3$. The sort $M_1$ will be the tree $\omega ^ {< \omega}$, however, instead of defining the tree with the relation ``is a child of,'' the tree will be given as a partial order. We will have a relation $V$ of type $M_1 \times M_2$ which is defined in the same way as $U$, except with $M_1$ replacing $A_1$ and $M_2$ replacing $A_2$. There will be another relation $W$ on $M_1 \times M_3$ such that each element of $M_3$ is related by $W$ to an element of $M_1$, and each element of $M_1$ is related to either no elements or one element of $M_3$. An element of $M_1$ will be related (via $W$) to an element of $M_3$ exactly if its last entry is odd, but it is not of the form $0^n \concat 1$. 

Once again, we give elements of $M_1$ a ``type''. We say that an element of $M_1$ is of \textit{type $\langle n,m \rangle$} if it is related by $V$ to exactly $n$ elements of $M_2$ and by $W$ to exactly $m$ elements of $M_3$. The possible types of elements are $\langle 0,0 \rangle$, $\langle 1,0 \rangle$, $\langle 1,1 \rangle$, and $\langle 2,0 \rangle$. The relation $S$ on $\mc{M}$ will be defined in the same way as $R$, but again $A_1$ is replaced by $M_1$ and $A_2$ by $M_2$; so $S$ consists of the elements of types $\langle 1,0 \rangle$ and $\langle 1,1 \rangle$. Note that every element of $\mc{B}$ which is in $S$, except for those of the form $0^n 1$, is of type $\langle 1,1 \rangle$, and hence satisfies an existential formula which is only satisfied by elements of $S$.
\end{exam}

Both examples have d-free elements over any tuple $\bar{c}$; these elements are of the form $0^n$ for $n$ large enough that no children of $0^n$ appear in $\bar{c}$ (i.e., the elements of type $\langle 0 \rangle$ in $\mc{A}$ or type $\langle 0,0 \rangle$ in $\mc{M}$). In either structure, any existential formula (over $\bar{c}$) satisfied by $0^n$ is also satisfied by $0^{n-1} 1$, and any existential formula satisfied by $0^{n-1} 1$ is also satisfied by $0^{n-1} b$ for $b$ even. Moreover, the relation $R$ (or $S$) on the subtrees of $0^n$ and $0^{n-1} b$ is the same under the natural identification. Both structures satisfy the effectiveness condition from Proposition \ref{ce-condition}, so for all degrees \textbf{d}, $\dgSp(\mc{A},R)_{\leq \textbf{d}}$ strictly contains $\Sigma^0_1(\textbf{d})$.

Note that in $\mc{A}$, there is an existential formula $\varphi(u)$ which says that $u$ is of type $\langle 2 \rangle$, and an existential formula $\psi(u)$ which says that $u$ is of type $\langle 1 \rangle$ or of type $\langle 2 \rangle$ (i.e., of type $\langle n \rangle$ with $n \geq 1$). Similarly, in $\mc{M}$, for each $n_0$ and $m_0$ there are existential formulas which say that an element $u$ is of type $\langle n,m \rangle$ with $n \geq n_0$ and $m \geq m_0$.

We begin by proving in Proposition \ref{M-not-A} that there is a Turing degree in the (unrelativized) degree spectrum of $S$ which is not in the degree spectrum of $R$ before proving in Proposition \ref{prop:incomp-2} that there is a Turing degree in the degree spectrum of $R$ which is not in the degree spectrum of $S$. After each proposition, we give the relativized version.

\begin{prop}\label{M-not-A}
There is a computable copy $\mc{N}$ of $\mc{M}$ such that no computable copy $\mc{B}$ of $\mc{A}$ has $S^{\mc{N}} \equiv_T R^{\mc{B}}$.
\end{prop}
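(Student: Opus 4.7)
The plan is to build $\mc{N}$ by a finite-injury priority construction in the spirit of Proposition \ref{ce-condition}, as the limit of finite partial isomorphisms $G_s : \omega \to \mc{M}$, so that $\mc{N}$ is the pullback of $\mc{M}$'s structure along $G = \lim_s G_s$. Alongside the usual surjectivity requirements $\mc{S}_k$ asking that the $k$th element of $\mc{M}$ lie in the range of $G$, we have for each triple $(e, i, j)$ a diagonalising requirement $\mc{R}_{e, i, j}$ saying that if $\Phi_e$ is the atomic diagram of a computable copy $\mc{B}$ of $\mc{A}$, then either $\Phi_i^{R^\mc{B}} \neq S^\mc{N}$ or $\Phi_j^{S^\mc{N}} \neq R^\mc{B}$. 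These requirements are arranged in the standard priority order.

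The key idea is to exploit a structural asymmetry between $\mc{M}$ and $\mc{A}$. In $\mc{M}$, the type-$\langle 1, 1 \rangle$ elements form a formally $\Sigma_1$ subrelation of $S$, existentially certified by their $W$-edge into $M_3$; moreover, such elements are \emph{permanent} in $S$, in the sense that no consistent existential extension of $\mc{N}$'s diagram can move them out of $S^\mc{N}$ (one cannot reach type $\langle 2, 0 \rangle$ from type $\langle 1, 1 \rangle$ without deleting the $W$-edge). In $\mc{A}$, by contrast, $R$ has no nontrivial formally $\Sigma_1$ subrelation, and no element of $R$ enjoys similar permanence: for any element of type $\langle 1 \rangle$, the existential upgrade to type $\langle 2 \rangle$ is always available and removes it from $R$. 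Thus $\mc{N}$ admits a move---placing a fresh $S^\mc{N}$-element via type $\langle 1, 1 \rangle$---with no $\mc{A}$-analogue compatible with keeping the element in $R^\mc{B}$ under further existential extension.

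The strategy for a single requirement $\mc{R}_{e, i, j}$ proceeds through a multi-state scheme analogous to the \textsc{initialized}, \textsc{waiting-for-computation}, \textsc{waiting-for-change}, \textsc{diagonalized} framework of Proposition \ref{ce-condition}. We first reserve a d-free position $n$ in $\mc{N}$ by setting $G_s(n) = 0^m$ of type $\langle 0, 0 \rangle$ for $m$ large enough that $0^m$ is d-free over everything previously committed; then we wait for convergent computations of $\Phi_i^{R^\mc{B}}[0, \ldots, n]$ with some use $u$ and $\Phi_j^{S^\mc{N}}[0, \ldots, u]$ with some use $v$. We next perform a short sequence of type modifications of $G(n)$ designed to trap the reduction: one of the key moves is the upgrade $\langle 1, 0 \rangle \to \langle 1, 1 \rangle$, which commits $n$ permanently to $S^\mc{N}$ and so forces $\Phi_j^{S^\mc{N}}$ to stabilise on a value for $R^\mc{B}$ below $u$; a subsequent manipulation of the remaining d-free structure then imposes a further constraint on $R^\mc{B}$ which no computable copy of $\mc{A}$ can jointly satisfy, precisely because $\mc{A}$ has no mechanism to certify membership in $R$ permanently.

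The main obstacle will be to verify the trapping step rigorously---that is, to show that no consistent isomorphism $F : \omega \to \mc{B}$ onto any computable copy $\mc{B}$ of $\mc{A}$ can simultaneously accommodate our sequence of type modifications in $\mc{N}$ and keep both reductions consistent with all previously-converged computations. This comes down to a case analysis on the possible $\mc{A}$-types of $F^{-1}(n)$ and on the existential extensions available to $\mc{B}$, using the absence of a permanent $R$-certificate in $\mc{A}$ to close off every branch. A secondary difficulty, handled in the standard finite-injury way, is that $\Phi_e$ may fail to encode a genuine copy of $\mc{A}$, in which case $\mc{R}_{e, i, j}$ is vacuous; this is absorbed by acting on the requirement only when $\Phi_{e, s}$ is consistent with $\mc{A}$ on the finitely many elements the action inspects, ensuring the construction does not deadlock waiting for computations that never converge.
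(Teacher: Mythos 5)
You have correctly set up the finite-injury framework (partial isomorphisms $F_s:\omega\to\mc{M}$, approximations $\mc{B}_{e,s}$ of the alleged copies of $\mc{A}$, and requirements $\mc{R}_{e,i,j}$ breaking one of the two reductions), and the observation that type-$\langle 1,1\rangle$ elements of $\mc{M}$ carry an existential $S$-certificate with no $\mc{A}$-analogue is a genuine structural difference between the two structures. But that is the \emph{wrong} asymmetry for this direction, and I believe the plan built on it does not go through.

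The $W$-relation is an extra \emph{constraint} on copies of $\mc{M}$: once the $W$-edge is committed to the diagram, the element is locked into $S^{\mc{N}}$. This gives $\mc{N}$ \emph{fewer} options, not more. Nothing about your proposed move ``upgrade $\langle 1,0\rangle\to\langle 1,1\rangle$, which commits $n$ permanently to $S^{\mc{N}}$'' puts any pressure on a rival copy $\mc{B}$: once $n$ is frozen into $S^{\mc{N}}$, the computation $\Phi_i^{R^{\mc{B}}}(n)$ simply settles on $1$ and the corresponding element of $\mc{B}$ stays quietly in $R^{\mc{B}}$ by never receiving a second $U$-edge --- $\mc{A}$ does not need an existential certificate for this, it just needs to refrain from acting. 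In fact this $W$-asymmetry is exactly what drives the \emph{other} direction (Proposition \ref{prop:incomp-2}, where it is $\mc{B}$ that exploits the rigidity of copies of $\mc{M}$); for the present proposition it works against you. A quick sanity check confirms this: if one modifies $\mc{M}$ by presenting the tree $M_1$ with the parent-child relation (as in $\mc{A}$) while keeping $W$, then any copy of that modified structure yields, by discarding the third sort, a copy $\mc{B}$ of $\mc{A}$ with $R^{\mc{B}}\leq_T S^{\mc{N}}$, so the $W$-asymmetry alone cannot possibly give the conclusion.

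The asymmetry the paper actually uses, and which your proposal never mentions, is that the first sort of $\mc{M}$ is presented as a \emph{partial order}, whereas the first sort of $\mc{A}$ carries only the \emph{parent-child} relation. The consequence is decisive: in a copy $\mc{N}$ of $\mc{M}$, one may at any stage insert a fresh node $b$ with $F(b)=0^\ell$ \emph{between the root and} an already-placed element $a$ with $F(a)=0^{\ell+1}$, because committing to $x<_{\mc{N}}a$ for some finitely many $x$'s never pins down the immediate predecessor of $a$. In a copy of $\mc{A}$ this is impossible: once the chain of children from the root to a d-free node $p$ appears in $\mc{B}_{e,s}$, no new d-free element can ever be placed above $p$. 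This gives $\mc{N}$ two nested layers of d-freeness ($a$ below $b$), and the construction exploits them in order: the ``$a$-dance'' (move $a$ into $S^{\mc{N}}$ and back out) forces a previously d-free element $p\leq u_{e,i,j}$ of $\mc{B}_e$ to march through types $\langle 0\rangle\to\langle 1\rangle\to\langle 2\rangle$, after which every element of $\mc{B}_e$ in $\{m_{e,i,j}+1,\ldots,\mu_{e,i,j}\}$ is irreversibly of type $\langle 1\rangle$ or $\langle 2\rangle$; then the ``$b$-dance'' forces some such element to change twice more, which it cannot do. Without the ability to insert $b$ above $a$ the whole scheme collapses to a single c.e.-degree move, which is not enough, since $\mc{B}_e$ has its own d-free elements. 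You should rebuild the argument around this tree-order insertion; the $W$-edge only plays the incidental role that a fresh odd child $0^\ell x$ ($x\ge 3$) happens to be type $\langle 1,1\rangle$, and the construction must be careful never to commit the $W$-neighbour to the diagram of $\mc{N}$ so that $a$ can still be moved to the type-$\langle 2,0\rangle$ child $0^\ell y$ afterwards.
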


\begin{proof}
The proof will be very similar to that of Proposition \ref{ce-condition}, though it will not be as easy to diagonalize as both structures have d-free elements. We will construct $\mc{N}$ with domain $\omega$ by giving at each stage a tentative finite isomorphism $F_s:\omega \to \mc{M}$. $F = \lim F_s$ will be a bijection, giving $\mc{N}$ as an isomorphic copy.

We need to diagonalize against computable copies of $\mc{A}$. Given a computable function $\Phi_e$, we can try to interpret $\Phi_e$ as giving the diagram of a computable structure $\mc{B}_e$ isomorphic to $\mc{A}$. At each stage, we get a finite substructure $\mc{B}_{e,s}$ which is isomorphism to a finite substructure of $\mc{A}$ by running $\Phi_{e}$ up to stage $s$, and letting $\mc{B}_{e,s}$ be the greatest initial segment on which all of the relations are completely determined and which is isomorphic to a finite substructure of $\mc{A}$ (because $\mc{A}$ is relatively simple, this can be checked computably). Let $\mc{B}_e$ be the union of the $\mc{B}_{e,s}$. If $\Phi_e$ is total and gives the diagram of a structure isomorphic to $\mc{A}$, then $\mc{B}_e$ is that structure. Otherwise, $\mc{B}_e$ will be some other, possibly finite, structure. For elements of $\mc{B}_{e,s}$, we also have an approximation of their type in $\mc{B}_e$, by looking at how many elements of the second sort they are connected to in $\mc{B}_{e,s}$. By further reducing the domain of $\mc{B}_{e,s}$, we may assume that $\mc{B}_{e,s}$ has the following property since $\mc{A}$ does: all of the elements of $\mc{B}_{e,s}$ which are of type $\langle 0 \rangle$ in $\mc{B}_{e,s}$ are linearly ordered.

We will meet the following requirements:
\begin{description}
	\item[$\mc{R}_{e,i,j}$]If $\mc{B}_e$ is isomorphic to $\mc{A}$, and $\Phi_i^{R^{\mc{B}_e}}$ and $\Phi_j^{S^{\mc{N}}}$ are total, then either $S^{\mc{N}} \neq \Phi_i^{R^{\mc{B}_e}}$ or $R^{\mc{B}_e} \neq \Phi_j^{S^{\mc{N}}}$.
	\item[$\mc{S}_{i}$] The $i$th element of $\mc{M}$ is in the image of $F$.
\end{description}

Note that the d-free elements of both structures are linearly ordered. Suppose that in $\mc{A}$, $p \notin R$ is d-free (so $p=0^\ell$ for some $\ell$), and $q \notin R$ is d-free over $p$ and in the subtree below $p$. Then, using the fact that $p$ is d-free, we can replace it by $p' \in R$ (replacing $q$ by $q'$) while maintaining any existential formula, and then we can replace $p'$ by $p'' \notin R$ (and $q'$ by $q''$). However, $q''$ will no longer be d-free, because it will not be of the form $0^k$. The same is true in $\mc{M}$. This is what we will exploit for both this proof and the proof of the next proposition.

\begin{figure}[htb]
	\includegraphics{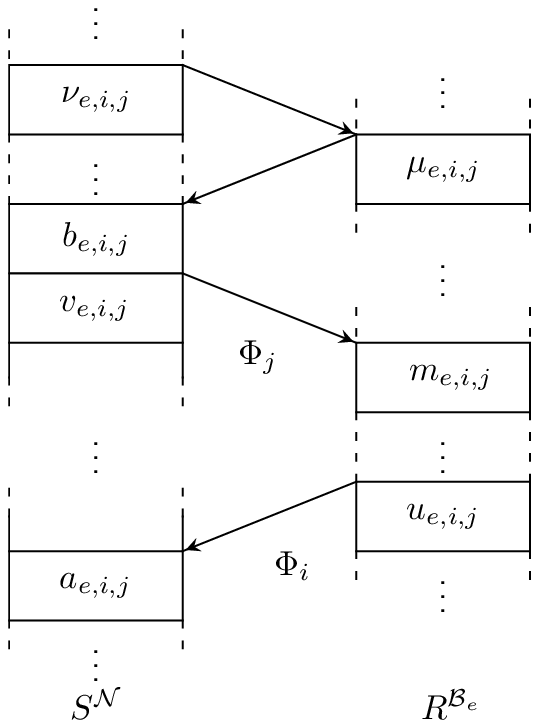}
	\caption{The values associated to a requirement for Proposition \ref{M-not-A}. An arrow shows a computation converging. The computations use an oracle and compute some initial segment of their target. The tail of the arrow shows the use of the computation, and the head shows the length.}
	\label{fig:prop4-11}
\end{figure}

The way we will meet $\mc{R}_{e,i,j}$ will be to put a d-free element $x \notin S$ into $\mc{N}$. If there is no $p$ in $\mc{B}_{e,s}$ which is d-free, we would be able to diagonalize by moving $x$ to $x' \in S$, and then later to $x'' \notin S$ and using appropriate computations as in Proposition \ref{ce-condition}. So we may assume that there is $p$ in $\mc{B}_{e,s}$ which is d-free. Now if $\mc{B}_e$ is an isomorphic copy of $\mc{A}$, we will eventually find a chain $p_0,p_1,\ldots,p_n = p$ from the root node $p_0$ to $p$, where $p_{i+1}$ is a child of $p_i$. Then in $\mc{A}$ every d-free element aside from $p_0,\ldots,p_n = p$ is in the subtree below $p$.

In $\mc{N}$, we just have to respect the tree-order, so no matter how much of the diagram of $\mc{N}$ we have build so far, we can always add a new d-free element $y$ such that $x$ is in the subtree below $y$. Then we will use the fact described above about d-free elements which are in the subtree below another d-free element. By moving $x$ to $x' \in S$ and then to $x'' \notin S$, we can force $p$ to move to $p' \in R$ and then $p'' \notin R$. Then, by moving $y$ to $y' \in S$ and then $y'' \notin S$, we can diagonalize as $\mc{B}_e$ will have no d-free elements which it can use: all of the d-free elements which were below $p$ have now been moved to be below $p''$ are are no longer d-free. We could still move $y$ to $y'$ and then $y''$ as $x$ was in the subtree below $y$ rather than vice versa. As in Proposition \ref{ce-condition}, we will use various computations to force $\mc{B}_e$ to follow $\mc{N}_e$.

The requirement $\mc{R}_{e,i,j}$ will have associated to it at each stage $s$ values $a_{e,i,j}[s]$, $u_{e,i,j}[s]$, $v_{e,i,j}[s]$, $m_{e,i,j}[s]$, and $t_{e,i,j}[s]$, and $b_{e,i,j}[s]$, $\mu_{e,i,j}[s]$, $\nu_{e,i,j}[s]$, and $\tau_{e,i,j}[s]$. These values will never be redefined, but may be canceled. When a requirement is injured, its corresponding values will be canceled. Figure \ref{fig:prop4-11} shows how these values are related.

At each stage, each of the requirements $\mc{R}_{e,i,j}$ will be in one of the following states: \textsc{initialized}, \textsc{waiting-for-first-computation}, \textsc{waiting-for-second-computation}, \textsc{waiting-for-first-change}, \textsc{waiting-for-second-change}, \textsc{waiting-for-third-change}, or \textsc{diagonalized}. Every requirement will move through these linearly in that order.

We are now ready to describe the construction.

\vspace*{10pt}
\noindent\textit{Construction.}
\vspace*{10pt}

At stage 0, let $F_s = \varnothing$ and for each $e$, $i$, and $j$ let $a_{e,i,j}[0]$, $u_{e,i,j}[0]$, and so on be $0$ (i.e., undefined).

At a stage $s+1$, let $F_s:\{0,\ldots,\xi_s\} \to \mc{M}$ be the partial isomorphism determined in the previous stage, and let $D(\mc{N}_s)$ be the finite part of the diagram of $\mc{N}$ which has been determined so far. We have an approximation $S^{\mc{N}}_s$ to $S^\mc{N}$ which we get by taking $k \in S^{\mc{N}}_s$ if $F_s(k) \in S$. For each $e$, we have a guess $R^{\mc{B}_{e}}_s$ at $R^{\mc{B}_{e}}$ using the diagram of the finite structure $\mc{B}_{e,s}$, given by $x \in R^{\mc{B}_e}_s$ if and only if in $\mc{B}_{e,s}$, $x$ is of type $\langle 1 \rangle$ (i.e., related by $U^{\mc{B}_{e,s}}$ to exactly one element of the second sort).

We will deal with a single requirement---the highest priority requirement which requires attention at stage $s+1$. A requirement $\mc{S}_i$ \textit{requires attention at stage $s+1$} if the $i$th element of $\mc{A}$ is not in the image of $F_s$. If $\mc{S}_i$ is the highest priority requirement which requires attention, then let $c$ be the $i$th element of $\mc{A}$. Let $F_{s+1}$ extend $F_s$ with $c$ in its image. Injure each requirement of higher priority.

The conditions for a requirement $\mc{R}_{e,i,j}$ to \textit{require attention at stage $s+1$} depends on the state of the requirement. Below, we will list for each possible state of $\mc{R}_{e,i,j}$, the conditions for $R_{e,i,j}$ to require attention, and the action that the requirement takes if it is the highest priority requirement that requires attention. We will also loosely describe what is happening in the construction, but a more rigorous treatment will follow.

\begin{description}[font=\sc]
\item[{Initialized}]
The requirement has been initialized, so $a_{e,i,j}[0]$, $u_{e,i,j}[0]$, and so on are all $0$.
\medskip{}
\begin{description}[font=\it,labelindent=-15pt,leftmargin = 0pt]
	\item[Requires attention] The requirement always requires attention.
	\item[Action] Let $F_{s+1}$ extend $F_s$ by adding to its image the element $0^{\ell}$, where $\ell$ is large enough that $0^{\ell}$ has no children in $\ran(F_{s})$. Then $0^\ell$ is d-free over $\ran(F_s)$. Let $a_{e,i,j}[s+1]$ be such that $F_{s+1}({a_{e,i,j}[s+1]}) = 0^\ell$. Change the state to \textsc{waiting-for-first-computation}.
\end{description}

\item[{Waiting-for-first-computation}]
We have set $F(a_{e,i,j}) = 0^\ell \notin R$ a d-free element. We wait for the computations \eqref{eq1:S-R} and \eqref{eq1:R-S} below. Then, we use the fact that $\mc{M}$ is given using the tree-order to insert an element $b_{e,i,j}$ in $\mc{N}$ above $a_{e,i,j}$ (so that now the image of $b_{e,i,j}$ under $F$ is $0^\ell$ and the image of $a_{e,i,j}$ under $F$ is $0^{\ell+1}$).
\medskip{}
\begin{description}[font=\it,labelindent=-15pt,leftmargin = 0pt]
	\item[Requires attention] The requirement requires attention if:
\begin{enumerate}
	\item there is a computation
\begin{equation}\label{eq1:S-R} S^\mc{N}_s[0,\ldots,a_{e,i,j}[s]] = \Phi_{i,s}^{R^{\mc{B}_e}_s}[0,\ldots,a_{e,i,j}[s]] \end{equation}
with use $u < s$,
	\item each element $p$ of the first sort in $\mc{B}_{e,s}$ is part of a chain $p_0,p_1,\ldots,p_n = p$ in $\mc{B}_{e,s}$ where $p_0$ is the root node and $p_{i+1}$ is a child of $p_i$, and
	\item there is a computation
\begin{equation}\label{eq1:R-S} R^{\mc{B}_e}_s[0,\ldots,m] = \Phi_{j,s}^{S^\mc{N}_s}[0,\ldots,m] \end{equation}
with use $v < s$ where $m \geq u$ is larger than each $p_i$ above.
\end{enumerate}
	\item[Action] Set $u_{e,i,j}[s+1] = u$, $v_{e,i,j}[s+1] = u$, $m_{e,i,j}[s+1] = m$, and $t_{e,i,j}[s+1] = s$. Let $a = a_{e,i,j}[s]$. We have $F_s(a) = 0^\ell$, where $\ell$ is large enough that no child of $0^\ell$ appears earlier in the image of $F_s$. Set
\[ F_{s+1}(w)=\begin{cases}
0^{\ell + 1}\sigma & F_{s}(w)=0^{\ell}\sigma\\
F_{s}(w) & otherwise
\end{cases}. \]
What we have done is taken every element of $\mc{N}$ which was mapped to the subtree below $0^\ell$, and moved it to the subtree below $0^{\ell + 1}$. Now let $b = b_{e,i,j}[s+1]$ be the first element on which $F_{s+1}$ is not yet defined and set $F_{s+1}(b) = 0^\ell$. So $F_{s+1}(b)$ is the parent of $F_{s+1}(a)$. Any existential formula which was true of the tree below $0^\ell$ is also true of the tree below $0^{\ell + 1}$. Also, for $w \in \dom (F_s)$, $F_{s}(w) \in R$ if and only if $F_{s+1}(w) \in R$. Change the state to \textsc{waiting-for-second-computation}.
\end{description}

\item[{Waiting-for-second-computation}]
In the previous state we defined $b_{e,i,j}$, so now we have to wait for the computations \eqref{eq2:S-R} and \eqref{eq2:R-S} below involving it. Then we modify $F$ so that it now looks like $a_{e,i,j} \in S$, breaking the computation \eqref{eq1:S-R} above.
\medskip{}
\begin{description}[font=\it,labelindent=-15pt,leftmargin = 0pt]
	\item[Requires attention] The requirement requires attention if there are computations
\begin{equation}\label{eq2:S-R} S^\mc{N}_s[0,\ldots,b_{e,i,j}[s]] = \Phi_{i,s}^{R^{\mc{B}_e}_s}[0,\ldots,b_{e,i,j}[s]] \end{equation}
with use $\mu < s$, and
\begin{equation}\label{eq2:R-S} R^{\mc{B}_e}_s[0,\ldots,\mu] = \Phi_{j,s}^{S^\mc{N}_s}[0,\ldots,\mu] \end{equation}
with use $\nu < s$.
	\item[Action] Set $\mu_{e,i,j}[s+1] = \mu$, $\nu_{e,i,j}[s+1] = \nu$, and $\tau_{e,i,j}[s+1] = s$. Let $a = a_{e,i,j}[s]$ and $b = b_{e,i,j}[s]$. We have $F_s(a) = 0^{\ell + 1}$ and $F_s(b) = 0^\ell$, where $\ell$ is large enough that no child of $0^\ell$ appears before $a$ in the image of $F_s$. Choose $x$ odd and larger than any odd number we have encountered so far, and define $F_{s+1}$ by
\[ F_{s+1}(w)=\begin{cases}
0^{\ell}x\concat\sigma & F_{s}(w)=0^{\ell+1}\concat\sigma\\
F_{s}(w) & otherwise
\end{cases}. \]
What we have done is taken every element of $\mc{N}$ which was mapped to the subtree below $0^{\ell+1}$, and moved it to the subtree below $0^{\ell} x$. Note that $F_{s+1}(b) = F_s(b)$ and for $w < a$, $F_s(w) = F_{s+1}(w)$. Any existential formula which was true of the tree below $0^\ell$ is also true of the tree below $0^{\ell}x$ (but not vice versa, since $0^{\ell} x$ is of type $\langle 1 \rangle$ but $0^{\ell+1}$ is of type $\langle 0 \rangle$). Also, for $w \in \dom (F_s)$, $F_{s}(w) \in S$ if and only if $F_{s+1}(w) \in S$ with the single exception of $w = a$. In that case, $F_{s}(a) \notin S$ and $F_{s+1}(a) \in S$. Change to state \textsc{waiting-for-first-change}. 
\end{description}

\item[{Waiting-for-first-change}]
In the previous state, we modified $F$ to break the computation \eqref{eq1:S-R}. If we are to have $S^\mc{N} = \Phi_i^{R^{\mc{B}_e}}$, then $R^{\mc{B}_e}$ must change below its use $u_{e,i,j}$. So some element of $\mc{B}_e$ which was previously of type $\langle 0 \rangle$ becomes of type $\langle 1 \rangle$, or some element which was previously of type $\langle 1 \rangle$ becomes of type $\langle 2 \rangle$. When this happens, we modify $F$ (by changing the image of $a_{e,i,j}$ again) so that $S^\mc{N}$ becomes the same as it was originally (below $\nu$).
\medskip{}
\begin{description}[font=\it,labelindent=-15pt,leftmargin = 0pt]
	\item[Requires attention] The requirement requires attention if
\[ R^{\mc{B}_e}_s[0,\ldots,u_{e,i,j}[s]] \neq R^{\mc{B}_e}_{t_{e,i,j}[s]}[0,\ldots,u_{e,i,j}[s]]. \]
	\item[Action] Let $a = a_{e,i,j}[s]$ and $b = b_{e,i,j}[s]$. We have $F_s(a) = 0^{\ell} x$, where $x$ is odd. Choose $y > 0$ even and larger than any even number we have encountered so far, and define $F_{s+1}$ by
\[ F_{s+1}(w)=\begin{cases}
0^{\ell}y\concat\sigma & F_{s}(w)=0^{\ell}x\concat\sigma\\
F_{s}(w) & otherwise
\end{cases}. \]
This is moving the subtree below $0^{\ell}x$ to the subtree below $0^{\ell}y$. Once again, $F_{s+1}(b) = F_s(b) = 0^\ell$. For $w \in \dom(F_s)$, $w \neq a$, we have $F_{s}(w) \in S$ if and only if $F_{s+1}(w) \in S$. For $w = a$, we have $F_{s}(a) \in S$ and $F_{s+1}(a) \notin S$. Change the state to \textsc{waiting-for-second-change}.
\end{description}

\item[{Waiting-for-second-change}]
In the previous state, we modified $F$ so that $S^\mc{N}$ is the same as it was previously in state \textsc{waiting-for-first-comput\-a\-tion}. By the computation \eqref{eq1:R-S}, $R^{\mc{B}_e}$ must return, below the use $u_{e,i,j}$, to the way it was previously (i.e., as it was when the computation \eqref{eq1:R-S} was found). It must be that the element from state \textsc{waiting-for-first-change} which changed its type then (from type $\langle 0 \rangle$ to type $\langle 1 \rangle$ or from type $\langle 1 \rangle$ to type $\langle 2 \rangle$) must now change its type again, and so it must have gone from type $\langle 0 \rangle$ to type $\langle 1 \rangle$ and now changes from type $\langle 1 \rangle$ to type $\langle 2 \rangle$. Call this element $p$. When this happens, we modify $F$ so that $b_{e,i,j}$ looks like it is in $S^{\mc{N}}$. We can do this because so far we have only modified the image of $a_{e,i,j}$, and $a_{e,i,j}$ was in the subtree below $b_{e,i,j}$. This breaks the computation \eqref{eq2:S-R}.
\medskip{}
\begin{description}[font=\it,labelindent=-15pt,leftmargin = 0pt]
	\item[Requires attention] The requirement requires attention if
\[ R^{\mc{B}_e}_s[0,\ldots,\mu_{e,i,j}[s]] = R^{\mc{B}_e}_{t_{e,i,j}[s]}[0,\ldots,\mu_{e,i,j}[s]]. \]
and also, each of the elements $m_{e,i,j}[s] + 1,\ldots,\mu_{e,i,j}[s]$ is of type $\langle 1 \rangle$ or type $\langle 2 \rangle$.
	\item[Action] Let $b = b_{e,i,j}[s]$. We have $F_s(b) = 0^{\ell}$. Choose $x > 0$ odd and larger than any odd number we have encountered so far, and define $F_{s+1}$ by
\[ F_{s+1}(w)=\begin{cases}
0^{\ell-1}x\concat\sigma & F_{s}(w)=0^{\ell}\concat\sigma\\
F_{s}(w) & otherwise
\end{cases}. \]
This is moving the subtree below $0^{\ell}$ to the subtree below $0^{\ell - 1}x$. For $w \in \dom(F_s)$, $w \neq b$, we have $F_{s}(w) \in S$ if and only if $F_{s+1}(w) \in S$. For $w = b$, we have $F_{s}(b) \notin S$ and $F_{s+1}(b) \in S$. Change the state to \textsc{waiting-for-third-change}.
\end{description}

\item[{Waiting-for-third-change}]
In the previous state, we broke the computation \eqref{eq2:S-R}. If we are to have $S^{\mc{N}} = \Phi_i^{R^{\mc{B}_e}}$, then $R^{\mc{B}_e}$ must change below the use $\mu$ of this computation. But since $S^{\mc{N}}[0,\ldots,v]$ is the same as it was before, by the computation \eqref{eq1:R-S}, $R^{\mc{B}_e}[0,\ldots,u]$ cannot change. So $R^{\mc{B}_e}$ must change on one of the elements $u+1,\ldots,\mu$. Let $p$ be the element we described in the previous state. By (2) from state \textsc{waiting-for-first-computation}, the only elements from among $u+1,\ldots,\mu$ in $\mc{B}_e$ which can be of type $\langle 0 \rangle$ are in the subtree below $p$. So when, in state \textsc{waiting-for-first-change}, $p$ becomes of type $\langle 1 \rangle$, each of the elements in the subtree below $p$ becomes of type $\langle 1 \rangle$ or type $\langle 2 \rangle$. So now, when $R^{\mc{B}_e}$ changes on one of the elements $u+1,\ldots,\mu$, it does so by some such element which was of type $\langle 1 \rangle$ becoming of type $\langle 2 \rangle$. Now modify $F$ so that $S^{\mc{N}}$ looks the same as it did originally (below $\nu$).
\medskip{}
\begin{description}[font=\it,labelindent=-15pt,leftmargin = 0pt]
	\item[Requires attention] The requirement requires attention if
\[ R^{\mc{B}_e}_s[0,\ldots,\mu_{e,i,j}[s]] \neq R^{\mc{B}_e}_{\tau_{e,i,j}[s]}[0,\ldots,\mu_{e,i,j}[s]]. \]
	\item[Action] Let $b = b_{e,i,j}[s]$. We have $F_s(b) = 0^{\ell-1} x$. Choose $y > 0$ even and larger than any even number we have encountered so far, and define $F_{s+1}$ by
\[ F_{s+1}(w)=\begin{cases}
0^{\ell-1}y\concat\sigma & F_{s}(w)=0^{\ell-1}x\concat\sigma\\
F_{s}(w) & otherwise
\end{cases}. \]
This is moving the subtree below $0^{\ell-1}x$ to the subtree below $0^{\ell - 1}y$. For $w \in \dom(F_s)$, $w \neq b$, we have $F_{s}(w) \in S$ if and only if $F_{s+1}(w) \in S$. For $w = b$, we have $F_{s}(b) \in S$ and $F_{s+1}(b) \notin S$. Change the state to \textsc{diagonalized}.
\end{description}

\item[{Diagonalized}]
In the previous state, we made sure that one of the elements $u+1,\ldots,\mu$ of $\mc{B}_e$ which previously looked like it was in $R^{\mc{B}_e}$ is now not in $R^{\mc{B}_e}$, so that that element is now of type $\langle 2 \rangle$ and hence must be in $R^{\mc{B}_e}$. We also modified $F$ so that $S^{\mc{N}}$ is the same as it was in state \textsc{initialized} (below $\nu$). Then, by computation $\eqref{eq2:R-S}$, we cannot have $R^{\mc{B}_e} = \Phi_j^{S^\mc{N}}$. So we have satisfied $\mc{R}_{e,i,j}$.
\medskip{}
\begin{description}[font=\it,labelindent=-15pt,leftmargin = 0pt]
	\item[Requires attention] The requirement never requires attention.
	\item[Action] None.
\end{description}

\end{description}

When a requirement of higher priority than $\mc{R}_{e,i,j}$ acts, $\mc{R}_{e,i,j}$ is injured. When this happens, $\mc{R}_{e,i,j}$ is returned to state \textsc{initialized} and its values $a_{e,i,j}$, $u_{e,i,j}$, etc. are set to $0$. Now injure all requirements of lower priority than the one that acted.

\noindent Set $D(\mc{N}_{s+1})$ to be the pullback along $F_{s+1}$ of the atomic and negated atomic formulas true of $\ran(F_{s+1})$ with G\"odel number at most $s$.

\vspace*{10pt}
\noindent\textit{End construction.}
\vspace*{10pt}

If $\mc{R}_{e,i,j}$ is never injured after some stage $s$, then it acts at most once at each stage, and it never moves backwards through the stages. A requirement $\mc{S}_{i}$ only acts once if it is not injured. So every requirement is injured only finitely many times.

Now we will show that each requirement is satisfied. Each requirement $\mc{S}_i$ is satisfied, because there is a stage $s$ after which each lower requirement never acts, and then at the next stage, $\mc{S}_i$ acts if it is not already satisfied, and is never again injured.

Now suppose that $\mc{R}_{e,i,j}$ is the least requirement which is not satisfied, and let $s$ be the last stage at which it is injured (or $s=0$ if it is never injured). So $\mc{R}_{e,i,j}$ is never injured after the stage $s$. Also, since $\mc{R}_{e,i,j}$ is not satisfied, $\mc{B}_{e}$ is isomorphic to $\mc{A}$, $S^{\mc{N}} = \Phi_i^{R^{\mc{B}_e}}$, and $R^{\mc{B}_e} = \Phi_j^{S^{\mc{N}}}$.

Since $\mc{R}_{e,i,j}$ was just injured at stage $s$, it in state \textsc{initialized}, and so it requires attention at stage $s_1 = s+1$. Then we define $a$ such that $F_{s_1+1}(a) \notin S$ and change to state \textsc{waiting-for-first-computation}.

Since $S^\mc{N} = \Phi_i^{R^{\mc{B}_e}}$, $R^{\mc{B}_e} = \Phi_j^{S^\mc{N}}$, and $\mc{B}_e$ is isomorphic to $\mc{A}$, at some stage $t > s_1$, $\mc{R}_{e,i,j}$ will require attention. Thus we get $u$, $m$, and $v$ such that
\begin{equation} \label{S-R-t-1} S^{\mc{N}}_t[0,\ldots,a] = \Phi_{i,t}^{R^{\mc{B}_e}_t}[0,\ldots,a] \end{equation}
with use $u$ and
\begin{equation} \label{R-S-t-1} R^{\mc{B}_e}_t[0,\ldots,m] = \Phi_{j,t}^{S^{\mc{N}}_t}[0,\ldots,m] \end{equation}
with use $v$. Each element $p$ from among $0,\ldots,a$ in $\mc{B}_e$ has a chain in $\mc{B}_{e,t}$ from the root node to itself, and each of these nodes is in $\{0,\ldots,m\}$. We define $b > m$ and $F_{t+1}$ such that $F_{t+1}(b) \notin S$. Then we change to state \textsc{waiting-for-second-computation}.

Once again, using the fact that $R^{\mc{B}_e} = \Phi_i^{S^\mc{N}}$ and $S^\mc{N} = \Phi_j^{R^{\mc{B}_e}}$, at some stage $\tau > t$, we will have
\begin{equation} \label{S-R-tau-1} S^{\mc{N}}_\tau[0,\ldots,b] = \Phi_{i,\tau}^{R^{\mc{B}_e}_\tau}[0,\ldots,b] \end{equation}
with use $\mu$ and
\begin{equation} \label{R-S-tau-1} R^{\mc{B}_e}_\tau[0,\ldots,\mu] = \Phi_{j,\tau}^{S^{\mc{N}}_\tau}[0,\ldots,\mu] \end{equation}
with use $\nu$. Note that \eqref{S-R-tau-1} implies that
\begin{equation}\label{R-R-t-tau} R^{\mc{B}_e}_{\tau}[0,\ldots,u] =  R^{\mc{B}_e}_{t}[0,\ldots,u]. \end{equation} Then $\mc{R}_{e,i,j}$ requires attention at stage $\tau+1$. We define $F_{\tau+1}$ so that $F_{\tau+1}(a) \in S$. Then we have
\begin{equation}\label{R-R-t1-tau} S^{\mc{N}}_{\tau + 1}[0,\ldots,u] \neq S^{\mc{N}}_{\tau}[0,\ldots,u] \end{equation}
The state is changed to \textsc{waiting-for-first-change}. 

Now by \eqref{S-R-tau-1}, \eqref{R-R-t1-tau}, and the fact that $S^\mc{N} = \Phi_i^{R^{\mc{B}_e}}$, at some stage $s_1 > \tau$, we have
\begin{equation} \label{R-s1-t} R^{\mc{B}_e}_{s_1}[0,\ldots,u] \neq R^{\mc{B}_e}_t[0,\ldots,u] = R^{\mc{B}_e}_\tau[0,\ldots,u] \end{equation}
and so $\mc{R}_{e,i,j}$ requires attention at stage $s_1+1$. $F_{s_1+1}$ is defined such that
\begin{equation}\label{eq6} S^\mc{N}_{s_1+1}[0,\ldots,\nu] = S^\mc{N}_{\tau}[0,\ldots,\nu]. \end{equation}
Then the state is changed to \textsc{waiting-for-second-change}.

Since $R^{\mc{B}_e} = \Phi_j^{S^{\mc{N}}}$ and using \eqref{R-S-tau-1} and \eqref{eq6}, at some stage $s > s_1$, we have
\begin{equation}\label{eq7} R^{\mc{B}_e}_{s}[0,\ldots,\mu] = R^{\mc{B}_e}_{\tau}[0,\ldots,\mu]. \end{equation}
Now by \eqref{R-s1-t} and \eqref{eq7}, there must be some $p \in \{0,\ldots,u\}$ such that $p \notin R^{\mc{B}_e}_{\tau}$, $p \in R^{\mc{B}_e}_{s_1}$, and $p \notin R^{\mc{B}_e}_{s}$. Then in $\mc{B}_{e,s}$, $p$ must be of type $\langle 2 \rangle$. All of the elements of $\mc{B}_e$ from $\{m+1,\ldots,\mu\}$ which looked like they were of type $\langle 0 \rangle$ at stage $t$ were in the subtree below $p$, so there is a stage $s' > s$ at which each of them is of type $\langle 1 \rangle$ or type $\langle 2 \rangle$. Then, at some stage $s_2 > s'$, we still have
\[ R^{\mc{B}_e}_{s_2}[0,\ldots,\mu] = R^{\mc{B}_e}_{\tau}[0,\ldots,\mu]. \]
So $\mc{R}_{e,i,j}$ requires attention at stage $s_2+1$. $F_{s_2+1}$ is defined so that $F_{s_2+1}(b) \in S$. Since we had $F_{s_2}(b) \notin S$,
\[ S^\mc{N}_{s_2+1}[0,\ldots,v] = S^\mc{N}_{\tau}[0,\ldots,v] = S^\mc{N}_{t}[0,\ldots,v].\]
The state is changed to \textsc{waiting-for-third-change}.

So by \eqref{R-S-t-1} and \eqref{S-R-tau-1} and since $R^{\mc{B}_e} = \Phi_j^{S^{\mc{N}}}$, at some stage $s_3 > s_2$, we have 
\[ R^{\mc{B}_e}_{s_3}[0,\ldots,\mu] \neq R^{\mc{B}_e}_{\tau}[0,\ldots,\mu] \]
but
\[ R^{\mc{B}_e}_{s_3}[0,\ldots,m] = R^{\mc{B}_e}_{\tau}[0,\ldots,m] \]
so that
\[ R^{\mc{B}_e}_{s_3}[m+1,\ldots,\mu] \neq R^{\mc{B}_e}_{\tau}[m+1,\ldots,\mu] \]
So $\mc{R}_{e,i,j}$ requires attention. $F_{s_3+1}$ is defined so that 
\[ S^\mc{N}_{s_3+1}[0,\ldots,\mu] = S^\mc{N}_{\tau}[0,\ldots,\mu].\]
The state is changed to \textsc{diagonalized}.

Since $R^{\mc{B}_e} = \Phi_j^{S^\mc{N}}$, at some stage $s_4 > s_3$, by \eqref{R-S-tau-1}, we have
\[ R^{\mc{B}_e}_{s_4}[0,\ldots,\mu] = R^{\mc{B}_e}_{\tau}[0,\ldots,\mu]. \]
Then in $\mc{B}_e$, there must be $q \in \{m+1,\ldots,\mu\}$ such that $F_{s_2}(q) \notin R$, $F_{s_3}(q) \in R$, and $F_{s_4}(q) \notin R$. Then in $\mc{B}_{e,s_2}$ at stage $s_2$, $q$ must have looked like it was of type $\langle 0 \rangle$. We have already established that all of the elements in $\{m+1,\ldots,\mu\}$ were of type $\langle 1 \rangle$ or of type $\langle 2 \rangle$. This is a contradiction. Hence all of the requirements are satisfied.
\end{proof}

The proposition relativizes as follows:

\begin{corollary}\label{cor:prop-1-rel}
For every degree $\textbf{d}$, there is a copy $\mc{N}$ of $\mc{M}$ with $\mc{N} \leq_T \textbf{d}$ such that no copy $\mc{B}$ of $\mc{A}$ with $\mc{B} \leq_T \textbf{d}$ has $S^{\mc{N}} \oplus \textbf{d} \equiv_T R^{\mc{B}} \oplus \textbf{d}$.
\end{corollary}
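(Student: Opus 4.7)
The plan is to relativize the finite-injury construction of Proposition \ref{M-not-A} to the oracle $\textbf{d}$. We build $\mc{N}$ by a $\textbf{d}$-computable approximation $F_s \colon \{0,\ldots,\xi_s\} \to \mc{M}$ whose limit is a bijection. The candidate copies of $\mc{A}$ we diagonalize against are now the $\textbf{d}$-computable structures $\mc{B}_e = \Phi_e^{\textbf{d}}$, from which we extract a valid finite substructure $\mc{B}_{e,s}$ at each stage $s$ exactly as before (this extraction is $\textbf{d}$-computable since $\mc{A}$ is computable). The requirement $\mc{R}_{e,i,j}$ becomes: if $\mc{B}_e \cong \mc{A}$ and $\Phi_i,\Phi_j$ are total, then either $S^{\mc{N}} \oplus \textbf{d} \neq \Phi_i^{R^{\mc{B}_e} \oplus \textbf{d}}$ or $R^{\mc{B}_e} \oplus \textbf{d} \neq \Phi_j^{S^{\mc{N}} \oplus \textbf{d}}$. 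The density requirements $\mc{S}_i$, ensuring $F$ is surjective, are unchanged.

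Each of the seven states of $\mc{R}_{e,i,j}$ (from \textsc{initialized} through \textsc{diagonalized}) is preserved verbatim, with two cosmetic modifications. First, every convergence one waits for is now a convergence on the joined oracles, so computations \eqref{eq1:S-R}, \eqref{eq1:R-S}, \eqref{eq2:S-R}, \eqref{eq2:R-S} are read with $R^{\mc{B}_e}_s \oplus \textbf{d}$ and $S^{\mc{N}}_s \oplus \textbf{d}$ in place of $R^{\mc{B}_e}_s$ and $S^{\mc{N}}_s$. Second, every search performed inside the construction, namely the choice of a d-free $0^\ell$ in $\mc{M}$, the verification that each element of $\mc{B}_{e,s}$ lies in a chain from the root, the approximations $R^{\mc{B}_e}_s$ and $S^{\mc{N}}_s$, and the pullback defining $D(\mc{N}_{s+1})$, is now carried out relative to $\textbf{d}$, which poses no problem since each search was computable to begin with. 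The combinatorial facts about $\mc{M}$ and $\mc{A}$ that drive the diagonalization (that d-free elements are linearly ordered, that one can always insert a new d-free ancestor $b_{e,i,j}$ above $a_{e,i,j}$ in $\mc{N}$, and that once an element of $\mc{A}$ ceases to be of type $\langle 0 \rangle$ every element in its subtree also does) are purely structural and transfer unchanged.

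The verification that every requirement is eventually satisfied is then essentially a copy of the original, and the main thing to check is benign: the changes we force on $S^{\mc{N}} \oplus \textbf{d}$ are changes in the $S^{\mc{N}}$ column only (the $\textbf{d}$ column is fixed throughout), so the comparisons between $R^{\mc{B}_e}_{t}$, $R^{\mc{B}_e}_{\tau}$, $R^{\mc{B}_e}_{s_1}$, and $R^{\mc{B}_e}_{s}$ produce the same contradiction as before, with an element $p$ of $\mc{B}_{e,s}$ forced to move from type $\langle 0 \rangle$ to $\langle 1 \rangle$ and then to $\langle 2 \rangle$, followed by a subtree element which cannot lie in type $\langle 0 \rangle$. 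I do not anticipate a new obstacle: the original proof was written in sufficiently oracle-agnostic form that the only substantive step is confirming that the joined-oracle reductions do not gain extra power by querying $\textbf{d}$, which is immediate because $\textbf{d}$ is provided on both sides and all toggles happen on the non-$\textbf{d}$ coordinates.
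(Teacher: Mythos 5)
Your proposal is correct and is exactly what the paper intends: Corollary \ref{cor:prop-1-rel} is stated as the direct relativization of Proposition \ref{M-not-A}, and the paper offers no further proof beyond the remark that the proposition relativizes. The details you supply (running the construction $\textbf{d}$-computably, diagonalizing against $\textbf{d}$-computable copies $\mc{B}_e$ with joined-oracle reductions, and observing that the forced changes occur only on the non-$\textbf{d}$ coordinates so the type-change contradiction goes through unchanged) are the right ones.
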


Now we have the proposition in the other direction, in the unrelativized form:

\begin{prop}\label{prop:incomp-2}
There is a computable copy $\mc{B}$ of $\mc{A}$ such that no computable copy $\mc{N}$ of $\mc{M}$ has $R^{\mc{B}} \equiv_T S^{\mc{N}}$.
\end{prop}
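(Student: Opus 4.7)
The plan is to construct a computable $\mc{B}$ by a finite-injury priority construction that mirrors the proof of Proposition \ref{M-not-A}, with the roles of $\mc{A}$ and $\mc{M}$ swapped. I will build $\mc{B}$ as the pullback along a bijection $F \colon \omega \to \mc{A}$, obtained as the limit of finite partial isomorphisms $F_s$, and meet the requirements
\begin{description}
\item[$\mc{R}_{e,i,j}$] If $\mc{N}_e$ is a computable copy of $\mc{M}$, then either $R^{\mc{B}} \neq \Phi_i^{S^{\mc{N}_e}}$ or $S^{\mc{N}_e} \neq \Phi_j^{R^{\mc{B}}}$.
\item[$\mc{S}_i$] The $i$th element of $\mc{A}$ lies in the range of $F$.
\end{description}
Each $\mc{R}_{e,i,j}$ will pass through the same seven states as in Proposition \ref{M-not-A}: \textsc{initialized}, two \textsc{waiting-for-computation} states, three \textsc{waiting-for-change} states, and \textsc{diagonalized}.

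In \textsc{initialized} I will place $a$ at the d-free position $F_s(a) = 0^\ell$ for $\ell$ large. In the \textsc{waiting-for-computation} states I will wait for the symmetric pair of computations $R^{\mc{B}}[0,\ldots,a] = \Phi_i^{S^{\mc{N}_e}}[0,\ldots,a]$ with use $u$ and $S^{\mc{N}_e}[0,\ldots,u] = \Phi_j^{R^{\mc{B}}}[0,\ldots,u]$ with use $v$, together with a diagrammatic condition on $\mc{N}_{e,s}$ playing the role of condition (2) in \textsc{waiting-for-first-computation} of Proposition \ref{M-not-A}, now requiring that enough of the partial order of $\mc{N}_{e,s}$ has been enumerated to pin down the types of the relevant elements. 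Between the two waits I will insert a new d-free $b$ above $a$ by shifting the subtree below $0^\ell$ down to $0^{\ell+1}$ and setting $F_{s+1}(b) = 0^\ell$; this is legal in $\mc{A}$ because neither $a$'s parent nor any relevant child of $0^\ell$ has been committed. In the three change states I will perform the type-shifts of $a$ and $b$: first send $a$ to $0^\ell \concat x$ for $x$ odd (putting $a$ into $R^{\mc{B}}$), then to $0^\ell \concat y$ for $y$ even (sending $a$ out as type $\langle 2 \rangle$), and finally send $b$ to $0^{\ell-1} \concat x'$ for $x'$ odd (putting $b$ into $R^{\mc{B}}$). These three $R^{\mc{B}}$-changes force, via the reductions, three matching changes in $S^{\mc{N}_e}$.

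Because each element of $\mc{N}_e$ admits at most two $S$-changes (from $\langle 0,0 \rangle$ to $\langle 1, \cdot \rangle$ and then to $\langle 2,0 \rangle$), the three changes must be split over at least two elements $p', q'$ of the use interval. Just as in Proposition \ref{M-not-A}, the waiting condition in the analogue of \textsc{waiting-for-second-change} will be arranged to guarantee that by the end of that state every element in the interval $\{m+1, \ldots, \mu\}$ has acquired a non-$\langle 0,0 \rangle$ type; this is possible because in $\mc{M}$ the d-free elements are linearly ordered as $0^n$, and any d-free candidate outside the already-enumerated ancestor chain of $p'$ must be a tree-descendant of $p'$ and hence ceases to be d-free the moment $p'$ does. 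The third forced change would therefore have to occur on a $q'$ already out of $\langle 0,0 \rangle$ and having exhausted its two permitted $S$-changes --- the desired contradiction. The hardest part, I expect, is the bookkeeping around the use parameters $u, m, v, \mu, \nu$ and the precise formulation of the waiting conditions on $\mc{N}_{e,s}$: they must simultaneously be achievable (if $\mc{N}_e \cong \mc{M}$ and the reductions work as claimed) and strong enough to pin down the type-information needed for the final contradiction. The injury argument and the routine $\mc{S}_i$ requirement will then be handled exactly as in Proposition \ref{M-not-A}.
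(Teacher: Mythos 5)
Your plan to mirror the proof of Proposition \ref{M-not-A} with the roles of $\mc{A}$ and $\mc{M}$ swapped does not work, and the obstruction is precisely the asymmetry between the two structures that makes the theorem true. The pivotal move in Proposition \ref{M-not-A} --- interpolating a fresh d-free element $b$ \emph{above} $a$ after the first computations have converged --- is available there only because the structure being \emph{built} is a copy of $\mc{M}$, whose tree is presented as a partial order: the already-declared fact that $0^{\ell-1}$ lies below $a$ remains true after $a$ is pushed down to $0^{\ell+1}$ and $b$ is inserted at $0^{\ell}$. In your construction the structure being built is a copy of $\mc{A}$, whose tree carries the ``is a child of'' relation and a distinguished root. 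Once the preimage of $0^{\ell-1}$ enters the range of $F$ and the atomic fact ``$a$ is a child of $w$'' is enumerated into the computable diagram --- and you cannot postpone this indefinitely, since the diagram must be enumerated computably and the $\mc{S}_i$-requirements must eventually act even if your requirement's computations never converge --- no element can ever be interpolated between $w$ and $a$. So the claim that ``neither $a$'s parent nor any relevant child of $0^\ell$ has been committed'' cannot be maintained through the unbounded wait for the reductions.

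Even granting the insertion, the endgame fails for a dual reason: your opponent now builds a copy of $\mc{M}$, and it is the opponent who can always reveal new type-$\langle 0,0 \rangle$ elements \emph{above} its committed ones, because no finite stage of $\mc{N}_{e,s}$ pins down the ancestor chain of a node in a mere partial order. Your assertion that ``any d-free candidate outside the already-enumerated ancestor chain of $p'$ must be a tree-descendant of $p'$'' is exactly the property that holds for copies of $\mc{A}$ (and drives Proposition \ref{M-not-A}) but fails for copies of $\mc{M}$; consequently your three forced changes need never land twice on an exhausted element, and no contradiction results from a single requirement acting finitely often. The paper's proof is accordingly not a finite-injury mirror but an infinite-injury construction: each $\mc{R}_{e,i,j}$ has infinitely many subrequirements planting elements $b^n$ mapped to $0^{\ell_n}1$, and either some subrequirement creates an opportunity to diagonalize outright, or each evasion by the opponent is shown to cost it a fresh element of type $\langle 0,0 \rangle$ or $\langle 1,0 \rangle$ not in the subtree below $p$ --- of which $\mc{M}$ has only finitely many (this is where the third sort $M_3$ and the relation $W$ are essential), so $\mc{N}_e \not\cong \mc{M}$. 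Any correct proof has to be organized around this counting of rare types rather than around a bounded number of state changes.
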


\begin{proof}
We will construct a computable copy $\mc{B}$ of $\mc{A}$ with domain $\omega$. We will diagonalize against every possible Turing equivalence with a computable copy $\mc{N}$ of $\mc{M}$. We will build $\mc{B}$ with an infinite injury construction using subrequirements, where each subrequirement is injured only finitely many times.

We will construct $\mc{B}$ by giving at each stage a tentative finite isomorphism $F_s:\omega \to \mc{A}$. $F = \lim F_s$ will be a bijection, giving $\mc{B}$ as an isomorphic copy.
The proof will be very similar in style to the proof of Propositions \ref{ce-condition} and \ref{M-not-A}, but there are some significant complications.

We need to diagonalize against computable copies of $\mc{M}$. As in the previous proposition, given a computable function $\Phi_e$, we can try to interpret $\Phi_e$ as giving the diagram of a computable structure $\mc{N}_e$ isomorphic to $\mc{M}$. At each stage, we get a finite substructure $\mc{N}_{e,s}$ isomorphic to a substructure of $\mc{M}$. If $\Phi_e$ is total and gives the diagram of a structure isomorphic to $\mc{M}$, then $\mc{N}_e = \bigcup \mc{N}_{e,s}$ is that structure. Otherwise, $\mc{N}_e$ will be some structure which may be finite and may or may not be isomorphic to $\mc{M}$. Also, recall that elements of $\mc{N}_{e,s}$ have a type which approximates their type in $\mc{N}_e$, and that we can assume that our approximation $\mc{N}_{e,s}$ has the the following property: all of the elements of $\mc{N}_{e,s}$ which are of type $\langle 0,0 \rangle$ in $\mc{N}_{e,s}$ are linearly ordered.

We will meet the following requirements:
\begin{description}
	\item[$\mc{R}_{e,i,j}$]If $\mc{N}_e$ is isomorphic to $\mc{M}$, and $\Phi_i^{S^{\mc{N}_e}}$ and $\Phi_j^{R^{\mc{B}}}$ are total, then either $R^{\mc{B}} \neq \Phi_i^{S^{\mc{N}_e}}$ or $S^{\mc{N}_e} \neq \Phi_j^{R^{\mc{B}}}$.
	\item[$\mc{S}_{i}$] The $i$th element of $\mc{A}$ is in the image of $F$.
\end{description}
The strategy for satisfying a requirement $\mc{R}_{e,i,j}$ is as follows. The requirement $\mc{R}_{e,i,j}$ will have, associated to it at each stage $s$, values $a_{e,i,j}[s]$, $u_{e,i,j}[s]$, $v_{e,i,j}[s]$, and $t_{e,i,j}[s]$. Also, for each $n$, there will be values $b^n_{e,i,j}[s]$, $\mu^n_{e,i,j}[s]$, $\nu^n_{e,i,j}[s]$, and $\tau^n_{e,i,j}[s]$. See Figure \ref{fig:prop4-12} for a depiction of what these values mean. If $\mc{R}_{e,i,j}$ is not satisfied, then $\mc{N}_e$ will be isomorphic to $\mc{M}$, and we will have \[R^{\mc{B}} = \Phi_i^{S^{\mc{N}_e}} \text{ and } S^{\mc{N}_e} = \Phi_j^{R^{\mc{B}}}.\] Thus given, for example, a value for $n_{e,i,j}$, there will be some $u_{e,i,j}$ such that \[ R^{\mc{B}}[0,\ldots,n_{e,i,j}] = \Phi_j^{S^{\mc{N}_e}}[0,\ldots,n_{e,i,j}] \] with use $u_{e,i,j}$. In the overview of the proof which follows below, we will just assume that these values (and the corresponding computations) always exist, since otherwise $\mc{R}_{e,i,j}$ is trivially satisfied. We also write $n$ for $n_{e,i,j}$ etc.

\begin{figure}[b]
	\includegraphics{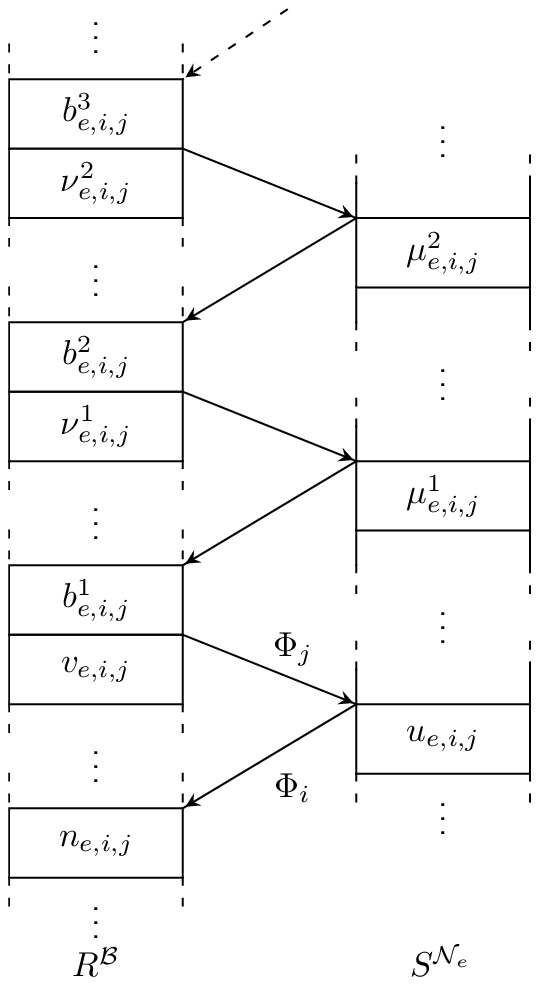}
	\caption{The values associated to a requirement for Proposition \ref{prop:incomp-2}. An arrow shows a computation converging. The computations use an oracle and compute some initial segment of their target. The tail of the arrow shows the use of the computation, and the head shows the length of the output. For example, $R^{\mc{B}}[0,\ldots,n_{e,i,j}] = \Phi_j^{S^{\mc{N}_e}}$ with use $u_{e,i,j}$.}
	\label{fig:prop4-12}
\end{figure}

We begin by mapping $n_{e,i,j}$ in $\mc{B}$ to the d-free element $0^\ell$ in $\mc{A}$. Because of the computation
\[ R^{\mc{B}}[0,\ldots,n_{e,i,j}] = \Phi_j^{S^{\mc{N}_e}}, \]
there must be at least one element in $\mc{N}_e$ below the use $u_{e,i,j}$ of this computation which still looks like it could be d-free in $\mc{N}_e$, i.e.\ of type $\langle 0,0 \rangle$. If not, we could modify $F$ to map $n_{e,i,j}$ to $0^{\ell-1} x$ where $x$ is odd (so that $F(n_{e,i,j}) \in R$) and then later modify it again to map $n_{e,i,j}$ to $0^{\ell-1} y$ where $y$ is even (and so $F(n_{e,i,j}) \notin R$) to immediately diagonalize against the computation above (while maintaining, as usual, any existential formulas). So one of the elements $0,\ldots,u_{e,i,j}$ of $\mc{N}_e$ must look like an element of the form $0^m$, i.e., be of type $\langle 0,0 \rangle$. All of the elements in $\mc{M}$ of the form $0^m$ (i.e., of type $\langle 0,0 \rangle$) are linearly ordered; if $\mc{N}_e$ is isomorphic to $\mc{M}$, the same is true in $\mc{N}_e$. Let $p$ be the element from $0,\ldots,u_{e,i,j}$ which is of type $\langle 0,0 \rangle$ and which is furthest from the root node of all such elements.

Now map $b^1_{e,i,j} \in \mc{B}$ to an element of the form $0^{\ell_1}1$ where $\ell_1 \geq \ell$. We claim that one of the elements $u_{e,i,j}+1,\ldots,\mu^1_{e,i,j}$ of $\mc{N}_e$ has to be:
\begin{enumerate}
	\item[(i)] not in the subtree below $p$ and
	\item[(ii)] of type $\langle 0,0 \rangle$ or type $\langle 1,0 \rangle$.
\end{enumerate}
Otherwise, we will be able to diagonalize to satisfy the requirement $\mc{R}_{e,i,j}$ in the following way. First, modify $F$ to map $n_{e,i,j}$ to $0^{\ell-1} x$ where $x$ is odd and then to $0^{\ell-1}y$ where $y$ is even. Then $b^1_{e,i,j}$ is now mapped to an element of the form $\rho 1$ for some $\rho$. This will force $p$, or some other element between $p$ in the root node, to enter $S^{\mc{N}_e}$ and then leave $S^{\mc{N}_e}$. Then $p$ must be look like an element of the form $0^m z$ for $z$ even, or $\tau z$ where $\tau$ is \textit{not} of the form $0^m$. In either case, by assumption each of the elements $u_{e,i,j}+1,\ldots,\mu^1_{e,i,j}$ of $\mc{N}_e$ must now either be of type $\langle 1,1\rangle$ or type $\langle 2,0 \rangle$ (if one of these elements was not before, then it is now as it was in the subtree below $p$ and $p$ is not of type $\langle 0,0 \rangle$). So all of these elements satisfy some existential formula which forces them to be in $S^{\mc{N}_e}$ (if they are of type $\langle 1,1 \rangle$) or which forces them out of $S^{\mc{N}_e}$ (if they are of type $\langle 2,0 \rangle$). Recall that $F$ is now mapping $b^1_{e,i,j}$ to $\rho 1$ for some $\rho$. We also have the computations
\[ R^{\mc{B}}[0,\ldots,b^1_{e,i,j}] = \Phi_j^{S^{\mc{N}_e}}[0,\ldots,b^1_{e,i,j}]\]
with use $\mu^1_{e,i,j}$ and
\[ S^{\mc{N}_e}[0,\ldots,u_{e,i,j}] = \Phi_i^{R^{\mc{B}}}[0,\ldots,u_{e,i,j}] \]
with use $v_{e,i,j}$. So by now modifying $F$ to map $b^1_{e,i,j}$ to an element of the form $\rho z$ for $z$ even, we now have $b^1_{e,i,j} \notin R^{\mc{B}}$. We break the first computation, causing $S^{\mc{N}_e}$ to change below the use $\mu^1_{e,i,j}$. Because of the second computation, $S^{\mc{N}_e}$ must stay the same on the elements $0,\ldots,u_{e,i,j}$. Thus $S^{\mc{N}_e}$ must change on the elements $u_{e,i,j}+1,\ldots,\mu_{e,i,j}$. But this cannot happen as remarked before. Thus we have diagonalized and satisfied $\mc{R}_{e,i,j}$.

Thus, if we cannot diagonalize to satisfy $\mc{R}_{e,i,j}$ in this way, one of the elements $u_{e,i,j}+1,\ldots,\mu^1_{e,i,j}$ of $\mc{N}_e$ satisfies (i) and (ii) above. Choose one such element, and call it $q^1$. Defining $b^2_{e,i,j}$, $b^3_{e,i,j}$, and so on in the same way, we get $q^2$, $q^3$, and so on. Thus we find, in $\mc{N}_e$, infinitely many elements satisfying (i) and (ii). But if $\mc{N}_e$ is isomorphic to $\mc{M}$, there can only be finitely many such elements: if $p$ is the isomorphic image of $0^m$, then the only elements satisfying (i) and (ii) are the isomorphic images of $0^{m'}$ and $0^{m'} 1$ for $m' < m$. Thus we force $\mc{N}_e$ to be non-isomorphic to $\mc{R}_{e,i,j}$ and satisfy $\mc{R}_{e,i,j}$ in that way.

The requirement $\mc{R}_{e,i,j}$ will have subrequirements $\mc{R}^n_{e,i,j}$ for $n \geq 1$. The main requirement will choose $a$, while each subrequirement will choose $b^n$. A subrequirement will act only when all of the previous requirements have chosen their values $b^n$. The main requirement $\mc{R}_{e,i,j}$ will monitor $\mc{N}_e$ to see whether it has given us elements $p$ and $q^n$, and if not, it can attempt to diagonalize. Either the parent requirement $\mc{R}_{e,i,j}$ will at some point diagonalize and be satisfied, or each subrequirement will be satisfied guaranteeing that $\mc{N}_e$ is not isomorphic to $\mc{M}$.

Because the subrequirements use infinitely many values $b^n$, we need to assign the subrequirements a lower priority than the main requirement in order to give the other requirements a chance to act. $\mc{R}_{e,i,j}$ will be of higher priority than its subrequirements $\mc{R}^n_{e,i,j}$, and the subrequirements will be of decreasing priority as $n$ increases. The subrequirements will be interleaved in this ordering, so that, for example, the ordering might begin (from highest priority to lowest priority):
\[ \mc{R}_{e_1,i_1,j_1} > \mc{R}^1_{e_1,i_1,j_1} > \mc{R}_{e_2,i_2,j_2} > \mc{R}^2_{e_1,i_1,j_1} > \mc{R}^1_{e_2,i_2,j_2} > \mc{R}^3_{e_1,i_1,j_1} > \cdots . \]

The requirement $\mc{R}_{e,i,j}$ will have, associated to it at each stage $s$, the values $a_{e,i,j}[s]$, $u_{e,i,j}[s]$, $v_{e,i,j}[s]$, and $t_{e,i,j}[s]$. A subrequirement $\mc{R}^n_{e,i,j}$ will be associated with the values $b^n_{e,i,j}[s]$, $\mu^n_{e,i,j}[s]$, $\nu^n_{e,i,j}[s]$, and $\tau^n_{e,i,j}[s]$. These values will never be redefined, but may be canceled. When a requirement is injured, its corresponding values will be canceled, with one exception. If $\mc{R}_{e,i,j}$ finds an opportunity to diagonalize using $b^n_{e,i,j}[s]$, then it will protect the values $b^n_{e,i,j}[s]$, $\mu^n_{e,i,j}[s]$, $\nu^n_{e,i,j}[s]$, and $\tau^n_{e,i,j}[s]$ using its own priority.

At each stage, each requirement $\mc{R}_{e,i,j}$ will be in one of the following states: \textsc{initialized}, \textsc{waiting-for-computation}, \textsc{next-subrequirement}, \textsc{waiting-for-change}, \textsc{diagonalized}, \textsc{waiting-for-first-change-$n$}, \textsc{waiting-for-second-change-$n$}, or \textsc{diagonalized-$n$}. The requirement will move through these in the following order (where, at the branch, the requirement will move along either the left branch or the right branch, and if it moves along the right branch it does so for some specific value of $n$):

\[\xymatrix@C=-5pc@R=0.75pc{ & \textsc{initialized}\ar[d]\\
 & \textsc{waiting-for-computation}\ar[d]\\
 & \textsc{next-subrequirement}\ar[dddl]\ar[dddr]\\
\\
\\
\textsc{waiting-for-change}\ar[d] &  & \textsc{waiting-for-first-change-\ensuremath{n}}\ar[d]\\
\textsc{diagonalized} &  & \textsc{waiting-for-second-change-\ensuremath{n}}\ar[d]\\
 &  & \textsc{diagonalized-\ensuremath{n}}
}\]

When the requirement is in state \textsc{next-subrequirement}, the subrequirements will begin acting. The requirement will then monitor them for a chance to diagonalize. There are two ways in which the requirement can diagonalize, either by going to state \textsc{waiting-for-change} if the element $p$ described above does not exist, or \textsc{waiting-for-first-change-$n$} if some element $q^n$ described above does not exist. Recall that in the second case, the values $b^n_{e,i,j}$, $\mu^n_{e,i,j}$, $\nu^n_{e,i,j}$, and $\tau^n_{e,i,j}$ are protected by the requirement $\mc{R}_{e,i,j}$ at its priority.

Each subrequirement $\mc{R}^n_{e,i,j}$ will be in one of three states: \textsc{initialized}, \textsc{waiting-for-computation}, or \textsc{next-subrequirement}. The subrequirement will move through these stages in order. When one subrequirement is in state \textsc{next-subre\-quirement}, it has finished acting and the next one can begin.

We are now ready to describe the construction.

\vspace*{10pt}
\noindent\textit{Construction.}
\vspace*{10pt}

At stage 0, let $F_s = \varnothing$ and for each $e$, $i$, and $j$ let $a_{e,i,j}[0]$, $u_{e,i,j}[0]$, $v_{e,i,j}[0]$, and $t_{e,i,j}[0]$ be $\varnothing$. For each $n$, let $b^n_{e,i,j}[0]$, $\mu^n_{e,i,j}[0]$, $\nu^n_{e,i,j}[0]$, and $\tau^n_{e,i,j}[0]$ be $0$ as well.

At a stage $s+1$, let $F_s:\{0,\ldots,\xi_s\} \to \mc{A}$ be the partial isomorphism determined in the previous stage, and let $D(\mc{B}_s)$ be the finite part of the diagram of $\mc{B}$ which has been determined so far. We have an approximation $R^{\mc{B}}_s$ to $R^\mc{B}$ which we get by taking $k \in R^{\mc{B}}_s$ if $F_s(k) \in R$. For each $e$, we have a guess $S^{\mc{N}_{e}}_s$ at $S^{\mc{N}_{e}}$ using the diagram of the finite structure $\mc{N}_{e,s}$, given by $x \in S^{\mc{N}_e}_s$ if and only if in $\mc{N}_{e,s}$, $x$ is in the first sort and is related by $V^{\mc{N}_{e,s}}$ to exactly one element of the second sort.

We will deal with a single requirement---the highest priority requirement which requires attention at stage $s+1$. A requirement $\mc{S}_i$ \textit{requires attention at stage $s+1$} if the $i$th element of $\mc{A}$ is not in the image of $F_s$. If $\mc{S}_i$ is the highest priority requirement which requires attention, then let $c$ be the $i$th element of $\mc{A}$. Let $F_{s+1}$ extend $F_s$ with $c$ in its image. Injure each requirement of higher priority.

The conditions for a requirement $\mc{R}_{e,i,j}$ or a subrequirement $\mc{R}^n_{e,i,j}$ to \textit{require attention at stage $s+1$} depend on the state of the requirement. Below, we will list for each possible state of $\mc{R}_{e,i,j}$, the conditions for $R_{e,i,j}$ to require attention, and the action that the requirement takes if it is the highest priority requirement that requires attention. The subrequirements will follow afterward.
\begin{description}[font=\sc]
\item[{Initialized}]
The requirement has been initialized, so $a_{e,i,j}[0]$, $u_{e,i,j}[0]$, and so on are all $0$.
\medskip{}
\begin{description}[font=\it,labelindent=-15pt,leftmargin = 0pt]
	\item[Requires attention] The requirement always requires attention.
	\item[Action] Let $F_{s+1}$ extend $F_s$ by adding to its image the element $0^{\ell}$, where $\ell$ is large enough that $0^{\ell}$ has no children in $\ran(F_{s})$. Let $a_{e,i,j}[s+1]$ be such that $F_{s+1}({a_{e,i,j})[s+1]} = 0^\ell$. Change the state to \textsc{waiting-for-computation}.
\end{description}

\item[{Waiting-for-computation}]
We have set $F(a_{e,i,j}) = 0^\ell \notin R$ a d-free element. We wait for the computations \eqref{eq3:R-S} and \eqref{eq3:S-R} below. Then we can begin to satisfy the subrequirements.
\medskip{}
\begin{description}[font=\it,labelindent=-15pt,leftmargin = 0pt]
	\item[Requires attention] The requirement requires attention if there is a computation
\begin{equation}\label{eq3:R-S} R^\mc{B}_s[0,\ldots,a_{e,i,j}[s]] = \Phi_i^{S^{\mc{N}_e}_s}[0,\ldots,a_{e,i,j}[s]] \end{equation}
with use $u < s$, and
\begin{equation}\label{eq3:S-R} S^{\mc{N}_e}_s[0,\ldots,u] = \Phi_j^{R^\mc{B}_s}[0,\ldots,u] \end{equation}
with use $v < s$.
	\item[Action] Let $u$ and $v$ be the uses of the computations which witness that this requirement requires attention. Set $u_{e,i,j}[s+1] = u$, $v_{e,i,j}[s+1] = u$, and $t_{e,i,j}[s+1] = s$. We have $F_{s+1} = F_s$. Change the state to \textsc{next-subrequirement}.
\end{description}

\item[{Next-subrequirement}]
While in this state, we begin trying to satisfy the subrequirements, building elements $b_{e,i,j}^1,b_{e,i,j}^2$, and so on. At the same time, we look for a way to immediately satisfy $\mc{R}_{e,i,j}$. The requirement requires attention during this state if we see such a way to satisfy $\mc{R}_{e,i,j}$. There are two possibly ways that we might immediately diagonalize. The first is that we can diagonalize using only $a_{e,i,j}$ and the computations \eqref{eq3:R-S} and \eqref{eq3:S-R}, because none of the elements of $\mc{N}_e$ below the use $u_{e,i,j}$ of \eqref{eq3:R-S} are d-free. The second is that we can diagonalize using $a_{e,i,j}$ and some $b^n_{e,i,j}$, because we use $a_{e,i,j}$ to force $S^{\mc{N}_e}$ to change below the use $u_{e,i,j}$ of \eqref{eq3:R-S}, and this will mean that we can diagonalize by changing $b^n_{e,i,j}$ from being in $R^\mc{B}$ to being out of $R^\mc{B}$. If we see a chance to diagonalize, we modify $F$ to put $a_{e,i,j}$ into $R^\mc{B}$, breaking the computation \eqref{eq3:R-S}.
\medskip{}
\begin{description}[font=\it,labelindent=-15pt,leftmargin = 0pt]
	\item[Requires attention] There are two possible ways that this requirement might require attention. The requirement requires \textit{attention of the first kind} if in $\mc{N}_{e,s}$:
\begin{enumerate}
	\item each of the elements $0,\ldots,u_{e,i,j}[s]$ of $\mc{N}_e$ which is in the first sort is of type $\langle 1,0 \rangle$, type $\langle 1,1 \rangle$, or type $\langle 2,0 \rangle$,
	\item we still have
	\[ S^{\mc{N}_e}_{s}[0,\ldots,u_{e,i,j}[s]] = S^{\mc{N}_e}_{t_{e,i,j}[s]}[0,\ldots,u_{e,i,j}[s]]. \]
\end{enumerate}
The requirement requires \textit{attention of the second kind} if for some $n$:
\begin{enumerate}
	\item each of the subrequirements $\mc{R}^m_{e,i,j}$ is in state \textsc{next-requirement} for all $m \leq n$,
	\item each of the elements $\mu^{n-1}_{e,i,j}[s]+1,\ldots,\mu^n_{e,i,j}[s]$ of $\mc{N}_{e,s}$ (with $\mu^{n-1}_{e,i,j}[s]$ replaced by $u_{e,i,j}[s]$ if $n = 0$) which is in the first sort and is either of type $\langle 0,0 \rangle$ or type $\langle 1,0 \rangle$ in $\mc{N}_{e,s}$ is in the subtree below those elements from among $0,\ldots,u_{e,i,j}[s]$ which are not related to any elements of the second sort, 
	\item we still have
\[ S^{\mc{N}_e}_{s}[0,\ldots,\mu^n_{e,i,j}[s]] = S^{\mc{N}_e}_{\tau^n_{e,i,j}[s]}[0,\ldots,\mu^n_{e,i,j}[s]]. \]
\end{enumerate}

	\item[Action] There were two different ways in which this requirement might require attention. The only difference in the action we take is which state we move to. Let $a = a_{e,i,j}[s]$. We have $F_s(a) = 0^\ell$, where $\ell$ is large enough that no child of $0^\ell$ appears earlier in the image of $F_s$. Choose $x$ odd and larger than any odd number we have encountered so far, and define $F_{s+1}$ by
\[ F_{s+1}(w)=\begin{cases}
0^{\ell-1}x\concat\sigma & F_{s}(w)=0^{\ell}\concat\sigma\\
F_{s}(w) & otherwise
\end{cases}. \]
What we have done is taken every element of $\mc{C}$ which was mapped to the subtree below $0^\ell$, and moved it to the subtree below $0^{\ell - 1} x$. Any existential formula which was true of the tree below $0^\ell$ is also true of the tree below $0^{\ell-1}x$ (but not vice versa, since $0^{\ell - 1} x$ is connected by $U$ to an element of the second sort, but $0^\ell$ is not). Also, for $w \in \dom (F_s)$, $F_{s}(w) \in R$ if and only if $F_{s+1}(w) \in R$ with the single exception of $w = a$. In that case, $F_{s}(a) \notin R$ and $F_{s+1}(a) \in R$. If the requirement required attention of the first kind, change to state \textsc{waiting-for-change}. Otherwise, if it required attention of the second kind, change to state \textsc{waiting-for-first-change-$n$} where $n$ is the least witness to the fact that this requirement required attention of the second kind.
\end{description}

\item[{Waiting-for-change}]
In this state, we are trying to diagonalize against $\mc{R}_{e,i,j}$ in the first way described above. The computation \eqref{eq3:R-S} was broken, and so as usual, $S^{\mc{N}_e}$ must change below the use $u_{e,i,j}$. When we first entered this state, all of the elements $0,\ldots,u_{e,i,j}$ of $\mc{N}_{e}$ were of types $\langle 1,0 \rangle$, $\langle 1,1 \rangle$, or $\langle 2,0 \rangle$ (i.e., were not d-free). So in order for $S^{\mc{N}_e}$ to change below the use $u_{e,i,j}$, one of these elements (call it $p$) which was connected to one element of the second sort must become connected to two elements of the second sort. We then modify $F$ to make $R^\mc{B}$ the same as it was originally (below $v_{e,i,j}$). This will successfully satisfy the requirement, because $S^{\mc{N}_e}$ cannot return to the way it was originally because $p$ cannot return to being in $S^{\mc{N}_e}$, and so the computation \eqref{eq3:S-R} from the state \textsc{waiting-for-computation} means that we cannot have $S^{\mc{N}_e} = \Phi_j^{R^{\mc{B}}}$. 
\medskip{}
\begin{description}[font=\it,labelindent=-15pt,leftmargin = 0pt]
	\item[Requires attention] This requirement requires attention if
\[ S^{\mc{N}_e}_{s}[0,\ldots,u_{e,i,j}[s]] \neq S^{\mc{N}_e}_{t_{e,i,j}[s]}[0,\ldots,u_{e,i,j}[s]].\]
	\item[Action] Let $a = a_{e,i,j}[s]$. We have $F_s(a) = 0^{\ell-1} x$, where $x$ is odd and no child of $0^{\ell-1} x$ appears earlier in the image of $F_s$. Choose $y > 0$ even and larger than any even number we have encountered so far, and define $F_{s+1}$ by
\[ F_{s+1}(w)=\begin{cases}
0^{\ell-1}y\concat\sigma & F_{s}(w)=0^{\ell-1}x\concat\sigma\\
F_{s}(w) & otherwise
\end{cases}. \]
This is moving the subtree below $0^{\ell - 1}x$ to the subtree below $0^{\ell-1}y$. For $w \in \dom(F_s)$, $w \neq a$, we have $F_{s}(w) \in R$ if and only if $F_{s+1}(w) \in R$. For $w = a$, we have $F_{s}(a) \in R$ and $F_{s+1}(a) \notin R$. Change the state to \textsc{diagonalized}.
\end{description}

\item[{Diagonalized}]
In this state, we have successfully satisfied $\mc{R}_{e,i,j}$ in the first way described above.
\medskip{}
\begin{description}[font=\it,labelindent=-15pt,leftmargin = 0pt]
	\item[Requires attention] The requirement never requires attention.
	\item[Action] None.
\end{description}

\item[{Waiting-for-first-change-}$n$]
In this state, we are trying to diagonalize against $\mc{R}_{e,i,j}$ in the second way described above. The computation \eqref{eq3:R-S} was broken, and so as usual, $S^{\mc{N}_e}$ must change below the use $u_{e,i,j}$. So some such element (which we call $p$) which was connected to no elements of the second sort (i.e., of type $\langle 0,0\rangle$) must become connected to one element of the second sort\footnote{It is possible for some element which was connected to one element of the second sort to become connected to two elements, but in this case we will successfully satisfy $\mc{R}_{e,i,j}$ in much the same way as above as a byproduct of our general construction.} (i.e., it is now some other type). We then modify $F$ to make $R^\mc{B}$ the same as it was originally (below $v_{e,i,j}$).
\medskip{}
\begin{description}[font=\it,labelindent=-15pt,leftmargin = 0pt]
	\item[Requires attention] This requirement requires attention if
\[ S^{\mc{N}_e}_{s}[0,\ldots,u_{e,i,j}[s]] \neq S^{\mc{N}_e}_{t_{e,i,j}[s]}[0,\ldots,u_{e,i,j}[s]].\]
	\item[Action] Do the same thing as in state \textsc{waiting-for-change}, except that instead of moving to state \textsc{diagonalized}, change to state \textsc{waiting-for-second-change-$n$}.
\end{description}

\item[{Waiting-for-second-change}-$n$]
In the previous state, $F$ was modified so that $R^\mc{B}$ is the same as it was originally below $v_{e,i,j}$. By the computation \eqref{eq3:S-R} from state \textsc{waiting-for-computation}, $S^{\mc{N}_e}$ must return to the same as it was originally below $u_{e,i,j}$, i.e., the element $p$ from the previous state must become connected to two elements of the second sort. Now, in state \textsc{next-subrequirement}, we had the condition (2). This condition implied that each of the elements $\mu_{e,i,j}^{n-1}+1,\ldots,\mu_{e,i,j}^n$ of $\mc{N}_e$ which is not either forced to be in $S^{\mc{N}_e}$ (by being of type $\langle 1,1 \rangle$) or forced to be not in $S^{\mc{N}_e}$ (by being of type $\langle 2,0 \rangle$) was in the subtree below $p$. But now $p$ is of type $\langle 2,0 \rangle$, and so there are no such elements below $p$. So each of $\mu_{e,i,j}^{n-1}+1,\ldots,\mu_{e,i,j}^n$ is either forced to be in $S^{\mc{N}_e}$ or forced to not be in $S^{\mc{N}_e}$. Now $b^n_{e,i,j}$ in $\mc{B}$ is currently in $R^{\mc{B}}$, and we can modify $F$ so that $b^n_{e,i,j}$ is not in $R^{\mc{B}}$. By the computation \eqref{eq4:R-S} below from the $n$th subrequirement, we cannot have $R^{\mc{B}} = \Phi_i^{S^{\mc{N}_e}}$.
\medskip{}
\begin{description}[font=\it,labelindent=-15pt,leftmargin = 0pt]
	\item[Requires attention] This requirement requires attention if
\[ S^{\mc{N}_e}_{s}[0,\ldots,\mu^n_{e,i,j}[s]] = S^{\mc{N}_e}_{t_{e,i,j}[s]}[0,\ldots,\mu^n_{e,i,j}[s]]\]
and also in $\mc{N}_{e,s}$, each of the elements $\mu^{n-1}_{e,i,j}[s]+1,\ldots,\mu^n_{e,i,j}[s]$ of $\mc{N}_{e,s}$ is either of type $\langle 1,1 \rangle$ or of type $\langle 2,0 \rangle$ in $\mc{N}_{e,s}$.
	\item[Action] Let $a = a_{e,i,j}[s]$ and $b = b^n_{e,i,j}[s]$. We have $F_s(a) = 0^{\ell_1-1} y$ for some even $y$, and $F_s(b) = 0^{\ell_1 - 1} y 0^{\ell_2} 1$. Let $\rho = 0^{\ell_1 - 1} y 0^{\ell_2}$ so that $F_s(b) = \rho 1$. Choose $z > 0$ even and larger than any even number we have encountered so far, and define $F_{s+1}$ by
\[ F_{s+1}(w)=\begin{cases}
\rho \concat z  \concat\sigma & F_{s}(w)= \rho \concat 1 \concat \sigma\\
F_{s}(w) & otherwise
\end{cases}. \]
This is moving the subtree below $\rho \concat 1$ to the subtree below $\rho \concat z$. For $w \in \dom(F_s)$, $w \neq b$, we have $F_{s}(w) \in R$ if and only if $F_{s+1}(w) \in R$. For $w = b$, we have $F_{s}(b) \in R$ and $F_{s+1}(b) \notin R$. Change the state to \textsc{diagonalized-$n$}.
\end{description}

\item[{Diagonalized-}$n$]
In this state, we have successfully satisfied $\mc{R}_{e,i,j}$ in the second way using $b^n_{e,i,j}$.
\medskip{}
\begin{description}[font=\it,labelindent=-15pt,leftmargin = 0pt]
	\item[Requires attention] The requirement never requires attention.
	\item[Action] None.
\end{description}

\end{description}

In order for a subrequirement $\mc{R}^n_{e,i,j}$ to require attention (in any state), there is a necessary (but not sufficient) condition: the parent requirement $\mc{R}_{e,i,j}$ must be in state \textsc{{next-subrequirement}}.
If this condition is satisfied, then the whether the requirement requires attention depends on its state:

\begin{description}[font=\sc]
\item[{Initialized}]
The subrequirement has been initialized, so $b^n_{e,i,j}$, $\mu^n_{e,i,j}$, and so on are all $0$. We define $b^n_{e,i,j}[s+1]$.
\medskip{}
\begin{description}[font=\it,labelindent=-15pt,leftmargin = 0pt]
	\item[Requires attention] The subrequirement always requires attention.
	\item[Action] Let $F_{s+1}$ extend $F_s$ by adding to its image the element $0^{\ell} 1$, where $\ell$ is large enough that $0^{\ell}$ has no children in $\ran(F_{s})$. Let $b^n_{e,i,j}[s+1]$ be such that $F_{s+1}({b^n_{e,i,j}[s+1]}) = 0^\ell 1$. Change the state to \textsc{waiting-for-computation}.
\end{description}

\item[{Waiting-for-computation}]
In the previous state, we defined $b^n_{e,i,j}$. We now wait for the computations below.
\medskip{}
\begin{description}[font=\it,labelindent=-15pt,leftmargin = 0pt]
	\item[Requires attention] This subrequirement requires attention if there are computations
\begin{equation}\label{eq4:R-S} R^\mc{B}_s[0,\ldots,b^n_{e,i,j}[s]] = \Phi_i^{S^{\mc{N}_e}_{s}}[0,\ldots,b^n_{e,i,j}[s]] \end{equation}
with use $\mu < s$, and
\begin{equation}\label{eq4:S-R} S^{\mc{N}_e}_{s}[0,\ldots,\mu] = \Phi_j^{R^\mc{B}_s}[0,\ldots,\mu] \end{equation}
with use $\nu < s$.
	\item[Action] Set $\mu^n_{e,i,j}[s+1] = \mu$, $\nu^n_{e,i,j}[s+1] = \nu$, and $\tau^n_{e,i,j}[s+1] = s$. We have $F_{s+1} = F_s$. Change the state to \textsc{next-subrequirement}.
\end{description}

\item[{Next-subrequirement}]
In the previous state, we found the computations \eqref{eq4:R-S} and \eqref{eq4:S-R}. This subrequirement is done acting, and the next subrequirement can begin.
\medskip{}
\begin{description}[font=\it,labelindent=-15pt,leftmargin = 0pt]
	\item[Requires attention] The subrequirement never requires attention.
	\item[Action] None.
\end{description}

\end{description}

Now we will say what happens when we say that we injure a requirement or subrequirement. When a requirement $\mc{R}_{e,i,j}$ is injured, it is returned to state \textsc{initialized} and its values $a_{e,i,j}$, $u_{e,i,j}$, $v_{e,i,j}$, and $t_{e,i,j}$ are set to $0$. Moreover, if it is in one of the states \textsc{waiting-for-first-change-$n$}, \textsc{waiting-for-second-change-$n$}, or \textsc{diagonalized-$n$}, then for $m \leq n$ set $b^m_{e,i,j}$, $\mu^m_{e,i,j}$, $\nu^m_{e,i,j}$, and $\tau^m_{e,i,j}$ to $0$.

When a subrequirement $\mc{R}^n_{e,i,j}$ is injured, it is returned to state \textsc{initialized}. Unless its parent requirement $\mc{R}_{e,i,j}$ is in one of the states \textsc{waiting-for-first-change-$m$}, \textsc{waiting-for-second-change-$m$}, or \textsc{diagonalized-$m$} for $m \geq n$, set $b^n_{e,i,j}$, $\mu^n_{e,i,j}$, $\nu^n_{e,i,j}$, and $\tau^n_{e,i,j}$ to $0$.  In this way, by being in one of these three states \textsc{waiting-for-first-change-$n$}, \textsc{waiting-for-second-change-$n$}, or \textsc{diagonalized-$n$} the parent requirement can take over control of the values associated to the subrequirements $\mc{R}^m_{e,i,j}$ for $m \leq n$ and protect them with its own priority level.

Set $D(\mc{B}_{s+1})$ to be the pullback along $F_{s+1}$ of the atomic and negated atomic formulas true of $\ran(F_{s+1})$ with Godel number at most $s$.

\vspace*{10pt}
\noindent\textit{End construction.}
\vspace*{10pt}

Each requirement and subrequirement, if it is not injured, only acts finitely many times. We must show that each requirement is satisfied.  Suppose not. Then there is a least requirement which is not satisfied. It is easy to see that a requirement $\mc{S}_i$ is always eventually satisfied, so let $\mc{R}_{e,i,j}$ be the least requirement which is not satisfied. Then $\mc{N}_e$ is a computable structure isomorphic to $\mc{B}$, $R^{\mc{B}} = \Phi_i^{S^{\mc{N}_e}}$, and $S^{\mc{N}_e} = \Phi_j^{R^{\mc{B}}}$.

There is some last stage at which $\mc{R}_{e,i,j}$ is injured. At this stage $\mc{R}_{e,i,j}$ and its subrequirements are in state \textsc{initialized}.

We will use the following fact implicitly throughout the rest of the proof. It is easy to prove.
\begin{lem}
If a requirement $\mc{R}_{e,i,j}$ is never injured after the stage $s$, then after the stage $s$, $F$ is only changed on the domain $[0, \ldots, v_{e,i,j}]$ by $\mc{R}_{e,i,j}$. If a subrequirement $\mc{R}^n_{e,i,j}$ is never injured after the stage $s$, then after the stage $s$, $F$ is only changed on the domain $[0, \ldots, \nu^n_{e,i,j}]$ by $\mc{R}_{e,i,j}$. Also, if a requirement $\mc{R}_{e,i,j}$ is in one of the states \textsc{waiting-for-first-change-$n$}, \textsc{waiting-for-second-change-$n$}, or \textsc{diagonalized-$n$} and is never injured after the stage $s$, then $F$ is only changed on $[0,\ldots,\nu^n_{e,i,j}]$ by $\mc{R}_{e,i,j}$.
\end{lem}

We will show that eventually $\mc{R}_{e,i,j}$ enters state \textsc{diagonalized} or \textsc{diagona\-lized-$n$} and diagonalizes against the two computations above, a contradiction. We will write $a$ for $a_{e,i,j}[s]$ since the $e$, $i$, and $j$ are fixed, and the value is never redefined since $\mc{R}_{e,i,j}$ is never injured. Similarly, we write $u$ for $u_{e,i,j}[s]$, $\mu^n$ for $\mu^n_{e,i,j}[s]$, and so on.

In state \textsc{initialized}, $\mc{R}_{e,i,j}$ always requires attention, so we will always define $a$ such that $F_s(a) = 0^\ell$ for some $\ell$ and move on to state \textsc{waiting-for-computation}.

Now because $R^{\mc{B}} = \Phi_i^{S^{\mc{N}_e}}$ and $S^{\mc{N}_e} = \Phi_j^{R^{\mc{B}}}$,
at some later stage $t$ we will have computations
\begin{equation}\label{R-S-t} R^\mc{B}_t[0,\ldots,a_{e,i,j}[t]] = \Phi_{i,t}^{S^{\mc{N}_e}_t}[0,\ldots,a_{e,i,j}[t]] \end{equation}
with use $u < t$, and
\begin{equation}\label{S-R-t} S^{\mc{N}_e}_t[0,\ldots,u] = \Phi_{j,t}^{R^\mc{B}_t}[0,\ldots,u]\end{equation}
with use $v < t$.
Then $u$, $v$, and $t$ will be defined to be these values and the requirement will move to state \textsc{next-subrequirement}.

Now we have three cases. First, it might be that at some later stage, $\mc{R}_{e,i,j}$ leaves state \textsc{next-subrequirement} and enters state \textsc{waiting-for-change}. Second, it might be that it enters the state \textsc{waiting-for-first-change-$n$}. Third, the requirement might never leave state \textsc{next-subrequirement}. We have to find a contradiction in each case.

\begin{case}
$\mc{R}_{e,i,j}$ leaves state \textsc{next-subrequirement} and enters state \textsc{waiting-for-change}.
\end{case}

At some stage $s_1 + 1 > t$, $R_{e,i,j}$ requires attention of the first kind. We change $F$ so that $F_{s_1+1}(a) \in R$ and change to state \textsc{waiting-for-change}.

Now since $R^{\mc{B}} = \Phi_i^{S^{\mc{N}_e}}$, at some stage $s_2 > s_1$, we have
\[ R^{\mc{B}}_{s_2}[0,\ldots,a] = \Phi_{i,s_2}^{S_{s_2}^{\mc{N}_e}}[0,\ldots,a]\]
and hence, by \eqref{R-S-t} and the fact that $F_{s_2}(a) = F_{s_1+1}(a) \neq F_t(a)$, we have
\begin{equation} \label{S-s2-t} S_{s_2}^{\mc{N}_e}[0,\ldots,u] \neq S_{t}^{\mc{N}_e}[0,\ldots,u]. \end{equation}
So the requirement requires attention at stage $s_2+1$.

We change $F$ so that $F_{s_2+1}(a) \notin R$ and change to state \textsc{diagonalized}. We make sure that
\begin{equation}\label{R-R-t} R_{s_2 + 1}^{\mc{B}} [0,\ldots,v] = R_{t}^{\mc{B}} [0,\ldots,v] \end{equation}
Since $S^{\mc{N}_e} = \Phi_j^{R^\mc{B}}$, at some stage $s_3 > s_2$, we have
\[ S^{\mc{N}_e}_{s_3}[0,\ldots,u] = \Phi_{j,s_3}^{R_{s_3}^{\mc{B}}}[0,\ldots,u]. \]
Then by \eqref{R-R-t} and \eqref{R-S-t} we have
\begin{equation}\label{S-s3-t} S^{\mc{N}_e}_{s_3}[0,\ldots,u] = S^{\mc{N}_e}_{t}[0,\ldots,u] \end{equation}
Combining this with \eqref{S-s2-t} we see that there is some $p$ in $[0,\ldots,u]$ with $p \notin S_t^{\mc{N}_e}$, $p \in S_{s_2}^{\mc{N}_e}$, and $p \notin S_{s_3}^{\mc{N}_e}$.

Now $s_1+1$ was the stage at which $\mc{R}_{e,i,j}$ required attention of the first kind while in state \textsc{next-subrequirement}. First of all, this means that
\[ S_{s_1}^{\mc{N}_e} [0,\ldots,u] = S_{t}^{\mc{N}_e} [0,\ldots,u] \]
and so $p \notin S_{s_1}^{\mc{N}_e}$.

Also, in $\mc{N}_{e,s_1}$, $p$ must be related by $V^{\mc{N}_e}$ to at least one element of the second sort (i.e., of one of the types $\langle 1,0 \rangle$, $\langle 1,1 \rangle$, or $\langle 2,0 \rangle$). Since $p \notin S_{s_1}^{\mc{N}_e}$, $p$ must be related to two elements of the second sort, so of type $\langle 2,0 \rangle$. But then the same is true at stage $s_2$, which contradicts the fact that $p \in S_{s_2}^{\mc{N}_e}$.

\begin{case}
$\mc{R}_{e,i,j}$ leaves state \textsc{next-subrequirement} and enters state \textsc{waiting-for-first-change-$n$} for some $n$.
\end{case}

The beginning of the proof of this case is the same as the beginning of the last case (with the states \textsc{waiting-for-change} and \textsc{diagonalized} replaced by \textsc{waiting-for-first-change-$n$} and by \textsc{waiting-for-second-change-$n$} respectively). Only the part of the proof after we conclude that $p \notin S_{s_1}^{\mc{N}_e}$, $p \in S_{s_2}^{\mc{N}_e}$, and $p \notin S_{s_3}^{\mc{N}_e}$ is different---this no longer leads to a contradiction. The requirement is in state \textsc{waiting-for-second-change-$n$}.

Since $\mc{R}_{e,i,j}$ required attention of the second kind at stage $s_1+1$, the subrequirement $R^n_{e,i,j}$ must have been in state \textsc{next-subrequirement}. It will have defined, for $m \leq n$, $b^m$, $\mu^m$, $\nu^m$, and $\tau^m$ with
\begin{equation}\label{R-S-tau} R^{\mc{B}}_{\tau^m}[0,\ldots,b^m] = \Phi_{i,\tau}^{S^{\mc{N}_e}_{\tau^m}}[0,\ldots,b^m] \end{equation}
with use $\mu^m$, and
\begin{equation}\label{S-R-tau-2} S^{\mc{N}_e}_{\tau^m}[0,\ldots,\mu^m] = \Phi_{j,\tau}^{R^{\mc{B}}_{\tau^m}}[0,\ldots,\mu^m] \end{equation}
with use $\nu^m$.

Now if $q$ is an element of $\mc{N}_e$ from among $\mu^{n-1} + 1,\ldots,\mu^n$, and in $\mc{N}_{e,s_1}$ it looks like $q$ is either of type $\langle 1,1 \rangle$ or type $\langle 2,0 \rangle$, then $q$ has the same type in the diagram at stage $s_3$. Now since $\mc{R}_{e,i,j}$ required attention of the second kind while in state \textsc{next-subrequirement} at stage $s_1+1$, any other element $q$ from among $\mu^{n-1} + 1,\ldots,\mu^n$ not satisfying either of the above conditions was in the subtree below $p$. At stage $s_3$, $p$ is related by is of type $\langle 2,0 \rangle$, and so we can see from the definition of $\mc{M}$ that any $q$ in the subtree below $p$ must be of type $\langle 1,1 \rangle$ or type $\langle 2,0 \rangle$.

Thus at some stage $s_4 > s_3$, the requirement requires attention. Each element $q$ from among $\mu^{n-1}+1,\ldots,\mu^n$ has been determined to be in either not in $S^{\mc{N}_e}$ if it is of type $\langle 2,0 \rangle$, or  in $S^{\mc{N}_e}$ if it is of type $\langle 1,1 \rangle$. So for all stages $s > s_4$, we have
\begin{equation}\label{S-s-s4-tau} S_s^{\mc{N}_e}[\mu^{n-1} + 1,\ldots,\mu^n] = S_{s_4}^{\mc{N}_e}[\mu^{n-1} + 1,\ldots,\mu^n] = S_{\tau^n}^{\mc{N}_e}[\mu^{n-1} + 1,\ldots,\mu^n] \end{equation}

We change $F$ so that $F_{s_4 + 1}(b^n) \notin R$ (while $F_{\tau^n}(b^n)$ was in $R$) and
\[ R_{s_4+1}[0,\ldots,\nu^{n-1}] = R_{\tau^n}[0,\ldots,\nu^{n-1}]. \]
This is also true with $s_4$ replaced by any $s \geq s_4$. By \eqref{S-R-tau-2}, for sufficiently large $s > s_4$ we have
\begin{equation}\label{S-s-t-1} S_{s}^{\mc{N}_e}[0,\ldots,\mu^{n-1} ] = S^{\mc{N}_e}_{\tau^n}[0,\ldots,\mu^{n-1}]. \end{equation}
Then since for all $s > s_4$ we have
\[ R_s^{\mc{B}}[0,\ldots,b^n] \neq R_{\tau^n}^{\mc{B}}[0,\ldots,b^n] \]
by \eqref{R-S-tau} for sufficiently large $s > s_4$ we have
\[ S_{s}[0,\ldots,\mu^{n} ] \neq S_{\tau^n}[0,\ldots,\mu^{n}]. \]
From this and \eqref{S-s-t-1}, we see that
\[ S_{s}[\mu^{n-1}+1,\ldots,\mu^{n} ] \neq S_{\tau^n}[\mu^{n-1}+1,\ldots,\mu^{n}]. \]
This contradicts \eqref{S-s-s4-tau}.

\begin{case}
$\mc{R}_{e,i,j}$ never leaves state \textsc{next-subrequirement}.
\end{case} 

Suppose to the contrary that $\mc{R}_{e,i,j}$ never requires attention while in state \textsc{next-subrequirement}. Then for all stages $s > t$, we have
\[ R_s^\mc{B}[0,\ldots,v] = R_t^\mc{B}[0,\ldots,v]. \]
Since $S^{\mc{N}_e} = \Phi_j^{R^\mc{B}}$ and using \eqref{S-R-t}, for sufficiently large stages $s$ we have
\begin{equation}\label{S-s-t-2} S_s^{\mc{N}_e}[0,\ldots,u] = S_t^{\mc{N}_e}[0,\ldots,u].\end{equation}
So for sufficiently large stages $s$, $\mc{R}^1_{e,i,j}$ always requires attention in state \textsc{initialized}. At some stage, each requirement of higher priority than $\mc{R}^1_{e,i,j}$ has acted, and so $\mc{R}^1_{e,i,j}$ is never injured after this point.

Then $\mc{R}^1_{e,i,j}$ will require attention and we will define $b^1$ such that $F_s(b^1) = 0^\ell 1$ for some $\ell$ and move on to state \textsc{waiting-for-computation}.

Because $R^{\mc{B}} = \Phi_i^{S^{\mc{N}_e}}$ and $S^{\mc{N}_e} = \Phi_j^{R^{\mc{B}}}$, at some later stage $\tau^1$ we will have computations
\[ R^\mc{B}_{\tau^1}[0,\ldots,b^1] = \Phi_{i,\tau^1}^{S^{\mc{N}_e}_{\tau^1}}[0,\ldots,b^1]\]
with use $\mu^1 < \tau^1$, and
\[ S^{\mc{N}_e}_{\tau^1}[0,\ldots,\mu^1] = \Phi_{j,\tau^1}^{R^\mc{B}_{\tau^1}}[0,\ldots,\nu^1] \]
with use $\nu^1 < \tau^1$.
Then $\mu^1$, $\nu^1$, and $\tau^1$ will be defined to be these values and the requirement will move to state \textsc{next-subrequirement}.

Continuing a similar argument, each of the subrequirements defines $b^n$, $\mu^n$, $\nu^n$, and $\tau^n$ such that
\begin{equation}\label{S-R-tau} S^{\mc{N}_e}_{\tau^n}[0,\ldots,\mu^n] = \Phi_{j,\tau^1}^{R^{\mc{B}_{\tau^n}}}[0,\ldots,\mu^n] \end{equation}
with use $\nu^n$.

Now we claim that $\mc{N}_e$ is not isomorphic to $\mc{M}$. First, $\mc{R}_{e,i,j}$ never requires attention of the first kind. Because of \eqref{S-s-t-2}, the only way this is possible is if there is an element $p$ from among $0,\ldots,u$ which is of type $\langle 0,0 \rangle$. That is, $p$ is the isomorphic image of $0^\ell$ in $\mc{M}$ for some $\ell$.

Let $n$ be arbitrary. By \eqref{S-R-tau} and the fact that $S^{\mc{N}_e} = \Phi_j^{R^{\mc{B}}}$, for sufficiently large stages $s$, we have
\[ S_{s}^{\mc{N}_e}[0,\ldots,\nu^n] = S_{\tau^n}^{\mc{N}_e}[0,\ldots,\nu^n]. \]
Then since $\mc{R}_{e,i,j}$ does not ever require attention of the second kind, there is $q_n$ from $\nu^{n-1} + 1,\ldots,\nu^n$ which is of type $\langle 0,0\rangle$ or type $\langle 1,0 \rangle$ and not in the subtree below $p$.

But one can easily see form the definition of $\mc{M}$ that there cannot be infinitely many such elements $q_n$, a contradiction.
\end{proof}

This proposition relativizes as follows:
\begin{corollary}\label{cor:prop-2-rel}
For every degree $\textbf{d}$, there is a copy $\mc{B}$ of $\mc{A}$ with $\mc{B} \leq_T \textbf{d}$ such that no copy $\mc{N}$ of $\mc{M}$ with $\mc{N} \leq_T \textbf{d}$ has $R^{\mc{B}} \oplus \textbf{d} \equiv_T S^{\mc{N}} \oplus \textbf{d}$.
\end{corollary}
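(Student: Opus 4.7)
The plan is to obtain Corollary \ref{cor:prop-2-rel} by relativizing the entire construction of Proposition \ref{prop:incomp-2} to the oracle $\textbf{d}$. Concretely, I would re-run the priority argument exactly as before but with every ``computable'' object replaced by its $\textbf{d}$-computable counterpart. Fix an enumeration $(\Phi_e)_{e \in \omega}$ of $\textbf{d}$-computable partial functionals. For each $e$, attempt to interpret $\Phi_e$ as the atomic diagram of a $\textbf{d}$-computable structure $\mc{N}_e$ isomorphic to $\mc{M}$, exactly as in the original proof (this interpretation is itself $\textbf{d}$-computable, since checking whether a finite diagram embeds into $\mc{M}$ is computable). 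We build $\mc{B} \leq_T \textbf{d}$ by constructing a $\textbf{d}$-computable sequence of finite partial isomorphisms $F_s: \omega \to \mc{A}$ whose union $F$ is a bijection, and define $\mc{B}$ as the pullback.

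The requirements become:
\begin{description}
    \item[$\mc{R}_{e,i,j}$] If $\mc{N}_e$ is isomorphic to $\mc{M}$ and $\Phi_i^{S^{\mc{N}_e} \oplus \textbf{d}}$, $\Phi_j^{R^{\mc{B}} \oplus \textbf{d}}$ are total, then either $R^{\mc{B}} \oplus \textbf{d} \neq \Phi_i^{S^{\mc{N}_e} \oplus \textbf{d}}$ or $S^{\mc{N}_e} \oplus \textbf{d} \neq \Phi_j^{R^{\mc{B}} \oplus \textbf{d}}$,
    \item[$\mc{S}_i$] The $i$th element of $\mc{A}$ is in the range of $F$.
\end{description}
Since any purported equivalence $R^{\mc{B}} \oplus \textbf{d} \equiv_T S^{\mc{N}} \oplus \textbf{d}$ is witnessed by $\textbf{d}$-computable functionals, meeting all the $\mc{R}_{e,i,j}$ above (for $\textbf{d}$-computable $\mc{N}_e$) yields the desired conclusion.

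The states, sub-requirements $\mc{R}^n_{e,i,j}$, and actions are copied verbatim from the proof of Proposition \ref{prop:incomp-2}, with the understanding that all stage-by-stage approximations $R^{\mc{B}}_s$, $S^{\mc{N}_e}_s$, $\mc{N}_{e,s}$, and all computation checks are now carried out using $\textbf{d}$ as an oracle. The combinatorial heart of the argument is unchanged: we still exploit the d-free element $0^\ell \in \mc{A}$, still use the fact that the d-free elements in $\mc{M}$ of the form $0^m$ are linearly ordered and that only finitely many of them lie above any given $0^m$ together with the $0^{m'}1$, and still force $\mc{N}_e$ to produce either a witness $p$ of type $\langle 0,0 \rangle$ or infinitely many witnesses $q^n$ satisfying conditions (i) and (ii) from the informal discussion. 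None of these combinatorial facts depend on which oracle we use for the functionals, so the three-case verification at the end of the original proof goes through line-for-line with $\Phi_i, \Phi_j$ reinterpreted as $\textbf{d}$-computable functionals and with $R^{\mc{B}}, S^{\mc{N}_e}$ replaced by $R^{\mc{B}} \oplus \textbf{d}, S^{\mc{N}_e} \oplus \textbf{d}$ in the computation equations.

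There is essentially no new obstacle: the only thing to check is that relativizing does not destroy the priority bookkeeping, but since $\textbf{d}$ appears uniformly on both sides of every reduction, injury and protection work identically. Alternatively, one can observe that the original proof is uniform in all parameters, so applying it inside the universe relativized to $\textbf{d}$ immediately gives a $\textbf{d}$-computable copy $\mc{B}$ of $\mc{A}$ with the required diagonalization property, which is exactly the statement of Corollary \ref{cor:prop-2-rel}.
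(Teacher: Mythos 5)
Your proposal is correct and is exactly the paper's approach: the paper obtains Corollary \ref{cor:prop-2-rel} simply by relativizing the construction of Proposition \ref{prop:incomp-2} to $\textbf{d}$, which is what you spell out. Your observations that the diagonalization must now range over $\textbf{d}$-computable copies $\mc{N}_e$ and $\textbf{d}$-computable functionals, and that the combinatorial core (d-freeness, types, the tree structure of $\mc{A}$ and $\mc{M}$) is oracle-independent, are precisely the points that make the relativization routine.
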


From the relativized versions of these two propositions (Corollaries \ref{cor:prop-1-rel} and \ref{cor:prop-2-rel}), we get Theorem \ref{thm:incomp-dce}.

\chapter[Relations on the Naturals]{Degree Spectra of Relations on the Naturals}
\label{OmegaSection}
In this chapter, we will consider the special case of the structure $(\omega,<)$. We will generally be working with relations on the standard computable copy of this structure. Downey, Khoussainov, Miller, and Yu \cite{DowneyKhoussianovMillerYu09} studied relations on $\omega$ and though they were mostly interested in the degree spectra of non-computable relations, they showed that any computable unary relation $R$ on $(\omega,<)$ has a maximal degree in its degree spectrum, and this degree is either $0$ or $0'$. Knoll \cite{Knoll09} (and later independently Wright \cite{Wright13})  extended this to show:
\begin{thm}[{Knoll \cite[Theorem 2.2]{Knoll09}, Wright \cite[Theorem 1.2]{Wright13}}]
Let $R$ be a computable unary relation on $(\omega,<)$. Then either $R$ is intrinsically computable, or its degree spectrum consists of all $\Delta^0_2$ degrees.
\end{thm}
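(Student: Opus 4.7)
The plan is in two steps. The first is to show that the degree spectrum is always contained in the $\Delta^0_2$ degrees: the structure $(\omega,<)$ is $\Delta^0_2$-categorical, since any computable copy $\mc{B}$ admits a $0'$-computable isomorphism to the standard copy (iteratively compute the minimum and the successor in $<_\mc{B}$), so $R^\mc{B} \leq_T 0'$ and the spectrum sits inside $\Delta^0_2$.

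The second step, the reverse inclusion under the hypothesis that $R$ is not intrinsically computable, is the substance of the proof. First I would observe that $R$ and $\neg R$ must both be infinite: otherwise $R^\mc{B}$ would be a finite (hence computable) subset of the domain in every copy, making $R$ intrinsically computable. Now fix a $\Delta^0_2$ set $C$ with computable approximation $C_s(n) \to C(n)$. I will construct a computable linear order $<_\mc{B}$ of type $\omega$ on $\omega$ with $R^\mc{B} \equiv_T C$. Designate even domain elements as \emph{markers} $\lambda_n := 2n$ and odd elements as \emph{fillers}. At stage $s$, for $n = 0, 1, \ldots, s-1$ in order, check whether the current rank of $\lambda_n$ in $<_{\mc{B},s-1}$ satisfies ``rank $\in R$ iff $C_s(n) = 1$''; if not, push $\lambda_n$ up to the next rank satisfying this condition by inserting the appropriate number of new fillers immediately below $\lambda_n$ (above every existing element below $\lambda_n$ in the gap $(\lambda_{n-1}, \lambda_n)$). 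Finally, introduce $\lambda_s$ at the next suitable rank above $\lambda_{s-1}$. Since $R$ and $\neg R$ are both infinite, a suitable rank always exists, and since $C_s(n)$ eventually stabilizes, each $\lambda_n$'s rank stabilizes at some $t_n$ with $t_n \in R$ iff $C(n) = 1$.

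This construction immediately yields $C \leq_T R^\mc{B}$: the markers form a computable sequence, and $R^\mc{B}(\lambda_n) = (t_n \in R) = C(n)$. The main obstacle will be showing $R^\mc{B} \leq_T C$ (rather than merely $\leq_T 0'$), which requires computing the final rank of each domain element from $C$. For markers this reduces to computing $t_n$, which depends only on $C(0), \ldots, C(n)$. For a filler $x$ inserted into the gap $(\lambda_{n-1}, \lambda_n)$ at some stage $s_0$, the convention of always inserting new fillers at the top of the gap means that $x$'s position relative to $\lambda_{n-1}$ is frozen once $x$ is placed; its final rank is $t_{n-1} + j$ for some offset $j$ determined by simulating the stage-by-stage construction (which is computable) and counting earlier fillers added to that gap. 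Since $j$ is uniformly computable from $x$ and $t_{n-1}$ is computable from $C$, so is $R^\mc{B}(x)$. This gives $R^\mc{B} \leq_T C$, and hence $R^\mc{B} \equiv_T C$ as required.
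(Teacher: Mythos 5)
The paper only cites this theorem from Knoll and Wright and does not include a proof, so there is no in-paper argument to compare against. Your Step 1 is correct, and the construction does give $C \leq_T R^\mc{B}$: the markers form a computable sequence and the invariant that the rank of $\lambda_n$ lies in $R$ exactly when $C(n)=1$ is maintained throughout, so $R^\mc{B}(\lambda_n) = C(n)$ directly, with no need to know $t_n$.

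The gap is in the direction $R^\mc{B} \leq_T C$, specifically in the assertion that $t_{n-1}$ ``is computable from $C$'' and the earlier justification that $t_n$ ``depends only on $C(0),\ldots,C(n)$.'' This is false for the construction as described. Already $t_0$ depends on the \emph{number of mind-changes} of the approximation $C_s(0)$: each change pushes $\lambda_0$ to a strictly larger rank, and ranks never decrease, so $t_0$ encodes the total count of changes and not merely the limit value $C(0)$, which fixes only that count's parity. In general, for $C$ to recover $t_n$ it would need to know when $C_s\upharpoonright (n+1)$ has stopped changing, i.e.\ a modulus of convergence for the approximation; for $C$ of non-c.e.\ degree (for instance a properly d.c.e.\ degree) this is not computable from $C$. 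Your argument would succeed if $C$ were c.e.\ (the enumeration yields a $C$-computable modulus), but the theorem must cover every $\Delta^0_2$ degree. The repair requires a stronger invariant than ``markers sit at ranks with the correct $R$-value'': the filler-insertion scheme must be arranged so that the $R$-value of each filler's final rank is recoverable from $C$ together with computable data about the stage at which the filler was introduced, without ever computing $t_{n-1}$. For $R$ the set of evens, for instance, always inserting fillers in matched pairs that preserve parity offsets makes a filler's final parity a function of $C(n-1)$ and the approximation value at the filler's introduction stage; the general case needs a similar but more delicate argument using the block structure of $R$. As written, the reduction $R^\mc{B} \leq_T C$ does not go through.
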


For relations which are not unary, Wright was able to show:
\begin{thm}[{Wright \cite[Theorem 1.3]{Wright13}}]
Let $R$ be a computable $n$-ary relation on $(\omega,<)$ which is not intrinsically computable. Then the degree spectrum of $R$ contains all of the c.e.\ degrees.
\end{thm}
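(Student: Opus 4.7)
The plan is to verify the $n$-ary analogue of the effectiveness condition~$(*)$ of Harizanov's Theorem~\ref{Harizanov} for $(\omega,<)$ with the relation $R$, and then invoke that theorem to realize every c.e.\ set $C$ as the degree of $R^{\mc{B}}$ in some computable copy $\mc{B}$ of $(\omega,<)$.

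The structural input is the rigidity of $(\omega,<)$: the quantifier-free type of an $n$-tuple $\bar{a}$ over parameters $\bar{c}$ is determined by the linear-order placement of $\bar{a}$ relative to $\bar{c}$. Moreover, when $\bar{a}$ is ``well-separated'' from $\bar{c}$ and within itself (the non-forced gaps in $\bar{c}\bar{a}$ are large), any $\Sigma_1^\comp$ formula over $\bar{c}$ true of $\bar{a}$ is true of every other well-separated $\bar{a}'$ with the same placement, because existential witnesses in the gaps can be translated. I would argue the contrapositive: if for some $\bar{c}$ no pair of well-separated $n$-tuples $(\bar{a},\bar{a}')$ with identical placement and opposite $R$-membership existed, then $R$ on well-separated tuples would depend only on placement relative to $\bar{c}$, with finitely many small-gap exceptions. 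Since $(\omega,<)$ is rigid, the image of $\bar{c}$ under the unique isomorphism to any copy $\mc{B}$ is uniformly $\mc{B}$-computable (just the $c_i$th element in $<_\mc{B}$), so $R^{\mc{B}}$ would be computable from $\mc{B}$, contradicting the hypothesis. Hence free pairs exist over every $\bar{c}$, and since $R$ is computable, a straightforward computable search for such a pair terminates uniformly in $\bar{c}$.

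With this effectiveness in hand, I would carry out Harizanov-style coding: given a c.e.\ set $C$, build $\mc{B}$ by a priority construction that reserves infinitely many disjoint, well-separated coding regions, each corresponding to a placement admitting both an $R$-witness $\bar{a}$ and a non-$R$-witness $\bar{a}'$ produced by the search above. The $i$th region initially realizes $\bar{a}'$, and is flipped to realize $\bar{a}$ when $i$ enters $C$; the separation between regions guarantees that flipping one region leaves the intended placements of all others intact, so in the end $R^{\mc{B}} \equiv_T C$ uniformly in $C$.

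The main obstacle is calibrating ``well-separated'' so that $\Sigma_1$-equivalence genuinely holds across placements, and so that the separations chosen by stage $s$ of the priority construction are large enough to absorb the quantifier depths and coding needs that arise at later stages. This is delicate bookkeeping layered onto Harizanov's original argument, but it does not introduce any new conceptual difficulty beyond the rigidity-based observation that makes the effectiveness condition automatic for relations on $(\omega,<)$.
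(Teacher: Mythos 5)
Your overall strategy---verify the $n$-ary form of Harizanov's effectiveness condition $(*)$ for $(\omega,<)$ and then invoke Theorem~\ref{Harizanov}---is exactly what the paper itself points to (``one could adapt Wright's proof to check Harizanov's effectiveness condition''). However, the structural claim on which your verification rests is false as stated, and the gap is fatal for $n \geq 2$. You assert that two well-separated $n$-tuples $\bar{a}$, $\bar{a}'$ with the same placement over $\bar{c}$ satisfy the same $\Sigma_1^\comp$ formulas; but $\bar{a} = (0,100)$ and $\bar{a}' = (0,50)$ both clear any fixed separation threshold yet are distinguished by ``there are at least $75$ elements between $x_1$ and $x_2$.'' The true statement is monotone: a $\Sigma_1^\comp$ formula true of $\bar{a}$ is true of any $\bar{a}'$ with the same placement and \emph{componentwise at least as large} gaps, not of any other ``well-separated'' tuple. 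Because of this, your contrapositive fails concretely: take $R$ to be the successor relation $R(x,y) \iff y = x+1$. For any threshold $N$, every pair $(a,b)$ with $b - a \geq N$ lies outside $R$, so there is no pair of well-separated tuples in the same placement with opposite $R$-membership; your argument would then conclude $R$ is intrinsically computable, but the successor relation is the canonical computable relation on $(\omega,<)$ that is not intrinsically computable (its spectrum is exactly the c.e.\ degrees). The culprit is ``finitely many small-gap exceptions'': only in the unary case is the set of tuples close to $\bar{c}$ finite; for $n \geq 2$ there are infinitely many tuples with small internal gaps, and they are not controlled by the hypothesis.

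The repair uses Ash--Nerode free tuples rather than free pairs: $\bar{a}$ is free over $\bar{c}$ if for every existential $\psi(\bar{c},\bar{x})$ true of $\bar{a}$ some $\bar{a}'$ on the other side of $R$ also satisfies $\psi$, which in $(\omega,<)$ amounts to the existence of $\bar{a}'$ on the other side in the same placement with dominating gap vector. If neither side has free tuples over $\bar{c}$, then both $R$ and $\neg R$ are $\Sigma_1^\infi$-definable over $\bar{c}$, hence by the well-quasi-ordering of $\Sigma_1^\infi$-definable sets on $(\omega,<)$ (Lemma~\ref{well-quasi-order} and its corollary) both are finitary-existentially definable, which \emph{does} give intrinsic computability; for the successor relation the free tuples exist only in the $\in R \to \notin R$ direction, so one must also note the WLOG between $R$ and $\neg R$ to match the specific direction of $(*)$. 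The computability of the search (your ``straightforward computable search'') is not automatic from $R$ being computable---it needs the WQO machinery, exactly as in Lemmas~\ref{lem:effect1} and~\ref{lem:effect2} for the $\alpha$-c.e.\ case. One last small point: ``the $c_i$th element in $<_\mc{B}$'' is uniformly a $\Delta^0_2$ problem, not $\mc{B}$-computable; what you actually use is that the finite tuple $\bar{c}^{\mc{B}}$ is non-uniformly computable, which is all intrinsic computability requires.
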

Note that this is the same as the conclusion of Harizanov's Theorem \ref{Harizanov} for this particular structure. One could adapt Wright's proof to check Harizanov's effectiveness condition. All of these results relativize.

In the case of unary relations on the standard copy of $(\omega,<)$, Knoll was able to classify the possible degree spectra completely---they are either just the computable degree, or all $\Delta^0_2$ degrees. This suggests the following idea: study the partial order of degree spectra (on a cone) for relations on a single fixed structure (or class of structures). While in general, we know from \S \ref{DCESection} that there are incomparable d.c.e.\ degree spectra, this is not the case for unary relations on $(\omega,<)$; in fact, there are only two possible degree spectra for such relations. We know that there are at least three possible degree spectra for arbitrary relations on $(\omega, <)$: the computable degree, the c.e.\ degrees, and the $\Delta^0_2$ degrees. Are there more, and if so, is there a nice classification of the possible degree spectra?

To begin, we will study the intrinsically $\alpha$-c.e.\ relations on $\omega$. We need some definitions and a lemma about $\mc{L}_{\omega_1,\omega}$-definable sets which is implicit in Wright's work. A significant portion of the lemma coincides with Lemma 2.1 of \cite{Montalban09}.

A partial order is a \textit{well-quasi-order} if it is well-founded and has no infinite anti-chains (see, for example, \cite{Kruskal72} where Kruskal first noticed that the same notion of a well-quasi-order had been used in many different places under different names). Note that a total ordering which is well-ordered is a well-quasi-order. There are two simple constructions that we will use which produce a new well-quasi-ordering from an existing one. First, if $(A_i,\leq_i)$ for $i = 1,\ldots,n$ are partial orders, then their product $A_1 \times \cdots \times A_n$ is partially ordered by $\bar{a} \leq \bar{b}$ if and only if $a_i \leq_i b_i$ for each $i$. If each $(A_i,\leq_i)$ is a well-quasi-order, then so is the product. Second, if $(A,\leq)$ is a partial order, then let $\mc{P}_{< \omega}(A)$ be the set of finite subsets of $A$. Define $\leq_\mc{P}$ on this set by $U \leq_\mc{P} V$ if and only if for all $a \in U$, there is $b \geq a$ in $V$. If $(A,\leq)$ is a well-quasi-order, then $(\mc{P}_{\leq \omega}(A),\leq_{\mc{P}})$ is a well-quasi-order.

\begin{lem}\label{well-quasi-order}
The set of $\Sigma^{\infi}_1$-definable $n$-ary relations on $(\omega,<)$ over a fixed set of parameters $\bar{c}$ is a well-quasi-order under (reverse) inclusion. That is, there is no infinite strictly increasing sequence, and there are no infinite anti-chains.
\end{lem}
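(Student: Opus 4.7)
The plan is to parametrize the $\Sigma_1^\infi$-definable relations on $(\omega,<,\bar c)$ by explicit combinatorial data and then deduce the well-quasi-order property from Dickson's lemma. Each tuple $\bar a\in\omega^n$ has an ``order type'' $\rho(\bar a)$ over $\bar c$, namely the isomorphism type of the labelled finite linear order on $\bar c\cup\{\bar a\}$, recording which $a_i$ equal which $c_j$ and the ordering of the distinct points. Only finitely many order types arise. For a fixed $\rho$, the tuples realizing $\rho$ are further parametrized by a vector $\bar g(\bar a)\in\mathbb N_+^{r(\rho)}$ of positive ``free gaps'' between consecutive distinct elements of $\bar c\cup\{\bar a\}$ that are not both drawn from $\bar c$ (the inter-$c$ gaps are fixed by $\bar c$).

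The key technical step is to check that over $(\omega,<)$ a finitary existential formula $\exists\bar y\,\psi(\bar x,\bar y,\bar c)$ defines a finite union of ``basic'' sets of the form $B(\rho,\bar n)=\{\bar a:\rho(\bar a)=\rho\text{ and }\bar g(\bar a)\geq\bar n\}$ for various $\rho$ and $\bar n\in\mathbb N_+^{r(\rho)}$. The reason is that the only information about $\bar a$ that an existential quantifier can extract from $(\omega,<)$ is its order type over $\bar c$ together with lower bounds on the free gap sizes: ``$\exists y_1<\dots<y_k$ fitting into a prescribed gap'' is equivalent to ``that gap has size $\geq k+1$'', and conversely, once the gaps are large enough the witnesses can always be placed. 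It follows that each $\Sigma_1^\infi$-definable relation $R$ is a countable union of basic sets, and is uniquely coded by a tuple $(U_\rho(R))_\rho$ of upward-closed subsets $U_\rho(R)\subseteq\mathbb N_+^{r(\rho)}$, one for each order type $\rho$. Crucially, containment of relations corresponds to coordinatewise containment: $R\subseteq S$ if and only if $U_\rho(R)\subseteq U_\rho(S)$ for every~$\rho$.

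To finish I would invoke the fact that the family of upward-closed subsets of $\mathbb N_+^r$, under reverse inclusion, is a well-quasi-order. Well-foundedness is immediate from Dickson's lemma: the union of an ascending chain $U_1\subseteq U_2\subseteq\cdots$ is an upward-closed subset of $\mathbb N_+^r$, hence has only finitely many minimal elements, so the chain is eventually constant. The absence of infinite antichains of upward-closed sets follows by a standard Higman-style argument applied to the finite antichains of minimal elements. Since a finite product of well-quasi-orders is again a well-quasi-order, taking the product over the finitely many order types $\rho$ yields the lemma. The main place requiring care is the second step, the quantifier-elimination-style reduction showing that existential formulas in $(\omega,<,\bar c)$ really are captured by order-type plus gap-lower-bound data; this is routine but requires some bookkeeping about how the witnesses $\bar y$ distribute into the gaps determined by $\bar x$ and $\bar c$.
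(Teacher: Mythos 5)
Your proposal is correct and follows essentially the same route as the paper: decompose by the (finitely many) order types of tuples, parametrize each piece by gap vectors so that existential formulas become lower-bound conditions, and conclude via Dickson's lemma together with the well-quasi-ordering of finite antichains (equivalently, upward-closed subsets) of $\omega^r$. The only presentational differences are that you absorb the parameters $\bar{c}$ into the order type rather than lifting chains and antichains back to the parameter-free case as the paper does, and you work directly with upward-closed sets where the paper instead proves that $D_{\bar{p}} \subseteq D_{\bar{q}_1} \cup \cdots \cup D_{\bar{q}_\ell}$ forces $D_{\bar{p}} \subseteq D_{\bar{q}_k}$ for some $k$; both variants are sound.
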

\begin{proof}
We will begin by considering the relations defined without parameters. Let $R$ be a $\Sigma^{\infi}_1$ $n$-ary relation. There are various possible orderings of an $n$-tuple, for example the entries may be increasing, decreasing, some entries may be equal to other entries, or many other possible orderings. But however many ways an $n$-tuple may be ordered, there are only finitely possibilities. We can write $R$ as the disjoint union of its restrictions to each of these orderings. Each of these restrictions is also $\Sigma^{\infi}_1$-definable. For any two such relations, we have $S \subseteq R$ if and only if each of these restrictions of $S$ is contained in the corresponding restriction of $R$. Hence inclusion on $n$-ary relations is the product order of the inclusion order on each of these restrictions, and so it suffices to show that each of these restrictions is a well-quasi-order. Without loss of generality, it suffices to show this for increasing tuples.

Let $R$ be $\Sigma^{\infi}_1$-definable relation on increasing $n$-tuples. Then $R$ is defined by a $\Sigma^{\infi}_1$ formula $\varphi(\bar{x})$. We may assume that $\varphi(\bar{x})$ can be written in the form
\[ \varphi(\bar{x}) = (x_1 < x_2 < \cdots < x_n) \wedge   \bigdoublevee_i (\exists \bar{y}) \psi_i(\bar{x},\bar{y}).\]
Now each of these disjuncts can be written in turn as the disjunct of finitely many formulas $\chi_{\bar{p}}(\bar{x})$ where $\bar{p}=(p_1,\ldots,p_n) \in \omega^{n}$ and $\chi_{\bar{p}}(\bar{x})$ is the formula which say that $x_1 < x_2 < \cdots < x_n$ and that there are at least $p_1$ elements less than $x_1$, $p_2$ elements between $x_1$ and $x_2$, and so on. So we can write $\varphi(\bar{x})$ as the disjunction of such formulas $\chi_{\bar{p}}$. For each $\bar{p} \in \omega^{n}$, let $D_{\bar{p}}$ be the set of solutions to $\chi_{\bar{p}}(\bar{x})$ in $\omega^n$. Then $R$ is the union of some of these sets $D_{\bar{p}}$.

Then note that $\bar{p} \mapsto D_{\bar{p}}$ is an order-maintaining bijection between the product order $\omega^{n}$ and the relations $D_{\bar{p}}$ ordered by \textit{reverse} inclusion (that is, $\bar{p} \leq \bar{q}$ in the product order if and only if $D_{\bar{p}} \supseteq D_{\bar{q}}$). Since $\omega^{n}$ is well-quasi-ordered, the set of relations $D_{\bar{p}}$ is also well-quasi-ordered. Thus any set of such relations $D_{\bar{p}}$ contains finitely many maximal elements ordered under inclusion (or minimal elements ordered under reverse inclusion), and each other relation is contained in one of those maximal elements. In particular, $R$ is the union of finitely many sets of the form $D_{\bar{p}}$. What we have done so far is the main content of Montalb\'an's work in \cite[Lemma 2.1]{Montalban09}.

Let $\bar{p} = (p_0,\ldots,p_n)$. Now note that the element $\hat{p} = (p_0,p_0+p_1+1,\ldots,p_0+\cdots+p_n+n-1)$ is in $D_{\bar{r}}$ if and only if $p_i \geq r_i$ for each $i$, in which case $D_{\bar{p}} \subseteq D_{\bar{r}}$. Now suppose that $D_{\bar{p}} \subseteq D_{\bar{q}_1} \cup \cdots \cup D_{\bar{q}_\ell}$. Then $\hat{p} \in D_{\bar{p}}$ and so $\hat{p} \in D_{\bar{q}_k}$ for some $k$. Hence for this $k$, $D_{\bar{p}} \subseteq D_{\bar{q}_k}$. Thus $D_{\bar{p}} \subseteq D_{\bar{q}_1} \cup \cdots \cup D_{\bar{q}_\ell}$ if and only if, for some $k$, $D_{\bar{p}} \subseteq D_{\bar{q}_k}$.

From this it follows that the partial order on $\Sigma^{\infi}_1$-definable relations on increasing $n$-tuples is isomorphic to the finite powerset order on $\omega^n$. This is a well-quasi-order.

Now we must consider formulas over a fixed tuple of parameters $\bar{c}$. Note that if $\varphi(\bar{x},\bar{y})$ and $\psi(\bar{x},\bar{y})$ are $\Sigma^{\infi}_1$ formulas with no parameters, and every solution of $\varphi(\bar{x},\bar{y})$ is a solution of $\psi(\bar{x},\bar{y})$, then every solution of $\varphi(\bar{c},\bar{y})$ is a solution of $\psi(\bar{c},\bar{y})$. Thus there is no infinite anti-chain of sets $\Sigma^{\infi}_1$-definable over $\bar{c}$ (as any such anti-chain would yield an anti-chain of sets definable without parameters).

Now suppose that there is a strictly increasing chain of sets definable over $\bar{c}$, $A_1 \subsetneq A_2 \subsetneq \cdots$, which are definable by $\Sigma^{\infi}_1$ formulas $\varphi(\bar{c},\bar{y})$. Let $B_1,B_2,\ldots$ be the corresponding sets definable by the formulas $\varphi(\bar{x},\bar{y})$. Then $B_1,B_2,\ldots$ cannot be a strictly increasing sequence, nor can it be an anti-chain. Thus for some $i < j$, $B_i \supseteq B_j$. But then $A_i \supseteq A_j$, and so $A_i = A_j$. This is a contradiction. Hence there is no strictly increasing chain of sets $\Sigma^{\infi}_1$-definable over $\bar{c}$. This completes the proof.
\end{proof}

From now on, by $\Sigma^{\infi}_1$-definable we mean definable with finitely many parameters. Then:

\begin{corollary}
Let $R$ be a $\Sigma^{\infi}_1$-definable relation on $(\omega,<)$. Then $R$ is defined by a finitary existential formula and $R$ is computable (in the standard copy of $(\omega,<)$). Moreover, $R$ is computable uniformly in the finitary existential formula and the tuple $\bar{c}$ over which it is defined.
\end{corollary}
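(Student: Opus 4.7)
The plan is to extract a finitary existential definition of $R$ from the proof of Lemma \ref{well-quasi-order}, and then to observe that finitary existential formulas over $(\omega,<)$ define uniformly computable relations.

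First I would partition $R$ according to the joint order type $\tau$ of $(\bar{c}, \bar{x})$. Since $|\bar{c}|$ and $n$ are fixed, there are only finitely many such order types, each defined by a quantifier-free formula in the language $\{<\}$; so each slice $R_\tau$ of $R$ is again $\Sigma^\infi_1$-definable over $\bar{c}$, and it suffices to show each $R_\tau$ is defined by a finitary existential formula. Within a fixed order type, the proof of Lemma \ref{well-quasi-order} identifies a family of finitary existential formulas $\chi_{\bar{p}}(\bar{c}, \bar{x})$---specifying the order type together with lower bounds $\bar{p}$ on the number of elements in each gap---whose solution sets $D_{\bar{p}}$ are indexed by $\bar{p} \in \omega^k$ for some $k$ depending on $\tau$. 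The map $\bar{p} \mapsto D_{\bar{p}}$ is order-reversing between the product order on $\omega^k$ and inclusion of the $D_{\bar{p}}$'s, and $R_\tau$ is the union of some upward-closed subset of them. Because $\omega^k$ is a well-quasi-order, this subset has only finitely many minimal elements, so $R_\tau$ is a finite disjunction of the corresponding $\chi_{\bar{p}}$'s. Summing over the finitely many order types $\tau$ yields a single finitary existential formula defining $R$.

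For the uniform computability claim: any finitary existential formula $(\exists \bar{y})\, \psi(\bar{c}, \bar{x}, \bar{y})$ over $(\omega,<)$ can be decided by enumerating the finitely many order types of $(\bar{c}, \bar{x}, \bar{y})$ that satisfy the quantifier-free part $\psi$ and checking whether at least one can be realized given the fixed positions of $\bar{c}$ and $\bar{x}$; this is a finite computation uniform in $\psi$, $\bar{c}$, and $\bar{x}$. The only real obstacle is bookkeeping, namely handling the parameter tuple $\bar{c}$ in the well-quasi-order step, since the proof of Lemma \ref{well-quasi-order} treats the parameter-free case in detail and then reduces the parameterized case to it; but the parameters can be absorbed into the joint order-type-plus-gap-sizes description without altering the structure of the argument. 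Note that uniformity is only claimed in the finitary existential formula and $\bar{c}$, not in the original $\Sigma^\infi_1$ formula, so no effective extraction of the finite subcover is required.
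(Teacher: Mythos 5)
Your proof is correct and takes essentially the same approach as the paper: both invoke the well-quasi-order structure of the $\chi_{\bar{p}}$-sets from Lemma \ref{well-quasi-order} to express $R$ as a finite disjunction of such formulas, and both conclude computability from the decidability of gap-counting over $(\omega,<)$. The only difference is cosmetic: you absorb $\bar{c}$ into the joint order type directly, whereas the paper's proof treats $\varphi(\bar{x},\bar{y})$ parameter-free and then substitutes $\bar{c}$.
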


Note that while $R$ is computable as a relation on $(\omega,<)$, it may not be intrinsically computable.

\begin{proof}
Suppose that $R$ is definable by a $\Sigma^{\infi}_1$ formula $\varphi(\bar{c},\bar{y})$. By the proof of the previous theorem, $\varphi(\bar{x},\bar{y})$ is a disjunction of finitely many formulas of the form $\chi_{\bar{p}}(\bar{x},\bar{y})$. Since in $(\omega,<)$ we can compute the number of elements between any two particular elements, the solution set in $\omega$ of each of these formulas $\chi_{\bar{p}}(\bar{x},\bar{y})$ is a computable set. Then $\varphi(\bar{x},\bar{y})$ is equivalent to a finitary existential formula and its solutions are computable; thus the same is true of $\varphi(\bar{c},\bar{y})$.
\end{proof}

Recall that $R$ is said to be intrinsically $\alpha$-c.e.\ if in all computable copies $\mc{B}$ of $\mc{A}$, $R^\mc{B}$ is $\alpha$-c.e. There is a theorem due to Ash and Knight (Propositions 3.2 and 3.3 of \cite{AshKnight96}), like that of Ash and Nerode \cite{AshNerode81}, which relates the notion of intrinsically $\alpha$-c.e.\ to formally $\alpha$-c.e.\ definitions. The theorem uses the following notion of $\alpha$-free for a particular (computable presentation of) an ordinal $\alpha$. Given tuples $\bar{c}$ and $\bar{a}$ in a structure, we say that $\bar{a}$ is $\alpha$-free over $\bar{c}$ if for any finitary existential formula $\varphi(\bar{c},\bar{x})$ true of $\bar{a}$, and any $\beta < \alpha$, there is a $\bar{a}'$ satisfying $\varphi(\bar{c},\bar{x})$ which is $\beta$-free over $\bar{c}$ and such that $\bar{a} \in R$ if and only if $\bar{a}' \notin R$. Then there are two theorems which together describe the intrinsically $\alpha$-c.e.\ relations:

\begin{thm}[{Ash-Knight \cite[Proposition 3.3]{AshKnight96}}]\label{formally-alpha-ce}
Let $\alpha$ be a computable ordinal. Let $\mc{A}$ be a computable structure and $R$ a computable relation on $\mc{A}$. Let $\bar{c}$ be a tuple. Suppose that no $\bar{a} \in R$ is $\alpha$-free over $\bar{c}$, and for each tuple $\bar{a}$ we can find a formula $\varphi(\bar{c},\bar{a})$ which witnesses this. Also, suppose that for any $\beta < \alpha$, we can effectively decide whether a tuple $\bar{a}$ is $\beta$-free over $\bar{c}$, and if not then we can find the witnessing formula $\varphi(\bar{c},\bar{a})$. Then $R$ is formally $\alpha$-c.e., that is, there are computable sequences $(\varphi_\beta(\bar{c},\bar{x}))_{\beta \leq \alpha}$ and $(\psi_\beta(\bar{c},\bar{x}))_{\beta \leq \alpha}$ such that
\begin{enumerate}
	\item for all $\beta \leq \alpha$ and tuples $\bar{a}$, if
	\[ \mc{A} \models \varphi_\beta(\bar{c},\bar{a}) \wedge \psi_\beta(\bar{c},\bar{a}) \]
	then for some $\gamma < \beta$,
	\[ \mc{A} \models \varphi_\gamma(\bar{c},\bar{a}) \vee \psi_\gamma(\bar{c},\bar{a}), \text{ and}\]

	\item $R$ is defined by
	\[ \bigdoublevee_{\beta < \alpha} (\varphi_\beta(\bar{c},\bar{x}) \wedge \neg \bigdoublevee_{\gamma < \beta} \psi_\beta(\bar{c},\bar{x}))\]
\end{enumerate}
\end{thm}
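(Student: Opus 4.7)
The plan is to construct the sequences $(\varphi_\beta)_{\beta\leq\alpha}$ and $(\psi_\beta)_{\beta\leq\alpha}$ uniformly in $\beta$ and then verify conditions (1) and (2). The intended semantics is that $\varphi_\beta(\bar c,\bar x)$ defines the set of tuples $\bar a$ such that some existential formula $\theta(\bar c,\bar x)$ true of $\bar a$ forces every $\bar a'\notin R$ satisfying $\theta$ to be not $\beta$-free over $\bar c$, and $\psi_\beta$ is defined dually using $\bar a'\in R$. The immediate difficulty is that the set of valid indices $\theta$ is $\Pi_1$ (it universally quantifies over $\bar a'$), so the syntactic presentation is not manifestly $\Sigma_1^\comp$.

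To achieve computability I would enumerate the disjunctions by explicit witnesses. By the effectiveness hypotheses, for each tuple $\bar a$ and each $\beta<\alpha$ I can decide whether $\bar a$ is $\beta$-free and, when it is not, produce a witness $(\theta^{\bar a},\gamma^{\bar a})$ with $\gamma^{\bar a}<\beta$ such that every $\bar a'$ satisfying $\theta^{\bar a}$ on the opposite side of $R$ is not $\gamma^{\bar a}$-free; for $\beta=\alpha$ a witness is available for every $\bar a\in R$. For every $\bar a\in R$ which is not $(\beta+1)$-free I would enumerate $\theta^{\bar a}$ into $\varphi_\beta$; by monotonicity of freeness (being $\beta$-free implies being $\gamma$-free for every $\gamma\leq\beta$), each such $\theta^{\bar a}$ indeed lies in the intended $\Pi_1$-defined index set. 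The dual enumeration of tuples $\bar a\notin R$ not $(\beta+1)$-free produces $\psi_\beta$ wherever the appropriate freeness is decidable; the finitely many extreme indices not handled this way can be taken trivially false, which is consistent with the defining formula since it ranges only over $\beta,\gamma<\alpha$. The whole construction is thus c.e.\ and uniform in $\beta$.

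Condition (1) then follows: if $\bar a$ satisfies $\varphi_\beta$ and $\psi_\beta$ via witnesses $\theta_1,\theta_2$, then $\theta_1\wedge\theta_2$ is existential, true of $\bar a$, and forces every solution to be not $\beta$-free regardless of $R$-membership, so $\bar a$ itself is not $\beta$-free, and a witness $(\theta',\gamma')$ with $\gamma'<\beta$ places $\bar a$ into $\varphi_{\gamma'}$ or $\psi_{\gamma'}$ according to whether $\bar a\in R$. For condition (2), take $\bar a\in R$: the hypothesis that $\bar a$ is not $\alpha$-free gives a witness with $\gamma<\alpha$ placing $\bar a$ in some $\varphi_\gamma$; taking $\beta$ minimal with $\bar a\in\varphi_\beta$, condition (1) together with minimality forces $\bar a\notin\psi_\delta$ for every $\delta<\beta$, so $\bar a$ is captured by the $\beta$-th clause. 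Dually, an $\bar a\notin R$ which is not $\alpha$-free has the least $\psi$-index strictly below the least $\varphi$-index and is excluded from every clause, and an $\bar a\notin R$ which is $\alpha$-free satisfies no $\varphi_\beta$ or $\psi_\beta$ at all, since any such membership would contradict the opposite-side free tuples guaranteed by $\alpha$-freeness.

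The main obstacle I anticipate is the computability step: the natural semantic description of $\varphi_\beta$ uses a $\Pi_1$ condition on $\theta$, and converting it to a c.e.\ enumeration depends crucially both on the effective production of witnesses and on the monotonicity fact that non-$\gamma$-freeness is inherited upward. A more delicate subtlety is that a formula $\theta^{\bar a_0}$ enumerated on account of some $\bar a_0\in R$ may be incidentally satisfied by unintended tuples $\bar a\notin R$, pulling them into the extension of $\varphi_\beta$; one has to argue inductively that such an $\bar a$ already satisfies some $\psi_\gamma$ with $\gamma<\beta$ (because $\bar a$ inherits non-$\beta$-freeness through $\theta^{\bar a_0}$ and the dual enumeration has recorded the corresponding witness), so that the subtraction in the defining formula eliminates it correctly.
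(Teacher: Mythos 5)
The paper does not contain its own proof of this statement; it is quoted as a citation to Proposition~3.3 of Ash--Knight \cite{AshKnight96}. Your reconstruction is correct and, to the best of my knowledge, follows the same route as the cited argument: enumerate each $\varphi_\beta$ (resp.\ $\psi_\beta$) from the effectively produced witnesses to failure of $(\beta+1)$-freeness for tuples in $R$ (resp.\ in $\neg R$), use the downward monotonicity of $\beta$-freeness to see that these witnesses land inside the a priori $\Pi_1$-described index set, handle the top $\psi$-index (where $\alpha$-freeness of tuples outside $R$ is undecidable) by declaring it empty since it never appears in the defining formula, and verify (1) and (2) by the least-index argument; the only unnecessary (and not obviously correct) aside is the claim that an $\alpha$-free $\bar a\notin R$ satisfies no $\psi_\beta$ --- it suffices, and is what your argument actually shows, that such an $\bar a$ satisfies no $\varphi_\beta$.
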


\begin{thm}[{Ash-Knight \cite[Proposition 3.2]{AshKnight96}}]\label{not-intrinsically-alpha-ce}
Let $\alpha$ be a computable ordinal, $\mc{A}$ be a computable structure, and $R$ a computable relation on $\mc{A}$. Suppose that for each tuple $\bar{c}$, we can find a tuple $\bar{a} \in R$ which is $\alpha$-free over $\bar{c}$. Suppose if $\beta \leq \alpha$, $\bar{a}$ is $\beta$-free, and given an existential formula $\varphi$, we can find the witness $\bar{a}'$ to $\beta$-freeness. Then $R$ is not intrinsically $\alpha$-c.e.
\end{thm}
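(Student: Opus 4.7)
The plan is to build a computable copy $\mc{B}$ of $\mc{A}$ by a finite-injury priority construction, presenting $\mc{B}$ as the pullback along a bijection $F \colon \omega \to \mc{A}$ defined as a union of finite partial isomorphisms $F_s$. For each index $e$ coding a pair of partial computable functions $(g_e, n_e)$ that potentially form an $\alpha$-c.e.\ approximation, I will impose a requirement $\mc{R}_e$ demanding that $\lim_s g_e(x_e, s) \neq R^\mc{B}(x_e)$ at some dedicated location $x_e \in \omega$. Auxiliary requirements $\mc{S}_i$ ensure that the $i$-th element of $\mc{A}$ eventually enters the image of $F$.

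The strategy for a single $\mc{R}_e$ exploits the well-foundedness of the ordinals below $\alpha$. When $\mc{R}_e$ is (re)initialized I pick a fresh location $x_e$ and set $F_s(x_e) = \bar{a}_e$ for some $\bar{a}_e \in R$ that is $\alpha$-free over the tuple $\bar{c}$ of previously committed elements; the hypothesis guarantees we can find such $\bar{a}_e$ effectively. Throughout the construction I maintain an ordinal parameter $\beta_e \leq \alpha$ recording the current freeness level of the tuple at $x_e$, together with the invariant $\beta_e \geq n_e(x_e, s)$. As soon as the approximation agrees with $R^{\mc{B}_s}$ at $x_e$, the value $n_e(x_e, s)$ has strictly dropped; I then invoke $\beta_e$-freeness to produce $\bar{a}'_e$ that (i) satisfies the same existential formulas over $\bar{c}$ currently certified by the diagram, (ii) has $R$-status opposite to $\bar{a}_e$, and (iii) is $n_e(x_e, s)$-free. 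I replace $F(x_e)$ with $\bar{a}'_e$ and reset $\beta_e$ to this lower value. Each such switch forces the approximation either to freeze at the wrong value or to change again at $x_e$ with strictly smaller ordinal; since ordinals below $\alpha$ are well-founded, after finitely many iterations the approximation can no longer change and $\mc{R}_e$ is permanently satisfied.

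The requirements are combined via the usual finite-injury priority ordering, with each higher-priority action injuring all lower-priority requirements and resetting their locations and ordinal parameters. The two effectiveness hypotheses in the theorem---that for every $\beta \leq \alpha$ we can decide $\beta$-freeness and compute the witness $\bar{a}'$ from an existential formula $\varphi$---are precisely what is needed to make each step of the above switching strategy computable; hence $F$, and therefore $\mc{B}$, is computable.

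The main obstacle I anticipate is maintaining the freeness invariant for $F(x_e)$ through intervening extensions of $F$ made by lower-priority $\mc{R}_{e'}$ and by $\mc{S}_i$. When such extensions enlarge $\bar{c}$, the tuple currently sitting at $x_e$ must remain $\beta_e$-free over the enlarged context, and the existential formulas witnessed by the growing diagram are richer. I plan to handle this in the standard way: when $\mc{R}_e$ switches, the hypothesis supplies $\bar{a}'_e$ together with replacements for every tuple presently enumerated in the existential orbit of $\bar{a}_e$, and every lower-priority requirement is initialized and re-chooses its own tuple relative to the updated $\bar{c}$. Verifying that this bookkeeping converges---each requirement is injured only finitely often and, once stable, succeeds by the well-foundedness argument above---is the crux of the verification and is entirely analogous to the Ash--Nerode style of proof underlying Theorem~\ref{AshNerode} and Proposition~\ref{ce-condition}.
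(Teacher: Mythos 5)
Your proposal is correct, and it is the standard Ash--Knight argument: a finite-injury construction of $\mc{B}$ via partial isomorphisms $F_s$, where each flip of the coding tuple is licensed by the current freeness ordinal and forces the opponent's counter $n_e(x_e,\cdot)$ to descend strictly, so well-foundedness bounds the number of flips and leaves the approximation permanently wrong. The paper itself gives no proof of this statement (it cites Ash--Knight, Proposition 3.2), but your construction matches both the cited proof and the template the paper uses for its own Proposition~\ref{ce-condition}; the only cosmetic slip is the remark that the tuple at $x_e$ must stay $\beta_e$-free over the \emph{enlarged} context --- it need only be free over the fixed $\bar{c}$ from its initialization, with later elements absorbed as existential witnesses, exactly as your subsequent sentence in fact handles it.
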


There is also the usual relationship between relationship between relatively intrinsically $\alpha$-c.e.\ and formally $\alpha$-c.e. This was shown in the d.c.e.\ case by McCoy \cite{McCoy02}, and also independently McNicholl \cite{McNicholl00} who proved the result for all $n$-c.e.\ degrees. The general theorem for any ordinal $\alpha$ was shown by Ash and Knight in \cite{AshKnight00}.

\begin{thm}[{Ash-Knight \cite[Theorem 10.11]{AshKnight00}}]
Let $\alpha$ be a computable ordinal, $\mc{A}$ be a computable structure, and $R$ a computable relation on $\mc{A}$. Then $R$ is relatively intrinsically $\alpha$-c.e.\ if and only if it is formally $\alpha$-c.e.\ in the sense of Theorem \ref{formally-alpha-ce}.
\end{thm}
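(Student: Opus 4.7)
My plan is to prove the equivalence in two directions, leaning heavily on the framework developed earlier in the chapter.

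For the reverse implication (formally $\alpha$-c.e.\ implies relatively intrinsically $\alpha$-c.e.), the argument is essentially syntactic. Given the uniform computable sequences $(\varphi_\beta(\bar{c},\bar{x}))_{\beta \leq \alpha}$ and $(\psi_\beta(\bar{c},\bar{x}))_{\beta \leq \alpha}$ from Theorem \ref{formally-alpha-ce}, and any copy $\mc{B}$ of $\mc{A}$ (with $\bar{c}$ identified with its image in $\mc{B}$), I would set
\[ A_\beta = \{\bar{a} : \mc{B} \models \varphi_\beta(\bar{c},\bar{a})\}, \qquad B_\beta = \{\bar{a} : \mc{B} \models \psi_\beta(\bar{c},\bar{a})\}. \]
These are uniformly c.e.\ in $\mc{B}$. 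Condition (1) of Theorem \ref{formally-alpha-ce} matches exactly the coherence condition of Definition \ref{def:Ershov-alternate}, and condition (2) identifies $R^\mc{B}$ with $\bigcup_{\beta<\alpha}(A_\beta \setminus \bigcup_{\gamma<\beta} B_\gamma)$, so $R^\mc{B}$ is $\alpha$-c.e.\ in $\mc{B}$.

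For the forward implication, I would argue the contrapositive: suppose $R$ is not formally $\alpha$-c.e., and produce a copy $\mc{B}$ with $R^\mc{B}$ not $\alpha$-c.e.\ in $\mc{B}$. The first step is to extract $\alpha$-free tuples. The (unrelativized) Theorem \ref{formally-alpha-ce} gives the contrapositive statement that if we can effectively witness failure of $\beta$-freeness, then failure of formally $\alpha$-c.e.-ness produces an $\alpha$-free tuple over any $\bar{c}$. That theorem carries effectiveness hypotheses, but for the relative version I may strip them by relativizing everything to a sufficiently high oracle $X \geq_T \mc{A} \oplus R$ in the jump hierarchy of $\mc{A} \oplus R$, so that deciding $\beta$-freeness and producing witnesses become $X$-computable operations.

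Having $\alpha$-free tuples available $X$-effectively, the relativization of Theorem \ref{not-intrinsically-alpha-ce} yields an $X$-computable copy $\mc{C}$ of $\mc{A}$ with $R^\mc{C}$ not $\alpha$-c.e.($X$). Using Knight's upward-closure theorem (as invoked in Chapter \ref{Preliminaries}), I would lift $\mc{C}$ to a copy $\mc{B}$ with $\mc{B} \equiv_T X$ via an $X$-computable isomorphism; since $\mc{B} \equiv_T X$, transferring the non-$\alpha$-c.e.-ness across the isomorphism shows $R^\mc{B}$ is not $\alpha$-c.e.\ in $\mc{B}$, precisely contradicting relatively intrinsic $\alpha$-c.e.-ness. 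The main obstacle is the relativized construction hidden inside Theorem \ref{not-intrinsically-alpha-ce}: this is a transfinite priority argument (an Ash-style $\alpha$-system) in which, for every candidate pair $(g,n)$ of an Ershov approximation computed from $\mc{B}$, one feeds in a $\beta$-free tuple and moves it in and out of $R$ enough times to force the mind-change counter past $\beta$, for every $\beta < \alpha$ simultaneously and consistently. Setting up this $\alpha$-system cleanly is the real technical work, but it is standard Ash--Knight machinery once the $\alpha$-free tuples are in place.
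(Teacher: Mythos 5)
Your reverse implication is fine. The gap is in the forward implication, at the step that extracts $\alpha$-free tuples. The relativized contrapositive of Theorem~\ref{formally-alpha-ce} reads: if $R$ is not formally $\alpha$-c.e.\ \emph{with $X$-computable sequences} (and the $X$-relativized effectiveness conditions hold), then $\alpha$-free tuples exist over every $\bar{c}$. Your hypothesis, however, is only that $R$ is not formally $\alpha$-c.e.\ with \emph{computable} sequences, which is strictly weaker, and the mismatch matters. If some $\bar{c}$ admits no $\alpha$-free tuple, then $R$ automatically has a formally $\alpha$-c.e.\ definition over $\bar{c}$ in which the sequences exist but may fail to be computable, so $R$ is formally $\alpha$-c.e.\ relative to some $X$ while still possibly not formally $\alpha$-c.e.\ in the computable sense. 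In that situation your diagonalization has no $\alpha$-free tuples to move, so it simply cannot run. Yet Theorem~10.11 claims there is still a copy $\mc{B}$ with $R^\mc{B}$ not $\alpha$-c.e.\ in $\mc{B}$; any such $\mc{B}$ must \emph{fail} to compute $X$ (for $\mc{B} \geq_T X$ the $X$-computable definition of $R$ makes $R^\mc{B}$ $\alpha$-c.e.\ in $\mc{B}$), so it cannot arise from an $X$-relativized construction of the kind you describe. (The Knight lift at the end is also unnecessary: if $\mc{C} \leq_T X$ and $R^\mc{C}$ is not $\alpha$-c.e.($X$), then a fortiori $R^\mc{C}$ is not $\alpha$-c.e.($\mc{C}$).)

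What your argument does establish is the \emph{cone} statement: relatively intrinsically $\alpha$-c.e.\ implies formally $\alpha$-c.e.\ relative to a sufficiently high oracle, which by the footnote at the end of Chapter~\ref{Preliminaries} is exactly ``intrinsically $\alpha$-c.e.\ on a cone.'' To obtain computable sequences, the standard proof (and the one in Ash--Knight, which the paper cites rather than reproduces) is an AKMS/Chisholm-style forcing argument over the computable structure $\mc{A}$: one takes a sufficiently generic enumeration, notes that any $\alpha$-c.e.\ approximation of $R$ in the generic copy is forced by a finite condition, and reads the formally $\alpha$-c.e.\ definition off the forcing relation, which is uniformly computable over $\mc{A}$. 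That route avoids $\alpha$-free tuples and effectiveness hypotheses entirely, and it is precisely what supplies the computability of the sequences that your relativization loses.
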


Finally, note that a relation $R$ is intrinsically $\alpha$-c.e.\ on a cone if and only if it has a formally $\alpha$-c.e.\ definition in the sense of Theorem \ref{formally-alpha-ce} for any countable ordinal $\alpha$ (not necessarily computable) and where the sequences $\varphi_\beta$ and $\psi_\beta$ are also not necessarily computable.

We will show that for the structure $(\omega,<)$, all of these notions coincide. We begin by showing that intrinsically $\alpha$-c.e.\ implies relatively intrinsically $\alpha$-c.e. We do this by checking in the next two lemmas that in $(\omega,<)$, the effectiveness conditions from Theorems \ref{not-intrinsically-alpha-ce} and \ref{formally-alpha-ce} are always satisfied.

\begin{lem}\label{lem:effect1}
The structure $(\omega,<)$ satisfies the effectiveness conditions of Theorem \ref{not-intrinsically-alpha-ce} for any computable relation $R$ and computable ordinal $\alpha$ (i.e., if for each tuple $\bar{c}$ there is $\bar{a}$ $\alpha$-free over $\bar{c}$, then we can find such an $\bar{a}$, etc.).
\end{lem}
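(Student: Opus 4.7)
My plan is to proceed by transfinite induction on $\beta\le\alpha$, showing that one can effectively find a witness to $\beta$-freeness for each tuple $\bar{a}$ known to be $\beta$-free over $\bar{c}$, each existential $\varphi$ true of $\bar{a}$, and each specified level $\gamma<\beta$. The main technical device comes from Lemma \ref{well-quasi-order}: every tuple $\bar{a}$ over $\bar{c}$ in $(\omega,<)$ admits a strongest finitary existential formula $\chi_{\bar{q}}(\bar{c},\bar{x})$ true of it, where $\bar{q}$ encodes the relative order and exact gap pattern of $\bar{a}\bar{c}$. This $\bar{q}$, and the computable solution set $D_{\bar{q}}$, are uniformly computable from $\bar{a}$ and $\bar{c}$, and $\chi_{\bar{q}}$ implies every other existential formula over $\bar{c}$ true of $\bar{a}$. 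Consequently, searching for a witness inside the solution set of $\varphi$ can always be replaced by the more efficient search inside $D_{\bar{q}}$.

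For $\beta=0$ there is nothing to do. At a successor stage $\beta=\delta+1$, given that $\bar{a}$ is $\beta$-free and that $\varphi$ is existential and true of $\bar{a}$, one enumerates candidates $\bar{a}'\in D_{\bar{q}}$ of flipped $R$-membership (a computable enumeration, since $R$ is computable and $D_{\bar{q}}$ is computable) and, for each, invokes the inductively available semi-confirmation of $\delta$-freeness. Since $\bar{a}$ is $(\delta+1)$-free a valid witness exists, and since semi-confirmation succeeds on any actually $\delta$-free tuple, the search terminates. The semi-confirmation of $\delta$-freeness is itself built inductively by the strongest-formula reduction: to semi-confirm $\delta$-freeness of $\bar{a}'$, one searches inside the computable set $D_{\bar{q}'}$ (the solution set of the strongest formula true of $\bar{a}'$) for a witness at level $\delta-1$, and so on down the successor chain.

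The limit case $\beta=\lambda$ is the main obstacle, since the obvious semi-confirmation of $\lambda$-freeness would require simultaneously semi-confirming $\gamma$-freeness for every $\gamma<\lambda$, a $\Pi$-like conjunction. The plan is to strengthen the inductive hypothesis so that at each stage $\gamma$ one effectively computes a canonical finite parameterization of the $\gamma$-free tuples of each fixed relative-ordering type and $R$-membership, presented as a finite description built from sets of the form $D_{\bar{p}}$. At successors, such parameterizations propagate through the strongest-formula reduction via the identity $D_{\bar{q}}\cap D_{\bar{p}}=D_{\max(\bar{q},\bar{p})}$, so the collection of $\gamma$-free tuples is effectively computable. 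At a limit $\lambda$ with a cofinal increasing notation $\gamma_n\nearrow\lambda$, one computes these parameterizations at each $\gamma_n$, yielding a decreasing chain; by the well-quasi-orderedness from Lemma \ref{well-quasi-order}, lifted to finite unions of $D_{\bar{p}}$'s by the standard preservation of wqo under finite powersets, this chain must stabilize at some $\gamma_0<\lambda$.

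The hardest technical point, and what I expect to occupy the heart of the proof, is effectively detecting the stabilization stage $\gamma_0$. The argument I have in mind is to show that in $(\omega,<)$ the structure of the strongest-formula reduction forces the sequence of parameterizations to stabilize as soon as a controlled, effectively bounded amount of information in the parameterizations ceases to change, so that eventual equality of consecutive parameterizations is both computably recognizable and genuinely permanent. Once the stabilization is detected, $\lambda$-freeness reduces to $\gamma_0$-freeness, which is handled by the successor case, and both effectiveness conditions of Theorem \ref{not-intrinsically-alpha-ce} follow.
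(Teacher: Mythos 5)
Your proposal shares the paper's basic tool (the well-quasi-ordering of $\Sigma^{\infi}_1$-definable sets from Lemma~\ref{well-quasi-order}, hence finite stabilization of the chain of non-free sets), but it misses the paper's two key simplifications, and the limit case is left as an acknowledged but unresolved gap.

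First, the paper begins with a crucial reduction: in $(\omega,<)$, if $\bar{a}$ is $\alpha$-free over $\varnothing$ and every entry of $\bar{a}$ exceeds $\max(\bar{c})$, then $\bar{a}$ is $\alpha$-free over $\bar{c}$. (The paper also reduces, via replacing $R$ with finitely many lower-arity restrictions, to the case where the $\alpha$-free tuples over $\varnothing$ have unbounded minimum.) This collapses the entire $\bar{c}$-indexed family of problems to a single family $C_\beta = \{\bar{a} : \bar{a}\text{ not }\beta\text{-free over }\varnothing\}$; to find something $\alpha$-free over $\bar{c}$ one just searches $F_\alpha$ for a tuple beyond $\max(\bar{c})$. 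Your proposal works with general $\bar{c}$ and a $\bar{c}$-dependent ``strongest formula'' $\chi_{\bar{q}}$ throughout, which is unnecessary extra bookkeeping and obscures where the argument actually closes off.

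Second, and more seriously, your limit case rests on ``effectively detecting the stabilization stage $\gamma_0$,'' which you yourself flag as the hardest technical point and do not actually prove. The proposed argument---that ``the structure of the strongest-formula reduction forces the sequence of parameterizations to stabilize as soon as a controlled, effectively bounded amount of information in the parameterizations ceases to change''---is a hope, not an argument; in particular, the needed claim that consecutive equality of the parameterizations implies permanent stabilization is nowhere established (it does follow, via the observation that $F_\gamma = F_{\gamma+1}$ forces $F_\delta = F_\gamma$ for all $\delta\ge\gamma$ because the binding constraint in $\delta$-freeness is always at level $\delta-1$, but your writeup does not supply this). The paper sidesteps the entire problem: since the chain $C_0\subseteq C_1\subseteq\cdots\subseteq C_\alpha$ is increasing and the relevant definable sets form a wqo under reverse inclusion, it has only finitely many distinct values $C_{\beta_1},\ldots,C_{\beta_m}$, each computable. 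The effectiveness condition of Theorem~\ref{not-intrinsically-alpha-ce} only requires that \emph{some} algorithm exists; one simply hardcodes the finitely many plateau levels and plateau sets, rather than trying to compute them uniformly from a notation for $\alpha$ as your plan would require. Your approach is attempting a strictly stronger uniformity than the lemma demands, and the proposal does not deliver it.
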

\begin{proof}
Our argument will be very similar to the proof of Theorem 1.3 of \cite{Wright13}. Using similar arguments, we may assume that $R$ is a relation on increasing $n$-tuples. Suppose that for each tuple $\bar{c}$, there is a tuple $\bar{a}$ which is $\alpha$-free over $\bar{c}$.

We will show that if $\bar{a}$ is $\alpha$-free over the empty tuple, and each element of $\bar{a}$ is greater than each element of a tuple $\bar{c}$, then $\bar{a}$ is $\alpha$-free over $\bar{c}$. Let $\bar{c} = (c_1,\ldots,c_n)$ and let $m = \max(c_i)$. Then for any existential formula $\varphi(\bar{c},\bar{x})$ true of $\bar{a}$, there is a corresponding formula $\psi(\bar{x})$ which says that there are elements $y_0 < \cdots < y_m$ smaller than each element of $\bar{x}$, and that $y_{c_1},\ldots,y_{c_n},\bar{x}$ satisfy $\varphi$. Any solution of $\varphi(\bar{c},\bar{x})$ is also a solution of $\psi(\bar{x})$, and vice versa (note that if $\psi(\bar{b})$ holds for some $\bar{b}$, then $y_0 = 0,\ldots,y_m = m$ witness the existential quantifier, since $\varphi$ is existential).

Consider for each $\beta \leq \alpha$ the following sets:
\[ C_{\beta} = \{ \bar{a} : \bar{a} \text{ is not $\beta$-free over $\varnothing$}\} \]
and its complement
\[ F_{\beta} = \{ \bar{a} : \bar{a} \text{ is $\beta$-free over $\varnothing$}\}. \]
Now suppose that the set
\[ L_\alpha = \{ \min(\bar{a}) : \bar{a} \in F_{\alpha}\} \]
is not unbounded, say its maximum is $m$. (Here, $\min(\bar{a})$ is the least entry of $\bar{a}$.) Then consider the $(n-1)$-ary relations $R(0,x_1,\ldots,x_{n-1})$, $R(1,x_1,\ldots,x_{n-1})$, and so on up to $R(m,x_1,\ldots,x_{n-1})$. One of these relations must have $\alpha$-free tuples over any tuple $\bar{c}$. We may replace $R$ with this new relation. Continuing in this way, eventually we may assume that $L_\alpha$ is unbounded.

Now if $\bar{a}$ is not $\beta$-free, this is because there is a finitary existential formula $\varphi(\bar{x})$ true of $\bar{a}$ which witnesses that $\bar{a}$ is not $\beta$-free. Thus $C_{\beta}$ can be written in the form
\[ C_{\beta} = (D_{\beta} \cap R) \cup (E_{\beta} \cap \neg R) \]
where $D_{\beta}$ and $E_{\beta}$ are $\Sigma^{\infi}_1$-definable (and hence computable and definable by a finitary existential formula). Since $R$ is computable, $C_{\beta}$ is computable, and hence $F_{\beta}$ is computable as well for each $\beta$. Moreover, these are uniformly computable, because the sets $C_\beta$ are increasing and the $\Sigma^{\infi}_1$-definable sets are well-quasi-ordered. So there are $\beta_1,\ldots,\beta_m \leq \alpha$ such that $C_0 = \cdots = C_{\beta_1}$, $C_{\beta_1+1} = \cdots = C_{\beta_2}$, and so on until $C_{\beta_m+1} = \cdots = C_\alpha$, and each of these sets are computable.

Thus, for any $\bar{c}$, we can find some tuple $\bar{a}$ which is $\alpha$-free over $\bar{c}$ (by searching through $F_\alpha$ for a tuple $\bar{a}$ all of whose elements are greater than each element of $\bar{c}$).

Now suppose that $\bar{a}$ is $\beta$-free over a tuple $\bar{c}$ for some $\beta \leq \alpha$. Then for any $\gamma < \beta$ and existential formula $\varphi(\bar{c},\bar{x})$ true of $\bar{a}$, there is $\bar{b}$ satisfying $\varphi(\bar{c},\bar{x})$ and $\gamma$-free over $\bar{c}$. Note that there must be such a $\bar{b}$ all of whose elements are greater than each element of $\bar{c}$, since this true of $\bar{a}$, and that any such element of $F_\gamma$ is $\gamma$-free over $\bar{c}$. So we can compute such a $\bar{b}$.
\end{proof}

\begin{lem}\label{lem:effect2}
$(\omega,<)$ satisfies the effectiveness condition of Theorem \ref{formally-alpha-ce} for any computable relation $R$ and ordinal $\alpha$.
\end{lem}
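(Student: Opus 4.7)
The plan is to mimic the argument of Lemma~\ref{lem:effect1}, only this time tracking the actual finitary existential formula whose existence was implicit there. Recall that in that proof we showed that the set
\[ C_\beta = \{ \bar{a} : \bar{a} \text{ is not } \beta\text{-free over } \varnothing \} \]
admits a decomposition $C_\beta = (D_\beta \cap R) \cup (E_\beta \cap \neg R)$ with $D_\beta$ and $E_\beta$ both $\Sigma^\infi_1$-definable, and that by Lemma~\ref{well-quasi-order} only finitely many of these sets are distinct as $\beta$ ranges up to $\alpha$. My starting point is to observe that by the Corollary following Lemma~\ref{well-quasi-order}, each such $D_\beta$ and $E_\beta$ is in fact defined by a \emph{finitary} existential formula $\delta_\beta(\bar{x})$, $\varepsilon_\beta(\bar{x})$, and that one can effectively produce from $\beta$ these formulas together with the finite list of jump ordinals $\beta_1 < \beta_2 < \cdots < \beta_m \leq \alpha$ at which the $C_\beta$ genuinely grow.

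With this in hand, I would deal with the two clauses of the effectiveness condition of Theorem~\ref{formally-alpha-ce} as follows. Decidability of $\beta$-freeness over $\bar{c}$ was already proven in Lemma~\ref{lem:effect1} by reducing to $\beta$-freeness over $\varnothing$ using tuples whose entries all exceed those of $\bar{c}$; I reuse that reduction verbatim. To produce the witnessing formula for a tuple $\bar{a}$ which is not $\beta$-free over $\bar{c}$, find the least $\beta' \leq \beta$ for which the reduced form of $\bar{a}$ lies in $C_{\beta'}$; the jump-ordinal structure forces $\beta'$ to be some $\beta_i + 1$, and I take $\gamma := \beta_i$ as the ordinal witness required by the definition of non-$\beta$-freeness. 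The formula witness is then $\delta_{\beta'}$ or $\varepsilon_{\beta'}$ (depending on whether $\bar{a} \in R$), reassembled over the parameters $\bar{c}$ exactly as in the translation from $\psi(\bar{x})$ to $\varphi(\bar{c},\bar{x})$ in Lemma~\ref{lem:effect1}. The analogous witness for $\alpha$ not being free is obtained identically using $\beta' = \alpha$.

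The main obstacle I expect is verifying that $\delta_{\beta'}$ and $\varepsilon_{\beta'}$ really are witnessing formulas in the strong sense demanded by the definition: namely, every $\bar{a}'$ satisfying the formula with opposite $R$-status must lie in $C_{\beta_i}$. This amounts to showing that the decomposition $C_{\beta'} = (D_{\beta'} \cap R) \cup (E_{\beta'} \cap \neg R)$ exchanges the two components under flipping $R$-status: a tuple in $D_{\beta'} \setminus R$ automatically falls into $E_{\beta_i}$, and symmetrically for $E_{\beta'} \cap R$. I would establish this by re-examining how $D_\beta$ and $E_\beta$ were constructed: because the chain of $C_\beta$'s is produced via the well-quasi-order finiteness of $\Sigma^\infi_1$-relations on $(\omega,<)$, one can refine the choice of $D_\beta$ and $E_\beta$ so that each is the finitary existential formula certifying failure of $\gamma$-freeness for opposite-$R$ instances, exactly as required. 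Everything else (uniformity in $\bar{c}$, in $\beta$, and in the input $\bar{a}$) is immediate from the Corollary, since finitary existential satisfaction in $(\omega,<)$ is uniformly decidable.
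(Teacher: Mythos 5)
Your overall strategy --- reduce everything to the well-quasi-ordering of $\Sigma^{\infi}_1$-definable sets, so that the sets $C_\beta$ of non-$\beta$-free tuples stabilize after finitely many jumps and come with uniformly computable finitary existential definitions --- is exactly the engine behind the paper's (very short) proof, and your analysis of the ``main obstacle'' is essentially correct: since the $C_\gamma$ are increasing and only finitely many are distinct, the ordinal witnesses $\gamma$ occurring in the union defining $D_{\beta'}$ can be replaced by a single $\gamma^* < \beta'$, and every opposite-$R$-status solution of the finitary existential formula defining $D_{\beta'}$ then lands in $C_{\gamma^*}$ by the very definition of $D_{\beta'}$ as a union of certifying formulas. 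Your observation that the least $\beta'$ with $\bar{a} \in C_{\beta'}$ must be a successor is also correct.

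However, there is a genuine gap in how you handle the parameters $\bar{c}$. You propose to decide $\beta$-freeness over $\bar{c}$ by ``reusing verbatim'' the reduction from Lemma \ref{lem:effect1}, which replaces a formula $\varphi(\bar{c},\bar{x})$ by a parameter-free formula $\psi(\bar{x})$. But that reduction is only established for tuples $\bar{a}$ all of whose entries exceed $\max(\bar{c})$, and even there it is used in one direction only (freeness over $\varnothing$ plus large entries implies freeness over $\bar{c}$); it is what Lemma \ref{lem:effect1} needs because the effectiveness condition of Theorem \ref{not-intrinsically-alpha-ce} only asks one to \emph{find} free tuples and witnesses, all of which may be chosen with large entries. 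The effectiveness condition of Theorem \ref{formally-alpha-ce}, by contrast, quantifies over \emph{all} tuples $\bar{a}$: you must decide $\beta$-freeness over $\bar{c}$ and produce witnessing formulas even for tuples with entries below $\max(\bar{c})$ or shared with $\bar{c}$. Such a tuple can satisfy an existential formula over $\bar{c}$ with a unique solution and hence fail to be $1$-free over $\bar{c}$ while being highly free over $\varnothing$, so its ``reduced form'' is undefined and the computation over $\varnothing$ says nothing about it. The fix is to drop the reduction entirely and define $C_\beta$ relative to $\bar{c}$ from the outset: $C_\beta \cap R$ and $C_\beta \cap \neg R$ are $\Sigma^{\infi}_1$-definable \emph{over $\bar{c}$}, and Lemma \ref{well-quasi-order} and its corollary are stated precisely for sets definable over a fixed parameter tuple, so the finiteness of the chain, the computability, and the uniform extraction of finitary existential definitions all go through with parameters. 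This is what the paper does; with that change the rest of your argument works.
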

\begin{proof}
Fix $\bar{c}$. Suppose that there are no $\alpha$-free tuples over $\bar{c}$. For each $\beta \leq \alpha$, let
\[ C_{\beta} = \{ \bar{a} : \bar{a} \text{ is not $\beta$-free over $\bar{c}$}\}. \]
Once again, $C_{\beta} \cap R$ and $C_\beta \cap \neg R$ are $\Sigma^{\infi}_1$-definable over $\bar{c}$, and so by the well-quasi-ordering of such sets, they are uniformly computable and the finitary existential definitions can be uniformly determined. This is enough to have the effectiveness condition of Theorem \ref{not-intrinsically-alpha-ce}.
\end{proof}

Now as a corollary of the previous two lemmas, we can prove the following fact.

\begin{corollary}\label{intrinsically-implies-relatively-intrinsically}
Suppose that $R$ is a computable relation on $(\omega,<)$ which is intrinsically $\alpha$-c.e. Then $R$ is relatively intrinsically $\alpha$-c.e.
\end{corollary}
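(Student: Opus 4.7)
The plan is to combine the two theorems of Ash and Knight on $\alpha$-c.e.\ relations (Theorems \ref{not-intrinsically-alpha-ce} and \ref{formally-alpha-ce}) with the theorem that formally $\alpha$-c.e.\ equals relatively intrinsically $\alpha$-c.e., using Lemmas \ref{lem:effect1} and \ref{lem:effect2} to dispatch the effectiveness hypotheses for free in the structure $(\omega,<)$.

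More precisely, I would argue by contraposition. Suppose that for every tuple $\bar{c}$ in $(\omega,<)$ there exists some tuple $\bar{a} \in R$ which is $\alpha$-free over $\bar{c}$. By Lemma \ref{lem:effect1}, the effectiveness conditions of Theorem \ref{not-intrinsically-alpha-ce} hold automatically for any computable relation on $(\omega,<)$: we can effectively produce such an $\bar{a}$, and for $\beta \leq \alpha$ we can decide $\beta$-freeness and find the witnesses demanded. Hence Theorem \ref{not-intrinsically-alpha-ce} applies and tells us that $R$ is not intrinsically $\alpha$-c.e., contradicting our hypothesis.

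So there must be some tuple $\bar{c}_0$ over which no tuple $\bar{a}$ is $\alpha$-free. By Lemma \ref{lem:effect2}, the effectiveness conditions of Theorem \ref{formally-alpha-ce} likewise hold automatically for $R$ on $(\omega,<)$ above any fixed $\bar{c}_0$: the sets of tuples failing $\beta$-freeness over $\bar{c}_0$ are $\Sigma^{\infi}_1$-definable over $\bar{c}_0$, and by Lemma \ref{well-quasi-order} the chain of these sets for $\beta \leq \alpha$ stabilises on finitely many steps, giving uniform computability of the witnessing formulas. Theorem \ref{formally-alpha-ce} therefore yields that $R$ is formally $\alpha$-c.e., that is, $R$ admits a definition of the form required by Definition~\ref{def:Ershov-alternate} using uniformly computable $\Sigma^{\infi}_1$ sequences $(\varphi_\beta)$, $(\psi_\beta)$ over the parameters $\bar{c}_0$.

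Finally, invoke the Ash--Knight theorem (Theorem~10.11 of \cite{AshKnight00}) that formally $\alpha$-c.e.\ is equivalent to relatively intrinsically $\alpha$-c.e., to conclude that $R$ is relatively intrinsically $\alpha$-c.e. There is essentially no obstacle here beyond verifying that the two effectiveness hypotheses really do transfer as claimed, and that was the content of Lemmas \ref{lem:effect1} and \ref{lem:effect2}; the argument is thus a clean two-step application of existing machinery specialised to $(\omega,<)$.
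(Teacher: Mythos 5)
Your proof is correct and follows essentially the same route as the paper's: derive from Theorem \ref{not-intrinsically-alpha-ce} plus Lemma \ref{lem:effect1} that some tuple $\bar{c}$ admits no $\alpha$-free tuple in $R$ over it, then apply Theorem \ref{formally-alpha-ce} plus Lemma \ref{lem:effect2} to get a formally $\alpha$-c.e.\ definition, which gives relative intrinsic $\alpha$-c.e.-ness. The only cosmetic difference is that the paper passes from formally $\alpha$-c.e.\ to relatively intrinsically $\alpha$-c.e.\ directly rather than citing Theorem 10.11 of Ash--Knight, but this is the same step.
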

\begin{proof}
Suppose that $R$ is computable and intrinsically $\alpha$-c.e.\ for some computable ordinal $\alpha$. We must show that $R$ is relatively intrinsically $\alpha$-c.e., that is, that $R$ is formally $\alpha$-c.e.

\begin{claim} There is a tuple $\bar{c}$ such that no $\bar{a} \in R$ is $\alpha$-free over $\bar{c}$.
\end{claim}
\begin{proof}
Suppose for a contradiction that for each tuple $\bar{c}$, there is $\bar{a} \in R$ which is $\alpha$-free over $\bar{c}$. By Theorem \ref{not-intrinsically-alpha-ce} and Lemma \ref{lem:effect1}, $R$ is not intrinsically $\alpha$-c.e. This contradicts the fact that $R$ is intrinsically $\alpha$-c.e.
\end{proof}

Now let $\bar{c}$ be as in the claim. By Theorem \ref{formally-alpha-ce} and Lemma \ref{lem:effect2}, $R$ is formally $\alpha$-c.e., and hence relatively intrinsically $\alpha$-c.e.
\end{proof}

Now we use this to show that the notions of intrinsically $\alpha$-c.e., relatively intrinsically $\alpha$-c.e., and intrinsically $\alpha$-c.e.\ on a cone all coincide for $(\omega,<)$. One can view this as saying that $(\omega,<)$ and every relation on it are ``natural.''

\begin{prop}
If $R$ is a relation on $(\omega, <)$ and $\alpha$ is any ordinal, then if $R$ is intrinsically $\alpha$-c.e.\ on a cone then $R$ is computable and intrinsically $m$-c.e.\ for some finite $m$.
\end{prop}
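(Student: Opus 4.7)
The plan is to use the well-quasi-order property of the $\Sigma^{\infi}_1$-definable sets on $(\omega,<)$ (Lemma \ref{well-quasi-order}) to collapse any formally $\alpha$-c.e.\ description of $R$ to one of finite length. Appeal to the characterization noted just before the proposition: $R$ is intrinsically $\alpha$-c.e.\ on a cone if and only if there are a finite tuple $\bar{c}$ and sequences $(\varphi_\beta)_{\beta < \alpha}$, $(\psi_\beta)_{\beta < \alpha}$ of $\Sigma^{\infi}_1$ formulas (not required to be computable) witnessing a formally $\alpha$-c.e.\ definition of $R$ over $\bar{c}$ in the sense of Theorem \ref{formally-alpha-ce} combined with Definition \ref{def:Ershov-alternate}. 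Let $A_\beta, B_\beta \subseteq \omega^n$ be the solution sets of $\varphi_\beta, \psi_\beta$, so that $R = \bigcup_{\beta < \alpha}(A_\beta - \bigcup_{\gamma < \beta} B_\gamma)$ together with the Ershov consistency condition on the intersections $A_\beta \cap B_\beta$.

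Form the weakly increasing chains $\tilde A_\beta := \bigcup_{\gamma \leq \beta} A_\gamma$ and $\tilde B_\beta := \bigcup_{\gamma \leq \beta} B_\gamma$; each is $\Sigma^{\infi}_1$-definable over $\bar{c}$ as a countable disjunction of $\Sigma^{\infi}_1$ formulas. Lemma \ref{well-quasi-order} forbids any infinite strictly increasing sequence of such sets, so the ``jump points'' of each chain form a finite set, and I may pick $m < \omega$ past which both chains are constant. For any $\beta > m$, $A_\beta \subseteq \tilde A_m$, so if $\bar{x} \in A_\beta - \bigcup_{\gamma < \beta} B_\gamma$ and $\beta'$ is the least index $\leq m$ with $\bar{x} \in A_{\beta'}$, then $\gamma < \beta' \leq m < \beta$ already gives $\bar{x} \notin B_\gamma$, so $\bar{x}$ is witnessed in $R$ at level $\beta' \leq m$. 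Hence
\[ R \;=\; \bigcup_{\beta \leq m}\Bigl(A_\beta - \bigcup_{\gamma < \beta} B_\gamma\Bigr), \]
and the truncated sequences $\varphi_0,\ldots,\varphi_m, \psi_0,\ldots,\psi_m$ inherit the consistency condition from the original. This is a formally $(m+1)$-c.e.\ definition of $R$ using only $\Sigma^{\infi}_1$ formulas.

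To finish, the corollary following Lemma \ref{well-quasi-order} tells us that every $\Sigma^{\infi}_1$-definable relation on $(\omega,<)$ is already computable and definable by a finitary existential formula. Applied to the finitely many $A_\beta, B_\beta$ with $\beta \leq m$, this makes $R$ a finite Boolean (Ershov) combination of computable sets, hence computable, and upgrades the above to a formally $(m+1)$-c.e.\ definition by $\Sigma^{\comp}_1$ formulas over $\bar{c}$. By the Ash-Knight theorem that formally $\alpha$-c.e.\ is equivalent to relatively intrinsically $\alpha$-c.e.\ (\cite[Theorem 10.11]{AshKnight00}), $R$ is relatively intrinsically, and in particular intrinsically, $(m+1)$-c.e. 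The step that requires the most care is the truncation itself: one must verify that collapsing the $\alpha$-length Ershov description to its first $m+1$ terms really does recover $R$ on the nose and continues to satisfy the consistency condition. Once this is established, the remaining steps are direct appeals to results already proved in the chapter.
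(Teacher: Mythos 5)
Your overall plan — pass to the increasing chains, apply the well-quasi-order lemma, then invoke the corollary to get computability — is the same as the paper's proof, and most of the pieces are right. The gap is in the claim that you may pick $m < \omega$ past which both chains $\tilde A_\beta$ and $\tilde B_\beta$ are constant. The wqo does give that each chain has only finitely many distinct values and hence finitely many jump points, but those jump points are \emph{ordinals} below $\alpha$, and a finite set of ordinals can have a maximum $\geq \omega$. Concretely, with $\alpha = \omega + 1$, take $A_n = B_n = \varnothing$ for all $n < \omega$ and let $A_\omega$ be a nontrivial $\Sigma^{\infi}_1$ set (with $B_\omega = \varnothing$); the Ash--Knight consistency condition is vacuous, $R = A_\omega$, and $\tilde A_\beta$ jumps exactly once, at $\omega$. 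There is no natural-number cutoff, and your truncated sequence $\varphi_0,\ldots,\varphi_m$ consists entirely of the empty set and cannot define $R$. Everything you do afterward (``the finitely many $A_\beta, B_\beta$ with $\beta \leq m$,'' the $(m+1)$-c.e.\ bound) silently relies on $m$ being finite, so the argument does not go through.

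The fix is to \emph{re-index} rather than truncate, which is what the paper's proof does. First replace $A_\beta, B_\beta$ in the representation of $R$ itself by $\tilde A_\beta, \tilde B_\beta$; one checks that this preserves both the set $R$ and the consistency condition. The chains are now weakly increasing, so by the wqo there are ordinals $0 = \beta_1 < \beta_2 < \cdots < \beta_m < \alpha$ on whose complementary intervals both chains are constant. Unwinding the union over all $\beta < \alpha$ then collapses to the length-$m$ form
\[ R \;=\; \tilde A_{\beta_1} \cup (\tilde A_{\beta_2} - \tilde B_{\beta_1}) \cup \cdots \cup (\tilde A_{\beta_m} - \tilde B_{\beta_{m-1}}), \]
where $m$ is genuinely finite because it counts \emph{distinct values} of the chain, not ordinal positions. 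The consistency condition transfers to this re-indexed sequence (if $x \in \tilde A_{\beta_i} \cap \tilde B_{\beta_i}$, then the least witness $\gamma < \beta_i$ to the original condition satisfies $\gamma \leq \beta_{i-1}$). From there your final paragraph — computability of the $\Sigma^{\infi}_1$ sets via the corollary to Lemma \ref{well-quasi-order}, then formal-implies-intrinsic — goes through unchanged.
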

\begin{proof}
Now suppose that $R$ is a possibly non-computable relation on $(\omega,<)$, $\alpha$ is a possibly non-computable countable ordinal, and $R$ is intrinsically $\alpha$-c.e.\ on a cone. Then there are sets $A_\beta$ and $B_\beta$ for $\beta < \alpha$ which are $\Sigma^{\infi}_1$-definable over a tuple $\bar{c}$ such that
\[
R = \bigcup_{\beta < \alpha} (A_\beta - \bigcup_{\gamma < \beta} B_\gamma)
\]
and if $x \in A_\beta \cap B_\beta$ then for some $\gamma < \beta$, $X \in A_\gamma \cup B_\gamma$. We may replace $A_\beta$ by $\bigdoublevee_{\gamma \leq \beta} A_\beta$ for each $\beta$, and similarly for $B_\beta$. Then the sequences $A_\beta$ and $B_\beta$ are increasing in $\beta$. Since the $\Sigma^{\infi}_1$-definable relations form a well-quasi-order under inclusion, there is some sequence $0 = \beta_1,\ldots,\beta_m \leq \alpha$ such that $A_\gamma$ and $B_\gamma$ are constant on the intervals $[\beta_1 = 0,\beta_2)$, $[\beta_2,\beta_3)$, and so on up to $[\beta_m,\alpha]$. Otherwise, we could construct an infinite strictly increasing chain. So
\[
R = (A_{\beta_m} - B_{\beta_{m-1}}) \cup (A_{\beta_{m-1}} - B_{\beta_{m-2}}) \cup \cdots \cup A_{\beta_1}
\]
Suppose that $x \in A_{\beta_i} \cap B_{\beta_i}$. Then for some least $\gamma < \beta_i$, $x \in A_{\gamma} \cup B_{\gamma}$. By the minimality of $\gamma$, $\gamma \leq \beta_{i-1}$. Thus, for some $j < i$, $x \in A_{\beta_j} \cup B_{\beta_j}$.

Since each of these sets $A_{\beta_i}$ and $B_{\beta_j}$ is $\Sigma^{\infi}_1$-definable, they are all computable subsets of $\omega$ which are definable by a finitary existential (and hence $\Sigma^\comp_1$) formula. Thus $R$ is intrinsically $m$-c.e.\ and $R$ is computable.
\end{proof}

\begin{prop}\label{omega-equivalences}
Let $R$ be a relation on $(\omega,<)$. Then the following are equivalent for any computable ordinal $\alpha$:
\begin{enumerate}
	\item $R$ is intrinsically $\alpha$-c.e.\ and computable in $(\omega,<)$,
	\item $R$ is relatively intrinsically $\alpha$-c.e.,
	\item $R$ is intrinsically $\alpha$-c.e.\ on a cone.
\end{enumerate}
In this case, $R$ is intrinsically $m$-c.e.
\end{prop}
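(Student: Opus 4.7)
The plan is to combine results already on hand; no new construction is needed.

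For (1) $\Rightarrow$ (2), I would simply invoke Corollary~\ref{intrinsically-implies-relatively-intrinsically}: the hypothesis that $R$ is computable and intrinsically $\alpha$-c.e.\ for the given computable $\alpha$ is exactly what that corollary requires, and it delivers relatively intrinsically $\alpha$-c.e.

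For (2) $\Rightarrow$ (3), the implication is immediate from the definitions. If $R^{\mc{B}}$ is $\alpha$-c.e.\ relative to $\mc{B}$ in every copy $\mc{B}$ of $(\omega,<)$, then for any cone and any $\textbf{d}$ on that cone with $\mc{B}\leq_T\textbf{d}$, the $\mc{B}$-computable approximation witnessing $R^{\mc{B}}$ as $\alpha$-c.e.\ is also $\textbf{d}$-computable; thus $R^{\mc{B}}$ is $\alpha$-c.e.\ relative to $\textbf{d}$.

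For (3) $\Rightarrow$ (1), I would appeal to the characterization---recorded just before the preceding (unnumbered) proposition---of intrinsic $\alpha$-c.e.\ on a cone as possession of a formal $\alpha$-c.e.\ definition via $\Sigma_1^{\infi}$-definable sets $A_\beta$, $B_\beta$ ($\beta<\alpha$). By the corollary to Lemma~\ref{well-quasi-order}, every $\Sigma_1^{\infi}$-definable set on $(\omega,<)$ is given by a finitary existential formula and is computable in every computable copy, uniformly in the formula and the parameter tuple. Hence in any computable copy $\mc{B}$, evaluating the formal $\alpha$-c.e.\ definition inside $\mc{B}$ exhibits $R^{\mc{B}}$ as $\alpha$-c.e.\ outright (not merely relative to $\mc{B}$), so $R$ is intrinsically $\alpha$-c.e.; applying the same computability fact in the standard copy shows that $R$ is computable in $(\omega,<)$.

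The trailing clause---that in this case $R$ is actually intrinsically $m$-c.e.\ for some finite $m$---is precisely the conclusion of the preceding proposition applied to~(3) and so comes for free. The entire argument is organizational, assembling Corollary~\ref{intrinsically-implies-relatively-intrinsically}, a trivial relativization, and the preceding proposition, so I anticipate no serious obstacle; the only point demanding a moment's care is the observation that the $\Sigma_1^{\infi}$-definable witnesses to the formal $\alpha$-c.e.\ definition give computable (not merely c.e.)\ sets uniformly in every computable copy of $(\omega,<)$, which is exactly what the corollary to Lemma~\ref{well-quasi-order} provides.
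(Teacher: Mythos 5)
Your proposal is correct and follows the same cycle (1)$\Rightarrow$(2)$\Rightarrow$(3)$\Rightarrow$(1) as the paper, using the same two ingredients: Corollary~\ref{intrinsically-implies-relatively-intrinsically} for the first implication and the preceding (unnumbered) proposition for the last, with (2)$\Rightarrow$(3) being essentially immediate either way (the paper routes it through the formal $\alpha$-c.e.\ definition, you argue it directly from the definitions; both are one-liners). Your partial re-derivation of (3)$\Rightarrow$(1) glosses over the uniformity in $\beta<\alpha$ needed for an outright $\alpha$-c.e.\ approximation when $\alpha$ is infinite, but since you also invoke the preceding proposition---whose conclusion already gives computability and intrinsic $m$-c.e.-ness, hence all of (1)---nothing is missing.
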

\begin{proof}
Corollary \ref{intrinsically-implies-relatively-intrinsically} shows that if $R$ is intrinsically $\alpha$-c.e. and computable in $(\omega,<)$, then $R$ is relatively intrinsically $\alpha$-c.e.

If $R$ is relatively intrinsically $\alpha$-c.e., then $R$ is formally $\alpha$-c.e., and hence intrinsically $\alpha$-c.e.\ on a cone.

The previous proposition shows that if $R$ is intrinsically $\alpha$-c.e.\ on a cone, then it is intrinsically $\alpha$-c.e.\ and computable in $(\omega,<)$.
\end{proof}

Now we will show that any intrinsically $\alpha$-c.e.\ relation on $(\omega,<)$ (which must be $m$-c.e.\ for some $m$) is intrinsically of c.e.\ degree. One example of such a relation is the intrinsically d.c.e.\ relation $S$ consisting of pairs $(a,b)$ which are separated by exactly one element. In any computable copy, $S$ computes the adjacency relation (which is always co-c.e.). Two elements $a$ and $b$ are adjacent if and only if there is some $c > b$ such that $c$ and $a$ are separated by a single element (which is $b$), and so the adjacency relation is c.e.\ in $S$ in any computable copy of $(\omega,<)$. Since it is always co-c.e., it is always computable in $S$. On the other hand, the adjacency relation in any computable copy computes an isomorphism between that copy and $(\omega,<)$, and hence computes $S$ in that copy. Thus we see that $S$ is intrinsically of c.e.\ degree. The proof of the following proposition is just a generalization of this idea.

Note that it is possible to have a relation which is formally $\Delta^0_2$ but not formally $\alpha$-c.e.\ for any computable ordinal $\alpha$. There is a formally $\Delta^0_2$ relation $R$ on a structure $\mc{A}$ whose degree spectrum, relativized to any degree \textbf{d}, consists of all of the $\Delta^0_2(\textbf{d})$ degrees. The degree spectra of any formally $\alpha$-c.e.\ relation does not consist, relative to any degree \textbf{d}, of all of the $\Delta^0_2(\textbf{d})$ degrees. This is in contrast to the $\Delta^0_2$ degrees, all of which are $\omega^2$-c.e.\ for some computable presentation of $\omega^2$ (see Theorem 8 of \cite{EpsteinHaasKramer81}).

\begin{prop}\label{intrinsically-ce}
Let $R$ be an  intrinsically $m$-c.e.\ $n$-ary relation on $(\omega,<)$ for some finite $m$. Then $R$ is intrinsically of c.e.\ degree.
\end{prop}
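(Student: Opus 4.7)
By Proposition \ref{omega-equivalences}, since $R$ is intrinsically $m$-c.e.\ on $(\omega,<)$ it is in fact relatively intrinsically $m$-c.e., hence formally $m$-c.e. I would fix such a formal definition
\[ R(\bar{x}) \iff \bigdoublevee_{i<m}\bigl(\varphi_i(\bar{c},\bar{x}) \wedge \neg\bigdoublevee_{j<i}\psi_j(\bar{c},\bar{x})\bigr), \]
with each $\varphi_i,\psi_j$ a $\Sigma_1^{\infi}$ formula. By Lemma \ref{well-quasi-order}, on $(\omega,<)$ each such formula is equivalent to a finitary existential one, and its solution set is computable. In any computable copy $\mathcal{B}$, the corresponding sets $A_i^{\mathcal{B}}$ and $B_j^{\mathcal{B}}$ are c.e., and their join $T^{\mathcal{B}}$ is a c.e.\ set from which $R^{\mathcal{B}}$ is computable. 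The task is therefore to show $R^{\mathcal{B}} \geq_T T^{\mathcal{B}}$, for then $R^{\mathcal{B}} \equiv_T T^{\mathcal{B}}$ realizes $R^{\mathcal{B}}$ as being of c.e.\ degree.

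The key structural feature of $(\omega,<)$ to exploit is that every existential formula over it merely asserts lower bounds on the gaps between certain specified elements, so each $A_i^{\mathcal{B}},B_j^{\mathcal{B}}$ is upward-closed in the capped gap profile of its input tuple (relative to $\bar{c}^{\mathcal{B}}$). Using the well-quasi-ordering of $\Sigma_1^{\infi}$-definable relations (Lemma \ref{well-quasi-order}), I can choose a single cap $K$ such that membership of a tuple $\bar{x}$ in $R$ depends only on the order of $\bar{x}$ and $\bar{c}$ together with the gaps between consecutive entries truncated at $K$. Consequently $R^{\mathcal{B}}(\bar{x})$ already carries substantial information about the capped gap profile of $\bar{x}$, and the remaining ambiguity can be eliminated by c.e.\ enumeration.

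I would then proceed by induction on $m$, along the lines of the proof of Proposition \ref{ce-degree-condition}. For $m=1$ the relation is intrinsically c.e.\ and there is nothing to prove. For the inductive step I would formulate a natural $m$-c.e.\ generalization of d-freeness and verify that no tuple in $(\omega,<)$ is ``$m$-d-free'' over any parameter tuple $\bar{c}$; the effectiveness arguments from Lemmas \ref{lem:effect1} and \ref{lem:effect2} carry over to make the witnesses to non-freeness uniformly computable in $(\omega,<)$. With this in hand one mimics the construction of the c.e.\ set $A$ in the proof of Proposition \ref{ce-degree-condition}: monitor $R^{\mathcal{B}}(\bar{x})$ together with the c.e.\ enumerations of the $A_i^{\mathcal{B}},B_j^{\mathcal{B}}$, and use stable $R^{\mathcal{B}}$-behaviour to force consistency with a particular capped gap profile for $\bar{x}$, thereby recovering each $A_i^{\mathcal{B}},B_j^{\mathcal{B}}$, and hence $T^{\mathcal{B}}$, from $R^{\mathcal{B}}$.

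The main obstacle I anticipate is formulating the correct $m$-c.e.\ analogue of d-freeness and establishing its failure on $(\omega,<)$. For the d.c.e.\ ($m=2$) case this is a direct calculation: any existential extension $\varphi$ of a tuple in $(\omega,<)$ already pins down enough of the gap profile that no further existential $\psi$ can flip the $R$-value in the prescribed way, so no d-free tuple exists. Lifting this to general $m$ requires carefully tracking the alternating structure of an $m$-c.e.\ definition against the linear rigidity of $\omega$; beyond this, the remaining technicalities are routine adaptations of the machinery already developed in Chapter \ref{DCESection}.
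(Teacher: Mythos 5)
Your setup matches the paper's: reduce via Proposition \ref{omega-equivalences} to a formal $m$-c.e.\ definition $R = (A_1 - A_2) \cup (A_3-A_4)\cup\cdots$ in which each $A_i$ is a finite union of gap-profile sets, and observe that the whole problem is to get a reduction from $R^{\mc{B}}$ back to something of c.e.\ degree. But the heart of your argument --- ``formulate an $m$-c.e.\ generalization of d-freeness, verify that no tuple is $m$-d-free over some $\bar{c}$, and mimic Proposition \ref{ce-degree-condition}'' --- is precisely the step you have not supplied, and it is not routine. Proposition \ref{ce-degree-condition} is proved only for formally d.c.e.\ relations; no analogue for $m>2$ is formulated anywhere in the paper (which only remarks that extending the characterization ``does not seem difficult''), and even your $m=2$ ``direct calculation'' that d-free tuples fail to exist is asserted rather than carried out --- the existence of the intermediate tuples $\bar{b},\bar{b}''$ with matching $R$-restrictions is exactly where such calculations get delicate on $(\omega,<)$. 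Note also that even if the generalized machinery worked, it would produce enumeration-dependent c.e.\ sets (as the discussion following Proposition \ref{ce-degree-condition} emphasizes), not the definable sets $A_i^{\mc{B}}$, so your specific claim $R^{\mc{B}} \geq_T T^{\mc{B}}$ would still need its own argument.

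The paper closes the loop by a different and more concrete device, which is the idea you are missing: it shows that $R^{\mc{B}}$ computes the \emph{successor relation} on $\mc{B}$, which is intrinsically $\Pi^0_1$ (hence of c.e.\ degree) and which computes the isomorphism with the standard copy and therefore $R^{\mc{B}}$ itself. The combinatorial core is that, after normalizing so that $A_{m-1}$ and $A_m$ are unions of sets $E_{\bar{p}}$ recording only the gaps between consecutive coordinates, one can extract gap profiles $\bar{u}$ and $\bar{v}$ agreeing everywhere except in one coordinate $t$ where $v_t = u_t + 1$, with $E_{\bar{u}} - E_{\bar{v}} \subseteq R$ and $E_{\bar{v}} \subseteq \neg R$ (or the same with $R$ and $\neg R$ interchanged). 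A tuple realizing profile $\bar{u}$ that lies in $R^{\mc{B}}$ must then have \emph{exactly} $u_t$ elements in its $t$-th gap, and positioning $x$ and $y$ appropriately inside that gap certifies that $y$ is the successor of $x$; this gives a c.e.\ verification of successorship, which together with its co-c.e.\ definition yields the computation. Your route may well be salvageable, but as written it defers the entire content of the proposition to machinery that does not yet exist.
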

\begin{proof}
Let $A_1 \supsetneq \cdots \supsetneq A_m$ be intrinsically $\Sigma^0_1$ sets such that (depending on whether $m$ is odd or even) either
\[
R = (A_1 - A_2) \cup (A_3 - A_4) \cup \cdots \cup (A_{m-1} - A_m)
\]
or
\[
R = (A_1 - A_2) \cup (A_3 - A_4) \cup \cdots \cup (A_{m-2} - A_{m-1}) \cup A_m.
\]
We claim that in any computable copy $\mc{B} \cong (\omega,<)$, from $R^\mc{B}$ we can compute the successor relation on $\mc{B}$. The successor relation computes the isomorphism between $\mc{B}$ and $(\omega,<)$ and hence computes $R^\mc{B}$. The successor relation is intrinsically $\Pi^0_1$ and hence of c.e.\ degree, so this will suffice to complete the proof.

We will begin with a simple case to which we will later reduce the general case. For $\bar{p} \in \mc{N}^n$, let $E_{\bar{p}} = D_{0 \concat \bar{p}}$, that is, $E_{\bar{p}}$ consists of those $\bar{x}=(x_1,\ldots,x_n)$ such that there are $p_1$ points between $x_1$ and $x_2$, $p_2$ point between $x_2$ and $x_3$, and so on (but no restriction on the number of points before $x_1$).

\begin{claim}
Suppose that $B \subseteq A$ are intrinsically $\Sigma^0_1$ sets of the form
\[ A = E_{\bar{p}_1} \cup \cdots \cup E_{\bar{p}_{\ell_1}}\]
and
\[ B= E_{\bar{q}_1} \cup \cdots \cup E_{\bar{q}_{\ell_2}} .\]
Furthermore, suppose that $A \neq B$ and $B \neq \varnothing$, $A - B \subseteq R$, and $B \subseteq \neg R$. Then $R$ computes the successor relation.
\end{claim}
\begin{proof}
Choose $j$ such that $E_{\bar{q}_j}$ is not contained in any of the others, and is not equal to $E_{\bar{p}_i}$ for any $i$. Some such $j$ exists because $A - B$ is non-empty.

Now, in the proof of Lemma \ref{well-quasi-order} we showed that if
\[ E_{\bar{q}_j} \subseteq E_{\bar{p}_1} \cup \cdots \cup E_{\bar{p}_{\ell_2}} \]
then $E_{\bar{q}_j} \subset E_{\bar{p}_i}$ for some $i$ (though we showed this with $E$ replaced by $D$, the same result still applies here). Fix such an $i$. There must be some index $t \in \{1,\ldots,n\}$ such that $\bar{p}_i(t) < \bar{q}_j(t)$.

Then let
\[ \bar{u} = (u_1,\ldots,u_n) = (\bar{q}_j(1),\bar{q}_j(2),\ldots,\bar{q}_j(t)-1,\ldots,\bar{q}_j(n))\]
and
\[ \bar{v} = (v_1,\ldots,v_n) = \bar{q}_j. \]
Thus $u_i = v_i$ except for $i = t$, in which case $v_i = u_i + 1$. So $E_{\bar{v}} \subseteq E_{\bar{u}}$.

Now $E_{\bar{v}} = E_{\bar{q}_j} \subseteq B$ and $E_{\bar{u}} \subseteq E_{\bar{p}_i} \subseteq A$. Moreover, if, for any $\bar{w}$, $E_{\bar{v}} \subseteq E_{\bar{w}} \subseteq E_{\bar{u}}$, then either $E_{\bar{w}} = E_{\bar{u}}$ or $E_{\bar{w}} = E_{\bar{v}}$.

So, by the choice of $j$, we have
\[ E_{\bar{u}} - E_{\bar{v}} \subseteq A - B \subseteq R \]
and
\[ E_{\bar{v}} \subseteq B \subseteq \neg R. \]

Let $\mc{B}$ be a computable copy of $(\omega,<)$. Let $S$ be the successor function on $\mc{B}$. We claim that, using $R^{\mc{B}}$, we can compute $S$. Suppose that we wish to compute whether an element $y$ is the successor of $x$. We can, in a c.e.\ way, find out if $y$ is not the successor of $x$, so we just need to show how to recognize that $y$ is the successor of $x$ if this is the case. We can non-uniformly assume that we know some initial segment of $\mc{B}$, say the first $t + u_1 + \cdots + u_t + 1$ elements.

First, we must have $x < y$. If $x \leq t + u_1 + \cdots + u_t$, then we can non-uniformly decide whether $y$ is the successor of $x$. Otherwise, search for $z_1 < z_2 < \cdots < z_{t-1} < x < y = z_t < \cdots < z_n$ such that
\begin{enumerate}
	\item there $u_1$ elements between $z_1$ and $z_2$, $u_2$ elements between $z_2$ and $z_3$, and so on,
	\item there are $u_t-1$ elements between $z_{t-1}$ and $x$, and
	\item $(z_1,\ldots,z_n) \in R^\mc{B}$.
\end{enumerate}
Then since $(z_1,\ldots,z_n) \in D_{\bar{u}}$, and $D_{\bar{v}} \subseteq \neg R$,
\[ (z_1,\ldots,z_n) \in D_{\bar{u}} - D_{\bar{v}}. \]
In particular, there cannot be more than $v_{t}$ elements between $z_{t-1}$ and $z_t$. As $z_{t-1} < x < y = z_t$, and there are $u_t = v_t - 1$ elements between $z_{t-1}$ and $x$, $y$ is the successor of $x$. If $y$ is the successor of $x$ and $x \geq t + u_1 + \cdots + u_t$, then it is possible to find such elements $z_1,\ldots,z_n$.
\end{proof}

Now we will finish the general case. Let $\mc{B}$ be a computable copy of $(\omega,<)$ and let $S$ be the successor relation on $\mc{B}$. Suppose that
\[
R = (A_1 - A_2) \cup (A_3 - A_4) \cup \cdots \cup (A_{m-1} - A_m)
\]
or
\[
R = (A_1 - A_2) \cup (A_3 - A_4) \cup \cdots \cup (A_{m-2} - A_{m-1}) \cup A_m
\]
where each of these sets is an intrinsically $\Sigma^0_1$ set definable over constants $\bar{c}$.

Let $M$ be a constant greater than each entry of $\bar{c}$. Then we can non-uniformly know the successor relation restricted to $\{0,\ldots,M\}$. It remains to compute the successor relation on $\{M,M+1,\ldots\}^n$. View $R$ as a relation on $\{M,M+1,\ldots\}^n$, which is essentially the same as $\omega^n$ under the natural identification $M \to 0$, $M+1 \to 1$, and so on. So we have reduced to the case where (for some possibly smaller $m$) we have
\[
R = (A_1 - A_2) \cup (A_3 - A_4) \cup \cdots \cup (A_{m-1} - A_m)
\]
or
\[
R = (A_1 - A_2) \cup (A_3 - A_4) \cup \cdots \cup (A_{m-2} - A_{m-1}) \cup A_m
\]
and each of these sets is intrinsically $\Sigma^0_1$ and definable without parameters.

Now each $A_i$ is a union of sets of the form $D_{\bar{p}}$ for $\bar{p} \in \mathbb{N}^{n+1}$. Let $N$ be larger than the first entry $p_0$ of each of these tuples $\bar{p}$. Then make the same reduction as before to reduce to the case where each $A_i$ is a union of sets $E_{\bar{q}}$. 

Now if 
\[
R = (A_1 - A_2) \cup (A_3 - A_4) \cup \cdots \cup (A_{m-1} - A_m)
\]
then $A = A_{m-1}$ and $B = A_m$ are both as in the claim above. If 
\[
R = (A_1 - A_2) \cup (A_3 - A_4) \cup \cdots \cup (A_{m-2} - A_{m-1}) \cup A_m
\]
then $A = A_{m-1}$ and $B = A_m$ are as in the claim above, except with $R$ and $\neg R$ interchanged.
\end{proof}

So far, we still only know of three possible degree spectra for a relation on $(\omega,<)$: the computable degree, the c.e.\ degrees, and the $\Delta^0_2$ degrees. It is possible that there is another degree spectrum in between the c.e.\ degree and the $\Delta^0_2$ degrees, but we do not know whether such a degree spectrum exists. This is the main open question of this section:

\begin{question}\label{possible-spectra-omega}
Is there a relation on $(\omega,<)$ whose degree spectrum is strictly contained between the c.e.\ degrees and the $\Delta^0_2$ degrees on a cone?
\end{question}

This question appears to be a difficult one. If the answer to the question is no, that such a degree spectrum cannot exist, and the proof was not too hard, then it would probably be of the following form. Let $R$ be a relation on $(\omega,<)$. Working on a cone, there is a computable function $f$ such that given an index $e$ for a computable function $\varphi_e(x,s)$ of two variables which is total and gives a $\Delta^0_2$ approximation of a set $C$, $f(e)$ is an index for a structure $\mc{A}$ isomorphic to $(\omega,<)$ with $R^\mc{A} \equiv_T C$. If $\varphi_e(x,s)$ does not give a $\Delta^0_2$ approximation, then we place no requirements on $f(e)$. Moreover, if the coding is simple, then we will have indices $g_1(e)$ and $g_2(e)$ for the computations $R^\mc{A} \leq_T C$ and $C \leq_T R^\mc{A}$ respectively.  We capture this situation with the following definition:

\begin{defn}
Let $\mc{A}$ be a structure and $R$ a relation on $\mc{A}$. Let $\Gamma$ be a class of degrees indexed by some subset of $\omega$ which relativizes. Then the degree spectrum of $R$ is \textit{uniformly equal to $\Gamma$ on a cone} if, on a cone, there are computable functions $f$, $g_1$, and $g_2$ such that given an index $e$ for $C \in \Gamma$, the computable structure $\mc{A}$ with index $f(e)$ has 
\[ R^\mc{A} = \Phi_{g_1(e)}^C \text{ and } C = \Phi_{g_2(e)}^{R^\mc{A}}. \]
\end{defn}
\noindent We do not know of any relations which are not uniformly equal to their degree spectrum. 
\begin{question}\label{spectra-uniformly}
Must every relation obtain its degree spectrum uniformly on a cone?
\end{question}
This question is not totally precise, because we have not classified all possible degree spectra on a cone and so we do not have an indexing for each of them. But this question is precise for relations with particular degree spectra, such as the c.e.\ degrees or the $\Delta^0_2$ degrees.

Theorem \ref{thm:weird-relation-on-omega} below says that one of the two questions we have just introduced is resolved in an interesting way: either there is a relation $R$ on $(\omega,<)$ with degree spectrum all of the $\Delta^0_2$ degrees, but not uniformly, or the relation $R$ has a degree spectrum contained strictly in between the c.e.\ degrees and the $\Delta^0_2$ degrees. 

If the answer to Question \ref{possible-spectra-omega} is no, that there are only three possible degree spectra for a relation on $(\omega,<)$, then by Theorem \ref{thm:weird-relation-on-omega} the answer to Question \ref{spectra-uniformly} must also be no. If the answer to Question \ref{possible-spectra-omega} is yes, then the answer to Question \ref{spectra-uniformly} might be either yes or no.

If the answer to Question \ref{possible-spectra-omega} is yes, there are more than three possible degree spectra, then we can ask what sort of degree spectra are possible. We know that any relation which is intrinsically $\alpha$-c.e.\ has degree spectra consisting only of the c.e.\ degrees, but can there be a relation which is intrinsically of $\alpha$-c.e.\ \textit{degree}, while not being intrinsically of c.e.\ degree?

We are now ready to prove Theorem \ref{thm:weird-relation-on-omega}.

\begin{thm}\label{thm:weird-relation-on-omega}
There is a relation $R$ on $(\omega,<)$ whose degree spectrum is not uniformly equal to the $\Delta^0_2$ degrees on a cone, but strictly contains the c.e.\ degrees.
\end{thm}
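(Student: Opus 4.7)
The plan is to take $R$ to be a fixed computable relation on $(\omega,<)$ chosen so as to have d-free tuples (in the sense of Chapter \ref{DCESection}) over every tuple $\bar c$. Any $n$-ary relation whose truth on $(a_1,\ldots,a_n)$ is determined by the ordering pattern of $\bar a$ together with a parity condition on sufficiently many of the gaps $a_{i+1}-a_i-1$ will do: appropriate shifts of $\bar a$ preserve the existential type over $\bar c$ but can flip each parity, yielding d-freeness, and the effectiveness hypotheses of Proposition \ref{ce-condition} are then immediate on $(\omega,<)$. Relativizing Proposition \ref{ce-condition} therefore produces copies $\mc B$ in which $R^{\mc B}$ fails to be of c.e.\ degree, so on a cone the degree spectrum of $R$ strictly contains the c.e.\ degrees.

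For the non-uniformity half, I would argue by contradiction. Suppose on some cone with base $X$ there are $X$-computable total functions $f,g_1,g_2$ witnessing uniform equality of the spectrum of $R$ with the $X$-$\Delta^0_2$ degrees. Apply the recursion theorem relative to $X$ to obtain an index $e^{*}$ such that $\Phi_{e^{*}}^{X}(x,s)$ is a $\{0,1\}$-valued function that we design with knowledge of $e^{*}$; write $C=\lim_s \Phi_{e^{*}}^{X}(\cdot,s)$ and $\mc B=\mc A_{f(e^{*})}$. If $\mc B$ is not an $X$-computable copy of $(\omega,<)$, the uniformity hypothesis already fails at $e^{*}$, so we may assume $\mc B\cong(\omega,<)$ and set out to violate one of the two claimed reductions at $e^{*}$.

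I would construct $\Phi_{e^{*}}^{X}$ in stages, mimicking the \textsc{waiting-for-computation}/\textsc{waiting-for-change} strategy of Proposition \ref{ce-condition}. At each stage $s$, wait for two converging computations
\[ R^{\mc B}_s[0,\ldots,n] = \Phi_{g_1(e^{*}),s}^{C_s}[0,\ldots,n] \text{ with use } u, \qquad C_s[0,\ldots,u] = \Phi_{g_2(e^{*}),s}^{R^{\mc B}_s}[0,\ldots,u] \text{ with use } v, \]
then toggle a single bit of $C$ below $u$ via $\Phi_{e^{*}}^{X}$. By the first computation, $R^{\mc B}$ must change below $v$ if the uniformity hypothesis is to survive, but $R^{\mc B}$ is just the pullback of a fixed computable $R$ along a fixed enumeration of $\mc B\cong(\omega,<)$, and the d-free analysis of $R$ in $\mc B$ obstructs the required change, producing the contradiction. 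Only finitely many bit-flips per triple are needed, so $C$ remains $\Delta^0_2$.

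The main obstacle will be verifying that the bit-flip of $C$ really does propagate through the two claimed reductions to force a \emph{structural} change in $R^{\mc B}$ that is impossible given the shape of $R$ on $(\omega,<)$: this is a delicate interplay between the reduction uses and the enumeration stages of $\mc B$, exactly analogous to the use-bound management in Proposition \ref{ce-condition}. Once this is arranged, the same bookkeeping ensures that $\Phi_{e^{*}}^{X}(x,\cdot)$ stabilizes at each $x$, so $C$ is genuinely $\Delta^0_2$, completing the contradiction and hence the proof.
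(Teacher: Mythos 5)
The first half of your plan is broadly in the spirit of the paper (exhibit d-free tuples over every $\bar c$ and invoke the relativization of Proposition \ref{ce-condition}), and your recursion-theorem framing of non-uniformity matches the paper's. But the second half has a genuine gap, and it sits exactly at the point you yourself flag as ``the main obstacle.'' You assert that after toggling a bit of $C$ below the use, ``the d-free analysis of $R$ in $\mc{B}$ obstructs the required change'' in $R^{\mc{B}}$. This is backwards: d-freeness is precisely what makes such changes in $R^{\mc{B}}$ possible. In any copy $\mc{B}$ of $(\omega,<)$, the value $R^{\mc{B}}(\bar x)$ changes only when new elements are enumerated below entries of $\bar x$, and a copy built by the hypothetical uniform procedure $f$ can absorb any finite number of demanded changes by enumerating finitely many new points. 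So finitely many bit-flips of $C$ never yield a contradiction on their own; the only way to defeat $\mc{B}$ is to force infinitely many elements to be enumerated below some fixed element, so that $\mc{B}\not\cong(\omega,<)$ --- while keeping the number of changes to each bit of $C$ finite so that $C$ stays $\Delta^0_2$. Reconciling these two demands is the entire content of the argument, and your sketch supplies no mechanism for it.

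The paper's resolution is to engineer $R$ with a rigid, self-similar combinatorial structure (nested ``matching pairs of $m$-cycles,'' Remarks \ref{rem:k-cycle-1} and \ref{rem:k-cycle-2}) so that each single in-and-out toggle of a \emph{fresh} bit $p_n$ of $C$ both forces one new element to be enumerated below a designated block of $\mc{B}$ and installs a ``link'' guaranteeing that any later enumeration below block $n{+}1$ propagates to an enumeration below block $n$, hence below block $0$. The cascade of links converts a d.c.e.\ approximation to $C$ (at most two changes per bit) into infinitely many forced enumerations below a fixed element. A generic ``ordering pattern plus gap parities'' relation gives no such propagation, and nothing in your sketch indicates why it would; the choice of $R$ cannot be made independently of the diagonalization, it must be designed jointly with it. As written, the first half and the recursion-theorem shell are fine, but the core of the non-uniformity proof is missing.
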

\begin{proof}
We will begin by working computably, and everything will relativize. We will exhibit the relation $R$ and show that there is a computable function $h$ which, given indices $e,i,j$, produces a $\Delta^0_2$ set $C$ with index $h(e,i,j)$ such that if $e$ is the index for a computable structure $\mc{A}$ isomorphic to $(\omega,<)$, then either $R^\mc{A} \neq \Phi_{i}^C$ or $C \neq \Phi_{j}^{R^\mc{A}}$. Note that $h(e,i,j)$ will be total and will always give an index for a $\Delta^0_2$ set, even if $e$ is not an index of the desired type. We will also show that $R$ is not intrinsically of c.e.\ degree. The construction of $R$ and $h$ will follow later, but first we will show that such an $R$ has a degree spectrum which is not uniformly equal to the $\Delta^0_2$ degrees.

Suppose to the contrary that there are computable functions $f$, $g_1$, and $g_2$ such that, given an index $e$ for a function $\varphi_e(x,s)$ which is total and gives a $\Delta^0_2$ approximation of a set $C$, $f(e)$ is the index of a computable copy $\mc{A}$ of $(\omega,<)$ with \[ R^\mc{A} = \Phi_{g_1(e)}^C \text{ and } C = \Phi_{g_2(e)}^{R^\mc{A}}.\]
Then we would like to take the composition $\theta(e) = h(f(e),g_1(e),g_2(e))$, except that this may not be total when applied to indices $e$ where $\varphi_e(x,s)$ is either not total or not a $\Delta^0_2$ approximation. So instead define $h$ as follows. Given an input $e$, we will define a $\Delta^0_2$ set uniformly by giving an approximation by stages. Try to compute $f(e)$, $g_1(e)$, and $g_2(e)$, and while these does not converge, make the $\Delta^0_2$ approximation equal to zero. When they do converge, compute $h(f(e),g_1(e),g_2(e))$ (since $h$ is total, this will always converge and give an index for a $\Delta^0_2$ set) and have our approximation instead follow the $\Delta^0_2$ set with index $h(f(e),g_1(e),g_2(e))$. Thus $\theta(e) = h(f(e),g_1(e),g_2(e))$ whenever the right hand side is defined, and is an index for the empty set otherwise.

Now, by the recursion theorem, there is a fixed point $e$ of $\theta$, so that
\[ \varphi_{\theta(e)}(x,s) = \varphi_e(x,s). \]
Now the left hand side is always total and is a $\Delta^0_2$ approximation, so the same is true of the right hand side. Thus $\theta(e) = h(f(e),g_1(e),g_2(e))$. Let $C$ be the $\Delta^0_2$ set with this approximation, so that $C$ has indices $e$ and $\theta(e)$. Thus $f(e)$ is the index of a computable copy $\mc{A}$ of $(\omega,<)$ with \[ R^\mc{A} = \Phi_{g_1(e)}^C \text{ and } C = \Phi_{g_2(e)}^{R^\mc{A}}.\] But then by definition of $h$, \[ R^\mc{A} \neq \Phi_{g_1(e)}^C \text{ or } C \neq \Phi_{g_2(e)}^{R^\mc{A}}.\] This is a contradiction. Hence no such functions $f$, $g_1$, and $g_2$ can exist, and the degree spectrum of $R$ is not uniformly equal to the $\Delta^0_2$ degrees on a cone.

\vspace*{10pt}
\noindent \emph{Construction of $R$.}
\vspace*{10pt}

We will begin by defining $a_n$, $b_n$, and $c_n$ with $a_1 < b_1 = c_1 < a_2 < b_2 < c_2 < a_3 < \cdots$. Begin with $a_1 = 0$. Let $b_n = a_n + n + 1$, $c_n = b_n + a_n$, and $a_{n+1} = c_n + n + 1$. This divides $\omega$ up into disjoint intervals $[a_n,b_n)$, $[b_n,c_n)$, and $[c_n,a_{n+1})$ as $n$ varies. Note that $b_1 = 2 = c_1$, and so the interval $[b_1,c_1)$ is empty. Every other interval is non-empty. Also, the length of the interval $[b_n,c_n)$ is the same as the length of the interval $[0,a_n)$.

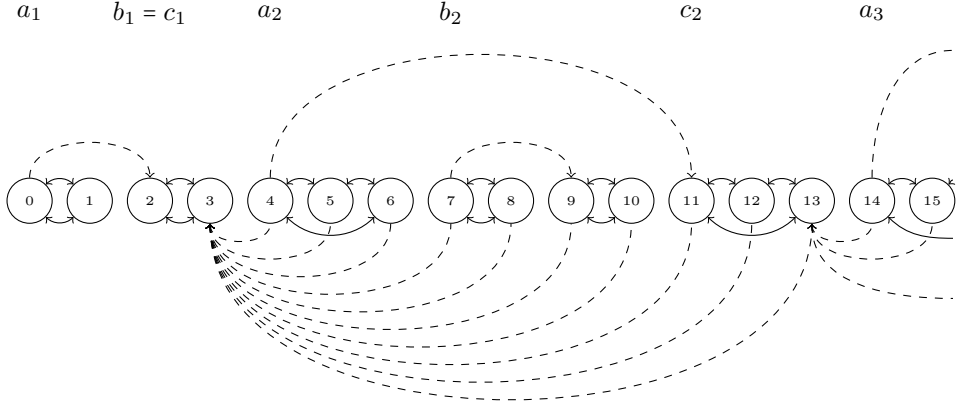
\begin{figure}[t]
\begin{center}
\begin{tikzpicture}[->]
\node[circle,draw=black,label=below:$$,minimum width=.6cm, minimum height=.6cm] (box0) at (0,0) {{\tiny$0$}};
\node (a1) at (0,2.5) {{$a_1$}};
\node[circle,draw=black,label=below:$$,minimum width=.6cm, minimum height=.6cm] (box1) at (0.8,0) {{\tiny$1$}};
\node[circle,draw=black,label=below:$$,minimum width=.6cm, minimum height=.6cm] (box2) at (1.6,0) {{\tiny$2$}};
\node (c1) at (1.6,2.5) {{$b_1 = c_1$}};
\node[circle,draw=black,label=below:$$,minimum width=.6cm, minimum height=.6cm] (box3) at (2.4,0) {{\tiny$3$}};
\node[circle,draw=black,label=below:$$,minimum width=.6cm, minimum height=.6cm] (box4) at (3.2,0) {{\tiny$4$}};
\node (a2) at (3.2,2.5) {{$a_2$}};
\node[circle,draw=black,label=below:$$,minimum width=.6cm, minimum height=.6cm] (box5) at (4.0,0) {{\tiny$5$}};
\node[circle,draw=black,label=below:$$,minimum width=.6cm, minimum height=.6cm] (box6) at (4.8,0) {{\tiny$6$}};
\node[circle,draw=black,label=below:$$,minimum width=.6cm, minimum height=.6cm] (box7) at (5.6,0) {{\tiny$7$}};
\node (b2) at (5.6,2.5) {{$b_2$}};
\node[circle,draw=black,label=below:$$,minimum width=.6cm, minimum height=.6cm] (box8) at (6.4,0) {{\tiny$8$}};
\node[circle,draw=black,label=below:$$,minimum width=.6cm, minimum height=.6cm] (box9) at (7.2,0) {{\tiny$9$}};
\node[circle,draw=black,label=below:$$,minimum width=.6cm, minimum height=.6cm] (box10) at (8.0,0) {{\tiny$10$}};
\node[circle,draw=black,label=below:$$,minimum width=.6cm, minimum height=.6cm] (box11) at (8.8,0) {{\tiny$11$}};
\node (c2) at (8.8,2.5) {{$c_2$}};
\node[circle,draw=black,label=below:$$,minimum width=.6cm, minimum height=.6cm] (box12) at (9.6,0) {{\tiny$12$}};
\node[circle,draw=black,label=below:$$,minimum width=.6cm, minimum height=.6cm] (box13) at (10.4,0) {{\tiny$13$}};
\node[circle,draw=black,label=below:$$,minimum width=.6cm, minimum height=.6cm] (box14) at (11.2,0) {{\tiny$14$}};
\node (a3) at (11.2,2.5) {{$a_3$}};
\node[circle,draw=black,label=below:$$,minimum width=.6cm, minimum height=.6cm] (box15) at (12,0) {{\tiny$15$}};

\path[bend left=45] (box0) edge[<->] node {{$$}} (box1);
\path[bend left=45] (box1) edge[<->] node {{$$}} (box0);

\path[bend left=45] (box3) edge[<->] node {{$$}} (box2);
\path[bend left=45] (box2) edge[<->] node {{$$}} (box3);

\path[bend left=45] (box4) edge[<->] node {{$$}} (box5);
\path[bend left=45] (box5) edge[<->] node {{$$}} (box6);
\path[bend left=45] (box6) edge[<->] node {{$$}} (box4);

\path[bend left=45] (box7) edge[<->] node {{$$}} (box8);
\path[bend left=45] (box8) edge[<->] node {{$$}} (box7);

\path[bend left=45] (box9) edge[<->] node {{$$}} (box10);
\path[bend left=45] (box10) edge[<->] node {{$$}} (box9);

\path[bend left=45] (box11) edge[<->] node {{$$}} (box12);
\path[bend left=45] (box12) edge[<->] node {{$$}} (box13);
\path[bend left=45] (box13) edge[<->] node {{$$}} (box11);

\path[bend left=90,dashed] (box0) edge[->] node {{$$}} (box2);
\path[bend left=90,dashed] (box4) edge[->] node {{$$}} (box11);
\path[bend left=90,dashed] (box7) edge[->] node {{$$}} (box9);

\path[bend right=90,dashed] (box3) edge[<-] node {{$$}} (box4);
\path[bend right=90,dashed] (box3) edge[<-] node {{$$}} (box5);
\path[bend right=90,dashed] (box3) edge[<-] node {{$$}} (box6);
\path[bend right=90,dashed] (box3) edge[<-] node {{$$}} (box7);
\path[bend right=90,dashed] (box3) edge[<-] node {{$$}} (box8);
\path[bend right=90,dashed] (box3) edge[<-] node {{$$}} (box9);
\path[bend right=90,dashed] (box3) edge[<-] node {{$$}} (box10);
\path[bend right=90,dashed] (box3) edge[<-] node {{$$}} (box11);
\path[bend right=90,dashed] (box3) edge[<-] node {{$$}} (box12);
\path[bend right=90,dashed] (box3) edge[<-] node {{$$}} (box13);


\path[bend left=45] (box14) edge[<->] node {{$$}} (box15);
\node (box16up) at (12.4,0.275) {{$$}};
\path[out=45,in=180] (box15) edge[<-] node {{$$}} (box16up);
\node (box16down) at (12.4,-0.5) {{$$}};
\path[out = 180, in = 315] (box16down) edge[->] node {{$$}} (box14);
\node (box17up) at (12.4,2) {{$$}};
\path[out=90,in=180,dashed] (box14) edge[-] node {{$$}} (box17up);

\path[bend right=90,dashed] (box13) edge[<-] node {{$$}} (box14);
\path[bend right=90,dashed] (box13) edge[<-] node {{$$}} (box15);
\node (box16down2) at (12.4,-1.3) {{$$}};
\path[out=270,in=180,dashed] (box13) edge[<-] node {{$$}} (box16down2);

\end{tikzpicture}
\end{center}
\caption{The relation $R$ on the first sixteen elements of $(\omega,<)$. The arrows $\leftrightarrow$ are those from the cycles $a_n \leftrightarrow a_n + 1 \leftrightarrow a_n + 2 \leftrightarrow \cdots \leftrightarrow a_n + n \leftrightarrow a_n$
and
$c_n \leftrightarrow c_n + 1 \leftrightarrow c_n + 2 \leftrightarrow \cdots \leftrightarrow c_n + n \leftrightarrow c_n.$
The arrows $\dashrightarrow$ (which curve above) are those between $a_n$ and $c_n$ for some $n$. The arrows $\dashleftarrow$ (which curve below) are those from $y$ to $c_n+n$ for $y \geq a_{n+1}$.}
\label{fig:omega}
\end{figure}

The relation $R$ will be a binary relation which we can interpret as a directed graph. For each $n$, we will have cycles of edges
\[ a_n \leftrightarrow a_n + 1 \leftrightarrow a_n + 2 \leftrightarrow \cdots \leftrightarrow a_n + n \leftrightarrow a_n \]
and
\[ c_n \leftrightarrow c_n + 1 \leftrightarrow c_n + 2 \leftrightarrow \cdots \leftrightarrow c_n + n \leftrightarrow c_n. \]
These edges all go in both directions; i.e., there is an edge from $a_n$ to $a_n + 1$ and from $a_n+1$ to $a_n$.

Now add an edge from $a_n$ to $c_n$. These edges are directed, and go from the smaller element to the larger element. Also, add edges from $y$ to $c_n+n$ for all $y \geq a_{n+1}$. These edges are also directed, but go from the larger element to the smaller element. By looking at whether an edge goes in both directions, in the increasing direction, or in the decreasing direction, we can decide what type of edge it is (i.e., is it from a cycle, from $a_n$ to $c_n$ for some $n$, or from $y$ to $c_n+n$ for some $n$ and some $y \geq a_{n+1}$).

For $x,y \in [b_n,c_n)$, put an edge from $x$ to $y$ if and only if there is an edge from $(x - b_n)$ to $(y - b_n)$. So the relation $R$ on the interval $[b_n,c_n)$ looks the same as it does on the interval $[0,a_n)$. This completes the definition of the relation $R$. Note that $R$ is computable. Figure \ref{fig:omega} shows the relation $R$ on an initial segment of $\omega$.

We say that elements $\bar{y} = (y_0,y_1,\ldots,y_m)$ of $(\omega,<)$ form an \textit{$m$-cycle} (for $m \geq 1$) if $y_0 < y_1 < \cdots < y_m$ and there is a cycle
\[ y_0 \leftrightarrow y_1 \leftrightarrow \cdots \leftrightarrow y_m \leftrightarrow y_0 \]
and there are no other edges between any of the $y_i$. An $m$-cycle is really an $m+1$-cycle of the graph. Note that every $m$-cycle is either
\[ a_m \leftrightarrow a_m + 1 \leftrightarrow \cdots \leftrightarrow a_m + m \leftrightarrow a_m,\]
or
\[ c_m \leftrightarrow c_m + 1 \leftrightarrow \cdots \leftrightarrow c_m + m \leftrightarrow c_m,\]
or contained in $[b_n,c_n)$ for some $n > m$.

\begin{remark}
Each element $x \in \omega$ is part of exactly one $m$-cycle for some $m$.
\end{remark}
\begin{proof}
This can easily be seen by an induction argument on the $n$ such that $x \in [a_n,c_n)$. If $x$ is in $[a_n,b_n)$ or $[c_n,a_{n+1})$ then this is obvious, and if $x$ is in $[b_n,c_n)$ then this follows by the induction hypothesis.
\end{proof}

If $\bar{x} = (x_0,\ldots,x_m)$ and $\bar{y} = (y_0,\ldots,y_m)$ are $m$-cycles and there are edges between $x_0$ and $y_0$ and vice versa, then we say that $\bar{x}$ and $\bar{y}$ are a \textit{matching pair} of $m$-cycles. Each $m$-cycle is part of a matching pair of $m$-cycles.

\begin{remark}\label{rem:k-cycle-1}
If $\bar{x} = (x_0,\ldots,x_m)$ and $\bar{y} = (y_0,\ldots,y_m)$ are a matching pair of $m$-cycles, then there is some $n$ such that both $\bar{x}$ and $\bar{y}$ are contained in $[a_n,a_{n+1})$.
\end{remark}
\begin{proof}
Since $\bar{x}$ and $\bar{y}$ form a matching pair, there is an edge from $x_0$ to $y_0$ but not vice versa. We can see by the definition of $R$ that either $\bar{x} = (a_m,\ldots,a_m+m)$ and $\bar{y} = (c_m,\ldots,c_m+m)$, or $\bar{x}$ and $\bar{y}$ are both contained in $[b_n,c_n)$ for some $n$.
\end{proof}

\begin{remark}\label{rem:k-cycle-2}
If $\bar{u} = (u_0,\ldots,u_m)$ and $\bar{v} = (v_0,\ldots,v_m)$ are a matching pair of $m$-cycles, and $\bar{x} = (x_0,\ldots,x_m)$ and $\bar{y} = (y_0,\ldots,y_m)$ are another matching pair of $m$-cycles (for the same $m$), then the relation $R$ restricted to the interval $[u_0,v_m]$ is the same as the relation $R$ restricted to the interval $[x_0,y_m]$. In particular, the lengths of these intervals are the same: $v_m - u_0 = y_m - x_0$.
\end{remark}
\begin{proof}
We may assume that $\bar{u} = (a_m,\ldots,a_m+m)$ and $\bar{v} = (c_m,\ldots,c_m+m)$. Suppose to the contrary that there are $\bar{x} = (x_0,\ldots,x_m)$ and $\bar{y} = (y_0,\ldots,y_m)$ a matching pair of $m$-cycles such that the relation $R$ restricted to the interval $[u_0,\ldots,v_m]$ is \textit{not} the same as the relation $R$ restricted to the interval $[x_0,\ldots,y_m]$. Assume that $x_0$ is least with this property.

Now by the previous fact, $\bar{x}$ and $\bar{y}$ are contained in $[a_n,a_{n+1})$ for some $n$. We must have $x_0 > c_m+m$, and so $n > m$. Then $\bar{x}$ and $\bar{y}$ are contained in $[b_n,c_n)$. Then by definition of $R$, $\bar{x}' = (x_0 - b_n,\ldots,x_m-b_n)$ and $\bar{y}' = (y_0 - b_n,\ldots,y_m - b_n)$ are a matching pair of $m$-cycles and $R$ restricted to the interval $[x_0,\ldots,y_m]$ is the same as $R$ restricted to the interval $[x_0-b_n,\ldots,y_m-b_n]$. But by the induction hypothesis, $R$ restricted to the interval $[x_0-b_n,\ldots,y_m-b_n]$ is the same as $R$ restricted to the interval $[u_0,\ldots,v_m]$. This contradiction finishes the proof.
\end{proof}

In any computably copy $\mc{A}$ of $(\omega,<)$, using $R^\mc{A}$ as an oracle, we can compute for each $x \in \mc{A}$ the unique $m$-cycle in which $x$ is contained, $x$'s position in that cycle, and we can compute the other $m$-cycle with which this $m$-cycle forms a matching pair.

\vspace*{10pt}
\noindent \emph{$R$ is not intrinsically of c.e.\ degree.}
\vspace*{10pt}

Let $\bar{c}$ be a tuple and $a_n$ be such that $\bar{c} < a_n$. We will show that the tuple $\bar{a} = (a_n,\ldots,a_n + n)$ is d-free over $\bar{c}$. First, we will introduce some notation. Given a tuple $\bar{x} = (x_0,\ldots,x_n)$, and $r \in \omega$, write $\bar{x} + r$ for $(x_0 + r,\ldots,x_n + r)$.

Recall what it means for $\bar{a}$ to be d-free over $\bar{c}$: for every $\bar{b}$ and existential formula $\varphi(\bar{c},\bar{u},\bar{v})$ true of $\bar{a},\bar{b}$, there are $\bar{a}'$ and $\bar{b}'$ which satisfy $\varphi(\bar{c},\bar{u},\bar{v})$ such that $R$ restricted to tuples from $\bar{c}\bar{a}'$ is not the same as $R$ restricted to tuples from $\bar{c}\bar{a}$ and also such that for every existential formula $\psi(\bar{c},\bar{u},\bar{v})$ true of them, there are $\bar{a}'',\bar{b}''$ satisfying $\psi$ and such that $R$ restricted to $\bar{c}\bar{a}''\bar{b}''$ is the same as $R$ restricted to $\bar{c}\bar{a}\bar{b}$.

Figure \ref{fig:dfree} shows the choices of $\bar{a}'$, $\bar{b}'$, etc. in one particular example. Given $\bar{b}$ and some existential formula $\varphi(\bar{c},\bar{u},\bar{v})$ true of $\bar{a},\bar{b}$, we may assume that $\varphi$ is quantifier-free by expanding $\bar{b}$. Then $\varphi$ just says that $\bar{c}\bar{a}\bar{b}$ are ordered in some particular way. Now, some of the entries of $\bar{b}$ are less than $a_n$, and the rest are greater than or equal to $a_n$. Rearranging $\bar{b}$ as necessary, write $\bar{b} = \bar{b}_1 \bar{b}_2$ where each entry of $\bar{b}_1$ is less than $a_n$, and each entry of $\bar{b}_2$ is greater than or equal to $a_n$. Let $\bar{a}' = \bar{a}+1$ (recall that this is shifting each entry by one) and $\bar{b}'$ be $\bar{b}_1 \bar{b}_2'$ where $\bar{b}_2' = \bar{b}_2 + 1$. Then $\bar{c}\bar{a}'\bar{b}'$ are ordered in the same way as $\bar{c}\bar{a}\bar{b}$ and hence still satisfy $\varphi$. Also, the valuation of $R$ on $\bar{a}$ is different from that of $R$ on $\bar{a}'$, since there are edges between $a_n$ and $a_n+n$, but not between $a_n + 1$ and $a_n + n + 1 = b_n$.

Now suppose that $\psi(\bar{c},\bar{u},\bar{v})$ is some further existential formula true of $\bar{c}\bar{a}'\bar{b}'$. Let $\bar{e}$ be the witnesses to the existential quantifiers, and $\chi(\bar{c},\bar{u},\bar{v},\bar{w})$ be the quantifier-free formula which holds of $\bar{c}\bar{a}'\bar{b}'\bar{e}$. Write $\bar{e} = \bar{e}_1 \bar{e}_2$ with each entry of $\bar{e}_1$ less than $a_n$, and each entry of $\bar{e}_2$ greater than or equal to $a_n$. Let $k$ be such that $b_k$ is larger than all of the entries of $\bar{a}$ and $\bar{b}$. Let $\bar{a}'' = \bar{a} + b_{k}$, $\bar{b}'' = \bar{b}_1\bar{b}_2''$ where $\bar{b}_2'' = \bar{b}_2 + b_k$, and $\bar{e}' = \bar{e}_1 \bar{e}_2'$ where $\bar{e}_2' = \bar{e} + b_k$. Then $\bar{c} \bar{a}'' \bar{b}'' \bar{e}'$ is ordered in the same way as $\bar{c} \bar{a}' \bar{b}' \bar{e}$, and so $\bar{c} \bar{a}'' \bar{b}'' \bar{e}'$ satisfies $\chi$. Thus $\bar{c} \bar{a}'' \bar{b}''$ satisfies $\psi$. We need to show that the relation $R$ restricted to $\bar{c} \bar{a}'' \bar{b}''$ is the same as $R$ restricted to $\bar{c}\bar{a}\bar{b}$.

Note that $\bar{a}''$ and $\bar{b}_2''$ are contained in the interval $[b_k,c_k)$. By definition of $R$, there is an edge between $x$ and $y$ in $[b_k,c_k)$ if and only if there is an edge between $x - b_k$ and $y - b_k$. Since $\bar{a}'' = \bar{a}+b_k$ and $\bar{b}_2'' = \bar{b}_2 + b_k$, $R$ restricted to $\bar{a}''$ and $\bar{b}_2''$ is the same as $R$ restricted to $\bar{a}$ and $\bar{b}_2$. Now $\bar{c}$ and $\bar{b}_1$ are contained in the interval $[0,a_n)$. There are no edges from some $x < a_n$ to some $y \geq a_n$. Note from the construction of $R$ that there is an edge from $x \geq a_n$ to $y < a_n$ if and only if there is an edge from \textit{every} $z \geq a_n$ to $y$. This completes the proof that $R$ restricted to $\bar{c}\bar{a}\bar{b}$ is the same as $R$ restricted to $\bar{c}\bar{a}''\bar{b}''$.

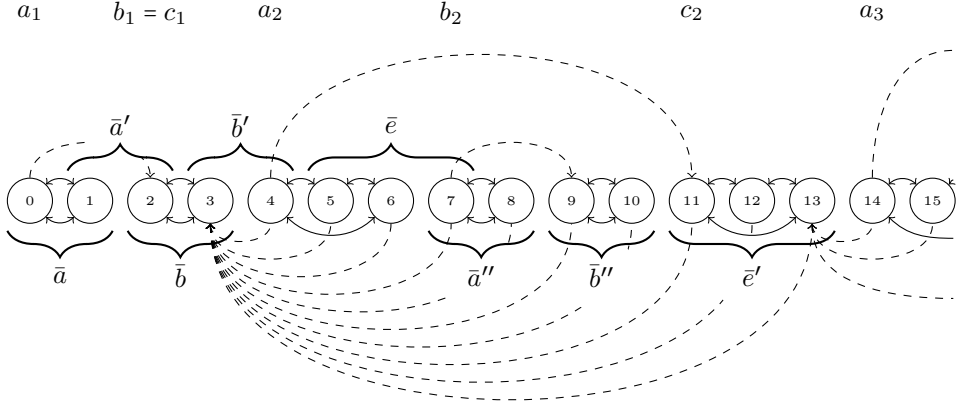
\begin{figure}[t]
\begin{center}
\begin{tikzpicture}
\node[circle,draw=black,label=below:$$,minimum width=.6cm, minimum height=.6cm] (box0) at (0,0) {{\tiny$0$}};
\node (a1) at (0,2.5) {{$a_1$}};
\node[circle,draw=black,label=below:$$,minimum width=.6cm, minimum height=.6cm] (box1) at (0.8,0) {{\tiny$1$}};
\node[circle,draw=black,label=below:$$,minimum width=.6cm, minimum height=.6cm] (box2) at (1.6,0) {{\tiny$2$}};
\node (c1) at (1.6,2.5) {{$b_1 = c_1$}};
\node[circle,draw=black,label=below:$$,minimum width=.6cm, minimum height=.6cm] (box3) at (2.4,0) {{\tiny$3$}};
\node[circle,draw=black,label=below:$$,minimum width=.6cm, minimum height=.6cm] (box4) at (3.2,0) {{\tiny$4$}};
\node (a2) at (3.2,2.5) {{$a_2$}};
\node[circle,draw=black,label=below:$$,minimum width=.6cm, minimum height=.6cm] (box5) at (4.0,0) {{\tiny$5$}};
\node[circle,draw=black,label=below:$$,minimum width=.6cm, minimum height=.6cm] (box6) at (4.8,0) {{\tiny$6$}};
\node[circle,draw=black,label=below:$$,minimum width=.6cm, minimum height=.6cm] (box7) at (5.6,0) {{\tiny$7$}};
\node (b2) at (5.6,2.5) {{$b_2$}};
\node[circle,draw=black,label=below:$$,minimum width=.6cm, minimum height=.6cm] (box8) at (6.4,0) {{\tiny$8$}};
\node[circle,draw=black,label=below:$$,minimum width=.6cm, minimum height=.6cm] (box9) at (7.2,0) {{\tiny$9$}};
\node[circle,draw=black,label=below:$$,minimum width=.6cm, minimum height=.6cm] (box10) at (8.0,0) {{\tiny$10$}};
\node[circle,draw=black,label=below:$$,minimum width=.6cm, minimum height=.6cm] (box11) at (8.8,0) {{\tiny$11$}};
\node (c2) at (8.8,2.5) {{$c_2$}};
\node[circle,draw=black,label=below:$$,minimum width=.6cm, minimum height=.6cm] (box12) at (9.6,0) {{\tiny$12$}};
\node[circle,draw=black,label=below:$$,minimum width=.6cm, minimum height=.6cm] (box13) at (10.4,0) {{\tiny$13$}};
\node[circle,draw=black,label=below:$$,minimum width=.6cm, minimum height=.6cm] (box14) at (11.2,0) {{\tiny$14$}};
\node (a3) at (11.2,2.5) {{$a_3$}};
\node[circle,draw=black,label=below:$$,minimum width=.6cm, minimum height=.6cm] (box15) at (12,0) {{\tiny$15$}};

\path[bend left=45] (box0) edge[<->] node {{$$}} (box1);
\path[bend left=45] (box1) edge[<->] node {{$$}} (box0);

\path[bend left=45] (box3) edge[<->] node {{$$}} (box2);
\path[bend left=45] (box2) edge[<->] node {{$$}} (box3);

\path[bend left=45] (box4) edge[<->] node {{$$}} (box5);
\path[bend left=45] (box5) edge[<->] node {{$$}} (box6);
\path[bend left=45] (box6) edge[<->] node {{$$}} (box4);

\path[bend left=45] (box7) edge[<->] node {{$$}} (box8);
\path[bend left=45] (box8) edge[<->] node {{$$}} (box7);

\path[bend left=45] (box9) edge[<->] node {{$$}} (box10);
\path[bend left=45] (box10) edge[<->] node {{$$}} (box9);

\path[bend left=45] (box11) edge[<->] node {{$$}} (box12);
\path[bend left=45] (box12) edge[<->] node {{$$}} (box13);
\path[bend left=45] (box13) edge[<->] node {{$$}} (box11);

\path[bend left=90,dashed] (box0) edge[->] node {{$$}} (box2);
\path[bend left=90,dashed] (box4) edge[->] node {{$$}} (box11);
\path[bend left=90,dashed] (box7) edge[->] node {{$$}} (box9);

\path[bend right=90,dashed] (box3) edge[<-] node {{$$}} (box4);
\path[bend right=90,dashed] (box3) edge[<-] node {{$$}} (box5);
\path[bend right=90,dashed] (box3) edge[<-] node {{$$}} (box6);
\path[bend right=90,dashed] (box3) edge[<-] node {{$$}} (box7);
\path[bend right=90,dashed] (box3) edge[<-] node {{$$}} (box8);
\path[bend right=90,dashed] (box3) edge[<-] node {{$$}} (box9);
\path[bend right=90,dashed] (box3) edge[<-] node {{$$}} (box10);
\path[bend right=90,dashed] (box3) edge[<-] node {{$$}} (box11);
\path[bend right=90,dashed] (box3) edge[<-] node {{$$}} (box12);
\path[bend right=90,dashed] (box3) edge[<-] node {{$$}} (box13);


\path[bend left=45] (box14) edge[<->] node {{$$}} (box15);
\node (box16up) at (12.4,0.275) {{$$}};
\path[out=45,in=180] (box15) edge[<-] node {{$$}} (box16up);
\node (box16down) at (12.4,-0.5) {{$$}};
\path[out = 180, in = 315] (box16down) edge[->] node {{$$}} (box14);
\node (box17up) at (12.4,2) {{$$}};
\path[out=90,in=180,dashed] (box14) edge[-] node {{$$}} (box17up);

\path[bend right=90,dashed] (box13) edge[<-] node {{$$}} (box14);
\path[bend right=90,dashed] (box13) edge[<-] node {{$$}} (box15);
\node (box16down2) at (12.4,-1.3) {{$$}};
\path[out=270,in=180,dashed] (box13) edge[<-] node {{$$}} (box16down2);


\node[circle,fill=white,label=below:$$,minimum width=1cm, minimum height=1cm] (a) at (0.4,-1) {{$\bar{a}$}};
\draw[decorate,thick,decoration={brace,mirror,amplitude=10pt}] (-0.3,-0.4) -- (1.1,-0.4);
\node[circle,fill=white,label=below:$$,minimum width=1cm, minimum height=1cm] (b) at (2,-1) {{$\bar{b}$}};
\draw[decorate,thick,decoration={brace,mirror,amplitude=10pt}] (1.3,-0.4) -- (2.7,-0.4);

\node[circle,fill=white,label=below:$$,minimum width=1cm, minimum height=1cm] (ap) at (1.2,1) {{$\bar{a}'$}};
\draw[decorate,thick,decoration={brace,amplitude=10pt}] (0.5,0.4) -- (1.9,0.4);
\node[circle,fill=white,label=below:$$,minimum width=1cm, minimum height=1cm] (bp) at (2.8,1) {{$\bar{b}'$}};
\draw[decorate,thick,decoration={brace,amplitude=10pt}] (2.1,0.4) -- (3.5,0.4);
\node[circle,fill=white,label=below:$$,minimum width=1cm, minimum height=1cm] (e) at (4.8,1) {{$\bar{e}$}};
\draw[decorate,thick,decoration={brace,amplitude=10pt}] (3.7,0.4) -- (5.9,0.4);

\node[circle,fill=white,label=below:$$,minimum width=1cm, minimum height=1cm] (app) at (6,-1) {{$\bar{a}''$}};
\draw[decorate,thick,decoration={brace,mirror,amplitude=10pt}] (5.3,-0.4) -- (6.7,-0.4);
\node[circle,fill=white,label=below:$$,minimum width=1cm, minimum height=1cm] (bpp) at (7.6,-1) {{$\bar{b}''$}};
\draw[decorate,thick,decoration={brace,mirror,amplitude=10pt}] (6.9,-0.4) -- (8.3,-0.4);
\node[circle,fill=white,label=below:$$,minimum width=1cm, minimum height=1cm] (ep) at (9.6,-1) {{$\bar{e}'$}};
\draw[decorate,thick,decoration={brace,mirror,amplitude=10pt}] (8.5,-0.4) -- (10.7,-0.4);

\end{tikzpicture}
\end{center}
\caption{The choice of $\bar{a}'$, $\bar{b}'$, $\bar{a}''$, $\bar{b}''$, and $\bar{e}'$. The figure shows the case when $\bar{c} = \varnothing$, we choose $\bar{a} = (0,1)$, and $\bar{b} = \bar{b}_2 = (2,3)$. We choose $k = 2$ so that $b_k = b_2 = 7$.}
\label{fig:dfree}
\end{figure}

So for any tuple $\bar{c}$, there is a tuple $\bar{a}$ which is d-free over $\bar{c}$. Moreover, everything was effective. Thus, by Proposition \ref{ce-condition}, the degree spectrum of $R$ contains a non-c.e.\ degree.

\vspace*{10pt}
\noindent \emph{Construction of $h$.}
\vspace*{10pt}

Let $e$ be an index for a computable function $\varphi_e$, which we attempt to interpret as the diagram of a computable structure $\mc{A}$. Let $i$ and $j$ be indices for the Turing functionals $\Phi_i$ and $\Phi_j$. We will build a $\Delta^0_2$ set $C$ such that if $\mc{A}$ is a computable copy of $(\omega,<)$, then either \[ C \neq \Phi_{i}^{R^\mc{A}} \text{ or } R^\mc{A} \neq \Phi_{j}^C.\] By constructing $C$ via a $\Delta^0_2$ approximation uniformly from $e$, $i$, and $j$ we will obtain the required function $h$.

At each stage $s$, we get a finite linear order $\mc{A}_s$ which approximates $\mc{A}$. Now, the domain of $\mc{A}_s$ is contained in $\omega$, but $\mc{A}$ may also be isomorphic to $\omega$. To avoid confusion, when we say $\omega$, we refer to the underlying domain of $\mc{A}_s$, and by $(\omega,<)$ we mean the structure as a linear order. We may assume that the elements of $\mc{A}_s$ form a finite initial segment of the domain $\omega$. To differentiate between the ordering (as part of the language of the structure) on $\mc{A}_s$ and the underlying order on the domain as a subset of $\omega$, we will use $\preceq_{\mc{A}}$ for the former and $\leq$ for the latter. We can write the elements of $\mc{A}_s$ as $x_1^s \prec_{\mc{A}} x_2^s \prec_{\mc{A}} \cdots \prec_{\mc{A}} x_n^s$, and given $z \in \omega$, we write $N_s(z) = k$ if $z = x^s_k$. So $N_s(z)$ is a guess at which element of $\omega$ the element $z$ represents. For $z \in \mc{A}$, let $N(z) \in (\omega,<)$ be the element which $z$ is isomorphic to (if $\mc{A}$ is isomorphic to $(\omega,<)$). We also get an approximation $R^\mc{A}_s$ of $R^\mc{A}$ by setting $z \in R^\mc{A}_s$ if and only if $N_s(z) \in R$. Then $R^\mc{A}_s$ is a $\Delta^0_2$ approximation of $R^\mc{A}$ in the case that $\mc{A}$ is isomorphic to $(\omega,<)$.

The general idea of the construction is as follows. At each stage $s$, we will have finite set $C_s$ such that $C(n) = \lim_{s\to\infty} C_s(n)$. If we do not explicitly say that we change $C$ from stage $s$ to stage $s+1$, then we will have $C_{s+1} = C_s$. Suppose that at stage $s$ we have $C_s(0) = 0$ and we have computations
\[ C_s(0) = 0 = \Phi_{i,s}^{R_s^{\mc{A}}}(0) \]
with use $u$ and
\[ R_s^{\mc{A}}[0,\ldots,u] = \Phi_{j,s}^{C_s}[0,\ldots,u] \]
with use $v$. By putting $0$ into $C$, we destroy the first computation, forcing $R_s^{\mc{A}}$ to change below the use $u$ (or else we are done); then, by removing $0$ from $C$, because of the second computation we force $R_s^{\mc{A}}$, at some later stage $t$, to change back to the way it was before  (i.e., $R_s^{\mc{A}}[0,\ldots,u] = R_t^{\mc{A}}[0,\ldots,u]$). This means that some $q \leq u$ in $\mc{A}$ must have had some element enumerated below it, so that $N_t(q) > N_s(q)$. By moving $0$ in and out of $C$ in this way, we can force arbitrarily many elements to be enumerated below one of $[0,\ldots,u]$. If we could enumerate infinitely many such elements, then we would have prevented $\mc{A}$ from being isomorphic to $(\omega,<)$. However, this would require moving $0$ in and out of $C$ infinitely many times, which would make $C$ not $\Delta^0_2$. We must be more clever.

Let $p_0 = 0$. We will wait for computations as above (with uses $u_0$ and $v_0$), and then choose $p_1 > v$. We wait for computations
\[ C_s[0,\ldots,p_1] = 0 = \Phi_{i,s}^{R_s^{\mc{A}}}[0,\ldots,p_1] \]
with use $u_1$ and
\[ R_s^{\mc{A}}[0,\ldots,u_1] = \Phi_{j,s}^{C_s}[0,\ldots,u_1] \]
with use $v_1$. We will move $p_0 = 0$ into and then out of $C$ as above to enumerate an element in $\mc{A}$ below one of $[0,\ldots,u]$. At the same time, we will create a ``link'' between $[0,\ldots,u_0]$ and $[u_0+1,\ldots,u_1]$ so that if some element gets enumerated below one of $[u_0+1,\ldots,u_1]$, then some element will also get enumerated below one of $[0,\ldots,u_0]$. (Exactly how these links work will be explained later.) We have moved $p_0$ into and then out of $C$, but from now on it will stay out of $C$. We will find a $p_2$, and move $p_1$ into and out of $C$. On the one hand, this will cause an element to be enumerated below one of $[u_0+1,\ldots,u_1]$, and hence below one of $[0,\ldots,u_0]$. On the other hand, we will create a ``link'' between $[u_0+1,\ldots,u_1]$ and $[u_1+1,\ldots,u_2]$. Now when an element gets enumerated below one of $[u_1+1,\ldots,u_2]$, an element gets enumerated below one of $[u_0+1,\ldots,u_1]$, and thus some element gets enumerated below one of $[0,\ldots,u_0]$. Continuing in this way, defining $p_3$, $p_4$, and so on, and maintaining these links, we force infinitely many elements to be enumerated below some element of $[0,\ldots,u_0]$. We will describe exactly how these links work in the construction. Figure \ref{fig:thm5-19} shows the computations which we use during the construction.

\begin{figure}[htb]
	\includegraphics{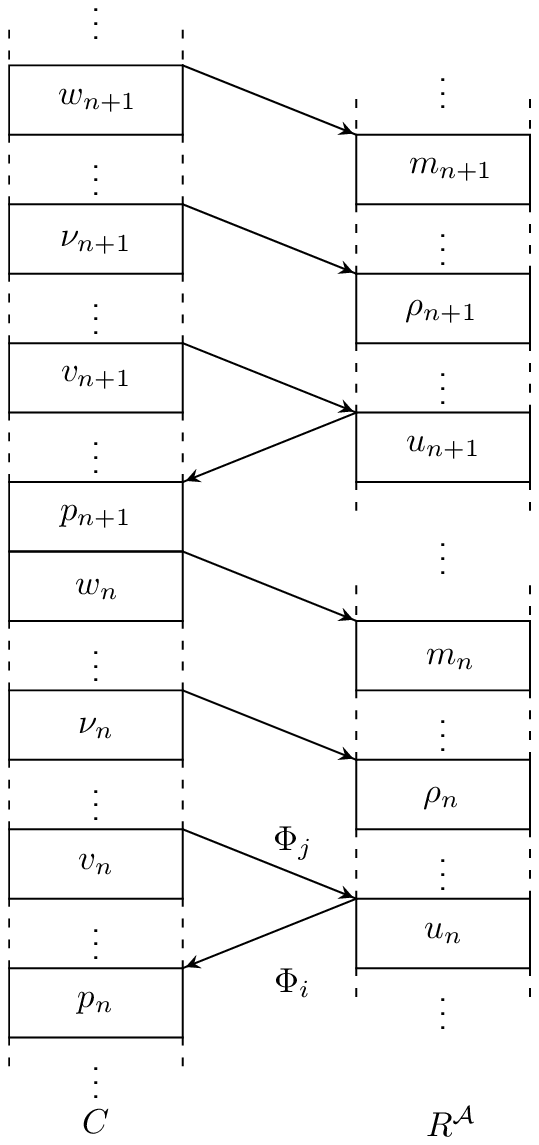}
	\caption{The values associated to a requirement for Proposition \ref{thm:weird-relation-on-omega}. An arrow shows a computation converging. The computations use an oracle and compute some initial segment of their target. The tail of the arrow shows the use of the computation, and the head shows the length.}
	\label{fig:thm5-19}
\end{figure}

The construction will consist of three steps defined below, which are repeated for each of $n = 0,1,\ldots$. The proof of the following lemma will be interspersed with the construction below:
\begin{lem}\label{lem:omega-not-stuck}
If $\mc{A}$ is an isomorphic copy of $(\omega,<)$, and
\[ C = \Phi_{i}^{R^\mc{A}} \text{ and } R^\mc{A} = \Phi_{j}^{C} \]
then the construction does not get stuck in any step.
\end{lem}
After describing the action taking place at each step, we will prove that if $\mc{A}$ is an isomorphic copy of $(\omega,<)$, and
\[ C = \Phi_{i}^{R^\mc{A}} \text{ and } R^\mc{A} = \Phi_{j}^{C} \]
then the construction eventually finishes that step.

Begin the construction with $p_0 = 0$ and $C_0 = \varnothing$. Before beginning to repeat the three steps, wait for a stage $s$ where we have computations
\begin{equation}\label{eq:a0-C-R} C_s[p_0] = 0 = \Phi_{i,s}^{R^\mc{A}_s}[p_0] \end{equation}
with use $u_0$ and
\begin{equation}\label{eq:a0-R-C} R^\mc{A}_s[0,\ldots,u_0] = \Phi_{j,s}^{C_s}[0,\ldots,u_0] \end{equation}
with use $v_0$. Let $s_0 = s$. If $C = \Phi_{i}^{R^\mc{A}}$ and $R^\mc{A} = \Phi_{j}^{C}$, we eventually find computations as in (\ref{eq:a0-C-R}) and (\ref{eq:a0-R-C}).

Now repeat, in order, the following steps. We call each repetition of these steps a \textit{Rep}. Begin at \textit{Rep 0} with $n=0$. At the beginning of \textit{Rep $n$}, we will have defined values $p_i$, $s_i$, $u_i$, and $v_i$ for $0 \leq i \leq n$, $m_i$, $w_i$, and $q_i$ for $0 \leq i < n$, and $\rho_i$ and $\nu_i$ for $1 \leq i \leq n$ (note that we never define $\rho_0$ or $\nu_0$). At the beginning of each repetition, we will have $C = \varnothing$; in \textit{Step One}, we will add an element to $C$, and in \textit{Step Two} we will remove that element from $C$ returning it to the way it was before.

\medskip{}
\noindent\textit{Step One.} Wait for a stage $s$ and an $m > \rho_n$ such that 
\begin{enumerate}
	\item at this stage $s$ we still have the computation
\begin{equation}\label{eq:an-C-R} R^\mc{A}_s[0,\ldots,\rho_n] = \Phi_{j,s}^{C_s}[0,\ldots,\rho_n] \end{equation}
with use $\nu_n$ from \textit{Step 3} of the previous step (see equation \eqref{eq:rho}). If $n = 0$, then instead we ask that
\begin{equation}\label{eq:an-C-R-2} R^\mc{A}_s[0,\ldots,u_0] = \Phi_{j,s}^{C_s}[0,\ldots,u_0], \end{equation}
	\item all of the elements $x$ of $\mc{A}_s$ with $x \preceq_{\mc{A}} p_n$ come from among $[0,\ldots,m]$,
	\item there is a computation
	\begin{equation}\label{eq:m-R-C} R^\mc{A}_s[0,\ldots,m] = \Phi_{j,s}^{C_s}[0,\ldots,m] \end{equation}
	with use $w$,
	\item there are computations
	\begin{equation}\label{eq:w-C-R} C_s[0,\ldots,w + 1] = \Phi_{i,s}^{R^\mc{A}_s}[0,\ldots,w + 1] \end{equation}
with use $u$ and
\begin{equation}\label{eq:w-R-C} R^\mc{A}_s[0,\ldots,u] = \Phi_{j,s}^{C_s}[0,\ldots,u] \end{equation}
with use $v$,
	\item for each $z \in [0,\ldots,u_n]$, among the elements $[0,\ldots,m]$ of $\mc{A}_s$ there is a matching pair of $k$-cycles (for some $k \in \omega$) $\bar{x} = (x_0,\ldots,x_k)$ and $\bar{y} = (y_0,\ldots,y_k)$ in $\mc{A}_s$ with $x_0 \preceq_{\mc{A}} z \preceq_{\mc{A}} y_k$ and there are edges from each element of $[m+1,\ldots,u]$ to $y_k$ (and an edge from $x_0$ to $y_0$ witnessing that the $k$-cycles form a matching pair).
\end{enumerate}
Set $m_n = m$, $w_n = w$, $p_{n+1} = w + 1$, $u_{n+1} = u$, $v_{n+1} = v$, and $s_{n+1} = s$. Set $C_{s+1}(p_n) = 1$ to break the computation .

\medskip{}

The idea at this step is to find, for each $z \in [0,\ldots,u_n]$, a matching pair of $k$-cycles $\bar{x}$ and $\bar{y}$ which contain $z$ between them. These $k$-cycles are all contained in the elements $[0,\ldots,m]$. Moreover, $\bar{x}$ and $\bar{y}$ look like they correspond to the $k$-cycles $(a_k,\ldots,a_k+k)$ and $(c_k,\ldots,c_k+k)$ respectively from the standard copy of $(\omega,<)$ in the sense that there is an edge from each element of $[m_n+1,\ldots,u_{n+1}]$ to $y_k$. We also define the next value $p_{n+1}$ during this step.  All of this is to set up the ``link'' between $p_{n+1}$ to $p_n$ (but the link will not be completed until \textit{Step Three}).

Note that, since we set $p_{n+1} = w + 1$, $u_{n+1} = u$, and $s_{n+1} = s$, the computation \eqref{eq:w-C-R} is really
\begin{equation}\label{eq:w-C-R-5} C_{s_{n+1}}[0,\ldots,p_{n+1}] = \Phi_{i,s_{n+1}}^{R^\mc{A}_{s_{n+1}}}[0,\ldots,p_{n+1}] \end{equation}
with use $u_{n+1}$. At the end of this step, we set $C_{s+1}(p_n) = 1$ to break the computation
\[ C_s[0,\ldots,p_n] = \Phi_{i,s}^{R^\mc{A}_s}[0,\ldots,p_n] \]
with use $u_n$ from \eqref{eq:w-C-R-5} of \textit{Rep $n-1$}.

\begin{proof}[Proof of Lemma \ref{lem:omega-not-stuck} for Step One]
Suppose that we never leave \textit{Step One}. Let $t$ be the stage at which we entered \textit{Step One}. Then $C = C_t$. For sufficiently large $s > t$, we have the true computation
\[ R^\mc{A}_s[0,\ldots,\rho_n] = \Phi_{j,s}^{C_s}[0,\ldots,\rho_n] \]
as in \eqref{eq:an-C-R}. Thus we satisfy (1). Recall that $N$ is the isomorphism $\mc{A} \to (\omega,<)$. Since $\mc{A}$ is an isomorphic copy of $(\omega,<)$, there are only finitely many elements $x \in \mc{A}$ with $x \preceq_{\mc{A}} a_n$ and so for sufficiently large $m$, they all come from $[0,\ldots,m]$. So (2) is satisfied as well. For each $z \in [0,\ldots,u_n]$, there is $k$ such that $N(z)$ is in the interval $[a_k,a_{k+1})$. Let $\bar{x}$ and $\bar{y}$ be the matching pair of $k$-cycles in $\mc{A}$ which are the pre-images, under the isomorphism $N$, of $(a_k,\ldots,a_k+k)$ and $(c_k,\ldots,c_k+k)$. Increasing $m$ if necessary, we may assume that these $k$-cycles are contained in $[0,\ldots,m]$ in $\mc{A}$, and that all of the elements of $\mc{A}$ which are $\preceq_\mc{A}$-less than any entry of $\bar{x}$ and $\bar{y}$ are contained in $[0,\ldots,m]$. Do this for each $z$.

Since $R^\mc{A} = \Phi_{j}^{C}$, for sufficiently large $s$ we have true computations as in \eqref{eq:m-R-C}, \eqref{eq:w-C-R}, and \eqref{eq:w-R-C} defining $w$, $u$, and $v$. So (3) and (4) are satisfied.

Finally, for each $z \in [0,\ldots,u_n]$ the $k$-cycles $\bar{x}$ and $\bar{y}$ chosen above satisfy $x_0 \preceq_\mc{A} z \preceq_\mc{A} y_k$ and there is an edge from $x_0$ to $y_0$. Since all of the elements of $\mc{A}$ which are $\preceq_\mc{A}$-less than any entry of $\bar{x}$ and $\bar{y}$ are contained in $[0,\ldots,m]$, each of $m+1,\ldots,u$ are $\preceq_\mc{A}$-greater than $y_k$. So there is an edge from each of these to $y_k$. Thus (5) is satisfied. This contradicts our assumption that we never leave \textit{Step One}.
\end{proof}

\medskip{}
\noindent\textit{Step Two.} Wait for a stage $s$ such that
\begin{equation}\label{eq:omega-diff} R^\mc{A}_s[m_{n-1}+1,\ldots,u_n] \neq  R^\mc{A}_{s_n}[m_{n-1}+1,\ldots,u_n] = R^\mc{A}_{s_{n+1}}[m_{n-1}+1,\ldots,u_n]. \end{equation}
Let $q_n$ be the $\preceq_{\mc{A}}$-greatest element in $[m_{n-1}+1,\ldots,u_n]$; note that $N_s(q_n) > N_{s_{n+1}}(q_n)$. Set $C_{s+1}(p_n) = 0$.

\medskip{}

In the previous step, we broke the computation 
\[ C_{s_n}[0,\ldots,p_n] = \Phi_{i,{s_n}}^{R^\mc{A}_{s_n}}[0,\ldots,p_n]. \]
In order for this computation to again hold at some $s$, $R^\mc{A}$ must change below the use $u_n$ of that computation. That means that some element must be enumerated in $\mc{A}$ $\preceq_{\mc{A}}$-below one of $0,\ldots,u_n$ (in fact, it will have to be enumerated below one of $m_{n-1}+1,\ldots,u_n$). We let $q$ be the $\preceq_{\mc{A}}$-greatest such element, so we know that some an element has been enumerated below $q_n$. At the end of this stage, we set $C_{s+1}(p_n) = 0$ (so that $C_{s+1} = C_{s_n}$) to restore the computation above and force $R^\mc{A}$ to return to the way it was before.

\begin{proof}[Proof of Lemma \ref{lem:omega-not-stuck} for Step Two]
Suppose that the construction does not leave \textit{Step Two}. Then for all $t > s_{n+1}$, $C_t(a_n) = C_{s_{n+1}+1}(a_n) \neq C_{s_{n+1}}(a_n) = C_{s_n}(a_n)$. Since $C = \Phi_{i}^{R^\mc{A}}$, at some stage $t > s_{n+1}$, we have a true computation
\[ C_t[0,\ldots,a_n] = \Phi_{i}^{R^\mc{A}_t}. \]
By \eqref{eq:w-C-R-5} of the previous repetition, and since $C_t(a_n) \neq C_{s_{n}}(a_n)$, we have
\begin{equation} R^\mc{A}_s[0,\ldots,u_n] \neq  R^\mc{A}_{s_n}[0,\ldots,u_n]. \end{equation}
Since $C_t[0,\ldots,w_{n-1}] = C_{s_{n}}[0,\ldots,w_{n-1}]$, by \eqref{eq:m-R-C} we have
\[ R^\mc{A}_s[0,\ldots,m_{n-1}] =  R^\mc{A}_{s_n}[0,\ldots,m_{n-1}]. \]
Thus
\[ R^\mc{A}_s[m_{n-1}+1,\ldots,u_n] \neq  R^\mc{A}_{s_n}[m_{n-1}+1,\ldots,u_n]. \]
This contradicts our assumption that we never leave \textit{Step Two}.
\end{proof}

\medskip{}
\noindent\textit{Step Three.} Wait for a stage $s$ and $\rho > u_{n+1}$ such that 
\begin{enumerate}
	\item we have
	\begin{equation}\label{eq:back-to-same} R^\mc{A}_s[0,\ldots,u_{n+1}] =  R^\mc{A}_{s_{n+1}}[0,\ldots,u_{n+1}], \end{equation}
	\item among the elements $[0,\ldots,\rho]$ of $\mc{A}_s$, there is a matching pair of $\ell$-cycles (for some $\ell$) $\bar{\sigma} = (\sigma_0,\ldots,\sigma_k)$ and $\bar{\tau} = (\tau_0,\ldots,\tau_k)$ in $\mc{A}_s$ with $\sigma_k \prec_{\mc{A}_s} q_n$ and $z \prec_{\mc{A}_s} \tau_0$ for each $z \in [m_n+1,\ldots,u_{n+1}]$ (and an edge from $\sigma_0$ to $\tau_0$ but not vice versa witnessing that the $\ell$-cycles are matching),
	\item we have the computation
	\begin{equation}\label{eq:rho} R^\mc{A}_s[0,\ldots,\rho] = \Phi_{j,s}^{C_s}[0,\ldots,\rho]\end{equation}
	with use $\nu$.
\end{enumerate}
Set $\rho_{n+1} = \rho$ and $\nu_{n+1} = \nu$. Return to \textit{Step One} for \textit{Rep $n+1$}.

\medskip{}

In this step, we wait for the computation \eqref{eq:back-to-same} to be restored. Now, this forces $R^\mc{A}$ to be the same as it was during \textit{Step One}:
\[ R^\mc{A}_s[0,\ldots,u_{n+1}] = R^\mc{A}_{s_n}[0,\ldots,u_{n+1}]. \]
At \textit{Step One}, there was a matching pair of $k$-cycles $\bar{x}$ and $\bar{y}$ which contained $q_n$ between them. Since $R^\mc{A}$ is the same at this stage as it was at that stage, $\bar{x}$ and $\bar{y}$ are still a matching pair of $k$-cycles. But some element has been enumerated below $q_n$ since then, and by Remark \ref{rem:k-cycle-2}, it must be enumerated below $\bar{x}$. So $\bar{x}$ and $\bar{y}$ are not the $\preceq_{\mc{A}}$-least matching pair of $k$-cycles. Thus, they correspond to some elements in an interval $[b_\ell,c_\ell)$ from $(\omega,<)$ for some $\ell$. The $\bar{\sigma}$ and $\bar{\tau}$ in (2) above are supposed to correspond to $(a_\ell,\ldots,a_\ell+\ell)$ and $(c_\ell,\ldots,c_\ell+\ell)$ from $(\omega,<)$ respectively. Since, in \textit{Step One}, there was an edge from each $z \in [m_n+1,\ldots,u_{n+1}]$ to $y_k$, each such $z$ must also correspond to some element from the interval $[b_\ell,c_\ell)$ (one can see from the definition of the relation $R$ that there are no such edges from an element $z \geq c_\ell$ to some $y \in [b_\ell,c_\ell)$).

This establishes the desired ``link''. By Remark \ref{rem:k-cycle-2}, since the interval $[\sigma_0,\tau_\ell]$ is of a fixed length determined by $\ell$, no new elements can be enumerated between $\sigma_0$ and $\tau_\ell$. So if some new element is enumerated below one of $[m_n+1,\ldots,u_{n+1}]$, it must be enumerated below $\sigma_0$ and hence below $q_n$.

\begin{proof}[Proof of Lemma \ref{lem:omega-not-stuck} for Step Three]
Suppose that the construction never leaves \textit{Step Three}. Suppose that the construction entered \textit{Step Three} at stage $s$. For $t \geq s$, we have $C_t = C_{s_{n+1}} = \varnothing$. Since $R^\mc{A} = \Phi_{j}^{C}$, and using (\ref{eq:w-R-C}), for sufficiently large $t > s$ we have
\begin{equation} R^\mc{A}[0,\ldots,u_{n+1}] = R^\mc{A}_t[0,\ldots,u_{n+1}] =  R^\mc{A}_{s_{n+1}}[0,\ldots,u_{n+1}]. \end{equation}
Since $q_n \in [m_{n-1}+1,\ldots,u_n]$, at stage $s_{n+1}$, there was a matching pair of $k$-cycles $\bar{x} = (x_0,\ldots,x_k)$ and $\bar{y} = (y_0,\ldots,y_k)$ among the elements $[0,\ldots,m_n]$ as in (5) of \textit{Step One} with $x_0 \preceq_\mc{A} q_n \preceq_{\mc{A}} y_k$. Since
\[ R^\mc{A}[0,\ldots,u_{n+1}] = R^\mc{A}_{s_{n+1}}[0,\ldots,u_{n+1}], \]
$\bar{x}$ and $\bar{y}$ are actually $k$-cycles in $\mc{A}$, and is actually an edge from $x_0$ to $y_0$. Also, $N_{t}(q_n) > N_{s_{n+1}}(q_n)$. By Remark \ref{rem:k-cycle-2}, $N_t(x_0) > N_{s_{n+1}}(x_0)$. Let $\bar{x}' \in (\omega,<)$ be $(N_{s_{n+1}}(x_0),\ldots,N_{s_{n+1}}(x_k))$ and similarly for $\bar{y}'$. Let $\bar{x}'' \in (\omega,<)$ be $(N_{t}(x_0),\ldots,N_{t}(x_k))$ and similarly for $\bar{y}''$. Then in $(\omega,<)$ we know that $\bar{x}'$ and $\bar{y}'$ are a matching pair of $k$-cycles, and so are $\bar{x}''$ and $\bar{y}''$. So $\bar{x}''$ and $\bar{y}''$ are not the first matching pair of $k$-cycles, and so they are contained in some interval $[b_\ell,c_\ell)$ for some $\ell$.  Moreover, in $R^{\mc{A}}$, each element $z$ of $[m_n,\ldots,u_{n+1}]$ has an edge from it to $x_0$. This is only possible if $N_t(z) \in [b_\ell,c_\ell)$ for each such $z$. Let $\bar{\sigma}$ and $\bar{\tau}$ be the matching pair of $\ell$-cycles in $\mc{A}$ whose images in $(\omega,<)$ under the isomorphism $N$ are $(a_\ell,\ldots,a_\ell + \ell)$ and $(c_\ell,\ldots,c_\ell+\ell)$ respectively. Then $\bar{\sigma}$ and $\bar{\tau}$ the required $\ell$-cycles in \textit{Step 3}. We get the computation for (\ref{eq:rho}) because $R^\mc{A} = \Phi_{j}^{C}$. This contradicts our assumption that we never leave \textit{Step Three}.
\end{proof}

This ends the construction. In the process, we have proved Lemma \ref{lem:omega-not-stuck}. The next two lemmas complete the proof that the construction works as desired.

\begin{lem}
$C$ is a d.c.e.\ set and the approximation $C_s$ is a d.c.e.\ approximation
\end{lem}
\begin{proof}
We change the approximation $C_s(x)$ at most twice for each $x$---since $x = p_n$ for at most one $n$, we change $C_s(x)$ at most once in \textit{Step One} and once in \textit{Step Two}.
\end{proof}

\begin{lem}\label{lem:moves-inf-times}
If the construction does not get stuck in any stage, then $\mc{A}$ is not isomorphic to $(\omega,<)$.
\end{lem}
\begin{proof}
In the construction above we remarked that $N_{s_{n+2}}(q_n) > N_{s_{n+1}}(q_n)$. We claim that for all $n \geq 1$, $N_{s_{n+1}}(q_0) > N_{s_n}(q_0)$. This would imply that $\mc{A}$ is not isomorphic to $(\omega,<)$, as $q_n \in \mc{A}$ would have infinitely many predecessors. 

The key to the proof will be to use the ``links'' that we created during the construction. We will show that if, for $n' > n + 1$, $N_{s_{n'+1}}(q_{n+1}) > N_{s_{n'}}(q_{n+1})$ then $N_{s_{n'+1}}(q_n) > N_{s_{n'}}(q_n)$. This will suffice to prove the lemma.

During \textit{Step Three} of the $n$th repetition, we saw that among the elements $[0,\ldots,\rho_{n+1}]$, there is a matching pair of $\ell$-cycles $\bar{\sigma} = (\sigma_0,\ldots,\sigma_\ell)$ and $\bar{\tau} = (\tau_0,\ldots,\tau_\ell)$ in $\mc{A}_s$ with $\sigma_\ell \prec_{\mc{A}} q_n$ and $z \prec_{\mc{A}} \tau_0$ for each $z \in [m+1,\ldots,v]$. Moreover, by \eqref{eq:an-C-R}, at every stage $s_{n'}$ for $n' > n+1$, $\bar{\sigma}$ and $\bar{\tau}$ are $\ell$-cycles in $\mc{A}_{s_{n'}}$.

Then $q_{n+1} \in [m+1,\ldots,v]$, so $q_{n+1} \prec_{\mc{A}} \tau_0$. Thus if $N_{s_{n'+1}}(q_{n+1}) > N_{s_{n'}}(q_{n+1})$, then $N_{s_{n'+1}}(\tau_0) > N_{s_{n'}}(\tau_0)$. Since $\bar{\sigma}$ and $\bar{\tau}$ are a matching pair of $\ell$-cycles at stages $s_{n'}$ and $s_{n'+1}$, by Remark \ref{rem:k-cycle-2} no new elements are added between $\bar{\sigma}$ and $\bar{\tau}$ in between these stages. So $N_{s_{n'+1}}(\sigma_0) > N_{s_{n'}}(\sigma_0)$, and since $\sigma_0 \preceq_{\mc{A}} q_n$, $N_{s_{n'+1}}(q_n) > N_{s_{n'}}(q_n)$.
\end{proof}

If $\mc{A}$ is an isomorphic copy of $(\omega,<)$, then Lemma \ref{lem:omega-not-stuck} and Lemma \ref{lem:moves-inf-times} combine to show that \[ C \neq \Phi_{i}^{R^\mc{A}} \text{ or } R^\mc{A} \neq \Phi_{j}^{C}.\]
This completes the proof of Theorem \ref{thm:weird-relation-on-omega}.
\end{proof}

One can view the proof as a strategy for satisfying a single requirement $\mc{R}_{e,i,j}$. For a fixed $e_0$, it does not add too much difficulty to satisfy multiple requirements of the form $\mc{R}_{e_0,i,j}$ at the same time---since these requirement are all working with the same structure $\mc{A}_e$, only one requirement has to force $\mc{A}_e$ to not be isomorphic to $(\omega,<)$. However, if one tries to satisfy every requirement $\mc{R}_{e,i,j}$ for different $e$'s at the same time, one runs into a problem. Each requirement tries to restrain infinitely much of $\omega$, and in order to build $p_{n+1}$, the requirement must move $p_n$. Thus if $p_{n+1}$ is injured, $p_n$ may injure other requirements.

\chapter{A ``Fullness'' Theorem for 2-\cea Degrees}
\label{SigmaTwoSection}
In this section, we will prove Theorem \ref{relativized-2}. Recall that a set $A$ is 2-\cea in a set $B$ if there is $C$ such that $A$ is c.e.\ in and above $C$ and $C$ is c.e.\ in and above $B$.

We will prove the theorem in the following form:

\begin{thm}\label{relativized-1}
Let $\mc{C}$ be a computable structure, and let $R$ be an additional computable relation on $\mc{C}$. Suppose that $R$ is not formally $\Delta^0_2(0''')$. Then for all degrees $\textbf{d} \geq 0^{(\omega + 1)}$ and sets $A$ 2-\cea in $\textbf{d}$ there is an isomorphic copy $\mc{D}$ of $\mc{C}$ with $\mc{D} \equiv_T \textbf{d}$ and
\[ R^{\mc{D}} \oplus \mc{D} \equiv_T A . \]
\end{thm}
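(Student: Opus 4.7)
The plan is a $\textbf{d}$-computable priority construction of $\mc{D}$ as the limit of finite partial isomorphisms $F_s \colon \omega \to \mc{C}$, coding $A$ into $R^\mc{D}$ at a designated sequence of locations. As in Propositions \ref{ce-condition}, \ref{M-not-A}, and \ref{prop:incomp-2}, we set up requirements $\mc{S}_i$ forcing $F$ to be onto (so that $\mc{D} \cong \mc{C}$), together with coding requirements $\mc{R}_n$ that pin down the $n$-th bit of $A$ from $R^\mc{D} \oplus \mc{D}$ and reduction requirements verifying $R^\mc{D} \oplus \mc{D} \leq_T A$. Since $A$ is 2-\cea in $\textbf{d}$, we fix $C$ c.e.\ in and above $\textbf{d}$ with $A$ c.e.\ in and above $C$, and our coding will respond to changes at both levels of this approximation.

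First, I would extract from the hypothesis the freeness condition needed to perform 2-\cea coding. Under the relativisation to $0'''$ of the Ash--Knight--Manasse--Slaman/Chisholm theorem, $R$ failing to be formally $\Delta^0_2(0''')$ is equivalent to $R$ failing to be intrinsically $\Delta^0_2$ on the cone above $0'''$; this in turn yields, over every tuple $\bar c$, a tuple $\bar a$ with a two-level freeness property: given any existential $\varphi(\bar c, \bar a, \bar b)$ we can find $\bar a', \bar b'$ satisfying $\varphi$ with $\bar a'$ in a different $R$-configuration from $\bar a$, and then given any further existential $\psi$ true of $\bar c \bar a' \bar b'$ we can find $\bar a'', \bar b''$ satisfying $\psi$ with $\bar a''$ back in the original $R$-configuration. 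This is exactly the room required to mirror a 2-\cea approximation: the outer flip tracks a conjectural enumeration into $A$, and the inner flip responds to a subsequent correction forced by an update to $C$. Locating such $\bar a$ and the witnesses for the successive flips is uniform in a $0^{(\omega+1)}$-oracle.

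The main obstacle, and the reason the argument needs genuinely new ideas, is the following non-local choice problem. When we apply 2-freeness we are typically handed several candidate witnesses $\bar a', \bar a''$, and long-term success of the construction depends on which candidates we commit to: unlike in the c.e.\ setting of Harizanov's theorem, where a single enumeration event can be absorbed by a single freeness step, here future $C$-changes can invalidate what was a locally reasonable choice, and we must commit before those changes have been seen. To cope, I would define an auxiliary Gale--Stewart game $\mathcal{G}$ in which Player I plays continuations of the construction (choosing freeness witnesses, extending $F_s$, and responding to approximation events) while Player II plays adversarial future enumerations into $C$ and $A$; Player I wins when the resulting partial isomorphisms cohere and the coding survives in the limit. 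A pigeonhole analysis based on the two-level freeness of $\bar a$ will show that from any position at least one of the two natural choices for the next witness yields a winning strategy for Player I, and the construction then asks $\mathcal{G}$ which to take. This is the dichotomy described in the introduction as the distinctive feature of the proof.

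Finally, I would verify that the payoff set of $\mathcal{G}$ is Borel at a low enough arithmetic level that a winning strategy is computable from $0^{(\omega+1)}$, and hence from $\textbf{d}$. With the game oracle deciding each binary choice, the rest of the construction reduces to finite-injury bookkeeping in the style of Chapter \ref{DCESection}: one checks that every $\mc{S}_i$ is eventually met and that the coding remains in force, giving $\mc{D} \cong \mc{C}$ with $\mc{D} \equiv_T \textbf{d}$ and $R^\mc{D} \oplus \mc{D} \equiv_T A$. The technical heart of the proof is the simultaneous balancing act described above: defining $\mathcal{G}$ precisely enough that the 2-freeness hypothesis guarantees a winning strategy for Player I, while keeping the winning condition simple enough that $0^{(\omega+1)}$ can actually compute such a strategy.
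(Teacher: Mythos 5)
Your overall architecture matches the paper's: extract $2$-free tuples from the failure of $R$ to be formally $\Delta^0_2(0''')$, approximate the 2-\cea set $A$ level by level, build $\mc{D}$ as a limit of $\textbf{d}$-computable partial isomorphisms carrying coding locations, and resolve the commitment problem by playing a game against an adversary who supplies the future of the approximation, with a dichotomy argument showing that one of the two candidate witnesses (the $2$-free $y \notin R$, or the $x \in R$ with $\cdots x \leq_1 \cdots y$) must admit a winning strategy. That dichotomy-and-transfer step is precisely the role of Lemmas \ref{one-or-other} and \ref{transfer-win} in the paper, so the heart of your plan is the right one.

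There is, however, a genuine gap at the step where you ``verify that the payoff set of $\mathcal{G}$ is Borel at a low enough arithmetic level that a winning strategy is computable from $0^{(\omega+1)}$.'' Determinacy of a closed or low-level Borel game does not by itself bound the complexity of a winning strategy: your game has infinitely many rounds and an infinite move set (the adversary may keep playing new tuples and new approximation strings forever), and for such games the set of non-losing positions for the closed player is in general only $\Pi^1_1$, so no arithmetic strategy is guaranteed. The paper closes this with a separate combinatorial reduction: it isolates the \emph{basic plays} (adversary plays whose approximation strings never loop more than once relative to the finitely many coding locations present at stage $s$), proves there are only finitely many basic lists with a computable bound on their length (Lemma \ref{finite-basic-list}), and shows a winning strategy for the basic game can be stitched into one for the full game (Lemma \ref{basic-is-all}); only then is the game essentially finite, so that existence of a winning strategy automatically yields an arithmetic one and condition \WS can be maintained uniformly below $\textbf{d}$. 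A second, smaller omission: for $A \geq_T R^\mc{D} \oplus \mc{D}$ you need the 2-\cea approximation to be one whose true path $A$ itself can compute (this uses that $A$ computes the intermediate c.e.\ set and hence can certify when the $\Sigma^0_2$ approximation has settled); this is the content of Lemma \ref{approx} and must be established rather than assumed.
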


Our construction will actually build $\mc{D} \leq_T \textbf{d}$. We can use Knight's theorem on the upwards closure of the degree spectrum of a structure (see \cite{Knight86}) to obtain $\mc{D} \equiv_T \textbf{d}$ as follows. Suppose that we have built $\mc{D} \leq_T \textbf{d}$ as in the theorem. There is an isomorphic copy $\mc{D}^{*} \equiv_T \textbf{d}$ of $\mc{D}$. Moreover, $\mc{D}^{*}$ is obtained by applying a permutation $f \equiv_T \textbf{d}$ to $\mathcal{D}$. Then $f \oplus \mc{D} \equiv_T f \oplus \mc{D}^{*} \equiv_T \mc{D}^*$ and $f \oplus R^{\mc{D}} \equiv_T f \oplus R^{\mc{D}^{*}}$. Hence
\[ A \equiv_T \mc{D}^{*} \oplus R^{\mc{D}^{*}}. \]

Theorem \ref{relativized-2} is obtained from Theorem \ref{relativized-1} by relativizing the proof. Given any structure $\mc{C}$ and relation $R$ on $\mc{C}$, we can build a computable copy $\mc{D} \equiv_T \textbf{d}$ as in Theorem \ref{relativized-1} for any $\textbf{d}$ in the cone above $(\mc{C} \oplus R)^{\omega + 1}$. We could also give effectiveness conditions on computable $\mc{C}$ and $R$ which would suffice to take $\textbf{d} = 0$, but these would be quite complicated.

Finally, the simplest way to state the theorem is as follows:
\begin{corollary}
Let $\mc{C}$ be a structure and $R$ a relation on $\mc{C}$. Then either
\[ \dgSp_{rel}(\mc{C},R) \subseteq \Delta^0_2 \]
or
\[ \text{2-\cea} \subseteq \dgSp_{rel}(\mc{C},R). \]
\end{corollary}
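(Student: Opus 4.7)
The plan is to reduce this corollary directly to Theorem \ref{relativized-2}. The key observation is that the dichotomy in the corollary is governed by whether $R$ is intrinsically $\Delta^0_2$ on a cone, and by the Borel determinacy argument from Chapter \ref{Preliminaries}, exactly one of these two possibilities holds (viewed as a Borel invariant set of degrees, the property of a fixed $\textbf{d}$ being a base of a cone on which every $\textbf{d}$-computable copy $\mc{B}$ of $\mc{C}$ has $R^\mc{B} \in \Delta^0_2(\textbf{d})$ either contains a cone or has its complement contain a cone).

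First, I would treat the case in which $R$ is intrinsically $\Delta^0_2$ on a cone. By the very definition from the ``Relativizing to a Cone'' section, on a cone of degrees $\textbf{d}$ every isomorphic copy $\mc{B}$ of $\mc{C}$ with $\mc{B} \leq_T \textbf{d}$ satisfies $R^\mc{B} \in \Delta^0_2(\textbf{d})$. Thus for such $\textbf{d}$ every element of $\dgSp(\mc{C},R)_{\leq \textbf{d}}$ is a $\Delta^0_2(\textbf{d})$ degree, which when translated into the partial-order notation gives $\dgSp_{rel}(\mc{C},R) \subseteq \Delta^0_2$.

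In the remaining case, $R$ is not intrinsically $\Delta^0_2$ on any cone, so the hypothesis of Theorem \ref{relativized-2} is satisfied. Applying that theorem directly yields that on a cone the degree spectrum of $R$ contains the 2-\cea sets. Rephrased in the $\dgSp_{rel}$ notation from Chapter \ref{Preliminaries}, this is precisely the statement $\text{2-\cea} \subseteq \dgSp_{rel}(\mc{C},R)$.

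The main difficulty here is not in this deduction, which amounts to unwinding definitions, but in the proof of Theorem \ref{relativized-2} itself; the corollary is essentially a restatement of that theorem in the degree-spectrum-on-a-cone language, with the additional comment that the failure of the hypothesis trivially yields the first alternative. The only point that requires any care is verifying that ``intrinsically $\Delta^0_2$ on a cone'' and ``$\dgSp_{rel}(\mc{C},R) \subseteq \Delta^0_2$'' are indeed equivalent, and this is immediate once one recalls that equality of Montalb\'an's map $f_R$ and the map $\textbf{d} \mapsto \Delta^0_2(\textbf{d})$ on a cone is exactly the content of both formulations.
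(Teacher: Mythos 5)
Your proof is correct and follows exactly the route the paper intends: the paper states this corollary as "the simplest way to state the theorem" and gives no separate argument, precisely because it is an immediate restatement of Theorem \ref{relativized-2} once one applies the Borel-determinacy dichotomy from Chapter \ref{Preliminaries} and unwinds the definition of $\dgSp_{rel}$ (noting that $d(R^{\mc{B}}) \oplus \mathbf{d}$ is $\Delta^0_2(\mathbf{d})$ exactly when $R^{\mc{B}}$ is). Your observation that the only step requiring care is the equivalence between "intrinsically $\Delta^0_2$ on a cone" and "$\dgSp_{rel}(\mc{C},R) \subseteq \Delta^0_2$" is exactly the right thing to flag, and your verification of it is correct.
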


The proof of the theorem will use free elements as in Barker's proof that formally $\Sigma_\alpha$ is the same as intrinsically $\Sigma_\alpha$ \cite{Barker88}. It will probably be helpful to understand the proof of that result at least for the case $\alpha = 2$.
\begin{defn}
We say that $\bar{a} \notin R$ is $2$\textit{-free over} $\bar{c}$ if for all $\bar{a}_1$, there are $\bar{a}' \in R$ and $\bar{a}_{1}'$ such that
\[ \bar{c},\bar{a},\bar{a}_1 \leq_1 \bar{c},\bar{a}',\bar{a}_1. \]
\end{defn}
\noindent Recall that $\leq_0$ and $\leq_1$ are the first two back-and-forth relations; $\bar{a} \leq_0 \bar{b}$ if all of the quantifier-free formulas with G\"odel number less than $|\bar{a}|$ which are true of $\bar{a}$ are true of $\bar{b}$, while $\bar{a} \leq_1 \bar{b}$ if every $\Sigma^0_1$ formula true of $\bar{b}$ is true of $\bar{a}$ (see Chapter 15 of \cite{AshKnight00}). If $F: \{0,\ldots,m\} \to \mc{C}$ and $G: \{0,\ldots,n\} \to \mc{C}$ are functions with $n > m$, then $F \leq_i G$ means that
\[ F(0),\ldots,F(m) \leq_i G(0),\ldots,G(m). \]

If $R$ is not formally $\Delta^0_2(0''')$, then either $R$ is not defined by a $0'''$-computable $\Pi^0_2$ formula or $\neg R$ is not defined by a $0'''$-computable $\Pi^0_2$ formula. We may suppose without loss of generality that it is $R$ which is not defined by a $0'''$-computable $\Pi^0_2$ formula. We will relativize Proposition 16.1 of \cite{AshKnight00} to show that for any tuple $\bar{c}$, there is a tuple $\bar{a}$ which is 2-free over $\bar{c}$. Moreover, using $0^{(4)}$ we can check whether a tuple is $2$-free, and hence find these $2$-free tuples. 

\begin{prop}
Let $\mc{C}$ be a computable structure, and let $R$ be a further computable relation on $\mc{C}$. Suppose that $\bar{c}$ is a tuple over which no $\bar{a} \notin R$ is 2-free. Then there is a $0'''$-computable $\Sigma_2$ formula $\varphi(\bar{c},\bar{x})$ defining $\neg R$.
\begin{proof}
We have $\bar{x} \leq_1 \bar{y}$ exactly when all $\exists_1$ formulas true of $\bar{y}$ are true of $\bar{x}$; so $\bar{x} \leq_1 \bar{y}$ if and only if
\[ \bigdoublewedge_{\varphi \text{ a } \exists_1 \text{ formula}} [\varphi(\bar{y}) \Rightarrow \varphi(\bar{x})]. \]
This is a computable $\Pi_2$ formula. In particular, $\mc{C}$ is 2-friendly relative to $0''$. By Theorem 15.2 of Ash-Knight, for each $\bar{a} \notin R$ and $\bar{a}_1$ there is (uniformly in $\bar{a}$ and $\bar{a}_1$) a $0''$-computable $\Pi_1$ formula $\varphi_{\bar{c},\bar{a},\bar{a}_1}(\bar{c},\bar{x},\bar{u})$ which says that $\bar{c},\bar{a},\bar{a}_1 \leq_1 \bar{c},\bar{x},\bar{u}$. 

Since there are no tuples in $\neg R$ which are 2-free over $\bar{c}$, for each $\bar{a} \notin R$ there is $\bar{a}_1$ such that for every $\bar{a}' \in R$ and $\bar{a}_{1}'$, $\bar{c},\bar{a},\bar{a}_1 \nleq_1 \bar{c},\bar{a}',\bar{a}_{1}'$. Since $\leq_1$ is computable in $0''$, we can find such a $\bar{a}_1$ for each $\bar{a} \in \notin R$ using $0'''$. For each $\bar{a}$, using this $\bar{a}_1$, define
\[ \psi_{\bar{a}}(\bar{c},\bar{x}) = (\exists \bar{u}) \varphi_{\bar{c},\bar{a},\bar{a}_1}(\bar{c},\bar{x},\bar{u}). \]
This formula is true of $\bar{a}$, but it is not true of any element of $R$. Thus $\neg R$ is defined by
\[ \bigdoublevee_{\bar{a} \in R} \psi_{\bar{a}}(\bar{c},\bar{x}). \]
This is a $0'''$-computable $\Sigma_2$ formula.
\end{proof}
\end{prop}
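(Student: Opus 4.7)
The plan is to mimic the classical ``free elements'' construction (as in the proofs of Ash--Nerode and Barker): produce, for each $\bar{a} \notin R$, a $\Sigma_2^{\infi}$ formula $\psi_{\bar{a}}$ that is true of $\bar{a}$ but false of every tuple in $R$, and then take the disjunction over $\bar{a} \notin R$. The content of ``$\bar{a}$ is not 2-free over $\bar{c}$'' is precisely what is needed to extract such a $\psi_{\bar{a}}$: it produces a tuple $\bar{a}_1 = \bar{a}_1(\bar{a})$ such that no $\bar{a}' \in R$, $\bar{a}_1'$ can satisfy $\bar{c},\bar{a},\bar{a}_1 \leq_1 \bar{c},\bar{a}',\bar{a}_1'$. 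Defining
\[ \psi_{\bar{a}}(\bar{c},\bar{x}) \;:=\; (\exists \bar{u}) \bigl[\, \bar{c},\bar{a},\bar{a}_1 \leq_1 \bar{c},\bar{x},\bar{u} \,\bigr], \]
we get a formula that $\bar{a}$ itself satisfies (taking $\bar{u} = \bar{a}_1$, since $\leq_1$ is reflexive) while no $\bar{a}' \in R$ satisfies, by the choice of $\bar{a}_1$.

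Next I would verify the complexity of $\psi_{\bar{a}}$. Since $\bar{x} \leq_1 \bar{y}$ unfolds as the conjunction over all $\Sigma_1$ formulas $\varphi$ of ``$\varphi(\bar{y}) \Rightarrow \varphi(\bar{x})$'', it is a computable $\Pi_2$ relation; in particular $\mc{C}$ is $2$-friendly relative to $0''$. By (the relativisation to $0''$ of) Theorem~15.2 of Ash--Knight, the condition $\bar{c},\bar{a},\bar{a}_1 \leq_1 \bar{c},\bar{x},\bar{u}$ is expressed by a $0''$-computable $\Pi_1$ formula $\varphi_{\bar{c},\bar{a},\bar{a}_1}(\bar{c},\bar{x},\bar{u})$, uniformly in $\bar{a},\bar{a}_1$. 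Hence $\psi_{\bar{a}}$ is a $0''$-computable $\Sigma_2$ formula, uniformly in the parameters $\bar{a},\bar{a}_1$.

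It remains to confirm that the whole disjunction $\varphi(\bar{c},\bar{x}) := \bigdoublevee_{\bar{a} \notin R} \psi_{\bar{a}}(\bar{c},\bar{x})$ is $0'''$-computable $\Sigma_2$. The only non-uniformity is the choice of the witness $\bar{a}_1(\bar{a})$. Unpacking, $\bar{a}_1$ must satisfy
\[ (\forall \bar{a}' \in R)(\forall \bar{a}_1')\, \bigl[\, \bar{c},\bar{a},\bar{a}_1 \nleq_1 \bar{c},\bar{a}',\bar{a}_1' \,\bigr], \]
which, since $\leq_1$ is $\Pi_2$ (so $\nleq_1$ is $\Sigma_2$) and $R$ is computable, is a $\Pi_3$ condition on $\bar{a}_1$. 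Searching for the least such $\bar{a}_1$ is therefore a $\Sigma_3$ task, decidable in $0'''$. This gives a $0'''$-computable enumeration of the disjuncts $\psi_{\bar{a}}$ as $\bar{a}$ ranges over (the computable set) $\neg R$, so $\varphi$ is $0'''$-computable $\Sigma_2$.

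The only step with any real subtlety is the bookkeeping of oracles: I need to keep careful track of the arithmetical complexities ($\leq_1$ is $\Pi_2$, so $0''$ is enough for each individual $\varphi_{\bar{c},\bar{a},\bar{a}_1}$, but locating the witness $\bar{a}_1$ requires deciding a $\Pi_3$ statement, which is what forces the jump to $0'''$). Everything else, the truth of $\psi_{\bar{a}}$ on $\bar{a}$ and its failure on every element of $R$, is immediate from the definition of $2$-freeness and the reflexivity of $\leq_1$.
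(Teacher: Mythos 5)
Your proof is correct and follows essentially the same route as the paper's: the same use of Theorem~15.2 of Ash--Knight to express $\leq_1$ by a uniformly $0''$-computable $\Pi_1$ formula, the same construction of $\psi_{\bar a}$ via the witness $\bar a_1$ from non-2-freeness, and the same accounting of where the extra jump to $0'''$ comes from (locating $\bar a_1$). The one place you are slightly more explicit than the paper is in noting that $\psi_{\bar a}(\bar c,\bar a)$ holds by reflexivity of $\leq_1$, which is a helpful clarification rather than a deviation.
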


The proof of Theorem \ref{relativized-1} is quite complicated and will take the rest of this chapter.

\section{Approximating a 2-\cea Set}

Let $B$ be c.e.\ and let $A$ be c.e.\ in and above $B$. As $A$ is $\Sigma^0_2$, there is a computable approximation $f(x,s)$ for $A$ such that $x \in A$ if and only if $f(x,s) = 0$ for sufficiently large $s$, and $x \notin A$ if and only if $f(x,s) = 1$ for infinitely many $s$. However, if $A$ is an arbitrary $\Sigma^0_2$ set, and $x \in A$, then $A$ cannot necessarily compute a stage $s$ after which $f(x,t) = 0$. We will begin by showing that $A$ in fact has such an approximation by virtue of computing $B$. Everything in this section relativizes.

\begin{lem}
Let $B$ be c.e.\ and let $A$ be c.e.\ in and above $B$. There is a computable approximation $f:\omega^2 \to \{0,1\}$ such that $x \in A$ if and only if $f(x,s) = 0$ for sufficiently large $s$, and $x \notin A$ if and only if $f(x,s) = 1$ for infinitely many $s$. Moreover, $B$ (and hence $A$) can compute uniformly whether for all $t \geq s$, $f(x,t) = 0$.
\begin{proof}
As $B$  is c.e., it has a computable approximation $B_s$. Let $e$ be such that $A = W_e^B$. Set $f(x,0) = 0$. Suppose that we have defined $f(x,s)$. If $x \notin W_e^{B_s}$, then let $f(x,s+1) = 1$. If $x \in W_e^{B_s}$, and $f(x,s) = 1$, set $f(x,s+1) = 0$. If $x \in W_e^{B_s}$ and $f(x,s) = 0$, then $x \in W_e^{B_{s-1}}$ with some use $u$. We set $f(x,s+1) = 0$ if the computations $x \in W_e^{B_s}$ and $x \in W_e^{B_{s-1}}$ are the same (that is, if $B_s$ and $B_{s-1}$ agree up to the use $u$). Otherwise, set $f(x,s+1) = 1$.

If $x \in W_e^B$, there is some computation with finite use that witnesses this, and hence $f(x,s) = 0$ for sufficiently large $s$. If $x \notin W_e^B$, then there is no such computation; so either there are infinitely many stages $s$ at which $x \notin W_e^{B_s}$, or there are infinitely many pairs of stages $s$ and $s+1$ such that $x \in W_e^{B_s}$ and $x \in W_e^{B_{s-1}}$, but the computations are different, and hence $f(x,s+1) = 1$. So there are infinitely many stages $s$ at which $f(x,s) = 1$ and $f$ is a $\Sigma^0_2$ approximation for $A$.

Finally, suppose $s$ is a stage at which $f(x,s) = 0$; we wish to know whether, for all $t > s$, $f(x,t) = 0$. There is some computation $x \in W_e^{B_{s-1}}$ with use $u$ witnessing that $f(x,s) = 0$. Using $B$, we can decide whether $B_{s-1}$ correctly approximates $B$ up to $u$. If so, $f(x,t) = 0$ for all $t > s$. Otherwise, there is some first stage $t > s$ at which $B_{t-1}$ is different from $B_{s-1}$ below the use $u$; either $x \notin W_e^{B_{t-1}}$, or $x \in W_e^{B_{t-1}}$ but the computation is different from that which witnesses $x \in W_e^{B_{s-1}}$. In either case, $f(x,t) = 1$.
\end{proof}
\end{lem}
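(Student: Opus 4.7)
The plan is to build $f$ from a fixed c.e.-in-$B$ index for $A$ together with a fixed computable enumeration of $B$, by declaring $f(x,s)=0$ precisely when the current computation of ``$x \in A$'' looks undisturbed at stage $s$.

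First I would fix $e$ with $A = W_e^B$ and a computable enumeration $\{B_s\}_{s \in \omega}$ of $B$. At stage $s+1$ I would set $f(x,s+1) = 0$ exactly when there is a computation witnessing $x \in W_{e,s}^{B_s}$ with some use $u$ such that $B_{s+1} \restriction u = B_s \restriction u$; otherwise I would set $f(x,s+1) = 1$. (Set $f(x,0)=0$ by convention.) The point is that $f(x,s+1) = 0$ is an effective snapshot of ``the computation I see now is not contradicted by enumerations already made into $B$.''

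Next I would verify the two directions of the approximation. If $x \in A$ then there is a true computation $x \in W_e^B$ with some finite use $u$; for sufficiently large $s$, the enumeration $B_s$ has stabilized on $\{0,\ldots,u\}$, so the same computation appears at every later stage and $f(x,s) = 0$ eventually. Conversely, if $x \notin A$ then for every $s$ either the approximation fails to find any computation $x \in W_{e,s}^{B_s}$, or the computation it does find has use $u$ and $B$ will at some later stage $t$ enumerate an element $\leq u$, causing $B_t \restriction u \ne B_{t-1} \restriction u$ and hence $f(x,t) = 1$; iterating this gives infinitely many such $t$.

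Finally, for the ``moreover'' clause I would argue that $B$ can uniformly decide stability. Given $(x,s)$ with $f(x,s) = 0$, locate the use $u$ of the witnessing computation; use $B$ as an oracle to test whether $B_{s-1} \restriction u$ already equals $B \restriction u$. If yes, the same computation remains valid at every $t \geq s$ and $f(x,t) = 0$ forever; if no, there is a least $t > s$ at which the enumeration of $B$ changes $B_{t-1} \restriction u$, and by construction $f(x,t) = 1$. So the stability predicate is $B$-decidable, uniformly in $(x,s)$. The ``hardest'' part here is really only bookkeeping around use functions and the off-by-one between stages $s$ and $s+1$; no injury or priority argument is needed.
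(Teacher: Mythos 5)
Your construction is the same in spirit as the paper's: define $f(x,s+1)=0$ exactly when the current $W_e^{B}$-computation survives the next step of the enumeration of $B$, and observe that a true computation survives forever while a false one is eventually injured, with $B$ able to tell the two cases apart by checking whether the use has stabilized. However, as literally written your definition has a gap: you set $f(x,s+1)=0$ when \emph{there exists} a computation witnessing $x\in W_{e,s}^{B_s}$ whose use is undisturbed, but your verification of both the ``$x\notin A$'' direction and the ``moreover'' clause treats this as if there were a single tracked computation. Concretely, in the second paragraph you argue that when $B$ enumerates an element below the use $u$ of ``the computation it does find'' at stage $t$, ``hence $f(x,t)=1$''; under the existential definition this does not follow, since a different computation present at stage $t-1$ (with a use not meeting the newly enumerated element) could witness $f(x,t)=0$. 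The same problem infects the stability test in the last paragraph: if $B_{s-1}\restriction u$ is not yet correct, you conclude $f(x,t)=1$ at the first stage $t$ where the enumeration changes below $u$, which again only injures one of possibly several witnesses, so the $B$-oracle test could answer ``unstable'' while $f(x,t)=0$ holds for all $t\ge s$.

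The repair is small and brings you back to exactly what the paper does: fix a \emph{canonical} computation at each stage (say, the first one found in the enumeration of $W_{e,s}^{B_s}$), define $f(x,s+1)=0$ only when that canonical computation is undisturbed (equivalently, when the canonical computations at stages $s$ and $s+1$ coincide), and run your verification verbatim. With the first-found convention, an undisturbed canonical computation remains canonical, so a canonical computation that stabilizes is a true computation; this is precisely the fact that makes ``infinitely many $1$'s'' and the $B$-decidability of stabilization go through. (Alternatively, one can salvage the existential version of the approximation property by a $\liminf$ argument on the uses $u_s$ of the witnessing computations, but the ``moreover'' clause is cleanest with a single tracked computation, and that is the version the rest of the chapter needs.)
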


We want to put an approximation like the one in the previous lemma on a tree so that the true path is the leftmost path visited infinitely often, while still maintaining the ability of $A$ to compute stages at which it stabilizes. We will also keep track of how many separate times a particular node in $2^{< \omega}$ is visited. Let $\omegaext$ be $\{\infty\} \cup \omega$ (with $\infty$ viewed as being to the left of each element of $\omega$). Let $T$ be the tree $(\omegaext)^{< \omega}$. If $\sigma \in T$, we write $v(\sigma)$ for the string in $2^{< \omega}$ of the same length as $\sigma$ which replaces each $\infty$ in $\sigma$ with $0$, and each other entry with $1$. For $f \in [T]$, $v(f)$ is defined similarly. We denote by $T_\infty$ the set of nodes which end in $\infty$, and by $T_\omega$ the set of nodes which end in an element of $\omega$. For $\sigma \in T$, we denote by $\sigma^-$ the proper initial segment of $\sigma$ of length one less. We denote by $\ell(\sigma)$ the last entry of $\sigma$.

\begin{lem}\label{approx}
Let $B$ be c.e.\ and $A$ be c.e.\ in and above $B$. There is a computable approximation $(\sigma_s)_{s\in\omega} \in T^\omega$ such that there is a unique $g \in [T]$ with $\rho \subset g$ if and only if $\rho \subset \sigma_s$ for infinitely many $s$. From $A$ we can compute $g$, and $v(g) = A$. Moreover,
\begin{enumerate}[label=(\roman*)]
	\item for each $\tau \in T$ of length $n$, if $s_0,s_1,\ldots$ are the stages $s$ at which $\tau \subset \sigma_s$, then $\sigma_{s_0} = \tau$ and the sequence $a_0 = \sigma_{s_1}(n),a_1 = \sigma_{s_2}(n),\ldots$ has the property that $a_0 = \infty$ and if $a_i \in \omega$, then $a_i$ counts the number of $j < i - 1$ such that $a_j = \infty$ and $a_{j+1} \in \omega$, and
	\item if $s < t$ are such that $\sigma_s$ and $\sigma_t$ are compatible, then $\sigma_t$ is a strict extension of $\sigma_s$, and if $\sigma_s$ is the largest initial segment of $\sigma_t$ which has appeared before stage $t$, then $\sigma_t$ extends $\sigma_s$ by a single element.
\end{enumerate}
\end{lem}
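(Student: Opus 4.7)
The plan is to take the $\Sigma^0_2$-type approximation $f(x,s)$ supplied by the previous lemma and lift it to a one-letter-at-a-time walk $(\sigma_s)$ on $T = (\omegaext)^{<\omega}$, attaching to each node a counter that records the $\infty\to\omega$ transitions seen there. Conditions (i) and (ii) will be built directly into the extension rule.

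I would set $\sigma_0 = \emptyset$ and, at each stage $s+1$, locate the longest prefix $\tau$ of $\sigma_s$ that has appeared as some previous $\sigma_{s'}$ and remains consistent with $f(\cdot,s+1)$; consistency at position $k$ means $\tau(k)=\infty$ iff $f(k,s+1)=0$ and, if $\tau(k)\in\omega$, that $\tau(k)$ equals the current counter at level $k$ beneath $\tau\upharpoonright k$. Then set $\sigma_{s+1} := \tau \concat a$, with $a=\infty$ if $f(|\tau|,s+1)=0$ and otherwise with $a$ equal to the current counter at level $|\tau|$ beneath $\tau$. Condition (ii) comes for free because every $\sigma_{s+1}$ extends a previously-seen node by exactly one letter, and condition (i) is exactly the bookkeeping invariant for the counters: the first extension of $\tau$ is by $\infty$ because there have been no $\infty\to\omega$ transitions on record yet and a single such transition is required before an $\omega$-child is selected.

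The heart of the proof is to show existence and uniqueness of $g\in[T]$ with $\rho\subset g$ iff $\rho\subset\sigma_s$ for infinitely many $s$, to show $v(g)=A$, and to show $g\leq_T A$. I would argue level by level: if $\tau = g\upharpoonright n$ is visited infinitely often, then for $n\in A$ the previous lemma gives $f(n,s)=0$ eventually, so only the $\infty$-child of $\tau$ is visited infinitely often and $g(n)=\infty$; for $n\notin A$ one must show that the counter beneath $\tau$ stabilizes at some $c\in\omega$, giving $g(n)=c$ and $v(g)(n)=1$. The main obstacle is this stabilization at $n\notin A$: a priori $f(n,\cdot)$ could flip infinitely often and the counter would diverge, but the key observation is that a flip at level $n$ invalidates $\tau$'s current extension and forces the walk to branch off at a strictly lower level, so the number of $\infty\to\omega$ transitions recorded beneath a fixed $\tau$ is controlled by the stability of the approximation at lower levels --- and meshing this induction on levels with the previous lemma's $B$-computable stability clause gives the required bound. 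Once stabilization is established, $v(g)=A$ is immediate from the $\infty$-vs-$\omega$ dichotomy, and $g\leq_T A$ follows by simulating the construction while using $A$ (equivalently, $A\oplus B$) to decide at each level when enough stages have passed that the relevant stability witness has been found.
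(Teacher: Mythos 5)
There is a genuine gap, and it sits exactly where you located the ``main obstacle.'' Your rule evaluates $f(k,s+1)$ at the current \emph{real} stage $s+1$ at every level $k$. Consequently, beneath the true prefix $\tau = g\upharpoonright_n$, the level-$n$ outcome is governed by the values of $f(n,\cdot)$ only along the subsequence of stages at which $\tau$ is visited, and that subsequence is dictated by the behaviour of $f$ at levels below $n$. The hypothesis supplied by the previous lemma for $n\notin A$ is a $\Pi^0_2$ condition --- $f(n,s)=1$ for infinitely many $s$ --- and such a condition is not preserved under passing to an infinite subsequence: $f(n,s)$ can take the ``settled'' value at every stage at which $\tau$ happens to be consistent (for instance, if $f(n,\cdot)$ flips in lockstep with a flip at a lower level). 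The paper's construction avoids exactly this with the pointer sequence $\tau_s$: the value placed at level $\ell+1$ is $f(\ell+1,t_{\ell+1})$, where the virtual stage $t_{\ell+1}=\tau_{m_{\ell+1}}(\ell+1)+1$ advances by one each time the relevant prefix pattern recurs, so the \emph{entire} sequence $f(n,0),f(n,1),\dots$ is replayed along the visits to the prefix and the ``infinitely often'' condition transfers faithfully. Your construction has no analogue of this device, and without it $v(g)=A$ fails.

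Second, your proposed resolution of the stabilization problem is not right. A flip of $f(n,\cdot)$ invalidates the extension at level $n$ and makes the walk branch at level $n$ itself (to a different child of $g\upharpoonright_n$), not at a strictly lower level; and the counter beneath a fixed $\tau$ is driven by the flips of $f(n,\cdot)$, not by the stability of lower levels. More fundamentally, for $n\notin A$ the previous lemma guarantees only that $f(n,s)=1$ infinitely often --- it may equal $0$ infinitely often as well --- so the counter genuinely need not stabilize, and no induction on levels can make it do so. The correct way out is not to prove stabilization but to assign the outcomes so that the non-stabilizing ($\Pi^0_2$) side of the dichotomy is the $\infty$-child: when the counter diverges, each $\omega$-child is visited only finitely often while the $\infty$-child is visited infinitely often, and when $f(n,\cdot)$ is eventually constant the counter is eventually constant and a unique $\omega$-child survives. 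You have assigned the stabilizing outcome to $n\notin A$, which is precisely the side on which stabilization is unavailable. (A smaller point: your extension rule can also reproduce a node $\sigma_{s+1}$ that has already occurred as some $\sigma_{s'}$, violating (ii); the paper's clause ``continue extending until the string has not appeared before'' is what rules this out.)
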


It is a consequence of (i) that if we visit some node $\sigma$, and then move further left in the tree than $\sigma$, we will never again return to $\sigma$. We may however return to a node further to the right of $\sigma$. For example, if $f(0) = 0$, $f(1) = 1$, $f(2) = 0$, and $f(3) = 1$ then we might have $\sigma_0 = \infty$, $\sigma_1 = 0$, $\sigma_2 = \infty \infty$, and $\sigma_3 = 1$. Since $\sigma_2$ is to the left of $\sigma_1$, we can no longer visit $\sigma_1$; instead, we visit $\sigma_3$ which is further to the right.

\begin{proof}
Let $f$ be the approximation from the previous lemma. We may assume that $f(x,0) = 0$ for all $x$. We will construct the approximation $(\sigma_s)_{s\in\omega}$ in stages. We will actually build a sequence of pairs $(\sigma_s,\tau_s)$ with $\tau_s \in \omega^{< \omega}$. For each $s$, $\tau_s$ will be the same length as $\sigma_s$, and $\tau_s(n)$ will point to a stage of the approximation $f$ to which $\sigma_s(n)$ corresponds in the sense that $\sigma_s(n) = f(n,\tau_s(n))$. Begin with $\sigma_0 = \infty$ and $\tau_0 = 0$.

Suppose that we have defined $(\sigma_s,\tau_s)$ for $s = 0,1,\ldots,n$. Now we will define $(\sigma_{n+1},\tau_{n+1})$, beginning with $\sigma_{n+1}(0)$ and $\tau_{n+1}(0)$ and then proceeding recursively. Let $t_0 = n+1$ and $a_0 = f(0,n+1)$. Suppose that we have defined $t_0,\ldots,t_\ell$ and $a_0,\ldots,a_\ell$. Let $m_{\ell+1}$ be the most recent stage at which the string $\sigma_{m_{\ell+1}}(0),\ldots,\sigma_{m_{\ell+1}}(\ell)$ agrees with the string $f(0,t_0),f(1,t_1),\ldots,f(\ell,t_\ell)$ in the sense that
\[ v(\sigma_{m_{\ell+1}}\upharpoonright_{\ell+1}) = f(0,t_0),f(1,t_1),\ldots,f(\ell,t_\ell). \]
Let $t_{\ell+1} = \tau_{m_{\ell+1}}(\ell+1) + 1$ (or $0$ if $|\tau_{m_\ell+1}| \leq \ell+1$). Let $a_{\ell+1} = \infty$ if $f(\ell+1,t_{\ell+1}) = 0$. If $f(\ell+1,t_{\ell+1}) = 1$, let $s_0,s_1,\ldots,s_r$ be the stages $s \leq \ell$ at which $(a_0,\ldots,a_\ell) \subset \sigma_s$; let $a_{\ell+1} \in \omega$ count the number of $j < r$ such that $\sigma_{s_j}(\ell+1) = \infty$ and $\sigma_{s_{j+1}}(\ell+1) \in \omega$. Continue defining the $t_i$ and $a_i$ until we reach some string $a_0,\ldots,a_{\ell+1}$ which has not appeared before as a $\sigma_s$ with $s \leq n$. Let $\sigma_{n+1} = a_0,\ldots,a_{\ell+1}$ and $\tau_{n+1} = t_0,\ldots,t_{\ell+1}$. This completes the construction.

We need to show that there is a unique $g \in [T]$ with $\rho \subset g$ if and only if $\rho \subset \sigma_s$ for infinitely many $s$. We show this by induction on the length of $\rho$, building $g$ as we go. At the same time, we will see that $v(g) = A$. For $|\rho| = 1$, we always have $\sigma_s(0) = f(0,s)$, so $\sigma_s(0) = \infty$ for infinitely many $s$ if and only if $f(0,s) = 0$ for infinitely many $s$. On the other hand, if $f(0,s) = 1$ for sufficiently large $s$, then for sufficiently large $s$, $\sigma_s(0) = k$ for some particular $k \in \omega$.

Now we consider the case $|\rho| = n+1$, supposing that there is a unique $\rho'$ of length $n$ with $\rho' \subset \sigma_s$ for infinitely many $s$. Let $s_0,s_1,\ldots$ be the stages $s$ at which $\rho \subseteq \sigma_s$. Now, from the construction, we see that $\sigma_{s_0} = \rho$, since the construction of $\sigma_{s_0}$ stops when $\sigma_{s_0}$ is distinct from each $\sigma_s$ for $s < s_0$. In defining $\sigma_{s_1}$, $s_0$ is the value for the previous stage $m_{n+1}$ in the construction; and $t_0 = \tau_{s_1}(n+1) = 0$. In defining $\sigma_{s_2}$, $s_1$ is the previous stage, and so $t_{n+1} = \tau_{s_2}(n+1) = 1$; and so on. So $\tau_{s_{i+1}}(n+1) = i$. Moreover, $\sigma_{s_{i+1}}(n+1) = \infty$ if $f(n+1,i) = 0$ and is in $\omega$ otherwise, and if $f(0,s) = 1$ for sufficiently large $s$, then for sufficiently large $i$, $\sigma_{s_i}(n+1) = k$ for some particular $k \in \omega$.

Finally, (i) and (ii) can be seen directly from the construction: (i) from the choice of $k$ and (ii) from the fact that the $t_i$ and $a_i$ are constructed until $a_0,\ldots,a_{\ell+1} = \sigma_{n+1}$ is distinct from $\sigma_0,\ldots,\sigma_n$; thus $\sigma_{n+1}$ is an extension by a single element of the longest previous string it is compatible with.
\end{proof}

The lemma relativizes as follows.

\begin{corollary}\label{approx-cor}
Let $A$ be 2-\cea relative to $\textbf{d}$. There is a $\textbf{d}$-computable approximation $(\sigma_s)_{s\in\omega} \in T^\omega$ and a unique $g \in [T]$ as in the lemma with $A \geq_T g$ and $v(g) = A$.
\end{corollary}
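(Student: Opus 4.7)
The plan is essentially to relativize the proof of Lemma~\ref{approx} verbatim, since nothing in that argument depends on the base being computable rather than $\textbf{d}$-computable. First I would unpack the hypothesis: since $A$ is 2-\cea relative to $\textbf{d}$, there is a set $B$ such that $B$ is c.e.\ in $\textbf{d}$ with $B \geq_T \textbf{d}$, and $A$ is c.e.\ in $B$ with $A \geq_T B$. All of these relations relativize in the obvious way, so in particular $B$ has a $\textbf{d}$-computable enumeration $B_s$.

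Next I would relativize the preliminary lemma that precedes Lemma~\ref{approx}. Repeating that construction with ``computable'' replaced by ``$\textbf{d}$-computable'' produces a $\textbf{d}$-computable approximation $f \colon \omega^2 \to \{0,1\}$ such that $x \in A$ iff $f(x,s) = 0$ for all sufficiently large $s$, and $x \notin A$ iff $f(x,s) = 1$ for infinitely many $s$. The key point is that the argument showing $B$ can compute stabilization depends only on $B$ being able to test agreement of $B_s$ below a finite use, and since $B \geq_T \textbf{d}$ this still holds. Because $A \geq_T B$, we conclude that $A$ itself can uniformly compute, given $x$ and $s$ with $f(x,s)=0$, whether $f(x,t) = 0$ for all $t \geq s$.

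With this approximation in hand, the construction of the sequence $(\sigma_s,\tau_s) \in T^\omega \times \omega^{<\omega}$ from the proof of Lemma~\ref{approx} is carried out unchanged. The recipe for defining $\sigma_{s+1}$ from $\sigma_0,\ldots,\sigma_s$ and $f$ is a purely finitary computable procedure in the parameters, so relative to $\textbf{d}$ the sequence $(\sigma_s)$ is $\textbf{d}$-computable. Properties (i) and (ii) are immediate from the construction and do not refer to the oracle at all.

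Finally I would verify that there is a unique $g \in [T]$ with $\rho \subset g$ iff $\rho \subset \sigma_s$ for infinitely many $s$, and that $v(g) = A$ and $A \geq_T g$. This follows exactly as in the unrelativized proof by induction on $|\rho|$: at each length one uses $A$ both to identify whether the $f(n,\cdot)$ column stabilizes at $0$ (so $g(n) = \infty$) and, in the case that it stabilizes at $1$, to identify the particular $k \in \omega$ that eventually occupies that coordinate. No genuine obstacle arises; the only thing to check is that the bookkeeping in (i)---that the entries in $\omega$ count past $\infty$-to-$\omega$ transitions---transfers verbatim, which it does because it is a property of the sequence $(\sigma_s)$ alone.
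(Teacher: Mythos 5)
Your proposal is correct and is essentially the paper's own argument: the paper simply states that Lemma \ref{approx} (together with its preliminary lemma) relativizes, and your verification that the stabilization test only requires $B \geq_T \textbf{d}$ and that the tree construction is finitary in $f$ is exactly the content of that relativization.
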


\section{Basic Framework of the Construction}\label{sec:framework}

In this section, we will describe what we are building during the construction and nine properties that we will maintain at each stage. We will also show how this will allow us to compute the relation from the structure and vice versa. Let $\mc{C}$, \textbf{d}, $R$, and $A$ be as in Theorem \ref{relativized-1}: $\mc{C}$ is a computable structure, $R$ is an additional computable relation on $\mc{C}$, $\textbf{d}$ is a degree above $0^{(\omega+1)}$, and $A$ is 2-\cea in $\textbf{d}$. For simplicity, assume that $R$ is unary.

We will build a model $\mc{D} \leq \textbf{d}$ with domain $\omega$. The construction will be by stages, at each of which we define increasing finite portions $\mc{D}_s$ of the diagram.  At each stage we will define a finite partial isomorphism $F_s$. This partial isomorphism will map $\{0,\ldots,n\}$ into $\mc{C}$ for some $n$. These partial isomorphisms will not necessarily extend each other; however, they will all be consistent with the partial diagram $\mc{D}_s$ we are building. Moreover, there will be a bijective limit $F: \omega \to \mc{C}$ along the true stages of the construction. Then $\mc{D}$ will be given by the pullback, along $F$, of the diagram of $\mc{C}$. $\mc{D}$ will be computable because its diagram will be $\bigcup \mc{D}_s$. To simplify the notation, denote by $F_s(a_1,\ldots,a_\ell)$ the tuple $F_s(a_1),\ldots,F_s(a_\ell)$.

Here is the basic idea. To code that some element $x$ is not in $A$, we will put an element $a \notin R$ which is 2-free into the image of $F$ (there may already be some elements in the range of $F$; $a$ should be free over them as well). At following stages, we may add more elements $\bar{a}_1$ into the image of $F$. If, at some later stage, we think that $x \in A$ then we will replace $a,\bar{a}_1$ with $a',\bar{a}_{1}'$ where $a' \in R$ and so that every existential formula true of $a',\bar{a}_{1}'$ is true of $a,\bar{a}_1$. Then it is possible that at some later stage we again think that $x \notin A$. We can replace $a',\bar{a}_{1}'$ with $a,\bar{a}_1$, returning to the previous stage in the construction at which we thought that $x \notin A$. We have probably added some more elements to the image of $F$ while we thought that $x \in A$, but the fact that $a,\bar{a}_1 \leq_1 a',\bar{a}_{1}'$ means that we can find corresponding witnesses over $a,\bar{a}_1$. Using $R^\mc{D}$, we can figure out what happened during the construction, and hence whether or not $x \in A$. Now we know how to code a single fact.

Now we will describe how to code two elements $x < y$. To code the fact that $x \notin A$, add to the range of $F$ an element $a \notin R$ which is 2-free. To code that $y \notin A$, add to the range of $F$ another element $b \notin R$ which is 2-free over $a$. Now if at some later stage, we think that $y \in A$, we can act as above. But if we think that $x \in A$, then we replace $a,b$ with some $a',b'$ with $a' \in R$ and $ab \leq_1 a' b'$. Now $b'$ is not necessarily $2$-free; so to code that $y \notin A$ while we think that $x \in A$, we need to add a new element $c$ which is 2-free over $a' b'$. At some later stage, we might think again that $x \notin A$, so we need to return to $a b$; we can do this because $ab \leq_1 a' b'$ (and $c$ is replaced by some $c'$ which is not necessarily 2-free over $ab$, but that does not matter because now $b$ is doing the coding again). So far, this is essentially describing the argument for Theorem 2.1 of \cite{AshKnight97}. This succeeds in coding $A$ into $R^{\mc{D}}$, but we require $\textbf{d}'$ in order to decode it. The problem is the following situation. Suppose that we have done the construction as described above, coding whether $x$ or $y$ is in $A$, and we currently think that neither is. Our function $F$ looks like $abc'$ so far. But then at some later stage we might think for a second time that $x \in A$.  We replace $abc'$ by $a''b''c''$, with $a'' \in R$ and $abc' \leq_1 a''b''c''$. Now $c''$ may not be 2-free over $a''b''$, so we need to add a new element $d$ which is 2-free over $a''b''c''$, and use $d$ to code whether $y \in A$. Using just $R^\mc{D}$ and $\textbf{d}$, we cannot distinguish between the case $a'b'c$ where $y$ is being coded by the third element, or $a''b''c''d$ where it is being coded by the fourth element ($a'$ and $a''$ are both in $R$, and we cannot control whether or not $b'$, $b''$, or $c''$ are in $R$). We \textit{can} differentiate between the two cases using $\textbf{d}'$, which is the basis of the proof by Ash and Knight.

What we do to solve this is the first time we think that $x \in A$, after choosing $a'$ and $b'$, we add a new element $a_0$ before adding $c$ which is $2$-free over $a'b'a_0$. When we later believe that $x \notin A$ again, we return to $aba_{0}'c'$ for some $a_{0}'$; and then later when we believe that $x \in A$ once more, we choose $a_{0}''$ as well as $a''$, $b''$, and $c''$. The trick will be to ensure that $a_0 \in R$ if and only if $a_{0}'' \notin R$. Thus $R$ can differentiate between the two cases. The argument as to why we can choose an appropriate $a_0$ is complicated, and is the main difficulty in this proof. The element $a_0$ will actually need to be a finite list of elements. For now, we do not need to worry about how we choose $a_0$---this will be done in the following sections.

In the previous section we had a computable approximation for $A$ which kept track of how many times we switched between believing an element was in $A$ and not in $A$. We will assigns labels from the tree $T$ to positions in the partial isomorphism to say what those positions are coding. In the situation above, we will assign to the position in which $a$ is the label $\infty$; it codes whether $\infty$ (i.e. $x \notin A$) is the correct approximation to $A$. The position of $b$ would code $\infty \infty$ since it codes, if the correct approximation for whether or not $x \in A$ is $\infty$ (or $x \notin A$), that $y \notin A$. The position of $a_0$ codes $0$, since it represents the first time that we think that $x \in A$. The position of $c$ codes $0 \infty$ since, if $0$ is the correct approximation for $x$, it tries to code $y \notin A$. When we add in $d$ after $a''b''c''a_{0}''$, we will actually first add another element $a_{1}$ in case at a further stage, we think that $x \notin A$, and then later think that $x \in A$ for a third time; $a_{1}$ will code $1$, and $d$ will code $1 \infty$.

To keep track of these labels, we will define a partial injection $\loc_s: T \to \dom(F_s)$. While $\loc_s$ is dependent on the stage $s$, once we set $\loc_s(\sigma)$ at some stage $s$, we will never change the image of $\sigma$. So if $s < t$, we will have $\loc_s \subseteq L_t$. At each stage, we will increase the domain of $\loc_s$ by a finite amount, and so the domain of each $\loc_s$ will be finite. In fact, the domain of $\loc_s$ will be $\{\sigma_i : 0 \leq i \leq s \}$. Because of this, $\loc_s$ will satisfy two closure properties on its domain. First, if $\loc_s$ is defined at some string, the it is defined at every initial segment; and second, if $\loc_s$ is defined at some string $\sigma \concat b$ ending in $b$, and $a < b$ in $\omegaext$, then $\loc_s$ is defined at $\sigma \concat a$. Each $\sigma \in T_\omega$ will label not just $\loc_s(\sigma)$, but a whole tuple $\loc_s(\sigma),\ldots,\loc_s(\sigma) + k - 1$ where $k = k_s(\sigma) > 0$ is a value defined at stage $s$. For each $\sigma \in \dom(\loc_s) \cap T_\omega$, we will also maintain a valuation $m_s(\sigma) \in \{-1,1\}^{k}$ which represents a choice of $R$ or $\neg R$ for the $k$ elements labeled by $\sigma$. We will write $\bar{a} \in R^{m_s(\sigma)}$ if $a_i \in R$ whenever $m_s(\sigma) = 1$, and $a_i \notin R$ otherwise. Like $\loc_s$, once we set $k_s(\sigma)$ or $m_s(\sigma)$, the value will be fixed.

Let $\sigma_s$ and $f$ be as in Corollary \ref{approx-cor} with $(\sigma_s)_{s \in \omega} \leq \textbf{d}$ and $A \equiv_T f$. We will take a moment to show how we will compute $f$ from $R^\mc{D}$ and $\textbf{d}$. Let $\loc$ be the union of all the $\loc_s$, and similarly for $m$ and $k$. $\loc$ contains each initial segment of $f$. These are partial $\textbf{d}$-computable functions. We will build $\mc{D}$ in such a way that
\begin{description}
	\item[(C1\textsuperscript{*})] if $\sigma \in T_\infty$ and $\sigma \subset f$ then $\loc(\sigma) \notin R^\mc{D}$;
	\item[(C2\textsuperscript{*})] if $\sigma \in T_\infty$ and $\sigma \nsubset f$ but $\sigma^- \subset f$, then $\loc(\sigma) \in R^\mc{D}$;
	\item[(C3\textsuperscript{*})] if $\sigma \in T_\omega$ and  $\sigma \subset f$ then $(\loc(\sigma),\ldots,\loc(\sigma) + k(\sigma)-1)) \in (R^\mc{D})^{m(\sigma)}$; and
	\item[(C4\textsuperscript{*})] if $\sigma \in T_\omega$ and  $\sigma \nsubset f$ but $\sigma^- \subset f$ then $(\loc(\sigma),\ldots,\loc(\sigma) + k(\sigma)-1) \notin (R^\mc{D})^{m(\sigma)}$.
\end{description}
We will use $R^\mc{D}$ to recursively compute longer and longer initial segments of $f$. Suppose that we have computed $\tau \subset f$. First, check whether $\loc(\tau \concat \infty) \in \notin R^\mc{D}$; if it is, then by \CTwoStar we must have $\tau \concat \infty \subset f$. Otherwise, by \COneStar there is some $x \in \omega$ such that $\tau \concat x \subset f$. For each of $x=0,1,\ldots$, let $a_x = \loc(\tau \concat x)$ and check whether $(a_x,a_{x}+1,\ldots,a_x + k(\tau \concat x)) \in (R^\mc{D})^{m(\tau \concat x)}$; if so for some $x$, then by \CFourStar, $\tau \concat x \subset f$; otherwise, continue searching. By \CThreeStar, we will eventually find the correct initial segment of $f$. Thus we will have $f \leq_T R^\mc{D} \oplus \loc \leq_T R^\mc{D} \oplus \textbf{d}$.

Now we will describe some properties which the partial isomorphisms $F_s$ will have. We required above that $\dom(\loc)$ consisted of $\sigma_0,\sigma_1,\sigma_2,\ldots$. So at each stage $s$, we must make sure that $\sigma_s$ is assigned a coding location:
\begin{description}
	\item[(CLoc)] \label{G-defined} $\dom(\loc_s)$ contains $\sigma_s$.
\end{description}
We also need to make sure the four properties \COneStar, \CTwoStar, \CThreeStar, and \CFourStar of $G$ and $R^\mc{D}$ are true by doing the correct coding during the construction. At each stage $s$ and for each $\sigma \in \dom(\loc_s)$, we will ensure that:
\begin{description}
	\item[(C1)] if $\sigma \in T_\infty$ and $\sigma \subset \sigma_s$ then $F_s(\loc_s(\sigma)) \in \notin R$;
	\item[(C2)] if $\sigma \in T_\infty$ and $\sigma \nsubset \sigma_s$ but $\sigma^- \subset \alpha_s$, then $F_s(\loc_s(\sigma)) \in R$;
	\item[(C3)] if $\sigma \in T_\omega$ and  $\sigma \subset \sigma_s$ then $F_s(\loc_s(\sigma),\ldots,\loc_s(\sigma) + k_s(\sigma)-1)) \in R^{m_s(\sigma)}$; and
	\item[(C4)] if $\sigma \in T_\omega$ and  $\sigma \nsubset \sigma_s$ but $\sigma^- \subset \sigma_s$ then $F_s(\loc_s(\sigma),\ldots,\loc_s(\sigma) + k_s(\sigma)-1) \notin R^{m_s(\sigma)}$.
\end{description}
The $F_s$ will maintain the same atomic diagram, even if they do not agree on particular elements. If $s < t$ then
\begin{description}
	\item[(At)] $F_s \leq_0 F_t$.
\end{description}
However, if two stages $s$ and $t$ agree on some part of the approximation, then they will also agree on how that part of the approximation is being coded. This will ensure that we can construct a limit $F$ by looking at the values of the $F_s$ at stages where the approximation is correct.
\begin{description}[resume*]
	\item[(Ext)] If $\alpha \subset \sigma_s$ and $\alpha \subset \sigma_t$, then for all $x \leq \loc(\alpha)$, $F_s(x) = F_t(x)$; if $\sigma_s \subset \sigma_t$, then in fact $F_s \subset F_t$.
\end{description}
We want the final isomorphism $F$ to be total. To do this, we need to ensure that we continue to add new elements into its image.
\begin{description}
	\item[(Surj)] The first $|\sigma_s| - 1$ elements of $\mc{C}$ appear in $\ran(F_s \upharpoonright_{\loc_s(\sigma_s)})$.
\end{description}
It may be helpful to remember what the labels stand for: \CLoc for \textit{coding location}; \COne, \CTwo, \CThree, and \CFour for \textit{coding 1}, \textit{coding 2}, \textit{coding 3}, and \textit{coding 4} respectively; \At for \textit{atomic agreement}; \Ext for \textit{extension}; and \Surj for \textit{surjective}.

There is one last condition which ensures that we can complete the construction while still satisfying the above conditions. Before stating it in the next section, we have already done enough to describe the $A$-computable isomorphism $F: \mc{D} \to \mc{C}$ and see that $A \geq_T R^\mc{D}$. Recall that $f$ is the path approximated by the $\sigma_s$ as in Corollary 1.3. Let $s_1,s_2,\ldots$ be the list of stages $s_n$ such that $\sigma_{s_n} \subset f$. Then $\sigma_{s_1} \subsetneq \sigma_{s_2} \subsetneq \cdots$ is a proper chain and $f = \bigcup \sigma_{s_n}$. As before, let $\loc = \bigcup \loc_s$ and similarly for $k$ and $m$. Now $\loc(\alpha_{s_1}),\loc(\alpha_{s_2}),\ldots$ is a strictly increasing sequence in $\omega$, and for each $i < j$, $F_{s_i} \subseteq F_{s_j}$ by \Ext. Define $F(n) = \bigcup F_{s_i}$; this is a total function because the $F_{s_i}$ are defined on increasingly large initial segments of $\omega$. By \Surj, $F$ is onto as $|\sigma_{s_i}| \geq i$ and so the first $i-1$ elements of $\mc{C}$ appear in the range of $F_{s_i}$ below $G(\sigma_{s_i})$, and hence appear in the range of $F$. $F$ is injective since each $F_s$ is; and the pullback along $F$ gives an isomorphic structure $\mc{D}$ whose diagram is the union of the diagrams of the $F_s$, and these diagrams agree with each other because of \At. So the atomic diagram of $\mc{D}$ is computable in $\textbf{d}$. The sequence $s_0,s_1,\ldots$ can be computed by $A$ because $A$ can compute $f$; and, knowing the sequence $s_0,s_1,\ldots$, we can compute $F$. Hence $A$ can compute $F$ and so $A \geq_T R^\mc{D}$ (recall that $\mc{C}$ and $R$ were computable). Also, \COne, \CTwo, \CThree, and \CFour imply respectively \COneStar, \CTwoStar, \CThreeStar, and \CFourStar. Earlier we argued that as a consequence, $f \leq_T R^\mc{D} \oplus \textbf{d}$. Hence $A \equiv_T R^\mc{D} \oplus \textbf{d}$ as required.

\section{An Informal Description of the Construction}

Recall the intuitive picture of how the coding is done in the previous section, but now using some of the more precise notation just developed. We will begin by looking at coding a single element, but now choosing the element $a_0$ which we had to add when we were coding two or more elements. Things will start to get more complicated than they were before, so Figure \ref{fig:CodingOne} shows the isomorphism $F_s$ as it changes. We began by choosing an element $a \notin R$ which is 2-free. We labeled $a$ with $\infty$, coding that $0 \notin A$. Now, at some stage, we might think that $0 \in A$, so we replace $a$ by $a'$ with $a' \in R$. At this point, we must choose some $a_0$ to code $0$. We are now concerned with the issue, which we ignored before, of how to choose $a_0$. What properties does $a_0$ need to have? If at some later stage after we have added $\bar{b}$ to the image of $F$, we think that it is actually the case that $0 \notin A$, we replace $a' a_0 \bar{b}$ by $a a_0' \bar{b}'$. Then, if at some further later stage after we have added more elements $\bar{c}$ to the image of $F$, we once more think that $0 \in A$, we replace $a a_0' \bar{b}' \bar{c}$ by $a'' a_0'' \bar{b}'' \bar{c}'$ where
\[ a a_0' \bar{b}' \bar{c} \leq_1 a'' a_0'' \bar{b}'' \bar{c}' \]
and $a'' \in R$. We need to have $a_0 \in R$ if and only if $a_0'' \notin R$. So what we need to know is that, no matter what elements $\bar{b}$ are added to the image of $F$, we can choose $a_0'$ so that no matter which elements $\bar{c}$ are then added to the image of $F$, we can choose $a''$, $a_0''$ and so on such that $a_0 \in R$ if and only if $a_0'' \notin R$. This is a sort of game where each player gets two moves---we are choosing $a_0'$, $a''$, $a_0''$, etc. satisfying the required properties while both we and our opponent together choose the tuples $\bar{b}$ and $\bar{c}$. By this, we mean that we choose a tuple, and then the tuple $\bar{b}$ (or $\bar{c}$) that our opponent plays must extend the tuple which we chose. We can do this because at any point, we can add any elements we want to the isomorphism. We want to know that we have a winning move in this game.

\begin{figure}[htb]
\begin{center}
\begin{tabular}{l l | *{5}{c}}
\multicolumn{2}{c}{} & \multicolumn{5}{c}{Coding locations} \\
& $\sigma_s$  & $\infty$ & $0$ & & & $1$ \\
\cline{2-7}
\multirow{6}{*}{\rotatebox[origin=r]{90}{Approximation}} & $\infty$ 					& $\yes{a}$ &  &  &  & \\
& $0$   & $\no{a'}$ & $\yes{a_0}$ &  & &  \\
& $0 \cdots$   & $\no{a'}$ & $\yes{a_0}$ & $\bar{b}$ & & \\
& $\infty$        & $\yes{a}$ & $a_0'$ & $\bar{b}'$ &  &  \\
& $\infty \cdots$        & $\yes{a}$ & $a_0'$ & $\bar{b}'$ & $\bar{c}$ &  \\
& $1$        & $\no{a''}$ & $\no{a_0''}$ & $\bar{b}''$ & $\bar{c}'$ & $\yes{a_1}$ \\
\end{tabular}
\end{center}
	\caption{The values of the isomorphism $F_s$ when coding a single element of $A$. The column $\sigma_s$ shows the approximation at a particular stage, and the coding location shows an indexing via $\loc$. An entry is surrounded by brackets ``$[\quad]$'' to show that it is coding ``yes'' (i.e., if it is at coding location $\tau$, then it is coding $\tau$). If an entry is ``active'' in the sense that it is of the form $\tau \concat x$ and $\tau$ is being coded as ``yes'', but $\tau \concat x$ is coding ``no'' then it is marked as ``$\no{\quad}$.'' Any other entries which are not active are unmarked.}
	\label{fig:CodingOne}
\end{figure}

Now let $y \notin R$ be 2-free over $a'$. So there is $x \in R$ with $a' x \leq_1 a' y$. Now we can try choosing $a_0 = y$. If this works (in the sense that for $a_0 = y$, no matter which $\bar{b}$ and then $\bar{c}$ our opponent chooses we can choose $a_0'$ and $a_0''$ as required), then we can just choose $a_0 = y$ and we are done. If this choice does not work, then I want to argue that choosing $a_0 = x$ does work. If $y$ does not work, that means that for some $\bar{b}$ which our opponent plays, every $a_0'$ we choose puts us in a losing position. This means that there is some existential formula $\varphi(u,v)$ (which is witnessed by the elements $\bar{b}$) so that $y$ satisfies $\varphi(a',v)$ but that every $a_0'$ which works satisfies $\varphi(a,v)$ puts us in a losing position. This means that for every such $a_0'$ there is a tuple $\bar{c}$ which our opponent can play so that any $a''$, $a_0''$, etc. we choose with
\[ a a_0' \bar{b}' \bar{c} \leq_1 a'' a_0'' \bar{b}'' \bar{c}' \]
has $a_0'' \in R$ since $y$ was also in $R$. Now if we instead started with $a_0 = x$, then that same existential formula $\varphi(u,v)$ which was true of $a',y$ is also true of $a',x$ since $a',x \leq_1 a'y$. We can add to the isomorphism a tuple witnessing that $\varphi(u,v)$ holds of $a',x$. So no matter which tuple $\bar{b}$ our opponent actually plays, the existential formula $\varphi(a',x)$is witnessed by elements from $\bar{b}$. Then, since $a \leq_1 a'$, there are $a_0' = x'$ and $\bar{b}'$ such that $a' x \bar{b} \leq_0 a x' \bar{b}'$; thus $\varphi(a,a_0)$ holds. But then this $a_0'$ is one which did not work for the choice $a_0 = y$. Now we add to the isomorphism the tuple which our opponent used to beat us at this point when we chose $a_0 = y$ and then also chose this value of $a_0'$. So no matter which tuple $\bar{c}$ our opponent actually plays, it contains the tuple which they used to win when $a_0$ was $y$. Thus, for every $a''$, $a_0''$, etc. we choose with
\[ a a_0' \bar{b}' \bar{c} \leq_1 a'' a_0'' \bar{b}'' \bar{c}' \]
we have $a_0'' \in R$. This did not work for $a_0 = y$, but since $x \notin R$, it does work for $a_0 = x$.

What we have done is taken our opponent's strategy from the case $a_0 = y$, and forced them to use it when $a_0 = x$. Their strategy consists only of choosing the tuples $\bar{c}$ and $\bar{d}$, and these tuples are chosen by us together with our opponent (because we can add them to the isomorphism before our opponent does). So when $a_0 = x$ we can force our opponent to play tuples $\bar{c}$ and $\bar{d}$ which extend the tuples they played when $a_0 = y$. But because $y \in R$ and $x \notin R$, the winning conditions are different for the different choices of $a_0$, so what won our opponent the game for $a_0 = y$ now loses him the game for $a_0 = x$.

Choosing $a_0$ can begin to get more complicated when we are coding two elements. Figure \ref{fig:CodingTwo} shows the isomorphism when coding two elements. For example, suppose that we choose $a$ and $b$, both not in $R$, with $a$ 2-free and $b$ 2-free over $a$. We label $a$ with $\infty$, coding $0 \notin A$, and $b$ with $\infty \infty$, coding $1 \notin A$. We think that $1 \in A$, and replace $b$ by some $b'$ with $a b \leq_1 a b'$. Then we add a new element $\bar{b}_0$ which is labeled by $\infty 0$. At some later stage, we think that $0 \in A$, and replace $a b' \bar{b}_0$ with $a'b''\bar{b}_0'$ with $a b' \bar{b}_0 \leq_1 a' b'' \bar{b}_0'$. Now we need to add a tuple $\bar{a}_0$ which is labeled by $0$, followed by an element $c$ which is 2-free over $a'b''\bar{b}_0'\bar{a}_0$.

\begin{figure}[htb]
\begin{center}
\begin{tabular}{l l | *{9}{c}}
\multicolumn{2}{c}{} & \multicolumn{9}{c}{Coding locations} \\
& $\sigma_s$ & $\infty$ & $\infty \infty$ & $\infty 0$ & $0$ & $0 \infty$ & & & & 1  \\
\cline{2-11}
 & $\infty$ 					& $\yes{a}$ &  &  &  &  &  &  & &\\
& $\infty \infty$   & $\yes{a}$ & $\yes{b}$ &  & &   & & &   &  \\
& $\infty 0$        & $\yes{a}$ & $\no{b'}$ & $\yes{\bar{b}_0}$ &  &  &   &  &    & \\
& $0$					      & $\no{a'}$ & $b''$ & $\bar{b}_0'$ & $\yes{\bar{a}_0}$ & &   &  &    & \\
& $0 \infty$				& $\no{a'}$ & $b'' $ & $\bar{b}_0'$ & $\yes{\bar{a}_0}$ & $\yes{c}$ & $\bar{d}$  &  & &    \\
& $\infty 0$	& $\yes{a}$ & $\no{b'}$ &  $\yes{\bar{b}_0}$ & $\bar{a}_0'$ & $c'$ &  $\bar{d}'$ & $\bar{e}$ & & \\
& \multicolumn{9}{ l }{} \\
& \multicolumn{9}{ c }{First case: opponent plays $1$ immediately} \\

& $1$					      & $\no{a''}$ & $b''' $ & $\bar{b}_0''$ & $\no{\bar{a}_0''}$ & $c''$ & $\bar{d}''$ & $\bar{e}'$ & & $\yes{\bar{a}_1}$    \\

& \multicolumn{9}{ l }{} \\
& \multicolumn{9}{ c }{Second case: opponent plays $\infty \infty$ then $1$} \\

& $\infty \infty$   & $\yes{a}$ & $\yes{b}$ & $\bar{b}_0''$ & $\bar{a}_0''$ & $c''$ &  $\bar{d}''$ & $\bar{e}'$ & $\bar{f}$ &    \\
& $1$					      & $\no{a''}$ & $b''' $ & $\bar{b}_0'''$ & $\no{\bar{a}_0'''}$ & $c''$ & $\bar{d}'''$ & $\bar{e}''$ & $\bar{f}'$ & $\yes{\bar{a}_1}$    \\
\end{tabular}
\end{center}
	\caption{The values of the isomorphism $F_s$ when coding two elements of $A$. Two possibilities are shown, depending on what the opponent in the game described plays as the approximation---$1$, or $\infty \infty$ followed by $1$.}
	\label{fig:CodingTwo}
\end{figure}

Now, as before, we need to see what properties we want to be true of $\bar{a}_0$. The tuple $\bar{a}_0$ will have two entries. Suppose that we have added some tuple $\bar{d}$ to the isomorphism, and then we believe that $0 \notin A$ and $1 \in A$, so now we want to code $\infty 0$. We make our isomorphism $a b' \bar{b}_0 \bar{a}_0' c' \bar{d}'$ for some $\bar{a}_0'$, $c'$, and $\bar{d}'$, and then an additional tuple $\bar{e}$ gets added to the isomorphism. Now at some later stage, we believe that $0 \in A$ once again, so we find $a'' \in R$, $b'''$, $\bar{b}_0''$, $\bar{a}_0''$, $c''$, $\bar{d}''$, and $\bar{e}'$ such that
\[ ab'\bar{b}_0\bar{a}_0'c'\bar{d}'\bar{e} \leq_1 a''b'''\bar{b}_0''\bar{a}_0'''c''\bar{d}''\bar{e}'. \]
In order to do our coding, for any $\bar{d}$, we must be able to choose our elements so that for any $\bar{e}$, we can find such $a'' \in R$, $b'''$, etc. with the first entry of $\bar{a}_0''$ in $R$ if and only if the first coordinate of $\bar{a}$ is not in $R$.

On the other hand, the approximation might turn out to be different. After adding $\bar{d}$ to the isomorphism, and again believing that $0 \notin A$ and $1 \in A$, and making our isomorphism $a b' \bar{b}_0 \bar{a}_0' c' \bar{d}'$, we add a new tuple $\bar{e}$ to the isomorphism. In the previous case, the approximation next told us that $0 \in A$ once again; it might instead be the case that first, the approximation says that $0 \notin A$ and $1 \notin A$. Then we must change the isomorphism to $a b \bar{b}_0''\bar{a}_0''\bar{c}''\bar{d}''\bar{e}'$. After we add some tuple $\bar{f}$ to the isomorphism, then we later believe that $0 \in A$, and so we must code $1$. Now we need to find $a'' \in R$, $b'''$, $\bar{b}_0'''$, $\bar{a}_0'''$, $\bar{c}''$, $\bar{d}'''$, $\bar{e}''$, and $\bar{f}'$ such that
\[ a b \bar{b}_0'' \bar{a}_0'' c'' \bar{d}'' \bar{e}' f \leq_1 a''b'''\bar{b}_0'''\bar{a}_0'''\bar{c}''\bar{d}'''\bar{e}''\bar{f}'. \]
To do our coding, we must have the second element of $\bar{a}_0'''$ in $R$ if and only if the second element of $a_0$ is not in $R$.

When we choose the tuple $\bar{a}_0$, we do now know which case we will be in, and so we must be able to handle both cases. There are actually more possibilities than this (for example, we could think that $1 \notin A$, then $1 \in A$, then $1 \notin A$, then $1 \in A$, and so on), but it will turn out that we get these possibilities for free, and so for now we will just consider the two possibilities outlined above.

Looking at this as a game again, in addition to adding tuples to the range of the isomorphism, our opponent can now choose whether the first possibility for the approximation of $A$ described above will happen, or whether the second possibility will happen (he chooses the approximation stage by stage---so he chooses, at the same time as choosing the tuple $\bar{e}$, which possibility we must respond to). We will choose a pair $\bar{a}_0 = (a_0^1,a_0^2)$ to defeat our opponent: $a_0^1$ to defeat our opponent when he chooses to first possibility for the approximation, and $a_0^2$ for the second. We can choose $a_0^1$ exactly as before when we were only coding a single element, in order to defeat our opponent if he uses the first possibility; that is, if he chooses $\bar{d}$ and $\bar{e}$, and the approximation says that $0 \notin A$, and then $0 \in A$ (while saying that $1 \in A$ the whole time), we can choose $a_0^{1 \prime \prime}$ which is in $R$ if and only if $a_0^1$ is not. Now we have to argue that we can choose $a_0^2$ so that not only do we defeat our opponent if he uses the second possibility for the approximation of $A$, but that we still beat our opponent if he uses the first possibility. Choose $y \notin R$ which is $2$-free over $a' b'' \bar{b}_0' a_0^1$, and $x \in R$ such that $a' b'' \bar{b}_0' a_0^1 x \leq_1 a' b'' \bar{b}_0' a_0^1 y$.  Suppose that we cannot beat our opponent if we choose $a_0^2 = y$, so that he has some winning strategy for this game. Call our opponent's winning strategy for $a_0^2 = y$ their $y$-strategy.

Then we will choose $a_0^2 = x$. We will use our winning strategy for the first approximation to ensure that the only way in which we can lose is if our opponent uses the second approximation ($\infty \infty$ followed by $1$) and forces us to have $a^{2\prime\prime\prime}_0 \in R$ (recall that $a^2_0 = x \in R$). Now, other than choosing the approximation, the only plays our opponent can make are to choose $\bar{d}$, $\bar{e}$, and $\bar{f}$. Now these tuples are played by us together with our opponent, so we can force our opponent to use their $y$-strategy by forcing them to play tuples extending those they used in the $y$-strategy (note that if our opponent plays a larger tuple, it puts more of a restriction on what we can play, and so even though our opponent is not, strictly speaking, using his $y$-strategy, he is using a strategy which is even stronger). We had a winning strategy when our opponent could only choose the first approximation. We will still follow this winning strategy, playing against the tuples our opponent plays which extend the tuples which we add to the isomorphism.

If our opponent uses the first approximation, then we will win because we used the strategy that we already had to beat them in this case. If they use the second approximation, then we force them to play their winning strategy from the case $a^2_0 = y$. So no matter what we play, we are forced to choose $a_0''' \notin R$ because this is the only way that our winning strategy from before with just the first approximation could lose in this new game. But now $x \in R$, so we win.

Now we said above that we do not have to worry about more complicated possibilities for the approximation, like if we think that the approximation is $\infty 0$, then $1$, then $\infty 0$, then $2$, then $\infty 0$, and so on. This is because every time that we think that the approximation is $\infty 0$, our partial isomorphism looks like $a b' \bar{b}_0 \bar{a}_0' c' \bar{d}' \bar{e}$ for some additional tuple $\bar{e}$. Because our opponent can play any tuple they like, and also we can respond in the same way whether the approximation becomes $1$, $2$, and so on, these are all essentially the same position in the game---we can play from any of these positions in the same way that we would play from any other. The values $1$, $2$, etc. are effectively the same for our purposes in the games above because we have not added a coding location for $1$, $2$, etc. and so they all put only the requirement that $0$ not be coded as ``yes.'' If, for example, $1$, $2$, and $3$ all had coding locations as well, then it would be $4$, $5$, and so on that were all equivalent. We play the game starting at a stage $s$ only for those coding locations that exist at the stage $s$. If we are only worrying about coding finitely many elements of $A$, then after some bounded number of steps, the approximation our opponent plays will have to repeat in this way. So there is some bound $N$ such that if we can beat our opponent when he plays only $N$ stages of the approximation, then we can beat him when he plays any number of stages.

The process starts to get more complicated when we are coding more than two elements. There become even more possibilities for what could happen with the coding which our opponent could play. If we are only coding finitely many elements of $A$, the tuple $\bar{a}_0$ will be exactly as long as the number of possibilities which we have to consider. Viewing the requirements on $\bar{a}_0$ as a game means that we can ignore the exact details of all of the possibilities for the approximation, while keeping the important properties, like the fact that there are finitely many possibilities. In the next section, we will formally define the game which we have used informally throughout this section.

Now in general, we are trying to code all of the elements of $A$. At each stage $s$, we code only finitely many facts, each labeled by the function $\loc_s$, and we add only finitely many new coding locations at each stage. We will maintain the property, at each stage $s$, that we can win the game described above for those coding locations (by which we mean that we must maintains properties \COne, \CTwo, \CThree, and \CFour for those coding locations). Then whenever we add new coding locations, we must show that our winning strategy for the game at the previous stage gives rise to a winning strategy which includes these new coding locations. In this way, even though there are infinitely many coding locations which we will have to deal with eventually, at each point we only consider a game where we deal with finitely many of them. The choice of the new coding locations will have to take into account the winning strategy for the game at the previous stage.

\section{The Game \texorpdfstring{$\game{s}$}{Gs} and the Final Condition}
\label{G-s}

We return to giving the last condition \WS. So far, we have put no requirement on the construction to reflect the fact that some of the elements have to be chosen to be 2-free, or that we can mark how many separate occasions we have believed that some $x$ is in $A$. At each stage $s$ of the construction, we will associate a game $\game{s}$ with two players, \POne and \PTwo. Condition \WS will simply be
\begin{description}
	\item[(WS)] \label{extension-condition} \POne has an arithmetic winning strategy for $\game{s}$.
\end{description}
This stands for \textit{winning strategy}.

In the game $\game{s}$, \POne goes first. On their turns, \POne plays a partial injection $G: \omega \to \mc{C}$, interpreted as a partial isomorphism $\mc{D} \to \mc{C}$; on the first turn, \POne is required to play a partial isomorphism extending $F_s$. In response to $G$, \PTwo plays some elements $\bar{c} \in \mc{C}$ which are not in the range of $G$, viewed as elements in the range of an extension of $G$, and a string $\alpha$ which is a possible value for $\sigma_{t}$ for $t > s$. The string $\alpha$ must either be a string in $\dom(\loc)$ which has no proper extensions in $\dom(\loc)$, or $\sigma \concat \eta$ where $\sigma \concat x \in \dom(\loc)$ for some $x \in \omegaext$. Let $\dom(\loc)^*$ be the set of these strings. Here, $\eta$ is a symbol which we can think of as representing some $k \in \omega$ for which $\sigma \concat k$ has not yet been visited, but we do not want to differentiate between different values of $k$. Thus $\sigma \concat \eta$ should be viewed as being to the right of every extension of $\sigma$ in $\dom(\loc)$. By convention, $\tau \concat \eta$ is not in $T$. The approximation $(\sigma_t)_{t \in \omega}$ must satisfy the properties from Lemma \ref{approx}. So we also restrict \PTwo and force him to play strings which form subsequences of sequences with the properties from the lemma. So if \PTwo plays some string to the left of $\tau$ after playing $\tau$, then \PTwo can never play any string extending $\tau$ again. Also, for any string $\tau$ and $x \in \omega$, if \PTwo plays a string $\tau \concat y$ for $y > x$ (or $y = \eta$), then \PTwo can never again play a string extending $\tau \concat x$. Note that this does not apply to strings which end with $\eta$ (conceptually, \PTwo should be thought of as playing $\sigma \concat k$ for increasingly large values of $k$).

At stage $s$, we do not yet know the actual values of the approximation $(\sigma_t)_{t\in\omega}$ after stage $s$. We could compute finitely many future stages, but not all of them. In playing the $\alpha \in T$, \PTwo plays a possible future value of the approximation which we have to be able to handle. When \POne plays a partial isomorphism $G$ in response, it is an attempt to continue the construction assuming that the approximation is as \PTwo has played it. But \POne only has to continue the construction in a limited manner: they must maintain the coding given by $\loc_s$, but they do not need to add more coding locations to $\loc_s$. Since we will not be adding new elements to $\loc_s$, we will let $\loc = \loc_s$, $k = k_s$, and $m = m_s$ for the rest of this section and the next.

Now during the construction there will be certain elements which we will have to add to our partial isomorphism. For example, condition  \Surj requires us to add elements in order to make the isomorphism bijective. We will also have to add free elements in order to code new strings; we have some control over these in that we can choose which free element we choose, but not total control in that we are restricted to the free elements. This is the role of the tuples $\bar{c}$ which are played by \PTwo in response to a play $G$ by \POne: they are elements which \POne is required to add to $G$ before continuing the construction. They will also be useful for more technical reasons in the next section.

Now we will state the ways in which \POne can lose. If \POne does not lose at any finite stage, then they win (thus it is a closed game). First, there are some conditions on the coding by $G$. If \POne plays $G$ in response to $\alpha$, then \POne must ensure that for each $\sigma \in \dom(\loc)$:
\begin{description}
	\item[(C1\textsuperscript{\dag})] if $\sigma \in T_\infty$ and $\sigma \subset \alpha$ then $G(\loc(\sigma)) \in \notin R$;
	\item[(C2\textsuperscript{\dag})] if $\sigma \in T_\infty$ and $\sigma \nsubset \alpha$ but $\sigma^- \subset \alpha$, then $G(\loc(\sigma)) \in R$;
	\item[(C3\textsuperscript{\dag})] if $\sigma \in T_\omega$ and  $\sigma \subset \alpha$ then $G(\loc(\sigma),\ldots,\loc(\sigma) + k(\sigma)-1)) \in R^{m(\sigma)}$; and
	\item[(C4\textsuperscript{\dag})] if $\sigma \in T_\omega$ and  $\sigma \nsubset \alpha$ but $\sigma^- \subset \alpha$ then $G(\loc(\sigma),\ldots,\loc(\sigma) + k(\sigma)-1) \notin R^{m(\sigma)}$.
\end{description}
These are conditions which ensure that $G$ codes $\alpha$ using the coding locations given by $\loc$; they are the equivalents of conditions \COne, \CTwo, \CThree, and \CFour respectively for the $F_s$.

Now we also have global agreement conditions which are the equivalents of \At and \Ext. There is a slight modification to \At and \Ext, which is that if \POne plays $G$, and $\PTwo$ responds by playing $\bar{c}$ and $\alpha$, then we use $G \concat \bar{c}$ rather than $G$ because the $\bar{c}$ are elements that must be added to the image of $G$ before the next move. Suppose that so far, \POne has played $G_0 \supset F_s,G_1,\ldots,G_n$ and \PTwo has played $(\bar{c}_0,\alpha_1),\ldots,(\bar{c}_n,\alpha_{n+1})$. Note that the two indices of a move by \PTwo differ by one, so that \PTwo plays $(\bar{c}_{i},\alpha_{i+1})$ rather than $(\bar{c}_i,\alpha_i)$. This will turn out to be more convenient later. By convention, we let $\alpha_0 = \sigma_s$ and $\bar{c}_{n+1}$ the empty tuple (or, if \PTwo plays the strings $\beta_i$, then $\beta_0 = \sigma_s$, and so on). Now \POne must play a partial isomorphism $G_{n+1}$ which codes $\alpha_{n+1}$. In addition to the four requirements above, \POne must also ensure that:
\begin{description}[resume*]
	\item[(At\textsuperscript{\dag})] $G_i \concat \bar{c}_i \leq_0 G_{i+1}$,
	\item[(Ext1\textsuperscript{\dag})] for each $0 \leq i < n+1$, if $\sigma \in T$ (so $\sigma$ does not end in $\eta$), $\sigma \subset \alpha_i$, and $\sigma \subset \alpha_{n+1}$, then for all $x \leq \loc(\sigma)$, $G_i(x) = G_{n+1}(x)$; if $\alpha_i = \alpha_{n+1}$ and they do not end in $\eta$, then in fact $G_i \concat \bar{c}_i \subset G_{n+1}$, and
	\item[(Ext2\textsuperscript{\dag})] for each $t \leq s$ and $0 \leq i < n+1$, if $\sigma \in T$ (so $\sigma$ does not end in $\eta$), $\sigma \subset \sigma_t$, and $\sigma \subset \alpha_{n+1}$, then for all $x \leq \loc(\sigma)$, $F_t(x) = G_{n+1}(x)$; if $\sigma_t = \alpha_{n+1}$ and they do not end in $\eta$, then in fact $F_t \subset G_{n+1}$.
\end{description}

A winning strategy for \POne is a just way to continue the construction, but without having to add any new strings to $\loc_s$. Because \PTwo can play any appropriate string $\alpha$, the strategy is independent of the future values $\sigma_{s+1},\sigma_{s+2},\ldots$ of the approximation.

\section{Basic Plays and the Basic Game\texorpdfstring{ $\basicgame{s}$}{}}

The game $\game{s}$ requires \POne to play infinitely many moves in order to win. However, \PTwo has only finitely many different strings in $\dom(\loc)^*$ which they can play, and so if they extend the approximation for infinitely many stages, they must repeat some strings infinitely many times. In this section, we will define a game which is like $\game{s}$, except that \PTwo is not allowed to have the approximation loop more than once. The main lemma here will be that if \POne can beat \PTwo when \PTwo is restricted to only playing one loop, then \POne can win in general. The idea is that at the end of a loop in the approximation, the game ends up in essentially the same place it was before the loop. So if \POne does not lose to any single loop, they do not lose to any sequence of loops. These plays with only one loop will be called the \textit{basic plays}, and the game with no loops the basic game $\basicgame{s}$.

Whether or not a play by \PTwo is a basic play depends only on the strings $\alpha$ in the play, and is independent of the tuples $\bar{c}$. We say that a play $(\bar{c}_0,\alpha_1),\ldots,(\bar{c}_{\ell-1},\alpha_\ell)$ by \PTwo is \textit{based on} the list $\alpha_1,\ldots,\alpha_\ell$. A play based on $\alpha_1,\ldots,\alpha_\ell$ is a basic play (and the list of strings it is based on is a \textit{basic list}) if it satisfies:
\begin{description}
	\item[(B1)] for $i < l - 1$, $\alpha_i \neq \alpha_{i+1}$ (though $\alpha_{\ell - 1}$ may be equal to $\alpha_\ell$), and
	\item[(B2)] if for some $i < j$ there is some $\tau \in T$ such that $\tau \concat \infty \subset \alpha_i$, $\tau \concat \infty \subset \alpha_j$, and for all $k$ with $i < k < j$, $\alpha_k = \tau \concat \eta$, then $j = \ell$ and $\alpha_i = \alpha_j$.
\end{description}
These conditions include $\alpha_0 = \sigma_s$; so, for example, if $\alpha_1 = \sigma_s$, then $\ell = 1$ by \BOne. Note that all of these definitions are dependent on the stage $s$. So really, we are defining what it meas to be a basic play \textit{at stage $s$}.

The first of two important facts about the basic plays is the following lemma.

\begin{lem}
\label{finite-basic-list}
There are finitely many basic lists.
\begin{proof}
The domain of $\loc$ is finite, and hence $\dom(\loc)^*$ is finite. Since \PTwo can only play strings from $\dom(\loc)^*$, there are only finitely many different $\sigma$ which can appear as one of the $\alpha_i$ in a basic list. Let $\alpha_1,\ldots,\alpha_\ell$ be a basic list. We will show that $\ell$ is bounded, and hence there are only finitely many basic lists. Let $M$ be the size of $\dom(\loc)^*$. In any sufficiently long basic list, say $\alpha_1,\ldots,\alpha_{N}$ of length at least $N$ (depending on $M$), there must be three indices $i_1 < i_2 < i_3 < N$ such that $\alpha_{i_1} = \alpha_{i_2} = \alpha_{i_3}$. Since $i_3 < N$, by \BOne there must be $j_1$ and $j_2$ with $i_1 < j_1 < i_2$ and $i_2 < j_2 < i_3$ with $\alpha_{j_1} \neq \alpha_{i_1}$ and $\alpha_{j_2} \neq \alpha_{i_2}$. Let $\tau_1$ and $\tau_2$ be the greatest common initial segments of the $j$ with $i_1 \leq j \leq i_2$ and $i_2 \leq j \leq i_3$ respectively. Let $x$ be such that $\tau_1 \concat x \subseteq \alpha_{i_1} = \alpha_{i_2}$. Since there is $j_1$ with $i_1 < j_1 < i_2$ and $\tau_1 \concat x \nsubseteq \alpha_{j_1}$, we cannot have $x \in \omega$. So $x = \infty$ or $x = \eta$. First, suppose that $x = \infty$. Then for all $j_1$ with $i_1 < j_1 < i_2$, we cannot have $\tau_1 \concat z \subseteq \alpha_{j_1}$ for some $z \in \omega$, since for any such $z$ with $\tau_1 \concat z \in \loc$, some string $\tau' \supseteq \tau_1 \concat z$ appeared before $i_1$ and hence $\tau_1 \concat z$ can never appear again. So, by decreasing $i_2$ and increasing $i_1$, we may assume that $\tau_1 \concat \infty \subseteq \alpha_{i_1}$, $\tau_1 \concat \infty \subseteq \alpha_{i_2}$, and for all $j_1$ with $i_1 < j_1 < i_2$, $\tau_1 \concat \eta \subseteq \alpha_{j_1}$ (though we may no longer have $\alpha_{i_1} = \alpha_{i_2}$). This contradicts \BTwo. So we must have $x = \eta$, and so $\tau_1 \concat \eta = \alpha_{i_1} = \alpha_{i_2}$. Similarly, we must have $\tau_2 \concat \eta = \alpha_{i_3} = \alpha_{i_2}$. Thus $\tau_1 = \tau_2$---call this $\tau$. Then there are $j_1$ and $j_2$ with $i_1 < j_1 < i_2 < j_2 < i_3$ and $\tau \concat \infty \subseteq \alpha_{j_1}$ and $\tau \concat \infty \subseteq \alpha_{j_2}$. For all $i$ with $j_1 < i < j_2$, we have $\tau \subseteq \alpha_i$. Thus, increasing $j_1$ and decreasing $j_2$, we may assume that for all such $i$, $\tau \concat \eta = \alpha_i$. This contradicts $\BTwo$. So there is no basic list of length greater than $N$.
\end{proof}
\end{lem}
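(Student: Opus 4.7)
My plan is to exploit the fact that $\dom(\loc_s)$ is finite by construction, which makes the alphabet $\dom(\loc)^*$ from which \PTwo's strings are drawn also finite; so it suffices to prove a uniform bound on the length $\ell$ of any basic list. First I would fix $M = |\dom(\loc)^*|$ and argue by contradiction: if basic lists could be made arbitrarily long, then for $\ell$ large enough (as a function of $M$), pigeonhole gives three indices $i_1 < i_2 < i_3 < \ell$ with $\alpha_{i_1} = \alpha_{i_2} = \alpha_{i_3}$. Note that all three are strictly less than $\ell$, so \BOne forces a genuinely different string to occur somewhere strictly inside each of the intervals $[i_1,i_2]$ and $[i_2,i_3]$. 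The aim is to convert this repetition into a violation of \BTwo.

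The core step is to zoom in on one repetition at a time. Given a repetition $\alpha_{i_1} = \alpha_{i_2}$, I would let $\tau$ be the greatest common initial segment of $\alpha_{i_1}, \alpha_{i_1+1}, \ldots, \alpha_{i_2}$, which is a proper initial segment of $\alpha_{i_1}$ because \BOne ensures some $\alpha_j$ with $i_1 < j < i_2$ differs from $\alpha_{i_1}$. Writing $\alpha_{i_1} = \tau \concat x$ for $x \in \omegaext \cup \{\eta\}$, I would split into three cases: $x \in \omega$, $x = \infty$, and $x = \eta$. In the $x \in \omega$ case, the structural rules on how approximations can move on $T$ (from Lemma~\ref{approx}(i), (ii), imported into the game via the legality restrictions on \PTwo) immediately rule this out: once a string strictly to the left of $\tau \concat x$ is played between $i_1$ and $i_2$, no extension of $\tau \concat x$ can be played again. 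In the $x = \infty$ case, the same monotonicity rules prevent any $\alpha_k$ with $i_1 < k < i_2$ from extending $\tau \concat z$ for $z \in \omega$; shrinking the interval to an $[i_1',i_2']$ on which all strictly intermediate strings equal $\tau \concat \eta$ then puts us exactly in the hypothesis of \BTwo, which forces $i_2' = \ell$ — contradicting $i_2' \leq i_2 < \ell$.

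That leaves $x = \eta$, i.e.\ $\alpha_{i_1} = \alpha_{i_2} = \tau \concat \eta$; this is why I want three occurrences rather than two. Applying the same analysis inside $[i_2,i_3]$ yields a common initial segment $\tau'$ with $\alpha_{i_2} = \tau' \concat \eta$, forcing $\tau' = \tau$. Now strictly between $i_1$ and $i_2$, any string different from $\tau \concat \eta$ must extend $\tau$ in a way that eventually returns to $\tau \concat \eta$, and by the approximation rules this is only possible through a play extending $\tau \concat \infty$; so I can locate $j_1 \in (i_1,i_2)$ and $j_2 \in (i_2,i_3)$ with $\tau \concat \infty \subset \alpha_{j_1}$ and $\tau \concat \infty \subset \alpha_{j_2}$. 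Shrinking $[j_1,j_2]$ to a subinterval whose strictly interior entries all equal $\tau \concat \eta$ (which I can do because every $\alpha_k$ for $j_1 < k < j_2$ extends $\tau$) produces the exact configuration \BTwo forbids, with the endpoint $j_2 < \ell$.

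The routine pieces are the pigeonhole bookkeeping and verifying closure under shrinking subintervals; the real obstacle is the $x = \eta$ case, since $\eta$-endings behave differently from genuine tree nodes (they stand proxy for many values $k \in \omega$), so one must argue carefully that a return to $\tau \concat \eta$ forces an intermediate excursion to $\tau \concat \infty$ rather than to some $\tau \concat k$. This is where the monotonicity consequences of Lemma~\ref{approx}(i) on the playable strings, combined with the no-consecutive-repeats clause \BOne, do the essential work, and where I would need to be the most careful in writing up the details.
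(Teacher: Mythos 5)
Your proposal reproduces the paper's argument essentially step for step: bound the alphabet $\dom(\loc)^*$ by $M$, pigeonhole to get three equal entries $\alpha_{i_1}=\alpha_{i_2}=\alpha_{i_3}$ with $i_3<\ell$, take the greatest common initial segment $\tau$ over each of the two consecutive intervals, split on whether $\alpha_{i_1}$ continues $\tau$ by an element of $\omega$, by $\infty$, or by $\eta$, dispose of the first two cases directly with the legality constraints and \BTwo, and in the residual $\eta$-case use all three occurrences to locate excursions to $\tau\concat\infty$ in both intervals and contradict \BTwo with an interior endpoint. The only difference from the paper is presentational (you phrase the shrinking of intervals a bit more loosely and are explicit that the $\eta$-case is the real reason for taking three occurrences instead of two), and the detail you flag as delicate --- that a return to $\tau\concat\eta$ forces an intermediate excursion to $\tau\concat\infty$ rather than to some $\tau\concat k$ --- is indeed the same point the paper handles by appealing to the legality rules imported from Lemma~\ref{approx}. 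No gap; same proof.
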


The basic game $\basicgame{s}$ is the same as the game $\game{s}$, except that we add a new requirement for \PTwo: any play by \PTwo must be a basic play. If, at any point in the game, \PTwo has violated one of the conditions of the basic plays, then \PTwo loses. The next lemma is the second important fact about the basic plays; it says that they are the only plays which \POne has to know how to win against.

\begin{lem}
\label{basic-is-all}
If \POne has a winning strategy for the basic game $\basicgame{s}$, then \POne has a winning strategy for the game $\game{s}$. Moreover, if \POne has an arithmetic winning strategy for $\basicgame{s}$, then they have an arithmetic winning strategy for $\game{s}$.
\begin{proof}

Let $\mc{S}^b$ be a winning strategy for \POne in the basic game $\basicgame{s}$. We must give a winning strategy $\mc{S}$ for \POne in the game $\game{s}$. To each play $P$ by \PTwo in the game $\game{s}$, $\mc{S}$ must give $\POne$'s response $\mc{S}(P)$. To each play $P$, we will associate a basic play $P^*$. \POne will respond to $P$ in the same way that they responded to the corresponding basic play $P^*$; that is, $\mc{S}(P)$ will be $\mc{S}^b(P^*)$. If $P$ is $(\bar{c}_0,\alpha_1),\ldots,(\bar{c}_{m-1},\alpha_m)$ and $P^*$ is $(\bar{d}_0,\beta_1),\ldots,(\bar{d}_{n-1},\beta_n)$ then we will have $\alpha_m = \beta_n$. Thus since \COneDagger, \CTwoDagger, \CThreeDagger, \CFourDagger, and \ExtTwoDagger hold for \POne playing $\mc{S}^b(P^*)$ in response to $P^*$, they will also hold for \POne playing $\mc{S}(P) = \mc{S}^b(P^*)$ in response to $P$.

The general strategy to construct $P^*$ from $P$ will be to build $P^*$ up inductively. If $P$ is not a basic play, it is because it fails to satisfy \BOne and \BTwo. In the first case, this means that for some $i$, $\alpha_i = \alpha_{i+1}$, and so in $P^*$ we will omit $\alpha_i$. In the second case, we will be able to omit everything between the $i$ and $j$ witnessing the failure of \BTwo. The difficulty is to do this in a well-defined way, so that we have a nice relationship between $Q^*$ and $P^*$ when $P$ is a longer play which includes $Q$. This is necessary to ensure that $\mc{S}^b(Q)$ and $\mc{S}^b(P)$ are related in the correct way, e.g. by \ExtOneDagger. It will be sufficient to build up $P^*$ inductively from the definition of $Q^*$ in a natural way, but there are a number of things to check.

There is a condition ($*$) relating $P$ and $P^*$ which, intuitively, says that $P^*$ captures the essence of $P$ (i.e., $P^*$ omits only non-essential plays from $P$). Let $G_0 \supset F_s,G_1,\ldots,G_{m}$ be \POne's response to $P$ and $H_0 \supset F_s,H_1,\ldots,H_{n}$ be \POne's response to $P^*$. Let $\tilde{F}: \omega \to \mc{C}$ be a partial isomorphism and $\gamma \in \dom(\loc)^*$. Then denote by $\Phi(P^*,\tilde{F},\gamma,i)$ the statement:
\begin{itemize}
	\item for all $\sigma \in T$, if $\sigma \subset \beta_i$ and $\sigma \subset \gamma$, then for all $x \leq \loc(\sigma)$, $H_i(x) = \tilde{F}(x)$. If $\beta_i = \gamma$ and they do not end in $\eta$, then in fact $H_i \concat \bar{d}_i \subset \tilde{F}$.
\end{itemize}
and by $\Psi(P,\tilde{F},\gamma,j)$ the statement:
\begin{itemize}
	\item for all $\sigma \in T$, if $\sigma \subset \alpha_j$ and $\sigma \subset \gamma$, then for all $x \leq \loc(\sigma)$, $G_j(x) = \tilde{F}(x)$. If $\alpha_j = \gamma$ and they do not end in $\eta$, then in fact $G_j \concat \bar{c}_j \subset \tilde{F}$.
\end{itemize}
Then ($*$) says that for any $\tilde{F}$ and $\gamma$ such that for all $i$ with $0 \leq i \leq n$ the statement $\Phi(P^*,\tilde{F},\gamma,i)$ holds, then for the same $\tilde{F}$ and $\gamma$ and for all $j$ with $0 \leq j \leq m$ the statement $\Psi(P,\tilde{F},\gamma,j)$ holds.

Now suppose that ($*$) holds for $P$ and $P^*$, and that we want to check \ExtOneDagger for the response $H_n = G_m = \mc{S}(P) = \mc{S}^b(P^*)$ to $P$. Choose $\tilde{F} = H_n = G_m$ and $\gamma = \alpha_m = \beta_n$. We know that $\mc{S}^b$ is a winning strategy in the basic game, and so \POne does not lose by playing $H_n$. Thus \ExtOneDagger must hold in the basic game. This implies, for all $i \leq n$, the statement $\Phi(P^*,H_n,\gamma,i)$. Then by ($*$), we know that for all $i \leq m$, we have $\Psi(P,G_m,\gamma,i)$. This implies \ExtOneDagger for the full game at $P$. Thus, instead of checking \ExtOneDagger, it suffices to check property ($*$).

We will define the operation $P \to P^*$ by induction on the length of $P$. At the same time, we will show that the strategy $\mc{S}$ for \POne does not lose at any finite point in the game (and hence, it must win), and also that $P$ and $P^*$ have ($*$). If we show ($*$), then the only thing remaining to see that \POne does not lose is to check \AtDagger. Let $P$ be $(\bar{c}_0,\alpha_1),\ldots,(\bar{c}_{\ell},\alpha_{\ell+1})$ and suppose that the operation $Q \to Q^*$ is defined for plays of length up to $\ell$. Let $Q$ be first $\ell$ plays in $P$, that is, $(\bar{c}_0,\alpha_1),\ldots,(\bar{c}_{\ell-1},\alpha_\ell)$, so that $Q^*$ has already been defined. We have three possibilities.

\begin{case}
$Q^*$ followed by $(\bar{c}_{\ell},\alpha_{\ell+1})$ is a basic play.
\end{case}

Let $P^*$ be $Q^*$ followed by $(\bar{c}_{\ell},\alpha_{\ell+1})$; this is already a basic play.

First we check ($*$). Fix $\gamma$ and $\tilde{F}$. Let $H_0,H_1,\ldots,H_n$ be \POne's response to $Q^*$ and $G_0,G_1,\ldots,G_\ell$ be \POne's response to $Q$. Let $H_{n+1} = G_{\ell+1} = \mc{S}^b(P^*) = \mc{S}(P)$. Suppose that, for each $i \leq n+1$, we have $\Phi(P^*,\tilde{F},\gamma,i)$. Then, from $\Phi(P^*,\tilde{F},\gamma,i)$ for $i \leq n$ and ($*$) for $Q$ and $Q^*$, we get $\Psi(P,\tilde{F},\gamma,j)$ for $i \leq \ell$. Note that $\Phi(P^*,\tilde{F},\gamma,\ell+1)$ is the same as $\Psi(P,\tilde{F},\gamma,n+1)$ since $H_{n+1} = G_{\ell+1}$. So, for all $j \leq \ell+1$, we have $\Psi(P,\tilde{F},\gamma,j)$. Thus we have ($*$).

Now we need to check \AtDagger to see that \POne does not lose $\game{s}$ by responding to $P$ with $\mc{S}^b(P^*) = H$. So we need to show that $\mc{S}(Q) \concat \bar{c}_\ell \leq_0 H_{n+1}$. Since \POne does not lose $\basicgame{s}$ when responding to $P^*$ with $H_{n+1}$, we have $\mc{S}^b(Q^*) \concat \bar{c}_{\ell} \leq_0 H_{n+1}$. But $\mc{S}^b(Q^*) = \mc{S}(Q)$, so \AtDagger holds for $\mc{S}$.

\begin{case}
$Q^*$ followed by $(\bar{c}_{\ell},\alpha_{\ell+1})$ does not satisfy \BOne.
\end{case}

Let $Q^*$ be $(\bar{d}_0,\beta_1),\ldots,(\bar{d}_{n-1},\beta_n)$. It must be the case that $\beta_{n-1} = \beta_n$. Let $H_0,H_1,\ldots,H_n$ be \POne's response to $Q^*$; then $H_{n-1} \concat \bar{d}_{n-1} \subset H_n$. Let $\bar{e}$ be a tuple of elements from $\mc{C}$ such that $H_{n-1} \concat \bar{d}_{n-1} \concat \bar{e} = H_n$ and let $P^*$ be the play $(\bar{d}_0,\beta_1),\ldots,(\bar{d}_{n-2},\beta_{n-1}),(\bar{d}_{n-1} \concat \bar{e} \concat \bar{c}_\ell,\alpha_{\ell+1})$. Since $Q^*$ satisfied $\BTwo$ and $\beta_{n-1} = \beta_{n}$, $P^*$ satisfies $\BTwo$ and hence is a basic play.

Now we will check ($*$). Fix $\gamma$ and $\tilde{F}$. Recall that $H_0,H_1,\ldots,H_n$ is \POne's response to $Q^*$ in the basic game, and let $G_0,G_1,\ldots,G_\ell$ be \POne's response to $Q$; let $G_{\ell+1} = H_n'$ be $\mc{S}^b(P^*) = \mc{S}(P)$. Then $H_0,H_1,\ldots,H_{n-1},H_n'$ is \POne's response to $P^*$. Suppose that for each $i \leq n$ we have $\Phi(P^*,\tilde{F},\gamma,i)$. Then since $G_{\ell+1} = H_n'$ and $\alpha_{\ell+1}$ is the last play by \PTwo in both $P$ and $P^*$, we immediately have $\Psi(P,\tilde{F},\gamma,\ell+1)$. To show $\Psi(P,\tilde{F},\gamma,j)$ for $j \leq \ell$, it suffices to show $\Psi(Q,\tilde{F},\gamma,j)$ for $j \leq \ell$, and hence (by ($*$) for $Q$ and $Q^*$) to show $\Phi(Q^*,\tilde{F},\gamma,i)$ for $i \leq n$.

Now $P^*$ and $Q^*$ agree on $(\bar{d}_0,\beta_1),\ldots,(\bar{d}_{n-2},\beta_{n-1})$, so we have $\Phi(Q^*,\tilde{F},\gamma,i)$ for $i \leq n-2$. Now $\Phi(P^*,\tilde{F},\gamma,n-1)$ says that: for all $\sigma \in T$, if $\sigma \subset \beta_{n-1}$ and $\sigma \subset \gamma$, then for all $x \leq \loc(\sigma)$, $H_{n-1}(x) = \tilde{F}(x)$; and moreover, if $\beta_{n-1} = \gamma$ and they do not end in $\eta$, then in fact $H_{n-1} \concat \bar{d}_{n-1} \concat \bar{e} \concat \bar{c}_\ell = H_n \concat \bar{c}_\ell \subset \tilde{F}$. This implies both $\Phi(Q^*,\tilde{F},\gamma,n-1)$ and $\Phi(Q^*,\tilde{F},\gamma,n)$ since $H_{n-1} \subset H_n$. Thus, for all $i \leq n$, we have $\Phi(Q^*,\tilde{F},\gamma,i)$. This completes the proof of ($*$).

Now we need to check \AtDagger. Since \POne does not lose $\basicgame{s}$ when responding to $P^*$ with $H_n' = G_{\ell+1}$, by \AtDagger we have that $H_{n-1} \concat \bar{d}_{n-1} \concat \bar{e} \concat \bar{c}_{\ell} \leq_0 H_n'$. But $H_{n-1} \concat \bar{d}_{n-1} \concat \bar{e} = \mc{S}^b(Q^*)=\mc{S}(Q)$, so $\mc{S}(Q) \leq_0 \mc{S}(P)$. So \AtDagger holds for $\mc{S}$.

\begin{case}
$Q^*$ followed by $(\bar{c}_{\ell},\alpha_{\ell+1})$ satisfies \BOne but does not satisfy \BTwo.
\end{case}

Let $Q^*$ be $(\bar{d}_0,\beta_1),\ldots,(\bar{d}_{n-1},\beta_n)$. Now $Q^*$ satisfies \BTwo, so there are two possible reasons that $Q^*$ followed by $(\bar{c}_{\ell},\alpha_{\ell+1})$ might fail to satisfy \BTwo.

\begin{subcase}
$\tau \concat \infty \subset \beta_m = \beta_n$ but for each $k$ with $m < k < n$, $\beta_k = \tau \concat \eta$
\end{subcase}

Choose $m$ to be least with the above property. For any $k$ with $m < k < n$, $\beta_k = \tau \concat \eta$. Let $H_0,H_1,\ldots,H_n$ be \POne's play in response to $Q^*$. We have $H_{m} \subset H_n$. Let $\bar{e}$ be a tuple of elements of $\mc{C}$ such that $H_{m} \concat \bar{e} = H_n$. Then let $P^*$ be $(\bar{d}_0,\beta_1),\ldots,(\bar{d}_{m-1},\beta_{m}),(\bar{e} \concat \bar{c}_{\ell},\alpha_{\ell+1})$. This is a basic play since $Q^*$ was. Note that \POne's play in response to $P^*$ is $H_0,H_1,\ldots,H_{m},H'$ for some partial isomorphism $H'$.

Now we will check ($*$). Let $G_0,G_1,\ldots,G_{\ell}$ be \POne's response to $Q$, so that \POne's response to $P$ is $G_0,G_1,\ldots,G_{\ell},G_{\ell+1} = H'$. Fix $\gamma$ and $\tilde{F}$. Suppose that for each $i \leq n$ we have $\Phi(P^*,\tilde{F},\gamma,i)$. Since both $P$ and $P^*$ end in $\alpha_{\ell+1}$, $\Phi(P^*,\tilde{F},\gamma,i)$ implies $\Psi(P,\tilde{F},\gamma,\ell + 1)$. Now for $j \leq \ell$, $\Psi(Q,\tilde{F},\gamma,j)$ implies $\Psi(P,\tilde{F},\gamma,j)$, and so by ($*$) for $Q$ and $Q^*$, it suffices to show that for all $i \leq n$, we have $\Phi(Q^*,\tilde{F},\gamma,i)$.

For the first $m$ turns, $Q^*$ and $P^*$ agree, and so for $i \leq m$, $\Phi(P^*,\tilde{F},\gamma,i)$ implies $\Phi(Q^*,\tilde{F},\gamma,i)$. So we have established $\Phi(Q^*,\tilde{F},\gamma,i)$ for $i \leq m$.

For $m < i < n$, we have $\beta_i = \tau \concat \eta$. To show $\Phi(Q^*,\tilde{F},\gamma,i)$, it suffices to check that if $\sigma \in T$ has $\sigma \subset \tau$ and $\sigma \subset \gamma$, then for all $x \leq \loc(\sigma)$, $G_{j}(x) = \tilde{F}(x)$. Now $\sigma \subset \tau \subset \alpha_{i}$, so $G_{i}(x) = \tilde{F}(x)$ for all $x \leq G(\sigma)$. By \ExtOneDagger, for all $x \leq \loc(\sigma)$, $G_{i}(x) = G_{j}(x)$, and hence $G_{j}(x) = \tilde{F}(x)$.

Finally, we have the case $i = n$. We have $\beta_n = \beta_{i}$. If $\sigma \in T$ has $\sigma \subset \beta_n$ and $\sigma \subset \gamma$, then $\sigma \subset \beta_{i}$ and so for all $x \leq \loc(\sigma)$, $G_{n}(x) = G_{i}(x) = \tilde{F}(x)$. We also need to consider the case where $\gamma = \beta_n$. By $\Phi(P^*,\tilde{F},\gamma,i)$, we have that $G_{i}\concat \bar{e} \subset \tilde{F}$; but $G_{i} \concat \bar{e} = G_n$ and so $G_n \subset \tilde{F}$ as desired. Thus we have $\Psi(Q^*,\tilde{F},\gamma,n)$. This completes the proof of ($*$).

So \POne responds to $P$ with $\mc{S}^b(P^*) = H$. To see that this does not lose the game for \POne, we just need to check \AtDagger. Since \POne does not lose $\basicgame{s}$ when responding to $P^*$ with $H$, by \AtDagger we have $G_{i} \concat \bar{e} \concat \bar{c}_{\ell+1} \leq_0 H$. But $G_{i} \concat \bar{e} = G_n$, so $G_n \concat \bar{c}_{\ell+1} \leq_0 H$ as required.

\begin{subcase}
There is $i \leq n$ and $\tau \in T$ such that $\tau \concat \infty \subset \beta_i$ and $\tau \concat \infty \subset \alpha_{\ell+1}$ but for each $k$ with $i < k \leq n$, $\tau \concat \eta = \beta_k$. Also, $\beta_i \neq \alpha_{\ell+1}$.
\end{subcase}

Let $\hat{Q}$ be $(\bar{d}_0,\beta_1),\ldots,(\bar{d}_{n-1},\beta_n),(\bar{c}_\ell,\beta_{i})$. We can now use the same argument as in the previous subcase, with $\hat{Q}$ being extended by $(\varnothing,\alpha_{\ell+1})$.

\medskip{}

The whole construction of $\mc{S}$ from $\mc{S}^b$ by transforming plays $P$ into basic plays $P^*$ is arithmetic. \qedhere
\end{proof}
\end{lem}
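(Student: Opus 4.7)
The plan is to construct the winning strategy $\mc{S}$ for $\game{s}$ by reducing each play $P$ by \PTwo to a basic play $P^*$ and then defining $\mc{S}(P) = \mc{S}^b(P^*)$. Intuitively, \BOne and \BTwo together say exactly what ``redundant play'' looks like: repeating the same string consecutively, or returning after a $\tau \concat \eta$-loop to a string extending $\tau \concat \infty$. In either case, \POne is essentially facing the same position they faced before, so re-using (a slight bookkeeping modification of) their earlier response should suffice. I would therefore build $P \mapsto P^*$ by induction on $|P|$, so that if $Q$ is $P$ with the last move removed, $P^*$ is obtained from $Q^*$ either by appending the new move, or by collapsing a repetition or a loop into a single move that absorbs any ``extra'' tuple witnesses that \POne committed to during $Q^*$.

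To make this precise, I would formulate a compatibility invariant $(*)$ linking $P$ and $P^*$: for every partial isomorphism $\tilde{F}$ and every string $\gamma$, if $\tilde{F}$ is compatible (in the sense of \ExtOneDagger) with \POne's responses $H_0,\ldots,H_n$ in $P^*$ at every index $i$, then $\tilde{F}$ is also compatible with \POne's responses $G_0,\ldots,G_m$ in $P$ at every index $j$. Granted $(*)$, verifying that $\mc{S}(P) := \mc{S}^b(P^*)$ does not lose at $P$ is almost immediate: conditions \COneDagger--\CFourDagger depend only on the last \PTwo-move $\alpha_m = \beta_n$, so they transfer verbatim; \ExtTwoDagger similarly only compares the final response to the $F_t$; and \ExtOneDagger falls out of $(*)$ by plugging in $\tilde{F} = G_m$ and $\gamma = \alpha_m$. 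The only remaining condition is \AtDagger, which I would verify by hand in each of the inductive cases, using the fact that the tuples absorbed into $P^*$ are precisely the entries we need $G_m \concat \bar{c}_m$ to contain.

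The inductive step splits into three cases on the last move $(\bar{c}_\ell, \alpha_{\ell+1})$ of $P$ added to $Q^*$: (1) the result is already a basic play, in which case $P^* := Q^* \concat (\bar{c}_\ell, \alpha_{\ell+1})$; (2) the result violates \BOne, i.e.\ $\beta_{n-1} = \beta_n$, in which case I would merge the last two moves of $Q^*$, folding the extra tuple $\bar{e}$ with $H_{n-1} \concat \bar{d}_{n-1} \concat \bar{e} = H_n$ into the new move $(\bar{d}_{n-1}\concat\bar{e}\concat\bar{c}_\ell,\alpha_{\ell+1})$; (3) the result violates \BTwo via some $\tau$ with $\tau\concat\infty\subset\beta_m$, either with $\beta_m = \alpha_{\ell+1}$ (a completed loop) or not. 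In the ``completed loop'' sub-case, I would truncate $Q^*$ at index $m$ and replace the rest by a single move $(\bar{e}\concat\bar{c}_\ell,\alpha_{\ell+1})$ where $H_m \concat \bar{e} = H_n$, using \ExtOneDagger inside $Q^*$ (applied to each $\beta_k = \tau \concat \eta$) to ensure that $H_m$ and $H_n$ agree on everything up to $\loc(\tau)$. The non-closing sub-case reduces to the closing one by first extending $Q^*$ by the auxiliary move $(\bar{c}_\ell,\beta_m)$ and applying the closing-loop analysis.

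The hard part is case (3): I have to verify $(*)$ and \AtDagger when a whole segment of moves is collapsed. The delicate check is that for each $k$ with $m < k < n$, the requirement $\Phi(Q^*,\tilde F,\gamma,k)$ can be deduced from $\Phi(P^*,\tilde F,\gamma,m)$, which is exactly where I need \ExtOneDagger applied inside the basic game at the intermediate $\beta_k = \tau\concat\eta$ to propagate agreement on the initial segment below $\loc(\tau)$. Once this is in place, Lemma~\ref{finite-basic-list} guarantees that the strategy $\mc{S}^b$ is only consulted on finitely many distinct basic plays of bounded length, and since the map $P \mapsto P^*$ is defined by a straightforward arithmetic recursion from $\mc{S}^b$ and the \PTwo-moves, $\mc{S}$ is arithmetic whenever $\mc{S}^b$ is, yielding the ``moreover'' clause.
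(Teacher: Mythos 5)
Your proposal follows essentially the same route as the paper's proof: the inductive definition of $P \mapsto P^*$, the compatibility invariant $(*)$ (with the observation that \COneDagger--\CFourDagger and \ExtTwoDagger transfer via the shared last move, \ExtOneDagger follows from $(*)$, and \AtDagger is checked case by case), the same three-case split on how $Q^* \concat (\bar{c}_\ell,\alpha_{\ell+1})$ fails to be basic, and the same reduction of the non-closing \BTwo subcase to the closing one via an auxiliary move. The plan is correct as stated.
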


Now any winning strategy for \POne in $\game{s}$ is also a winning strategy in $\basicgame{s}$; in particular, if \POne has an arithmetic winning strategy for $\game{s}$, then they have an arithmetic winning strategy in $\basicgame{s}$. However, it would be nice if we did not have to worry about the computability-theoretic properties of the winning strategy during our arguments. The following lemma lets us do exactly that.

\begin{corollary}
The following are equivalent:
\begin{enumerate}
	\item \POne has a winning strategy for $\game{s}$.
	\item \POne has an arithmetic winning strategy for $\game{s}$.
	\item \POne has a winning strategy for $\basicgame{s}$.
	\item \POne has an arithmetic winning strategy for $\basicgame{s}$.
\end{enumerate}
\begin{proof}
We have $(4) \Rightarrow (2)$ from the previous lemma. $(2) \Rightarrow (1)$ is immediate. $(1) \Rightarrow (3)$ is because any winning strategy for \POne in $\game{s}$ is a winning strategy for $\basicgame{s}$. So it remains to prove $(3) \Rightarrow (4)$. In $\basicgame{s}$, each player makes only finitely many moves in every play of the game, and there is a computable bound on the number of moves. Any such game with a winning strategy for \POne has an arithmetic winning strategy for \POne.
\end{proof}
\end{corollary}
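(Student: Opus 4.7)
The plan is to establish the four equivalences by a short cycle. The direction $(4) \Rightarrow (2)$ is exactly Lemma \ref{basic-is-all}, which upgrades an arithmetic winning strategy in $\basicgame{s}$ to an arithmetic winning strategy in $\game{s}$. The implication $(2) \Rightarrow (1)$ is trivial, and $(1) \Rightarrow (3)$ is immediate: since every basic play by \PTwo is in particular a legal play in $\game{s}$, any winning strategy for \POne in $\game{s}$ restricts to a winning strategy in $\basicgame{s}$. So the only substantive implication remaining is $(3) \Rightarrow (4)$.

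For $(3) \Rightarrow (4)$ the key observation is that Lemma \ref{finite-basic-list} gives a finite (hence computable) bound $N$ on the number of moves in any complete play of $\basicgame{s}$, because every such play is based on a basic list, and there are only finitely many basic lists. Each move by \PTwo consists of a string from the finite set $\dom(\loc)^*$ paired with a finite tuple from $\mc{C}$, each move by \POne is a finite partial injection $\omega\to\mc{C}$, and the loss conditions \COneDagger, \CTwoDagger, \CThreeDagger, \CFourDagger, \AtDagger, \ExtOneDagger, \ExtTwoDagger together with the basic-play conditions \BOne and \BTwo can all be checked computably (relative to $\mc{C}\oplus R$) from the finite history, since $\mc{C}$, $R$, $\loc$, $k$, and $m$ are fixed finite or computable data.

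Given this, I would define the set of winning positions for \POne by backward induction on the number of moves already played. The set of length-$N$ positions in which \POne has not yet lost is computable. For $n<N$, a length-$n$ position is winning for \POne if there is some legal response $G$ by \POne such that for every legal move $(\bar{c},\alpha)$ by \PTwo the resulting length-$(n{+}1)$ position is winning. Since we alternate only $2N$ quantifiers over finite objects, the resulting predicate ``position $P$ is winning for \POne'' is arithmetic. The strategy that plays, at each position, the numerically least winning response (under some arithmetic well-ordering of finite partial injections) is then an arithmetic winning strategy, provided (3) guarantees that the initial empty position is winning. The main obstacle is purely bookkeeping: one has to be careful that the recursion correctly tracks the restriction on \PTwo imposed by the history (earlier moves rule out certain $\alpha$ via the true-stage structure and \BOne, \BTwo), but since all constraints are finitary this is routine.
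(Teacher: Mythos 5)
Your proposal is correct and follows essentially the same route as the paper: the same three easy implications, with $(4)\Rightarrow(2)$ cited from Lemma \ref{basic-is-all}, and $(3)\Rightarrow(4)$ reduced to the fact that $\basicgame{s}$ has a computable bound on the length of plays. Your backward-induction argument for the arithmetic winning strategy is just the standard justification of the fact the paper invokes in one line, so nothing is missing.
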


So in order to check \WS, it suffices to show that \POne has a winning strategy without worrying about whether it is arithmetic.

\section{The Construction}

Begin at stage $s = -1$ with $F_{-1} = \loc_{-1} = \varnothing$.

At each subsequent stage $s+1$, we will first update the partial isomorphism from the previous stage according to the new approximation. To do this, we will use the winning strategy from the game $\game{s}$ which we had at the previous stage. Then we will add new elements to the image of the isomorphism, and add a new coding location. We must add these new elements so that all of the properties from Section \ref{sec:framework} are satisfied.

At stage $s+1$, $\sigma_{s+1} \notin \dom(\loc_s)$ but $\sigma_{s+1}^- \in \dom(\loc_s)$. (Recall that $\sigma_{s+1}^-$ is $\sigma_{s+1}$ with the last entry removed.) We begin by using our winning strategy for $\game{s}$ to code $\sigma_{s+1}^-$ correctly. If there is some $x \in \omegaext$ such that $\sigma_{s+1}^- \concat x \in \dom(\loc_s)$, then $\ell(\sigma_{s+1}) \in \omega$; let $\tau = \sigma_{s+1}^- \concat \eta$. Otherwise, let $\tau = \sigma_{s+1}^-$. \POne has an arithmetic winning strategy $\mc{S}$ for the game $\game{s}$ from the previous stage. Consider these first couple moves of the game where \POne uses the strategy $\mc{S}$: \POne plays $G_0 \supset F_s$, \PTwo plays $(\varnothing,\tau)$, and \POne responds with $G$ according to their winning strategy $\mc{S}$. $F_{s+1}$ will be an extension of $G$. Since $G$ was part of the winning strategy for \POne, it did not lose the game. So, automatically, $F_{s+1}$ will satisfy \COne, \CTwo, \CThree, and \CFour for $\sigma \in \dom(\loc_s)$, \At, and \Ext since $G$ already satisfies these.

Suppose that the domain of $G$ is $\{0,\ldots,n\}$. Let $t < s+1$ be the previous stage at which $\sigma_t = \sigma_{s+1}^-$. Then by \Surj at stage $t$, the first $|\sigma_t|-1$ elements of $\mc{C}$ appear in $\ran(F_t \upharpoonright_{\loc_t(\sigma_t)})$. By \Ext, $F_t \subset G$. If the first $|\sigma_{s+1}| - 1$ elements of $\mc{C}$ do not appear in $\ran(G)$, then it is because the $(|\sigma_{s+1}| - 1)$th element is not in $\ran(G)$; let this element be $a$, and define $G'$ extending $G$ by $G'(n+1) = a$. Set $\loc_{s+1}$ to be the extension of $\loc_s$ with $\loc_{s+1}(\sigma_{s+1}) = n+2$. Then any $F_{s+1}$ extending $G'$ will satisfy \Surj. Also, since $\sigma_{s+1} \in \dom(\loc_{s+1})$, we satisfy \CLoc.

While we have set $\loc_{s+1}(\sigma_{s+1}) = n+2$, we have not yet added an element into that position. What remains to be done is to add some element in the position $n+2$ (or a tuple in the positions $n+2,n+3,\ldots$ if $\sigma_{s+1} \in T_\omega$) to satisfy \COne, \CTwo, \CThree, and \CFour for $\sigma = \sigma_{s+1}$ and also to give a winning strategy witnessing \WS. There are two cases depending on the last entry of $\sigma_{s+1}$. The first is relatively easy while the second is much harder.

\subsection{The \texorpdfstring{}{first }case\texorpdfstring{ $\sigma_{s+1} \in T_\infty$}{}}

Let $b \notin R$ be an element which is $2$-free over $\ran(G')$. Let $F_{s+1}$ extend $G'$ with $F_{s+1}(n+2) = c$. Define $m_{s+1}(\sigma_{s+1}) = -1$ and $k_{s+1}(\sigma_{s+1}) = 1$. Then $F_{s+1}$ satisfies \COne, \CTwo, \CThree, and \CFour for $\sigma = \sigma_{s+1}$, which was the last remaining case of those properties.

We must show that \POne has a winning strategy in the game $\basicgame{s+1}$. The only difference between $\basicgame{s+1}$ and $\basicgame{s}$ is that in $\basicgame{s+1}$ we have added a new coding location $\sigma_{s+1}$ to $\loc$, and $\basicgame{s}$ starts a turn earlier than $\basicgame{s+1}$. We can accommodate the latter by considering plays in $\basicgame{s}$ which begin with $(\varnothing,\sigma_{s+1})$, thus making \POne play $G$ as their first play. So every part of the approximation that our opponent plays in $\basicgame{s+1}$ is an approximation that our opponent could have played in $\basicgame{s}$, except that in $\basicgame{s+1}$ \PTwo can also play $\sigma_{s+1}$ and $\sigma_{s+1}^- \concat \eta$.

As a first approximation, we could use the strategy $\mc{S}$ from the previous stage (and when our opponent plays $\sigma_{s+1}$ or $\sigma_{s+1}^- \concat \eta$, we respond instead to $\sigma_{s+1}^-$). This works except for one thing, which is that when our opponent plays either $\sigma_{s+1}$ or $\sigma_{s+1}^- \concat \eta$, we are not guaranteed to code $\sigma_{s+1}$ correctly. Now, when our opponent plays $\sigma_{s+1}$, our response using $\mc{S}$ will extend $F_{s+1} = G a b$ with $b \notin R$ coding $\sigma_{s+1}$ (this is because by \ExtOneDagger and \ExtTwoDagger our response must extend our response at the previous stage where the approximation was $\sigma_{s+1}^-$). The only problem is that when our opponent plays $\sigma_{s+1}^- \concat \eta$, $\mc{S}$ will also have us respond with an extension of $F_{s+1} = G a b$, but now $b$ is coding the wrong thing. This is relatively easy to fix. Say $\mc{S}$ has us respond with $G a b \bar{c}$. Since $b$ is 2-free over $Ga$, we can find $b' \in R$ and $\bar{c}'$ with $Gab \bar{c} \leq_1 G a b' \bar{b}'$. Then we will play $Gab' \bar{c}'$.

In order to continue to follow the strategy $\mc{S}$ at later stages, we cannot tell $\mc{S}$ that our opponent plays $\sigma_{s+1}$ or $\sigma_{s+1}^- \concat \eta$ because this is an illegal play in $\basicgame{s}$. However, we can tell $\mc{S}$ that our opponent played $\sigma_{s+1}^-$ and pretend that we responded with $G a b \bar{c}$. We will keep track of these corresponding plays for the purposes of using $\mc{S}$.

So along with defining the strategy $\mc{T}$ for $\basicgame{s}$, we will describe a correspondence between plays by \PTwo in $\basicgame{s+1}$ and $\basicgame{s}$. If \PTwo plays $(\bar{c}_0,\alpha_1),\ldots,(\bar{c}_{n-1},\alpha_{n})$, the corresponding play in $\basicgame{s}$ will be of the form $(\varnothing,\sigma_{s+1}^-),(ab\bar{d}_0,\beta_1),\ldots,(\bar{d}_{n-1},\beta_{n})$. Note that the length of the play in $\basicgame{s}$ is one more than of that in $\basicgame{s+1}$; this is because $\basicgame{s}$ begins at the previous stage, and so we need to begin by playing $G$ and adding $a$ and $b$ to the image of the isomorphism. Thus the first play using the strategy $\mc{S}$ will be $G$, and before the second play \POne will be forced to add $ab$ to $G$, and thus \POne is essentially playing $F_{s+1}$. We can already define $\beta_1,\ldots,\beta_{n-1}$, but the $\bar{d}_i$ will be defined at the same time as we define $\mc{T}$. If $\alpha_i$ is in $\dom(\loc_s)^*$, then $\beta_i$ and $\alpha_i$ will be equal. Otherwise, $\alpha_i$ is either $\sigma_{s+1}$ or $\sigma_{s+1}^- \concat \eta$, and $\beta_i$ will be the longest initial segment which is in $\dom(\loc_s)^*$, which is $\beta_i = \sigma_{s+1}^-$. 

Now we will define $\mc{T}$ and the $\bar{d}_i$ by an alternating inductive definition. Suppose that so far we have defined \POne's response $G_0 \supset F_{s+1},G_1,\ldots,G_{n-1}$ when \PTwo plays $(\bar{c}_0,\alpha_1),\ldots,(\bar{c}_{n-2},\alpha_{n-1})$. We will also have defined a corresponding play by \PTwo in $\basicgame{s}$, $(\varnothing,\sigma_{s+1}^-),(ab \concat \bar{d}_0,\beta_1),\ldots,(\bar{d}_{n-2},\beta_{n-1})$. Let $F' \supset F_s,G,H_1,\ldots,H_{n-1}$ be \POne's response to this using $\mc{S}$.  We will have ensured that if $\alpha_i = \sigma_{s+1}^- \concat \eta$ then $H_i \leq_1 G_i$, and otherwise that $H_i = G_i$ and  for each $i$ and $\bar{d}_i = \bar{c}_i$. Recall that $G \concat ab = F_{s+1}$.

It is now \PTwo's turn, and suppose that \PTwo plays $(\bar{c}_{n-1},\alpha_{n})$, and that this is a basic play by \PTwo. We must define $\bar{d}_{n-1}$ and then define \POne's response $G_n$. Note that if $\alpha_i = \alpha_{i+1} = \sigma_{s+1}^- \concat \eta$, then $i+1 = n$ by \BOne.

There are four cases depending on the values of $\alpha_{n-1}$ and $\alpha_{n}$. When neither of $\alpha_{n-1}$ nor $\alpha_{n}$ are $\sigma_{s+1}^- \eta$, then we can just follow $\mc{S}$. Then we have three more cases depending on whether one (or both) of $\alpha_{n-1}$ and $\alpha_{n}$ are $\sigma_{s+1}^- \eta$.

\begin{itemize}
	\item[Case 1:] $\alpha_{n-1}, \alpha_n \neq \sigma_{s+1}^- \concat \eta$. In this case we can simply follow $\mc{S}$.	Let $\bar{d}_{n-1} = \bar{c}_{n-1}$. Let $G_n = H_n$ be \POne's response, using $\mc{S}$, to
	\[ (\varnothing,\sigma_{s+1}^-),(ab \concat \bar{d}_0,\beta_1),\ldots,(\bar{d}_{n-1},\beta_{n}). \]

	\item[Case 2:] $\alpha_{n-1} = \sigma_{s+1}^- \concat \eta$ and $\alpha_{n} \neq \sigma_{s+1}^- \concat \eta$. We have $H_{n-1} \leq_1 G_{n-1}$. Let $\bar{d}_{n-1}$ be such that $G_{n-1} \bar{c}_{n-1} \leq_0 H_{n-1}\bar{d}_{n-1}$. Now let $G_n = H_n$ be \POne's response, using $\mc{S}$, to
	\[ (\varnothing,\sigma_{s+1}^-),(ab \concat \bar{d}_0,\beta_1),\ldots,(\bar{d}_{n-1},\beta_{n}). \]
		
	\item[Case 3:] $\alpha_{n-1} \neq \sigma_{s+1}^- \concat \eta$ and $\alpha_{n} = \sigma_{s+1}^- \concat \eta$. Let $\bar{d}_{n-1} = \bar{c}_{n-1}$. Now let $H_n$ be \POne's response, using $\mc{S}$, to
	\[ (\varnothing,\sigma_{s+1}^-),(ab \concat \bar{d}_0,\beta_1),\ldots,(\bar{d}_{n-1},\beta_{n}). \]
Note that since $\alpha_{n} = \sigma_{s+1}^- \concat \eta$, $\beta_n = \sigma_{s+1}^-$. Then $H_n \supseteq F_{s+1} = Gab$, say $H_n = Gab\bar{e}$. The using the fact that $b$ is free over $Ga$, choose $b' \in R$ and $\bar{e}'$ such $H_n \leq_1 Gab'\bar{e}'$. Then set $G_n = Gab'\bar{e}'$. 
	
	\item[Case 4:] $\alpha_{n-1} = \alpha_n = \sigma_{s+1}^- \concat \eta$. Set $G_n = G_{n-1} \bar{c}_{n-1}$. Since $\alpha_{n-1} = \alpha_n$, there are no longer basic plays than this, and so we do not need to define $\bar{d}_{n-1}$.
\end{itemize}

It is tedious but easy to see that none of these plays by \POne is a losing play. Hence \POne has a winning strategy $\mc{T}$ in $\basicgame{s+1}$.

\subsection{The \texorpdfstring{}{second }case\texorpdfstring{ $\sigma_{s+1} \in T_\omega$}{}}

By Lemma \ref{finite-basic-list}, there are only finitely many basic lists on which \PTwo's basic plays are based. Let $\mathfrak{b}^1,\ldots,\mathfrak{b}^m$ be these basic lists, where $\mathfrak{b}^i$ is the list $\beta_1^i,\beta_2^i,\ldots,\beta_{\ell_i}^i$.

We must add a tuple $\bar{b}$ to the image of our partial isomorphism, setting $F_{s+1} = G \bar{b}$. The tuple $\bar{b}$ will be made up of tuples $\bar{b}_1,\ldots,\bar{b}_m$ from $\mc{C}$. Let $\bar{\epsilon}_1,\ldots,\bar{\epsilon}_m$ be tuples of elements in $\{-1,1\}$ be such that $\bar{b}_j \in R^{\bar{\epsilon}_j}$. If we set $m_{s+1}(\sigma)=\bar{\epsilon}_1 \concat \cdots \concat \bar{\epsilon}_m$ and $k_{s+1}(\sigma_{s+1}) = |\bar{b}_1| + \cdots + |\bar{b}_m|$, then \COne-\CFour will be satisfied. So we just have to make sure that $\bar{b}_1,\ldots,\bar{b}_m$ are chosen such that \WS is satisfied, that is, \POne has a winning strategy for the game $\game{s+1}$ (or, equivalently, $\basicgame{s+1}$).

To choose the tuples $\bar{b}_1,\ldots,\bar{b}_m$, we will define a new class of games. For each $r \leq m$, $\bar{b}_1,\ldots,\bar{b}_r$ tuples of elements of $\mc{C}$, and tuples $\bar{\epsilon}_1,\ldots,\bar{\epsilon}_r$ which are tuples of $1$s and $-1$s (with $\epsilon_i$ the same length as $\bar{b}_i$), we have a game $\mc{H}(\bar{b}_1,\bar{\epsilon}_1;\ldots;\bar{b}_r,\bar{\epsilon}_r)$. We do not require, for the definition of the game, that $\bar{b}_j \in R^{\bar{\epsilon}_j}$. We allow the case $r = 0$; that is, $\mc{H}(\varnothing)$ is a game. $\mc{H}(\varnothing)$ will be essentially the game $\basicgame{s}$, and we will be able to easily turn a winning strategy for \POne in $\basicgame{s}$ into a winning strategy for \POne in $\mc{H}(\varnothing)$. Then we will use the winning strategy for $\mc{H}(\varnothing)$ to show that we can choose $\bar{b}_1$ and $\bar{\epsilon}_1$ so that we have a winning strategy for \POne in $\mc{H}(\bar{b}_1,\bar{\epsilon}_1)$, and we will use that winning strategy to show that we can choose $\bar{b}_2$ and $\bar{\epsilon}_2$ so that we have a winning strategy for \POne in $\mc{H}(\bar{b}_1,\bar{\epsilon}_1;\bar{b}_2,\bar{\epsilon}_2)$, and so on. Eventually we will be able to choose $\bar{b}_1,\ldots,\bar{b}_m$ and $\bar{\epsilon}_1,\ldots,\bar{\epsilon}_m$ so that we have a winning strategy for $\mc{H}(\bar{b}_1,\bar{\epsilon}_1;\ldots;\bar{b}_m,\bar{\epsilon}_m)$. We will choose the tuples so that if we make the definition of $F_{s+1}$, $m_{s+1}$, and $k_{s+1}$ above, this winning strategy will immediately yield a winning strategy for \POne in $\basicgame{s+1}$.

Here are the rules for the game $\mc{H}(\bar{b}_1,\bar{\epsilon}_1;\ldots;\bar{b}_r,\bar{\epsilon}_r)$. \POne begins by playing a partial isomorphism $G_0$ which extends $Ga\bar{b}_1\cdots\bar{b}_i$. Then, \PTwo and \POne alternate, with \PTwo playing a tuple of elements from $\mc{C}$ and a string in $\dom(\loc_{s+1})^*$, and \POne playing a partial isomorphism. As in $\basicgame{s+1}$, \PTwo must make a play which is based on one of the basic lists. \POne can lose by violating one of \COneDagger-\CFourDagger for $\sigma \in \dom(\loc_s)^*$ (not $\dom(\loc_{s+1}^*)$, since we have not yet defined $m_{s+1}(\sigma_{s+1})$ and $k_{s+1}(\sigma_{s+1})$). \POne can also lose by violating \AtDagger-\ExtTwoDagger using $\loc = \loc_{s+1}$. Finally, \POne must ensure that
\begin{description}
 \item[(CB\textsuperscript{\dag})] whenever \POne is responding with a partial isomorphism $H$ to a basic play $(\bar{c}_0,\beta^i_1),\ldots,(\bar{c}_{\ell_i-1},\beta_{\ell_i}^i)$ based on $\mathfrak{b}^i$ for $i \leq r$, and $\beta_{\ell_i} = \sigma_{s+1}^- \concat \eta$, then
	\[H(n+|\bar{b}_1|+\cdots+|\bar{b}_{i-1}|+1),\ldots,H(n+|\bar{a}_1|+\cdots+|\bar{b}_{i-1}|+|\bar{b}_i|) \notin R^{\bar{\epsilon}_i}. \]
\end{description}
This condition \CB will be what is required to ensure \CFourDagger for $\sigma = \sigma_{s+1}$.

The choice of $\bar{b}_1,\ldots,\bar{b}_m$ involves three lemmas. We will begin by stating the first two lemmas and using them to prove induction step of the third lemma before returning to the proofs of the first two.

\begin{lem}
\label{one-or-other}
Suppose that \POne wins $\mc{H}(\bar{b}_1,\bar{\epsilon}_1;\ldots;\bar{b}_r,\bar{\epsilon}_r)$. Let $\bar{c}$ be such that \POne's first play using their winning strategy for this game is $Ga\bar{b}_1\cdots\bar{b}_r\bar{c}$. Let $\bar{\nu}$ be such that $\bar{c} \in R^{\bar{\nu}}$. Let $x \in \mc{C}$. Then for one of $\iota = 1$ or $\iota = -1$, \POne wins $\mc{H}(\bar{b}_1,\bar{\epsilon}_1;\ldots;\bar{b}_r,\bar{\epsilon}_r;\bar{c}x,\bar{\nu}\iota)$.
\end{lem}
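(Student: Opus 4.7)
My plan is to prove this by a finite-game determinacy argument, deriving a contradiction from the assumption that \POne wins neither extended game. By Lemma~\ref{finite-basic-list}, basic plays by \PTwo have bounded length, so the games $\mc{H}(\bar{b}_1,\bar{\epsilon}_1;\ldots;\bar{b}_r,\bar{\epsilon}_r;\bar{c}x,\bar{\nu}\iota)$ are finite games with clopen winning conditions, hence determined. If neither $\iota=+1$ nor $\iota=-1$ admits a winning strategy for \POne, then \PTwo has winning strategies $\mc{T}_+$ and $\mc{T}_-$ for the two games.

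The key structural observation I would exploit is that the tuple-valuations $R^{\bar{\nu}(+1)}$ and $R^{\bar{\nu}(-1)}$ are disjoint: they prescribe opposite $R$-membership at the final ($x$) coordinate. Consequently, at any \textit{critical leaf}---a history at which \PTwo has played a basic play based on $\mathfrak{b}^{r+1}$ ending in $\sigma_{s+1}^-\concat\eta$---any single response $H$ of \POne has its $\bar{c}x$-positions lying in at most one of $R^{\bar{\nu}(+1)}$, $R^{\bar{\nu}(-1)}$. Thus any single response satisfying the base-game conditions automatically satisfies \CB for $i=r+1$ under \emph{at least one} choice of $\iota$; the whole content of the lemma is that this choice can be made uniformly across \PTwo's moves.

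Starting from \POne's winning strategy $\mc{S}$ for the base game, I would define $\mc{S}'$ by appending $x$ to $\mc{S}$'s first play and following $\mc{S}$ on subsequent turns. Then $\mc{S}'$ automatically meets \COneDagger--\CFourDagger for $\sigma\in\dom(\loc_s)$, \AtDagger, \ExtOneDagger, \ExtTwoDagger and \CB for $i\le r$, so the only way $\mc{S}'$ can lose is by producing, at some reachable critical leaf, a response whose $\bar{c}x$-positions fall in $R^{\bar{\nu}\iota}$. To complete the contradiction, I would assemble $\mc{T}_+$ and $\mc{T}_-$ into a \PTwo strategy $\mc{T}^*$ in the base game by backward induction on the (finite) game tree: at each node I would inspect both extended games in parallel and follow whichever of $\mc{T}_+$, $\mc{T}_-$ guarantees, further down the tree, a move forcing \POne's response to violate some \emph{base-game} condition. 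Since the base game has no \CB constraint for $i=r+1$, $\mc{T}^*$ would defeat $\mc{S}$ outright, contradicting the fact that $\mc{S}$ is a winning strategy in the base game.

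The main obstacle will be executing the final combination step rigorously: $\mc{T}_+$ and $\mc{T}_-$ may diverge arbitrarily early, so a naive interleaving does not work. The careful part of the argument is to show, by induction on positions in the common game tree from the leaves up, that if at every critical leaf the \POne response must land in $R^{\bar{\nu}(+1)}\cup R^{\bar{\nu}(-1)}$ (since neither \CB constraint can be satisfied uniformly), then this information can be propagated upward via the moves of $\mc{T}_+$ and $\mc{T}_-$ to produce a winning base-game strategy for \PTwo---the disjointness observation being exactly what is needed to glue the two branch choices together at each \PTwo node. Once this gluing is carried out, the contradiction with \POne winning the base game yields the lemma.
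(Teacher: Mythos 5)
Your determinacy reduction and your disjointness observation---that a single response $H$ at a critical leaf has its $\bar{c}x$-positions in at most one of $R^{\bar{\nu}1}$ and $R^{\bar{\nu}(-1)}$, so it violates the new \CB clause in at most one of the two extended games---are correct, and they are exactly the ingredients the paper uses. The gap is the final combination step, and it is not just a matter of care: the contradiction you aim for is unattainable. You want to glue $\mc{T}_+$ and $\mc{T}_-$ into a \PTwo strategy $\mc{T}^*$ that defeats $\mc{S}$ in the base game by forcing a violation of a base-game condition. But against the $x$-appended strategy $\mc{S}'$, each $\mc{T}_\iota$ wins its extended game only by forcing a violation of \CB for the new block $\mathfrak{b}^{r+1}$: $\mc{S}'$ inherits \COneDagger--\ExtTwoDagger and \CB for $i\le r$ from $\mc{S}$, so no base-game violation is ever forced on any branch. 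Hence your backward induction never finds a node from which either $\mc{T}_+$ or $\mc{T}_-$ ``guarantees a base-game violation,'' and no $\mc{T}^*$ of the kind you describe can exist, since $\mc{S}$ wins the base game by hypothesis. The two winning lines of $\mc{T}_+$ and $\mc{T}_-$ against $\mc{S}'$ are in general different branches, and \PTwo can realize only one of them; your disjointness observation lets you pass the burden from one game to the other at a \emph{single} leaf, but it does not by itself produce the uniform choice of $\iota$ across all of \PTwo's plays.

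The combination has to happen on \POne's side, and asymmetrically. Assume only that \PTwo wins $\mc{G}(1)$ with strategy $\mc{T}$; one then builds a winning strategy for \POne in $\mc{G}(-1)$ directly. The trick is lookahead: before each of \POne's moves in $\mc{G}(-1)$, \POne computes the tuple $\bar{d}_i$ that $\mc{T}$ would demand at the corresponding position of $\mc{G}(1)$ and appends $\bar{d}_i$ to its own partial isomorphism. Since \PTwo's tuple-moves only ever require \POne to include \emph{more} elements, whatever \PTwo actually plays in $\mc{G}(-1)$ along the list $\mathfrak{b}^{r+1}$ extends a legal $\mc{T}$-conformant play of $\mc{G}(1)$ against responses satisfying everything except possibly the new \CB clause. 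Because $\mc{T}$ wins $\mc{G}(1)$, the response at the critical leaf must lie in $R^{\bar{\nu}1}$, hence by disjointness it satisfies \CB for $\iota=-1$; off the list $\mathfrak{b}^{r+1}$, \POne simply follows $\mc{S}$ and wins as in the base game. This single-line simulation is the uniformization your sketch correctly identifies as the whole content of the lemma but does not supply.
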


\begin{lem}
\label{transfer-win}
Let $\bar{c}$ and $\bar{\nu}$ be as above. Let $\iota = 1$ or $\iota = -1$. If \POne wins the game $\mc{H}(\bar{b}_1,\bar{\epsilon}_1;\ldots;\bar{b}_r,\bar{\epsilon}_r;\bar{c} x,\bar{\nu}\iota)$ and
\[ G a \bar{b}_1\cdots\bar{b}_r\bar{c} x \leq_1 G a \bar{b}_1\cdots\bar{b}_r\bar{c}y,\]
then \POne wins $\mc{H}(\bar{b}_1,\bar{\epsilon}_1;\ldots;\bar{b}_r,\bar{\epsilon}_r;\bar{c} y,\bar{\nu}\iota)$.
\end{lem}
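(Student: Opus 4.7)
The plan is to construct the winning strategy $\mc{S}_y$ for \POne in $\mc{H}(\bar{b}_1,\bar{\epsilon}_1;\ldots;\bar{b}_r,\bar{\epsilon}_r;\bar{c}y,\bar{\nu}\iota)$ by running the given winning strategy $\mc{S}_x$ for the corresponding $x$-game as a subroutine and translating moves between the two games via the hypothesis $Ga\bar{b}_1\cdots\bar{b}_r\bar{c}x \leq_1 Ga\bar{b}_1\cdots\bar{b}_r\bar{c}y$. Since $\leq_1$ implies $\equiv_0$ on atomic formulas, the starting tuples $G_x$ and $G_y$ have identical atomic types at the coding positions inside $G,a,\bar{b}_1,\ldots,\bar{b}_r,\bar{c}$; in particular $x \in R \Leftrightarrow y \in R$, so the \CB constraint on the last slot under valuation $\iota$ is satisfied by $y$ exactly when it is by $x$, and the \dag-coding conditions on $G_y$ reduce to those already verified for $G_x$.

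At each turn I would proceed as follows. Maintain, inductively, the invariant that after \POne's $n$-th play one has $H^n_x \leq_1 H^n_y$ under the natural position-preserving identification sending the $x$-slot to the $y$-slot. When \PTwo plays $(\bar{d}^y_n,\alpha_{n+1})$ in the $y$-game, apply the back-and-forth property of $\leq_1$ to produce a tuple $\bar{d}^x_n$ with $H^n_x\bar{d}^x_n \equiv_0 H^n_y\bar{d}^y_n$, by taking witnesses in $\mc{C}$ for the $\Sigma_1$ formula expressing the full quantifier-free type of $H^n_y\bar{d}^y_n$ over $H^n_x$. Feed $(\bar{d}^x_n,\alpha_{n+1})$ to $\mc{S}_x$ as \PTwo's $n$-th move in the simulated $x$-game, obtaining $\mc{S}_x$'s response $H^{n+1}_x$. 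Finally, choose $H^{n+1}_y$ as an extension of $H^n_y\bar{d}^y_n$ whose corresponding slots realise the same quantifier-free type as $H^{n+1}_x$, padded with further elements to reinstate $H^{n+1}_x \leq_1 H^{n+1}_y$.

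To verify that $\mc{S}_y$ is winning, one checks that each losing condition is inherited from $\mc{S}_x$'s correctness. The coding conditions \COneDagger--\CFourDagger and \CB depend only on values of the partial isomorphism at positions within $G,a,\bar{b}_1,\ldots,\bar{b}_r$, which are common to the $x$- and $y$-sides; the $R$-status of the $x/y$-slot is forced to agree by $\leq_1$, so that \CB for the newly adjoined tuple $\bar{c}y$ at valuation $\bar{\nu}\iota$ holds because it holds for $\bar{c}x$ at valuation $\bar{\nu}\iota$. The agreement conditions \AtDagger, \ExtOneDagger, and \ExtTwoDagger follow because $\mc{S}_x$ satisfies them in the $x$-game and the back-and-forth translation between the two games is made through $\leq_0$-compatible extensions.

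The main obstacle is that $\leq_1$ is not in general preserved under adjoining arbitrary extensions; the defining property of $\leq_1$ only delivers $\leq_0$ after one step of back-and-forth, so the invariant $H^n_x \leq_1 H^n_y$ cannot be carried forward for free. The resolution will hinge on using \POne's freedom in the $y$-game to enlarge $H^{n+1}_y$ by whatever additional witnessing elements are required to re-establish $H^{n+1}_x \leq_1 H^{n+1}_y$ before the next exchange. Making this padding operation precise, and checking that it can always be performed inside $\mc{C}$, is the technical heart of the argument.
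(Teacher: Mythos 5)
Your proposal has two genuine gaps, and the second is exactly where the real work lies.

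First, the claim ``$x \in R \Leftrightarrow y \in R$'' is false. The relation $R$ is an \emph{additional} relation on $\mc{C}$, not part of the signature, so the back-and-forth relation $\leq_1$ sees only the atomic formulas of $\mc{C}$ and says nothing about membership in $R$. Indeed, in the very application of this lemma (the induction step right before it), one chooses $x \notin R$ that is 2-free and then $y \in R$ with $Ga\bar{b}_1\cdots\bar{b}_r\bar{c}x \leq_1 Ga\bar{b}_1\cdots\bar{b}_r\bar{c}y$; the whole point is to switch the $R$-status of the last slot. So the \CB condition does not transfer ``for free'' from the $x$-game to the $y$-game at the level of the initial play; it transfers because later responses overwrite that slot, and those later values are common to both simulations.

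Second, and more seriously, the invariant $H^n_x \leq_1 H^n_y$ cannot be carried forward by ``padding'': given $\bar{x} \leq_1 \bar{x}'$ and an extension $\bar{z}'$ on the $\bar{x}'$ side, one only gets a $\bar{z}$ with $\bar{x}'\bar{z}' \leq_0 \bar{x}\bar{z}$; there is no reason there should exist $\bar{z}$ with $\bar{x}\bar{z} \leq_1 \bar{x}'\bar{z}'$ (that would be a $\leq_2$-style condition). You flag this as ``the technical heart'' but the resolution you sketch (enlarging $H_y$ to restore $\leq_1$) is precisely what is not available. The paper's proof does \emph{not} restore $\leq_1$ on the whole tuple. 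Instead it fixes once and for all a maximal index set $I$ of the first response $H_0 = Ga\bar{b}_1\cdots\bar{b}_r\bar{c}x\bar{u}$ for which $\leq_1$ can be lifted to the $y$-side (yielding $H_0^I \leq_1 G_0$), and then invokes the small auxiliary lemma immediately preceding it to guarantee that the witnesses $\bar{e}_i$ pulled back from the $y$-side stay disjoint from the ``bad'' $J$-indexed elements. Those $J$-indexed elements are then shunted past everything else by a fixed permutation $s$ applied to each $H_i$, which is harmless because they lie above all coding positions and the conditions \COneDagger--\CFourDagger, \AtDagger, \ExtOneDagger, \ExtTwoDagger, \CB only look at positions at or below the coding locations. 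Without the maximal-$I$ device and the disjointness lemma, the simulation breaks down exactly at the step you identified as problematic.
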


\begin{lem}
There are $\bar{b}_1,\ldots,\bar{b}_m \in \mc{C}$ and $\bar{\epsilon}_1,\ldots,\bar{\epsilon}_m$ such that \POne wins the game $\mc{H}(\bar{b}_1,\bar{\epsilon}_1;\ldots;\bar{b}_m,\bar{\epsilon}_m)$ and $(\bar{b}_1,\ldots,\bar{b}_m) \in R^{(\bar{\epsilon}_1,\ldots,\bar{\epsilon}_m)}$.
\begin{proof}
The proof is by induction on $r$. The first step is to show that \POne has a winning strategy for the game $\mc{H}(\varnothing)$. Recall that \POne has a winning strategy $\mc{S}$ for $\game{s}$, and $\mc{S}$ responds to $(\varnothing,\sigma_{s+1}^-\concat \eta)$ with $G$. To any play $P = (\bar{d}_0,\alpha_1),\ldots,(\bar{d}_{\ell-1},\alpha_{\ell})$ by \PTwo in $\mc{H}(\varnothing)$, associate the play $P^* = (\varnothing,\sigma_{s+1}^- \concat \eta),(a\bar{d}_0,\alpha_1),\ldots,(\bar{d}_{\ell-1},\alpha_{\ell})$ in $\game{s}$. A response $F_s,G,H_1,\ldots,H_\ell$ to $P'$ according to $\mc{S}$ which does not lose $\game{s}$ gives rise to a response $Ga,H_1,\ldots,H_\ell$ which does not lose $\mc{H}(\varnothing)$ (the conditions for \POne losing in each game are essentially the same; the only difference is that $\game{s}$ involves an additional turn at the beginning). So the winning strategy $\mc{S}$ for $\game{s}$ gives rise to a winning strategy for $\mc{H}(\varnothing)$.

Now for the induction step, suppose that we have tuples $\bar{b}_1,\ldots,\bar{b}_r$ and $\bar{\epsilon}_1,\ldots,\bar{\epsilon}_r$ such that \POne wins $\mc{H}(\bar{b}_1,\bar{\epsilon}_1;\ldots;\bar{b}_r,\bar{\epsilon}_r)$ and $\bar{b}_i \in R^{\bar{\epsilon}_i}$. Let $\bar{c}$ and $\bar{\nu}$ be as in the previous lemmas, that is, \POne's winning strategy begins by playing $Ga\bar{b}_1\cdots\bar{b}_r\bar{c}$. Now choose an element $x \notin R$ which is $2$-free over $Ga\bar{b}_1 \cdots \bar{b}_r \bar{c}$. Then choose $y \in R$ such that
\[ G a \bar{b}_1\cdots \bar{b}_r \bar{c} x \leq_1 G a \bar{b}_1\cdots \bar{b}_r \bar{c} y \]
Now by Lemma \ref{one-or-other}, \POne wins $\mc{H}(\bar{b}_1,\bar{\epsilon}_1;\ldots;\bar{b}_r,\bar{\epsilon}_r;\bar{c}x,\bar{\nu}\iota)$ for either $\iota = 1$ or $\iota = -1$. If $x \in R^\iota$ (i.e., $\iota = -1$), then we are done. Otherwise, if $\iota = 1$, then by Lemma \ref{transfer-win}, \POne also wins $\mc{H}(\bar{b}_1,\bar{\epsilon}_1;\ldots;\bar{b}_r,\bar{\epsilon}_r;\bar{c}y,\bar{\nu} \iota)$. But $y \in R$ and $\iota = 1$, so this completes the induction step.
\end{proof}
\end{lem}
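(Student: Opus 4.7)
The plan is to proceed by induction on $r$, building the tuples $\bar{b}_1, \ldots, \bar{b}_r$ one coordinate at a time until $r = m$. For the base case $r = 0$, I would show that \POne wins $\mc{H}(\varnothing)$ by leveraging the winning strategy $\mc{S}$ for $\basicgame{s}$ guaranteed by condition \WS at stage $s$. Since $\mc{H}(\varnothing)$ is essentially $\basicgame{s}$ with one extra opening move already encoded into the board (namely $G$, together with the element $a$ to be appended), I would translate each \PTwo-play in $\mc{H}(\varnothing)$ into a $\basicgame{s}$-play by prepending the move $(\varnothing, \sigma_{s+1}^- \concat \eta)$ and copying \POne's responses from $\mc{S}$. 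The opening response of $\mc{S}$ is by construction a partial isomorphism extending $G$, so after appending $a$ it also extends $Ga$, respecting the requirement on \POne's first move in $\mc{H}(\varnothing)$.

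For the inductive step, assume $\bar{b}_1,\ldots,\bar{b}_r$ and $\bar{\epsilon}_1,\ldots,\bar{\epsilon}_r$ have been chosen with $\bar{b}_i \in R^{\bar{\epsilon}_i}$ and \POne winning $\mc{H}(\bar{b}_1,\bar{\epsilon}_1;\ldots;\bar{b}_r,\bar{\epsilon}_r)$. Let $\bar{c}$ be the extension that \POne's opening winning move appends to $Ga\bar{b}_1\cdots\bar{b}_r$, and let $\bar{\nu}$ record the $R$-status of $\bar{c}$. The next tuple will have the form $\bar{c}\,z$ for a single added element $z$. Since $R$ is not formally $\Delta^0_2(0''')$ there are 2-free elements over every tuple, so I would choose $x \notin R$ that is 2-free over $Ga\bar{b}_1\cdots\bar{b}_r\bar{c}$ and then extract $y \in R$ satisfying
\[
Ga\bar{b}_1\cdots\bar{b}_r\bar{c}\,x \;\leq_1\; Ga\bar{b}_1\cdots\bar{b}_r\bar{c}\,y.
\]
Lemma \ref{one-or-other} applied at $\bar{c}x$ produces a sign $\iota \in \{-1,+1\}$ such that \POne wins $\mc{H}(\bar{b}_1,\bar{\epsilon}_1;\ldots;\bar{b}_r,\bar{\epsilon}_r;\bar{c}x,\bar{\nu}\iota)$. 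If $\iota = -1$, then $\bar{c}x$ already has the correct sign pattern and I set $\bar{b}_{r+1} = \bar{c}x$ with $\bar{\epsilon}_{r+1} = \bar{\nu}(-1)$. If $\iota = +1$, I invoke Lemma \ref{transfer-win} on the $\leq_1$-extension above to move the win onto $\mc{H}(\bar{b}_1,\bar{\epsilon}_1;\ldots;\bar{b}_r,\bar{\epsilon}_r;\bar{c}y,\bar{\nu}\iota)$, and then set $\bar{b}_{r+1} = \bar{c}y$ since $y \in R$.

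The main conceptual obstacle, and the reason both preceding lemmas are required, is the following mismatch. Lemma \ref{one-or-other} tells us \POne can win for \emph{some} choice of $\iota$, but gives no control over which, so the sign may disagree with the $R$-status of whatever single element we have in hand. The 2-freeness hypothesis resolves this by producing a matched pair $(x,y)$, one outside and one inside $R$, that are $\leq_1$-related in precisely the direction Lemma \ref{transfer-win} needs, so that whichever $\iota$ the game forces upon us, we can port the winning strategy onto an element whose actual membership in $R$ matches $\iota$. This dovetailing is where the hypothesis that $R$ is not formally $\Delta^0_2(0''')$ is decisively used; without it, the induction could stall at the first step where $\iota$ disagreed with the $R$-status of the 2-free witness.
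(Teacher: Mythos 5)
Your proposal is correct and follows essentially the same route as the paper: the base case transfers the stage-$s$ winning strategy to $\mc{H}(\varnothing)$ by prepending the move $(\varnothing,\sigma_{s+1}^-\concat\eta)$, and the induction step pairs a $2$-free $x\notin R$ with $y\in R$ via $\leq_1$, invokes Lemma \ref{one-or-other} to fix the sign $\iota$, and uses Lemma \ref{transfer-win} to relocate the win onto whichever of $x,y$ has the matching $R$-status. Your closing remark about why the matched pair $(x,y)$ is needed to reconcile the uncontrolled sign from Lemma \ref{one-or-other} with the actual $R$-membership is exactly the point of the paper's argument.
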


The winning strategy for $\mc{H}(\bar{b}_1,\bar{\epsilon}_1;\ldots;\bar{b}_m,\bar{\epsilon}_m)$ is a winning strategy for $\basicgame{s+1}$. We need to check that this strategy satisfies \CThreeDagger and \CFourDagger for $\sigma_{s+1}$. The first follows from the choice of $m_{s+1}$, and the fact that once \PTwo plays some string other than $\sigma_{s+1}$, they can never again play $\sigma_{s+1}$. The second is because the winning strategy for $\mc{H}(\bar{b}_1,\bar{\epsilon}_1;\ldots;\bar{b}_m,\bar{\epsilon}_m)$ satisfies \CB for each basic list.

Now we return to the omitted proofs.

\begin{proof}[Proof of Lemma \ref{one-or-other}]
In order to simplify the notation, denote by $\mc{G}$ the game $\mc{H}(\bar{b}_1,\bar{\epsilon}_1;\ldots;\bar{b}_r,\bar{\epsilon}_r)$ and by $\mc{G}(\iota)$ the game $\mc{H}(\bar{b}_1,\bar{\epsilon}_1;\ldots;\bar{b}_r,\bar{\epsilon}_r;\bar{c}x,\bar{\nu}\iota)$ for $\iota = 1$ and $\iota = -1$. We must show that \POne has a winning strategy for either $\mc{G}(-1)$ or $\mc{G}(1)$.

Suppose that \POne does not have a winning strategy for $\mc{G}(1)$. Then \PTwo has a winning strategy for $\mc{G}(1)$. We also know that \POne has a winning strategy in $\mc{G}$. We will show that \POne has a winning strategy for $\mc{G}(-1)$.

The strategy for \POne will try to do two things. First, it will try to be the same as \POne's winning strategy for $\mc{G}$. Then the only way for \PTwo to lose while using such a strategy will be for \PTwo to use a basic play based on $\mathfrak{b}^{r+1}$, the $r+1$st basic list, and to have \POne fail to satisfy \CB for this basic list. The second thing that \POne will try to do is, if \PTwo follows the basic list $\mathfrak{b}^{r+1}$, to try and force \PTwo to use their winning strategy from $\mc{G}(1)$. Since this is a winning strategy for \PTwo, \POne will fail to satisfy \CB in $\mc{G}(1)$ for $\mathfrak{b}^{r+1}$. But failing to satisfy \CB in $\mc{G}(1)$ for some basic list is the same as satisfying \CB in $\mc{G}(-1)$ for that basic list. So \POne will win $\mc{G}(-1)$.

Let $\mc{S}$ be \POne's winning strategy for $\mc{G}$, and $\mc{T}$ be \PTwo's winning strategy for $\mc{G}(1)$. We will define $\mc{S}'$, a winning strategy for \POne in $\mc{G}(-1)$.

\POne must play first. The first move in $\mc{G}$ according to $\mc{S}$ is $H_0 = G a \bar{b}_1\cdots \bar{b}_r \bar{c}$. Let $(\bar{d}_0,\beta_1)$ be \PTwo's response to $H_0x$ according to their winning strategy $\mc{T}$ for $\mc{G}(1)$. Then the strategy $\mc{S}'$ for $\mc{G}(-1)$ will play $G_0 = H_0 b \bar{d}_0$ as \POne's initial play. Note that \PTwo has not actually played $(\bar{d}_0,\beta_1)$; \POne has just looked ahead at what \PTwo \textit{would} play if they were following the strategy $\mc{T}$.

Now \PTwo must actually respond to $G_0$. Suppose that they respond with $(\bar{e}_0,\alpha_1)$. If $\alpha_1 \neq \beta_1^{r+1}$, then since $\alpha_1$ is not part of the basic list $\beta^{r+1}$, the winning and losing conditions are the same as in $\mc{G}$; have \POne respond to $(\bar{e}_0,\alpha_1)$ as they would, using $\mc{S}$, to $(\bar{d}_0\bar{e}_0,\alpha_1)$ in $\mc{G}$. After this, \POne just continues to use $\mc{S}$ to win.

If instead $\alpha_1 = \beta_1^{r+1}$, then let $H_1$ be \POne's response, using $\mc{S}$, to $(\bar{d}_0\bar{e}_0,\beta_1^{r+1})$. Let $(\bar{d}_1,\beta_2)$ be \PTwo's response to $H_1$ using the strategy $\mc{T}$. Then once again the strategy $\mc{S}'$ will tell \POne to play $G_1 = H_1\bar{d}_1$.

If $\POne$ ever plays an $\alpha_i \neq \beta^{r+1}_i$, then $\PTwo$ can win by following the strategy $\mc{S}$. Otherwise, $\mc{S}'$ will have \POne play $G_0,G_1,\ldots$ in response to \PTwo playing $(\bar{e}_0,\beta_1),(\bar{e}_1,\beta_2)\ldots$. There will be $H_0,H_1,\ldots$ which are plays according to $\mc{S}$ in response to \PTwo playing $(\bar{d}_0\bar{e}_0,\beta_1),(\bar{d}_1\bar{e}_1,\beta_2),\ldots$. Moreover, we will have $H_i \subset G_i$.

Since $H_0,H_1,\ldots$ is a winning play against $(\bar{d}_0\bar{e}_0,\beta_1),(\bar{d}_1\bar{e}_1,\beta_2),\ldots$ in $\mc{G}$, we can see that because all for the conditions \COneDagger-\ExtTwoDagger are satisfied for $H_0,H_1,\ldots$, they are also satisfied for $G_0,G_1,\ldots$ (the extra elements $\bar{d}_i$ in $G_i$ but not $H_i$ are included in \AtDagger, \ExtOneDagger, and \ExtTwoDagger as the tuples which \PTwo plays). Also, \CB is satisfied for the basic lists $\mathfrak{b}^1,\ldots,\mathfrak{b}^r$.

So the only way $G_0,G_1,\ldots$ could be a losing play against $(\bar{e}_0,\beta_1),(\bar{e}_1,\beta_2)\ldots$ is if $\beta_1,\beta_2,\ldots$ is the basic play $\mathfrak{b}^{r+1}$ and \CB fails for this basic play. $H_0,H_1,\ldots$ is a play by \POne against which \PTwo wins in $\mc{G}(1)$ using the strategy $\mc{T}$. Then \CB fails in $\mc{G}(1)$ for the basic list $\mathfrak{b}^{r+1}$, and so \CB must be satisfied in $\mc{G}(-1)$ for this basic list.

So \POne wins this play of $\mc{G}(-1)$, and $\mc{S}'$ is a winning strategy.
\end{proof}

To prove Lemma \ref{transfer-win}, we first need a quick technical lemma.

\begin{lem}
Suppose that $\bar{x} \leq_1 \bar{x}'$. Let $\bar{y}$ be a tuple so that for no $y \in \bar{y}$ is there a $y'$ with $\bar{x} y \leq_1 \bar{x}' y'$. Then for each tuple $\bar{z}'$, there is $\bar{z}$ such that $\bar{x}' \bar{z}' \leq_0 \bar{x} \bar{z}$ and moreover $\bar{z}$ is disjoint from $\bar{y}$.
\begin{proof}
For each $y \in \bar{y}$, and each $y' \in \mc{C}$, there is some existential fact true about $y'$ which is not true of $y$. Let $\bar{u}'$ be tuple of elements witnessing these existential formulas for each $z' \in \bar{z}'$. Then there are $\bar{z}$ and $\bar{u}$ such that $\bar{x}' \bar{z}' \bar{u}' \leq_0 \bar{x} \bar{z} \bar{u}$. So $\bar{x}' \bar{z}' \leq_0 \bar{x} \bar{z}$ and each $z \in \bar{z}$ satisfies an existential formula which no $y \in \bar{y}$ satisfies. Hence $\bar{z}$ is disjoint from $\bar{y}$.
\end{proof}
\end{lem}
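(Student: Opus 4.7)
The plan is to extract, from the hypothesis on $\bar{y}$, a packet of existential witnesses that will force any $\leq_0$-realization of $\bar{x}'\bar{z}'$ over $\bar{x}$ to avoid $\bar{y}$, and then apply the back-and-forth fact that $\bar{x}\leq_1\bar{x}'$ lets one pull back arbitrary tuples extending $\bar{x}'$ at the quantifier-free level.

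First, I would unpack the hypothesis. For each $y\in\bar{y}$ and each potential image $y'\in\mc{C}$, the condition $\bar{x}y\nleq_1\bar{x}'y'$ gives an existential formula $\theta_{y,y'}(\bar{u},v)$ such that $\mc{C}\models\theta_{y,y'}(\bar{x}',y')$ but $\mc{C}\not\models\theta_{y,y'}(\bar{x},y)$. Specializing $y'$ to each entry $z'$ of $\bar{z}'$, this produces, for every pair $(y,z')\in\bar{y}\times\bar{z}'$, a witness tuple $\bar{w}_{y,z'}$ in $\mc{C}$ verifying the quantifier-free matrix of $\theta_{y,z'}$ applied to $\bar{x}',z'$. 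Collect all these witnesses into one long tuple $\bar{u}'$.

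Next, I would form the tuple $\bar{z}'\bar{u}'$ and apply the standard consequence of $\bar{x}\leq_1\bar{x}'$: any $\Sigma_1$ fact about $\bar{x}'$ transfers to $\bar{x}$, so the existential formula asserting the conjunction of all atomic facts that $\bar{x}'\bar{z}'\bar{u}'$ satisfies is true of $\bar{x}$, giving a tuple $\bar{z}\bar{u}$ in $\mc{C}$ with $\bar{x}'\bar{z}'\bar{u}'\leq_0\bar{x}\bar{z}\bar{u}$. In particular $\bar{x}'\bar{z}'\leq_0\bar{x}\bar{z}$, which is the first required conclusion.

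Finally, I would check disjointness. Fix $z\in\bar{z}$ (arising from $z'\in\bar{z}'$) and $y\in\bar{y}$; by construction, the relevant subtuple $\bar{w}_{y,z'}$ of $\bar{u}$ witnesses the matrix of $\theta_{y,z'}$ at $(\bar{x},z)$, so $\mc{C}\models\theta_{y,z'}(\bar{x},z)$. On the other hand $\mc{C}\not\models\theta_{y,z'}(\bar{x},y)$, so $z\neq y$; running over all $y\in\bar{y}$ and all entries of $\bar{z}$ gives $\bar{z}\cap\bar{y}=\varnothing$. There is no real obstacle here: the only delicate point is to make sure all pairwise ``separating'' existentials $\theta_{y,z'}$ are bundled into the single tuple $\bar{u}'$ \emph{before} invoking $\leq_1$, so that the single application of the back-and-forth transfer produces witnesses sufficient to separate every $z\in\bar{z}$ from every $y\in\bar{y}$ simultaneously.
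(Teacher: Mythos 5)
Your proposal is correct and follows essentially the same route as the paper's proof: extract, for each pair $(y,z')$, an existential formula true of $\bar{x}',z'$ but not of $\bar{x},y$; bundle all witnesses into $\bar{u}'$; pull $\bar{z}'\bar{u}'$ back along $\bar{x}\leq_1\bar{x}'$ to get $\bar{z}\bar{u}$; and read off disjointness from the separating formulas. No issues.
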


\begin{proof}[Proof of Lemma \ref{transfer-win}]
Once again we will simplify the notation. For $z = x$ or $z = y$, let $\mc{G}(z)$ be the game $\mc{H}(\bar{b}_1,\bar{\epsilon}_1;\ldots;\bar{b}_r,\bar{\epsilon}_r;\bar{c} z,\bar{\nu}\iota)$.

Let $\mc{T}$ be a winning strategy for \POne in $\mc{G}(x)$. We need to find a winning strategy $\mc{S}$ for \POne in $\mc{G}(y)$. We will use the fact that
\[ G a \bar{b}_1\cdots\bar{b}_r\bar{c} x \leq_1 G a \bar{b}_1\cdots\bar{b}_r\bar{c} y \]
to convert $\mc{T}$ into the desired strategy $\mc{S}$.

Let $H_0 \supset G a \bar{b}_1\cdots\bar{b}_r \bar{c} x$ be the initial play for \POne according to $\mc{T}$. Let $\bar{u}$ be the tuple of elements such that $H_0 = G a \bar{b}_1\cdots\bar{b}_r \bar{c} x \bar{u}$.

Let $I = \{i_1,\ldots,i_n\}$ be a maximal set of indices in $\bar{u}$ such that there is a tuple $\bar{v}$ such that
\[ G a \bar{b}_1\cdots\bar{b}_r\bar{c} x \bar{u}_I \leq_1 G a \bar{b}_1\cdots\bar{b}_r\bar{c} y \bar{v} \]
where $\bar{u}_I$ is $(u_{i_1},\ldots,u_{i_n})$. Let $J$ be the rest of the indices.

The first play according to $\mc{S}$ will be $G_0 = G a \bar{b}_1\cdots\bar{b}_r \bar{c} y \bar{v}$. We have $H_0^{I} \leq_1 G_0$ where $H_0^{I}$ denotes $H_0$ with the entries at indices in $J$ removed.

Now suppose that \PTwo responds with $(\bar{d}_0,\alpha_1)$.  If $\alpha_1 = \sigma_{s+1}$, then \POne can play $G_0 \bar{d}_0$ in response to win (by property \BOne of basic lists). Otherwise, $\alpha_1 \neq \sigma_{s+1}$.

Using the fact that $H_0^I \leq_1 G_0$, choose $\bar{e}_0$ such that $G_0 \bar{d}_0 \leq_0 H_0^{I} \bar{e}_0$. By the previous lemma, $\bar{e}_0$ is disjoint from $H_0$. Let $H_1$ be the response, according to $\mc{T}$, to $(\bar{e}_0,\alpha_1)$. Let $s$ be the permutation which moves those entries of $H_1$ with indices in $J$ to the end (the permutation $s$ fixes anything not in the domain of $H_1$, so that for example if $H' \supset H_1$, then applying the permutation $s$ to $H'$ does not move the indices $J$ to the end of $H'$, but rather to somewhere in the middle). Then $\mc{S}$ will tell \POne to respond to $(\bar{d}_0,\alpha_1)$ with $H_1^s$, the application of the permutation $s$ to $H_1$.

To any further play $(\bar{c}_1,\alpha_2),(\bar{c}_2,\alpha_3),\ldots$, $\mc{S}$ will respond in the same way as $\mc{T}$, except that it will again apply the permutation $s$.

The games $\mc{G}(x)$ and $\mc{G}(y)$ are the same, except for the initial move; otherwise, the ways in which \POne can lose are the same. The permutation $s$ does not affect any of the conditions, since it only permutes indices which do not do any coding (i.e. above everything in the image of $\loc_s$, and also above the position of $x$ and $y$ in $G_0$ and $H_0$ respectively). Since $G_0 \bar{d}_0 \leq_0 H_0^{I} \bar{e}_0 \leq_0 H_1^s$, and if $H_i \leq_0 H_{i+1}$ then $H_i^s \leq_0 H_{i+1}^s$, \AtDagger still holds. And as none of $\alpha_1,\ldots,\alpha_\ell$ are $\sigma_{s+1}$, \ExtOneDagger is not affected by changing the change from $H_0$ to $G_0$, and \ExtTwoDagger is also not affected by the application of the permutation $s$. Thus \POne has a winning strategy for $\mc{G}(y)$.
\end{proof}

\chapter{Further Questions}
\label{QuestionSection}
In this section we will list some of the unresolved questions from our investigation. This is a new investigation and so there are many questions to be answered. We have seen that there are many nice properties that degree spectra on a cone must have. Many degree spectra are well-known classes of degrees and satisfy many ``fullness'' properties. But on the other hand, there are some interesting degree spectra that are not so nicely behaved, like the incomparable degree spectra from Theorem \ref{thm-incomparable-dce} and the relation on $(\omega,<)$ from \ref{thm:weird-relation-on-omega}. The general question is: to what extent do the degree spectra avoid pathological behaviour? Of course, there are many specific questions about particular types of pathological behaviour and many questions arising directly out of results in this paper. We will take the opportunity to list some of them here.

In Chapter \ref{DCESection} we gave a condition, involving d-free tuples, which was equivalent to being intrinsically of c.e.\ degree, but the equivalence only held for relations which are relatively intrinsically d.c.e. (see Proposition \ref{ce-degree-condition}). A relation which is not intrinsically $\Delta^0_2$ cannot be intrinsically of c.e.\ degree. We ask:
\begin{question}
Which intrinsically $\Delta^0_2$ but not relatively intrinsically d.c.e.\ relations are intrinsically of c.e.\ degree?
\end{question}

Also in Chapter \ref{DCESection}, we gave an example of two relations with incomparable degree spectra on a cone, but whose degree spectra are strictly contained within the d.c.e.\ degrees and strictly contain the c.e.\ degrees (see Theorem \ref{thm-incomparable-dce}, Proposition \ref{M-not-A}, and Proposition \ref{prop:incomp-2}). We ask whether it is possible to find other such degree spectra:
\begin{question}
How many different possible degree spectra on a cone are there strictly containing the c.e.\ degrees and strictly contained in the d.c.e.\ degrees? How are they ordered?
\end{question}

Many of the degree spectra on a cone have a ``name,'' that is, some sort of description of degrees which relativizes. For example, the $\Delta^0_\alpha$ degrees, the $\Sigma^0_\alpha$ degrees, $\alpha$-c.e.\ degrees, and $\alpha$-\cea degrees. We do not know of any such description of the degree spectra from Proposition \ref{M-not-A} and Proposition \ref{prop:incomp-2}. In general, one would hope that any degree spectrum on a cone has a nice description of some form.
\begin{question}
Is there a good degree-theoretic description of the degree spectra from Examples \ref{exam1} and \ref{exam2}?
\end{question}

In Chapter \ref{OmegaSection}, we show that every relation on $(\omega,<)$ which is intrinsically $\alpha$-c.e.\ is intrinsically of c.e.\ degree (see Propositions \ref{omega-equivalences} and \ref{intrinsically-ce}). It might, however, be possible to have a relation which is intrinsically of $\alpha$-c.e.\ degree but not intrinsically $\alpha$-c.e.
\begin{question}
Is there a computable relation on the standard copy of $(\omega,<)$ which is not intrinsically of c.e.\ degree, but is intrinsically of $\alpha$-c.e.\ degree for some fixed ordinal $\alpha$?
\end{question}

In Theorem \ref{thm:weird-relation-on-omega}, we show that there is a relation $R$ on $(\omega,<)$ such that either the degree spectrum of $R$ on a cone is strictly contained between the c.e.\ degrees and the $\Delta^0_2$ degrees, or $R$ has degree spectrum $\Delta^0_2$ but not uniformly. It would be interesting to know if either of these behaviours is possible.
\begin{question}
Is there a relation on $(\omega,<)$ whose degree spectrum on a cone is strictly contained between the c.e.\ degrees and the $\Delta^0_2$ degrees?
\end{question}
\begin{question}
Is there a relation with $\dgSp_{rel}=\Delta^0_2$ but not uniformly? Is there such a relation on $(\omega,<)$?
\end{question}

In Chapter \ref{SigmaTwoSection} we proved a ``fullness'' result by showing that any degree spectra on a cone which strictly contains the $\Delta^0_2$ degrees contains all of the 2-\cea degrees. Fullness results are very interesting because they show that degree spectra must contain all of the degrees of a particular type, and hence can provide a good description of the degree spectrum. Our result for 2-\cea degrees is an answer to a question of Ash and Knight from \cite{AshKnight95} and \cite{AshKnight97}. The general question, when stated in our framework, is as follows:
\begin{question}
If a degree spectrum on a cone strictly contains the $\Delta^0_3$ degrees, must it contain the 3-\cea degrees? What about for general $\alpha$?
\end{question}
Recall that a positive answer to this question implies a positive answer to the following questions of Montalb\'an which first appeared in \cite{Wright13}:
\begin{question}[Montalb\'an]\leavevmode
\begin{enumerate}
	\item Is it true that for any relation $R$, for all degrees \textbf{d} on a cone, $\dgSp(R)_{\leq \textbf{d}}$ has a maximal element?
	\item Does the function which takes a degree \textbf{d} to the maximal element of $\dgSp(R)_{\leq \textbf{d}}$ satisfy Martin's conjecture?
\end{enumerate}
We also add the question:
\begin{enumerate}[resume]
	\item Is this function uniformly degree invariant (in the sense of the footnote at the end of Chapter \ref{Preliminaries})?
\end{enumerate}
\end{question}

In \cite{Slaman05}, Slaman defines a \textit{$\Sigma$-closure operator} to be a map $M: 2^{\omega} \to 2^{2^{\omega}}$ such that:
\begin{enumerate}
	\item for all $X$, $X \in M(X)$,
	\item for all $X$ and $Y,Z \in M(X)$, $Y \oplus Z \in M(X)$,
	\item for all $X$ and $Y \in M(X)$, if $Z \equiv_T Y$ then $Z \in M(X)$,
	\item for all $X \leq_T Y$, $M(X) \subseteq M(Y)$.
\end{enumerate}
Note that for any relation $R$ on a structure $\mc{A}$, the map which takes a set $X$ to $\dgSp(\mc{A},R)_{\leq \textbf{d}}$ (where $\textbf{d}$ is the degree of $X$) satisfies (1), (3), and (4) of this definition.

Slaman shows:
\begin{thm}[{Slaman \cite[Theorem 5.2]{Slaman05}}]
Let $M$ be a Borel $\Sigma$-closure operator. If, on a cone, $M(X) \nsubseteq \Delta^0_2(X)$, then there is a cone on which $M(X)$ contains all of the sets which are \cea in $X$.
\end{thm}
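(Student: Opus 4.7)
The plan is to establish the conclusion by (i) applying Borel determinacy to the relevant Borel predicates, (ii) extracting a uniform witness to the non-triviality of $M$ on the cone, and (iii) using this witness together with the closure axioms to code an arbitrary $X$-\cea set into $M(X)$. Throughout I will treat this as the natural abstraction of Harizanov's Theorem \ref{Harizanov} where free tuples are replaced by the ``unpredictability'' of a fixed member of $M(X)\setminus\Delta^0_2(X)$.

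First, since $M$ is Borel, both predicates ``$M(X) \nsubseteq \Delta^0_2(X)$'' and ``every set \cea in $X$ lies in $M(X)$'' define Borel, Turing-invariant classes of reals, so by Borel determinacy each one is determined on a cone. My goal is to show the latter holds on a cone assuming the former does. Fix a base $X_0$ above which the former always holds, and by a Borel selection argument applied to the non-empty Borel sections $M(X)\setminus \Delta^0_2(X)$, obtain a Borel assignment $X \mapsto Y_X$ with $Y_X \in M(X)\setminus \Delta^0_2(X)$ for every $X \geq_T X_0$. Using closure under $\oplus$ with $X$, I may assume $Y_X \geq_T X$, so $Y_X$ is a member of $M(X)$ that is not computable from $X'$.

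Next, for the main step, fix $X$ sufficiently high in the cone and $C$ an $X$-\cea set, so $C$ is c.e.\ in some $B \geq_T X$ with $B$ itself c.e.\ in $X$ (and without loss of generality $B \leq_T Y_X$ after further absorption into the cone). I would carry out a forcing construction producing $Z \equiv_T C$ with $Z \in M(X)$. Conditions in the forcing would be pairs $(p, F)$ where $p$ is a finite approximation to $Z$ and $F$ is a finite subset of $M(X)$ providing restraints; closure under finite $\oplus$ guarantees $\bigoplus F \in M(X)$, so any set computed from such joins and from $Y_X$ lies in $M(X)$ by closure under $\equiv_T$. The key leverage is that because $Y_X$ is not $\Delta^0_2(X)$, no $X$-computable strategy can correctly predict $Y_X$'s behavior at the limit, which gives enough diagonalization power to encode the c.e.\ enumeration of $C$ into the bits of $Z$ while keeping $Z$ Turing-equivalent to a single join of $M(X)$-members.

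The hard part will be keeping $Z$ inside $M(X)$ rather than merely inside some larger $M(X \oplus G)$ produced by an external generic $G$: the $\Sigma$-closure axioms only allow \emph{enlarging} the class as one moves up the Turing ordering, so there is no direct ``pulling-down'' mechanism. I anticipate this is handled by running the forcing \emph{inside} $M(X)$, i.e., drawing the generic sequence of bits from $Y_X$ itself (and, if necessary, from further elements of $M(X)$ obtained by iterated joins with $Y_X$), using the non-$\Delta^0_2$ content of $Y_X$ as both the source of genericity and the reservoir of coding locations. Packaging this carefully, and exploiting the fact that the conclusion only requires \emph{some} cone (allowing one to pass to a higher base such as $X_0^{(n)}$ to absorb the arithmetic complexity of the Borel selection and of the forcing relation), should yield $C \equiv_T Y_X \oplus (\bigoplus F) \in M(X)$ and thereby complete the argument.
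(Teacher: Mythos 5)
The paper does not prove this statement; it is quoted verbatim from Slaman \cite{Slaman05} as a known analogue of Theorem \ref{Harizanov}, so there is no in-paper proof to compare against. Judged on its own terms, your proposal has a genuine gap at its core step.

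The central problem is your claim that ``any set computed from such joins and from $Y_X$ lies in $M(X)$ by closure under $\equiv_T$.'' The axioms give closure under join and under Turing \emph{equivalence}, not downward closure under $\leq_T$. Consequently, for a \emph{fixed} $X$, the only degrees you can certify to lie in $M(X)$ from a single witness $Y_X$ are those of $X$, $Y_X$, and $X\oplus Y_X$; $M(X)$ is a fixed set, and no forcing construction can ``put'' a new degree such as $\deg(C)$ into it. Your own observation that monotonicity only enlarges $M$ as one moves up is exactly the obstruction, but drawing the generic bits from $Y_X$ does not resolve it: whatever $Z$ you build must still be shown Turing-equivalent to a finite join of \emph{already-known} members of $M(X)$. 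The standard way around this is either (a) to exploit monotonicity in the direction you did not use --- $M(X)\supseteq M(Z)$ for every $Z\leq_T X$ on the cone, so $M(X)$ contains a witness $Y_Z\nleq_T Z'$ for a whole cone of $Z$'s below $X$, and one must realize $\deg(C)$ as a join of such witnesses with $X$ --- or, as in Slaman's argument, (b) to argue by contradiction: if some c.e.\ operator $W_e$ had $W_e^X\oplus X\notin M(X)$ on a cone, one applies a join theorem (Posner--Robinson/Shore--Slaman, proved by Kumabe--Slaman forcing) together with the recursion theorem to find $G$ with $Y\oplus G\equiv_T W_{e}^{G}\oplus G$ for a witness $Y\in M(G)$, contradicting $Y\oplus G\in M(G)$. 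Note that in (b) one moves \emph{up} to $G\geq_T X$, which is the direction the closure axioms permit.

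A secondary gap is the ``Borel selection argument.'' Borel sets with nonempty sections need not admit Borel uniformizations, and the usual substitute --- a winning strategy in a Gale--Stewart game in which \PTwo plays the witness bit by bit --- cannot work here either, since a strategy computes $Y_X$ continuously from $X$ (so $Y_X\leq_T X\oplus s\leq_T X$ on a cone above $s$), contradicting $Y_X\nleq_T X'$. The object that actually gets uniformized in the contradiction argument is the integer index $e$ of the offending c.e.\ operator, not the real $Y_X$; uniformizing a natural-number witness by a strategy is unproblematic. As written, your argument needs both of these repairs before the coding step can even be formulated.
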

One can see this as an analogue of Harizanov's Theorem \ref{Harizanov}. We ask whether the degree spectrum on a cone is a $\Sigma$-closure operator:
\begin{question}
On a cone, do the degree spectra form an upper semi-lattice under joins?
\end{question}

In Chapter \ref{Preliminaries}, we defined the alternate degree spectrum
\[ \dgSp^*(\mc{A},R)_{\leq  \textbf{d}} = \{ d(R^\mc{B}) : (\mc{B},R^\mc{B}) \text{ is an isomorphic copy of } (\mc{A},R) \text{ with } \mc{B} \leq_T \textbf{d}\}\]
and used this to define the alternate degree spectrum on a cone, $\dgSp^*_{rel}$. In Appendix \ref{CESection}, we showed that Harizanov's Theorem \ref{Harizanov} on c.e.\ degrees (in the form of Corollary \ref{cor:Harizanov}) holds for $\dgSp^*_{rel}$. The general question is whether anything we can prove about $\dgSp_{rel}$ is also true of $\dgSp^*_{rel}$. More formally:
\begin{question}
Is it always the case that restricting $\dgSp^*(\mc{A},R)_{\leq  \textbf{d}}$ to the degrees above \textbf{d} gives $\dgSp(\mc{A},R)_{\leq  \textbf{d}}$?
\end{question}

\appendix
\chapter[Relativizing Harizanov's Theorem]{Relativizing Harizanov's Theorem on C.E. Degrees}
\label{CESection}
Theorem \ref{Harizanov} had, as a consequence, Corollary \ref{cor:Harizanov} which said that for any structure $\mc{A}$ and relation $R$, either $\dgSp_{rel}(\mc{A},R) = \Delta^0_1$ or $\dgSp_{rel}(\mc{A},R) \supseteq \Sigma^0_1$. Now $\dgSp_{rel}(\mc{A},R)$ was defined using the behaviour, on a cone, of the degree spectrum relativized to a degree \textbf{d}:
\[ \dgSp(\mc{A},R)_{\leq \textbf{d}} = \{ d(R^\mc{B}) \oplus \textbf{d} : (\mc{B},R^\mc{B}) \text{ is an isomorphic copy of } (\mc{A},R) \text{ with } \mc{B} \leq_T \textbf{d}\}. \]
In Chapter \ref{Preliminaries}, we briefly considered the alternate definition
\[ \dgSp^*(\mc{A},R)_{\leq  \textbf{d}} = \{ d(R^\mc{B}) : (\mc{B},R^\mc{B}) \text{ is an isomorphic copy of } (\mc{A},R) \text{ with } \mc{B} \leq_T \textbf{d}\}\]
from which one could define an alternate degree spectrum on a cone, $\dgSp^*_{rel}(\mc{A},R)$. The relativization of Theorem \ref{Harizanov} does not suffice to prove that $\dgSp^*_{rel}(\mc{A},R) = \Delta^0_1$ or $\dgSp^*_{rel}(\mc{A},R) \supseteq \Sigma^0_1$. This is because, as we will see later, the relativization to a degree \textbf{d} only shows that $R^\mc{B} \oplus \textbf{d} \equiv_T C \oplus \textbf{d}$ (for $C$ c.e.\ in \textbf{d}).

Our goal in this appendix is to give a new proof of Theorem \ref{Harizanov} whose revitalization is strong enough to apply to the alternative degree spectrum $\dgSp^*_{rel}(\mc{A},R)$. We will show that it is possible, though the proof is significantly more complicated. The reader may skip this appendix without any impact on their understanding of the rest of this work. The results in this section are in a similar style, but much easier, than Theorem \ref{relativized-1} and hence may be read as an introduction to the proof of that result.

\begin{thm}\label{HarizanovRel}
Suppose that $\mc{A}$ is a computable structure and $R$ is a computable relation which is not intrinsically computable. Suppose that $\mc{A}$ satisfies the following effectiveness condition: the $\exists_1$-diagram of $(\mc{A},R)$ is computable, and given a finitary existential formula $\varphi(\bar{c},\bar{x})$, we can decide whether there are finitely many or infinitely many solutions. Then for any sets $X \leq_T Y$ with $Y$ c.e.\ in $X$, there is an $X$-computable copy $\mc{B}$ of $\mc{A}$ with
\[ R^\mc{B} \equiv_T Y \]
Moreover, $Y$ can compute the isomorphism between $\mc{A}$ and $\mc{B}$.
\end{thm}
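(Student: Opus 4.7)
The plan is to adapt Harizanov's original argument by relativizing to $X$, with the essential new ingredient that the coding locations used to witness $Y \leq_T R^\mc{B}$ are placed at positions depending only on the element being coded (and not on the enumeration of $Y$), so that the decoding step does not require access to $X$. Let $(Y_s)_{s \in \omega}$ be an $X$-computable enumeration of $Y$, with $Y_{-1} = \varnothing$. By the effectiveness hypothesis and Theorem \ref{AshNerode}, since $R$ is not intrinsically computable we may assume without loss of generality that over any finite tuple there is a 1-free element $a \notin R$ in $\mc{A}$, that such elements can be found computably, and that for any $\exists_1$-formula $\psi(\bar c, u)$ true of $a$, a witness $a' \in R$ with $\psi(\bar c, a')$ can be found computably.

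I will construct an isomorphism $f \colon \omega \to \mc{A}$ as a limit $f = \lim_s f_s$ of finite partial injections $f_s \colon \{0, \ldots, k_s\} \to \mc{A}$, together with a monotone sequence of finite quantifier-free diagrams $D_0 \subseteq D_1 \subseteq \cdots$ whose union will be the atomic diagram of $\mc{B}$. Position $2y$ is reserved as the coding location for $y$, with the intended invariant $f(2y) \in R \iff y \in Y$; odd positions are used to guarantee surjectivity of $f$. At stage $s+1$, for each $y \in Y_{s+1} \setminus Y_s$, let $a = f_s(2y) \notin R$ and let $\bar c$ denote the current values of $f_s$ at the other positions; by the preserved 1-freeness of $a$ over $\bar c$, the conjunction $\tau(\bar c, u)$ of all atomic and negated-atomic facts about $(\bar c, a)$ already committed in $D_s$ is an $\exists_1$-formula true of $a$, so we may find $a' \in R$ with $\tau(\bar c, a')$ and set $f_{s+1}(2y) = a'$, leaving $\bar c$ unchanged. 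We then extend the domain by one new even position $2y'$ (assigning a freshly chosen 1-free element not in $R$) and one odd position (assigning the least element of $\mc{A}$ not yet in the image of $f_{s+1}$), and we enumerate into $D_{s+1}$ all atomic and negated-atomic facts about positions $\leq k_{s+1}$ of G\"odel number $\leq s+1$ that hold of $f_{s+1}$ in $\mc{A}$.

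The resulting diagram $D = \bigcup_s D_s$ is $X$-computable, since each stage is carried out uniformly from $Y_s$, and the crucial replacement step never contradicts $D_s$ because $a$ and $a'$ have the same quantifier-free type over $\bar c$. Hence $\mc{B} \leq_T X$. The map $y \mapsto 2y$ is computable, and the coding invariant yields $y \in Y \iff 2y \in R^\mc{B}$, so $Y \leq_T R^\mc{B}$ without any appeal to $X$. Conversely, $Y$ computes $X$ and hence can simulate the construction; for each $n$, once $Y_s$ agrees with $Y$ below a computable bound $B(n)$ controlling all future modifications touching position $n$, the value $f_s(n)$ has stabilized and $Y$ reads off $f(n)$, so $Y$ computes the isomorphism $f$ and therefore $R^\mc{B}$. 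The principal obstacle is maintaining 1-freeness throughout the construction: each planted coding element $a_y$ must remain 1-free over the ever-growing tuple of values already assigned at other positions, which constrains the choice of surjectivity witnesses and of future coding elements at each stage. This is essentially the same difficulty as in Harizanov's original proof and is handled using the effectiveness hypothesis to select elements that are jointly 1-free and by never committing the diagram beyond what the current $\exists_1$-type of the partial isomorphism permits; the genuinely new ingredient here, compared to the standard relativization, is simply the decision to fix the coding position for $y$ uniformly as $2y$ in advance of $y$'s possible entry into $Y$, which is what upgrades the conclusion from $R^\mc{B} \oplus X \equiv_T Y$ to $R^\mc{B} \equiv_T Y$.
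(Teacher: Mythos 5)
Your high-level idea---decouple the decoding of $Y$ from the construction history so that $R^\mc{B}$ alone, without $X$, computes $Y$---is exactly the right one and is what the paper is after. But the specific device of fixing position $2y$ as the coding location for $y$ has a genuine gap: you cannot keep a fixed coding element $a = f(2y) \notin R$ $1$-free over the tuple of values at the other positions, because surjectivity forces that tuple to grow to include (in the limit) all of $\mc{A}$, and there is no reason a fixed $a \notin R$ can remain $1$-free over arbitrarily large finite pieces of $\mc{A}$. For a concrete failure, take $(\omega,<)$ with $R$ the even numbers and $a$ odd: once $a-1$ and $a+1$ enter the tuple, the quantifier-free formula $a - 1 < u < a + 1$ isolates $a$ and has no even solution, so $a$ is not $1$-free over the current tuple. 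The problem compounds: after you replace $f(2y)$ by $a' \in R$ when $y$ enters $Y$, the $1$-freeness of earlier coding elements $f(2y_0)$ for $y_0 < y$ must be over a tuple that now contains $a'$, which you chose only to satisfy the current atomic constraints $\tau$ and not to preserve any earlier freeness. Your remark about ``selecting elements that are jointly $1$-free'' does not address this: the surjectivity witnesses are forced on you, and the replacements $a'$ are chosen reactively. This is not ``essentially the same difficulty as in Harizanov's original proof''; there, the loss of $1$-freeness is precisely what triggers finite injury and the re-choosing of coding locations, and it is exactly that re-choosing which makes decoding depend on the enumeration of $Y$. Freezing the positions does not remove the difficulty; it makes the construction impossible to carry out.

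The paper's proof works around this quite differently. After reducing via assumptions (I)--(III), it splits into cases according to whether $R$ or $\neg R$ contains a \emph{thick} $\Sigma^\comp_1$-definable set. In the first two cases there is a ready supply of \emph{constrained} tuples---tuples whose membership in $R$ is pinned down by a finitary existential formula and hence can never change once committed---and these are used as spacers. Coding positions $\bar{b}_0, \bar{b}_1, \ldots$ are not pre-assigned to particular bits of $Y$; instead they are paired with a computable infinite-to-one map $g\colon \omega \to 2^{<\omega}$, and $Y$ is recovered from $R^\mc{B}$ as the unique ``active'' chain $k_1 < k_2 < \cdots$ with $\bar{b}_{k_i} \notin R^\mc{B}$, $|g(k_i)| = i$, and $g(k_1) \prec g(k_2) \prec \cdots$. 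The constrained spacers guarantee that positions abandoned by injury end up permanently in $R^\mc{B}$ and therefore can never re-enter an active chain, which is what makes that chain unique and computable from $R^\mc{B}$ alone. The third case, where neither $R$ nor $\neg R$ has a thick $\Sigma^\comp_1$ subset, requires a separate analysis of the definable sets to produce a substitute for the constrained elements. This case split and the constrained/thick machinery are the missing ingredients in your proposal.
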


From this theorem, we get the following corollary:
\begin{corollary}
Suppose that $\mc{A}$ is a structure and $R$ is a relation on $\mc{A}$ which is not intrinsically computable on a cone. Then $\dgSp^*_{rel}(\mc{A},R)$ contains the c.e.\ degrees.
\end{corollary}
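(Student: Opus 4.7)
The plan is to obtain the corollary as a direct relativization of Theorem \ref{HarizanovRel}. First, since $R$ is not intrinsically computable on a cone, Borel determinacy furnishes a cone $C_1$ on which the negation holds uniformly: for every $\textbf{d} \in C_1$ there is a $\textbf{d}$-computable copy $\mc{B}$ of $\mc{A}$ with $R^\mc{B} \not\leq_T \textbf{d}$. I would then intersect $C_1$ with the cone above $(\mc{A}\oplus R)''$, on which the effectiveness conditions of Theorem \ref{HarizanovRel} relativize automatically: the $\exists_1$-diagram of $(\mc{A}, R)$ is $\Sigma^0_1$ in $\mc{A}\oplus R$ and so $\textbf{d}$-computable, while deciding whether a finitary existential formula has finitely or infinitely many solutions is a $\Delta^0_2(\mc{A}\oplus R)$ question. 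Call the resulting cone $C$.

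For each $\textbf{d} \in C$, applying the relativization of Theorem \ref{HarizanovRel} with $X = \textbf{d}$ yields, for every $Y \geq_T \textbf{d}$ that is c.e.\ in $\textbf{d}$, a $\textbf{d}$-computable copy $\mc{B}$ of $\mc{A}$ satisfying $R^\mc{B} \equiv_T Y$. Hence every such degree appears in $\dgSp^*(\mc{A},R)_{\leq \textbf{d}}$, which is precisely the meaning of $\dgSp^*_{rel}(\mc{A}, R) \supseteq \Sigma^0_1$ inside the partial order of degree spectra on a cone.

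The main obstacle is not in the corollary itself---which, as above, is essentially bookkeeping on top of the theorem---but in the proof of Theorem \ref{HarizanovRel}. The subtlety is that the conclusion reads $R^\mc{B} \equiv_T Y$ rather than merely $R^\mc{B} \oplus \textbf{d} \equiv_T Y \oplus \textbf{d}$: Harizanov's original argument only delivers the latter, because her decoding of $Y$ from $R^\mc{B}$ must consult the presentation of $\mc{B}$, which after relativizing carries degree $\textbf{d}$. Engineering the coding so that $R^\mc{B}$ alone is Turing-equivalent to $Y$---without the join with $\textbf{d}$---is the strengthening packaged into Theorem \ref{HarizanovRel}, and it is that strengthening, rather than the present corollary, that requires genuine work.
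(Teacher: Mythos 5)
Your proposal is correct and is exactly the route the paper takes: the corollary is obtained by relativizing Theorem \ref{HarizanovRel} to degrees $\textbf{d}$ on the intersection of the cone witnessing non-intrinsic-computability (via Borel determinacy) with the cone above $(\mc{A}\oplus R)''$, where the effectiveness hypotheses trivialize, and you correctly locate all the genuine work in the theorem itself rather than in this bookkeeping. One pedantic note: deciding whether a finitary existential formula has infinitely many solutions is a $\Pi^0_2(\mc{A}\oplus R)$ question rather than $\Delta^0_2(\mc{A}\oplus R)$, but your choice of base $(\mc{A}\oplus R)''$ decides it regardless.
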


Our goal in proving Theorem \ref{HarizanovRel} is to give evidence towards two ideas. First, we want to give evidence that results that can be proved for $\dgSp_{rel}$ can also be proved for $\dgSp^*_{rel}$. Second, we will see that if we try to prove results for $\dgSp^*_{rel}$, we have to deal with many complications which distract from the heart of the proof.

We will begin by describing Harizanov's proof of Theorem \ref{Harizanov}. This will both show us why it does not relativize in the way we desire, and also guide us as to what we need to do. Let $\mc{A}$ be a structure and $R$ a computable relation which is not intrinsically computable, say $R$ is not intrinsically $\Pi^0_1$. Harizanov's construction uses the following definition from Ash-Nerode \cite{AshNerode81}.

\begin{defn}
Let $\bar{c}$ be a tuple from $\mc{A}$. We say that $\bar{a} \notin R$ is \textit{free} over $\bar{c}$ if for any finitary existential formula $\psi(\bar{c},\bar{x})$ true of $\bar{a}$ in $\mc{A}$, there is $\bar{a}' \in R$ which also satisfies $\psi(\bar{c},\bar{x})$.
\end{defn}

If $\mc{A}$ is assumed to have an effectiveness condition---namely that for each $\bar{c}$ in $\mc{A}$ and finitary existential formula $\varphi(\bar{x},\bar{c})$, we can decide whether there is $\bar{a} \notin R$ such that $\mc{A} \models \varphi(\bar{a},\bar{c})$---then for any tuple $\bar{c}$, we can effectively find a tuple $\bar{a} \notin R$ which is free over $\bar{c}$. Harizanov uses these free elements to code a c.e.\ set $C$ into a computable copy $\mc{B}$ of $\mc{A}$. Building $\mc{B}$ via a $\Delta^0_2$ isomorphism, for each $x \in \omega$, there is a tuple from $\mc{B}$ which codes, by being in $R$ or not in $R$, whether or not $x$ is in $C$. For a given $x$, she fixes a tuple $\bar{b}_x$ from $\mc{B}$ and maps it to a tuple $\bar{a}$ from $\mc{A}$ which is free; the fact that $\bar{a} \notin R$ codes that $x \notin C$. If, at a later stage, $x \in C$, then using the fact that $\bar{a}$ is free, she modifies the $\Delta^0_2$ isomorphism to instead map $\bar{b}$ to a tuple $\bar{a}' \in R$, coding that $x \in C$. The argument involves finite injury. Given $R^\mc{B}$, one can compute $\bar{b}_0$ and decide whether or not $0 \in C$; then, knowing this, we can wait until a stage at which $0$ enters $C$ (if necessary) and can compute $\bar{b}_1$. We need to wait, since the choice of $\bar{b}_1$ may be injured before this stage. Once we know $\bar{b}_1$, we can use $R^\mc{B}$ to decide whether or not $1 \in C$, and so on. On the other hand, given $C$, one understands the injury from the construction and can compute the isomorphism between $\mc{A}$ and $\mc{B}$.

Now consider the relativisation to a degree \textbf{d}. Let $C$ be \cea in \textbf{d} and try to use the same construction (building $\mc{B}$ computable in \textbf{d}). Given $C$, we can once again run the construction and compute the isomorphism between $\mc{A}$ and $\mc{B}$. However, given $R^\mc{B}$, we can not necessarily compute \textbf{d}, and hence do not necessarily have access to the enumeration of $C$. Without this, we cannot run through the construction and compute the coding locations $\bar{b}_i$. We do, however, get that $R^\mc{B} \oplus \textbf{d} \equiv_T C$.

To prove Theorem \ref{HarizanovRel}, we need a strategy to divorce the coding locations from the construction of $\mc{B}$. The trick we will use is as follows. Fix beforehand tuples from $\mc{B}$ to act as coding locations, and number them in increasing order using $\omega$. Choose a computable infinite-to-one bijection $g: \omega \to 2^{<\omega}$. A coding location may either be ``on'' or ``off'' depending on whether or not it is in $R^\mc{B}$ (though whether ``on'' means in $R^\mc{B}$ and ``off'' means out of $R^\mc{B}$, or vice versa, will depend on the particular structure and relation). We will show that we make two choices for a coding location: we can either choose for them to be permanently off no matter what happens with the other coding locations, or we can choose to have a coding location start on and later turn off (after which we not longer have control of the coding location---if some earlier location turns from on to off, the later coding location may turn back on again).

We will ensure that there is a unique increasing sequence $k_0 < k_1 < k_2 < \cdots$ of coding locations which are ``on'' such that $g(k_0)$ has length one, and $g(k_{i+1})$ extends $g(k_i)$ by a single element. Thus $\bigcup_{i \in \omega} g(k_i)$ will be a real, and we will ensure that it is the set $C$ which we are trying to code. We call such a sequence an \textit{active sequence}. Since this sequence is unique, we can compute it using $R^\mc{B}$ by looking for the first coding location $k_0$ with $g(k_0)$ of length one which is on, then looking for the next coding location $k_1$ with $g(k_1)$ an extension of $g(k_0)$ of length two and which is on, and so on.

We will illustrate how we build the sequence using the following example where we code whether two elements $0$ and $1$ are in $C$. To code that $0 \notin C$, start by choosing a coding location $k_0$ with $g(k_0) = 0$ and have $k_0$ be on. Set every smaller coding location to be permanently off. Then, to code that $1 \notin C$, find a coding location $k_1 > k_0$ with $g(k_1) = 00$ and have $k_1$ be on, while every coding location between $k_0$ and $k_1$ is off. Now if $0$ enters $C$, switch $k_0$ off; $k_1$ might be on, but every other coding location less than $k_1$ is permanently off. Because there is no coding location $i < k_1$ with $g(i)$ of length one and $g(i) \prec g(k_1)$, $k_1$ can never be part of an active sequence even if it is on. Find some $k_0'$ which has no appeared yet with $g(k_0') = 1$ and set $k_0'$ to be on, while every coding location between $k_1$ and $k_0'$ is permanently off. Thus $k_0'$ will be the first coding location in the active sequence.

In the remainder of this chapter, we give the proof of Theorem \ref{HarizanovRel}.

\section{Framework of the Proof}

Let $\mc{A}$, $R$, $X$, and $Y$ be as in the theorem. Note that the effectiveness condition is robust in the following sense: if $Q$ is definable from $R$ via both a finitary existential formula and a finitary universal formula, then the effectiveness conditions holds for $Q$ as well.

The proof of the theorem is by induction on the arity $r$ of $R$. We will argue that we can make three assumptions, (I), (II), and (III). Let $\noreps{A}^r$ denote the tuples in $A^r$ with no duplicate entries and let $A^r_{i=j}$ denote the set of tuples from $A^r$ with $i$th entry equal to the $j$th entry. Note that $\noreps{A}^r$ and $A^r_{i=j}$ are defined by both finitary existential and universal formulas. If the restriction $R \cap A^r_{i=j}$ of $R$ to some $A^r_{i=j}$ is not intrinsically computable, then as the restriction is essentially an $(r-1)$-ary relation, by the induction hypothesis, there is an $X$-computable copy $\mc{B}$ of $\mc{A}$ with $R^\mc{B} \cap B^r_{i=j} \equiv_T Y$. Now the set $Y$ computes the isomorphism between $\mc{A}$ and $\mc{B}$, and since $R$ is computable in $\mc{A}$, $Y$ computes $R^\mc{B}$. Also, $R^\mc{B} \cap B^r_{i=j} \leq_T R^\mc{B}$ and hence $R^\mc{B} \equiv_T Y$ and we are done. So we may assume that the restrictions $R \cap A^r_{i=j}$ are intrinsically computable and hence are defined by finitary existential and universal formulas. Since
\[
R = (R \cap \noreps{A}^r) \cup \bigcup_{i \neq j} (R \cap A^r_{i=j})
\]
and $R\cap \noreps{A}^r$ is disjoint from $\bigcup_{i \neq j} (R \cap A^r_{i=j})$, we must have that $R \cap \noreps{A}^r$ is not intrinsically computable. So we may replace $R$ by $R \cap \noreps{A}^r$. This is assumption (I): that $R \subseteq \noreps{A}^r$. When we say $\bar{a} \in R$ or $\bar{a} \notin R$, we really mean that $\bar{a} \in \noreps{A}^r$ as well.

An important aspect of the proof will be whether we can find ``large'' formally $\Sigma^0_1$ sets contained in $R$ or its complement. Let us formally define what we mean by large. We say that two tuples $\bar{a} = (a_1,\ldots,a_n)$ and $\bar{b} = (b_1,\ldots,b_n)$ are \textit{disjoint} if they do not share any entries, that is, $a_i \neq b_j$ for each $i$ and $j$. We say that a set $S \subseteq A^n$ is \textit{thick} if it contains an infinite set of pairwise disjoint tuples; otherwise, we say that $S$ is \textit{thin}.

\begin{lem}
If $S \subseteq A^k$ is a thin set, there is a bound on the size of sets of pairwise disjoint tuples from $S$.
\begin{proof}
Suppose that there is no such bound. We claim that $S$ is thick. Let $\bar{a}_1,\ldots,\bar{a}_n$ be a maximal pairwise disjoint subset of $S$. Since there is no bound on the size of sets of pairwise disjoint tuples form $S$, we can pick $\bar{b}_1,\ldots,\bar{b}_{n \cdot k + 1}$ all pairwise disjoint. There are $k \cdot n$ distinct entries that appear in $\bar{a}_1,\ldots,\bar{a}_n$; since each entry can only appear in a single $\bar{b}_i$, some $\bar{b}_i$ must be disjoint from each $\bar{a}_j$. This contradicts the maximality of $\bar{a}_1,\ldots,\bar{a}_n$.
\end{proof}
\end{lem}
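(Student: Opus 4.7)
The plan is to prove the contrapositive: if there is no bound on the sizes of pairwise disjoint subsets of $S$, then $S$ is thick, i.e.\ contains an infinite pairwise disjoint family. The author's proof sketch essentially accomplishes this by a maximality/pigeonhole trick, and I would carry it out in that spirit but spell out the inductive construction of the infinite family, since ``thick'' was defined to require an infinite pairwise disjoint subset (not merely arbitrarily large finite ones).

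The key combinatorial observation is the following pigeonhole bound. Any finite pairwise disjoint collection $\bar a_1,\dots,\bar a_n \in A^k$ uses at most $nk$ distinct entries of $A$. If $S$ contains a pairwise disjoint family $\bar b_1,\dots,\bar b_{nk+1}$, then each fixed entry of $A$ appears in at most one of the $\bar b_i$, so the $\bar a_j$'s collectively ``block'' at most $nk$ of the $\bar b_i$'s. Hence some $\bar b_i$ is disjoint from every $\bar a_j$, and $\{\bar a_1,\dots,\bar a_n,\bar b_i\}$ is a pairwise disjoint subset of $S$ of size $n+1$.

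I would then iterate this to build the infinite family. Assume there is no finite bound, so that for every $n$ there is a pairwise disjoint subset of $S$ of size $n$. Define $\bar c_1,\bar c_2,\dots \in S$ recursively: pick any $\bar c_1 \in S$; having chosen pairwise disjoint $\bar c_1,\dots,\bar c_n$, apply the observation with $N = nk+1$. By hypothesis we can find a pairwise disjoint family in $S$ of size $N$, so some member of that family is disjoint from each of $\bar c_1,\dots,\bar c_n$, and we take this to be $\bar c_{n+1}$. This yields an infinite pairwise disjoint family in $S$, contradicting the assumption that $S$ is thin.

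There is really no obstacle here; the only subtlety worth flagging is the distinction between ``arbitrarily large finite pairwise disjoint subfamilies'' and ``an infinite pairwise disjoint subfamily,'' which is why the argument must be phrased as a recursive extension rather than a single application of pigeonhole. Everything else is elementary counting. The lemma as stated follows immediately by contrapositive.
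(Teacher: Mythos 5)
Your proof is correct and uses the same pigeonhole count as the paper ($n$ pairwise disjoint $k$-tuples meet at most $nk$ members of any pairwise disjoint family, so a family of size $nk+1$ contains a tuple extending the given one). The only difference is presentational: you build the infinite disjoint family by explicit recursion, whereas the paper extends a maximal finite disjoint subset to a contradiction; your phrasing in fact handles more carefully the point that thickness requires an infinite family rather than arbitrarily large finite ones.
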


We'll argue that we may assume that $R$ is thick. Suppose that $R$ is not thick. Then there are finitely many tuples $\bar{a}_1,\ldots,\bar{a}_n$ such that no more elements of $R$ are disjoint from $\bar{a}_1,\ldots,\bar{a}_n$. Write $\bar{a}_i = (a_i^1,\ldots,a_i^r)$. Let $R_{i,j,k}$ be the set of tuples $\bar{b} = (b^1,\ldots,b^n)$ in $R$ with $b^j = a_i^k$. Then
\[
	R = \bigcup_{i,j,k} R_{i,j,k}.
\]
Then one of the $R_{i,j,k}$ is not intrinsically computable, or else $R$ would be intrinsically computable. But $R_{i,j,k}$ is essentially an $(r-1)$-ary relation and $R_{i,j,k} \leq_T R$. So we can use the induction hypothesis as above to reduce to the case where $R$ and $\neg R$ are both thick. This is assumption (II).

Let $\bar{c}$ be any tuple. By a similar argument as above, we may assume that $R$, when restricted to tuples not disjoint from $\bar{c}$, is intrinsically computable, and that $R$ restricted to tuples disjoint from $\bar{c}$ is not intrinsically computable. This is assumption (III).

Now we have three cases to consider.
\begin{enumerate}
	\item $R$ is not intrinsically $\Pi^0_1$, but $R$ contains a thick set defined by a $\Sigma^\comp_1$ formula.
	\item $R$ is not intrinsically $\Sigma^0_1$, but $\neg R$ contains a thick defined by a $\Sigma^\comp_1$ formula.
	\item Neither $R$ nor $\neg R$ contains a thick set defined by a $\Sigma^\comp_1$ formula.
\end{enumerate}

These exhaust all of the possibilities. If we are not in the third case, then either $R$ or $\neg R$ must contain a thick set defined by a $\Sigma^\comp_1$ formula. If neither $R$ nor $\neg R$ are definable by a $\Sigma^\comp_1$ formula, then we must be one of the first two cases. Finally, it is possible that $R$ is definable by a $\Pi^\comp_1$ or $\Sigma^\comp_1$ formula. In the first case, $R$ is not definable by a $\Sigma^\comp_1$ formula, but $\neg R$ is (and, as $\neg R$ is thick, (2) holds). In the second case, $R$ is definable by a $\Sigma^\comp_1$ formula, but not by any $\Pi^\comp_1$ formula, and so (1) holds.

Note that cases one and two include the particular cases where $R$ is intrinsically $\Sigma^0_1$ but not intrinsically $\Pi^0_1$ and intrinsically $\Pi^0_1$ but not intrinsically $\Sigma^0_1$ respectively.

\section{The First Two Cases} 

The second case is similar to the first, but with $R$ replaced by $\neg R$. We will just consider the first case.

We assume that $R$ satisfies (I), (II), and (III). Now we have the following lemma which applies to $R$ (since our effectiveness condition is strong enough to imply the condition in the lemma, as well as to find the free elements from the lemma).

\begin{lem}[{Ash-Nerode \cite{AshNerode81}}]
Suppose that $R$ is not defined in $\mc{A}$ by $\Pi^\comp_1$ formula. Furthermore, suppose that for each tuple $\bar{c}$ in $\mc{A}$ and finitary existential formula $\varphi(\bar{c},x)$, we can decide whether there exists $\bar{a} \notin R$ such that $\mc{A} \models \varphi(\bar{a},\bar{c})$. Then for each tuple $\bar{c} \in \mc{A}$, there is $\bar{a} \notin R$ disjoint from $\bar{c}$ and free over $\bar{c}$.
\end{lem}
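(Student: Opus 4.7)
My plan is to prove the lemma by contrapositive: assuming there is a tuple $\bar{c}$ over which no $\bar{a} \notin R$ disjoint from $\bar{c}$ is free, I will produce a $\Pi^\comp_1$ definition of $R$ with parameters $\bar{c}$, contradicting the hypothesis that $R$ is not $\Pi^\comp_1$-definable. The overall shape is the standard Ash--Nerode disjunction argument, adapted to the ``disjoint from $\bar{c}$'' restriction and combined with the reduction afforded by assumption (III).

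The first step is to set up a computable enumeration of ``good'' existential formulas. Let $\mc{F}$ be the set of finitary existential formulas $\psi(\bar{c},\bar{x})$, each strengthened by conjoining the (existential) condition that $\bar{x}$ be disjoint from $\bar{c}$, such that $\psi$ has no solution lying in $R$. The key point is that membership in $\mc{F}$ is \emph{decidable}: the question ``does $\exists \bar{x}\,(\psi(\bar{c},\bar{x}) \wedge R(\bar{x}))$ hold in $\mc{A}$?'' is an $\exists_1$ sentence in the expanded language, and by our effectiveness assumption the $\exists_1$-diagram of $(\mc{A},R)$ is computable. This is the place where the strength of the effectiveness hypothesis really enters, and is the main technical obstacle; the weaker Ash--Nerode condition would require a slightly different trick to obtain a computable index set.

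The second step is to show that the set $S$ defined by the $\Sigma^\comp_1$ formula $\bigdoublevee_{\psi \in \mc{F}} \psi(\bar{c},\bar{x})$ is exactly $\neg R$ restricted to tuples disjoint from $\bar{c}$. The inclusion $S \subseteq \neg R$ holds by construction of $\mc{F}$. The reverse inclusion is where the nonexistence of free tuples is used: for each $\bar{a} \notin R$ disjoint from $\bar{c}$, the failure of freeness yields an existential formula $\psi_{\bar{a}}(\bar{c},\bar{x})$ true of $\bar{a}$ such that no tuple in $R$ satisfies $\psi_{\bar{a}}$. After conjoining the disjointness requirement, $\psi_{\bar{a}}$ lies in $\mc{F}$, placing $\bar{a}$ in $S$. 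Hence $R$ restricted to tuples disjoint from $\bar{c}$ is defined by the negation of a $\Sigma^\comp_1$ formula over the parameters $\bar{c}$, i.e., by a $\Pi^\comp_1$ formula.

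Finally, by assumption (III) the restriction of $R$ to tuples \emph{not} disjoint from $\bar{c}$ is intrinsically computable and hence in particular $\Pi^\comp_1$-definable over $\bar{c}$; disjointness (or its negation) from $\bar{c}$ is itself a finitary condition that can be incorporated into each piece. Patching the two $\Pi^\comp_1$ definitions together along this finite partition of $A^r$ yields a single $\Pi^\comp_1$ definition of $R$ over $\bar{c}$, contradicting the hypothesis. The contradiction shows that some $\bar{a} \notin R$ disjoint from $\bar{c}$ must be free over $\bar{c}$, completing the proof.
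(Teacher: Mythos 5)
The paper states this lemma as a citation to Ash--Nerode without providing a proof, so there is no in-paper argument to compare against. What you have written is the standard contrapositive argument: if no $\bar{a} \notin R$ disjoint from $\bar{c}$ is free over $\bar{c}$, the witness formulas assemble into a $\Sigma^\comp_1$ definition of $\neg R$ on tuples disjoint from $\bar{c}$, and assumption (III) disposes of the finitely-overlapping tuples. The structure is correct.

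The one point worth tightening is the handling of the effectiveness hypothesis, which you flag but leave vague. The lemma's stated condition decides existence of solutions $\notin R$; to recognize that a formula $\psi$ belongs to $\mc{F}$ you must decide existence of solutions $\in R$, and the first does not yield the second: together with the fact that $\mc{A}$ is computable you can only semi-decide ``$\psi$ has a solution in $R$,'' since ``$\psi$ has any solution at all'' is merely $\Sigma^0_1$. In the Appendix's actual setting this is harmless---Theorem \ref{HarizanovRel} assumes the full $\exists_1$-diagram of $(\mc{A},R)$ is computable, which decides both questions, and the sentence after the lemma in the paper explicitly notes that its hypothesis is strong enough to supply what the lemma needs. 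But the $\notin R$ query as written serves the \emph{dual} version of the lemma (free tuples in $R$, for the formally $\Sigma_1$ characterization), whereas the free-tuples-not-in-$R$ version you are proving wants the $\in R$ query. Stating this directly would be cleaner than gesturing at ``a slightly different trick.''
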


Let $\bar{c}$ be a tuple from $\mc{A}$. We say that $\bar{a} \in \noreps{A}^r$ is \textit{constrained over $\bar{c}$} if there is a finitary existential formula $\psi(\bar{c},\bar{x})$ true of $\bar{a}$ in $\mc{A}$ such that for any $\bar{a}' \in \noreps{A}^r$ which also satisfies $\psi(\bar{c},\bar{x})$, $\bar{a}' \in R$ if and only if $\bar{a} \in R$.

Let $\bar{d}$ be a tuple in $\mc{A}$ and $\varphi(\bar{d},\bar{x})$ a $\Sigma^\comp_1$ formula defining a thick subset of $R$. Now there are infinitely many disjoint tuples $\bar{b} \in \noreps{A}^r$ satisfying $\varphi(\bar{d},\bar{x})$ and hence in $R$. Each of these tuples satisfies a finitary existential formula which is a disjunct in $\varphi(\bar{d},\bar{x})$, and hence is constrained over $\bar{d}$. We may find such tuples computably.

The following proposition completes the theorem in the first case.

\begin{prop}\label{CEProp}
Suppose that there is a tuple $\bar{d}$ such that for any $\bar{c} \supseteq \bar{d}$, we can compute new elements $\bar{a} \notin R$ and $\bar{b} \in R$ disjoint from $\bar{c}$ such that $\bar{a}$ is free over $\bar{c}$ and $\bar{b}$ is constrained over $\bar{d}$. Then for any sets $X \leq_T Y$ with $Y$ c.e. in $X$ there is an $X$-computable copy $\mc{B}$ of $\mc{A}$ with $R^\mc{B} \equiv_T Y$. Moreover, $Y$ can compute the isomorphism between $\mc{A}$ and $\mc{B}$.
\begin{proof}
We may assume that the constants $\bar{d}$ are part of the language, and hence ignore them.

Let $\bar{c}$ be a tuple. Suppose that $a \notin R$ is free over $\bar{c}$, and $\bar{b}_1,\ldots,\bar{b}_m \in R$ are constrained, and $\bar{e}$ any elements. Let $\varphi(\bar{c},\bar{u},\bar{v}_1,\ldots,\bar{v}_m,\bar{w})$ be a quantifier-free formula true of $\bar{a},\bar{b}_1,\ldots,\bar{b}_m,\bar{e}$. We claim that there is $\bar{a}' \in R$, $\bar{b}_1',\ldots,\bar{b}_m' \in R$ constrained, and $\bar{e}$ such that $\mc{A} \models \varphi(\bar{c},\bar{a}',\bar{b}_1',\ldots,\bar{b}_m',\bar{e}')$. For $i=1,\ldots,m$ choose $\psi_i(\bar{d},\bar{v}_i)$ an existential formula true of $\bar{b}_i$ in $\mc{A}$ such that for any $\bar{b}_i' \in \mc{A}$ which also satisfies $\psi_i$, $\bar{b}_i' \in R$; note that any such $\bar{b}_i'$ is also constrained over $\bar{d}$ since it satisfies $\psi_i$. Then consider the existential formula
\[
	\phi(\bar{c},u) = \exists \bar{v}_1,\ldots,\bar{v}_m,\bar{w}(\varphi(\bar{c},\bar{u},\bar{v}_1,\ldots,\bar{v}_m,\bar{w}) \wedge \bigwedge_{i=1,\ldots,m} \psi_i(\bar{d},\bar{v}_i))
\]
Since $\bar{a}$ is free over $\bar{c}$, there is $\bar{a}' \in R$ with $\mc{A} \models \phi(\bar{c},\bar{a}')$. Then let $\bar{b}_1',\ldots,\bar{b}_m' \in R$ and $\bar{e}$ be the witnesses to the existential quantifier. These are the desired elements.

Let $Y_s$ be the enumeration of $Y$ relative to $X$. Using $X$ we will construct by stages a copy $\mc{B}$ of $\mc{A}$ with $R^\mc{B} \equiv_T Y$. Let $B$ be an infinite set of constants disjoint from $A$. We will construct a bijection $F: B \to \mc{A}$ and use $F^{-1}$ to define the structure $\mc{B}$ on $B$. At each stage, we will give a tentative finite part of the isomorphism $F$. It will be convenient to view $B$ as a list of $r$-tuples $B = \{\bar{b}_0,\bar{b}_1,\ldots\}$.

Fix a computable infinite-to-one bijection $g: \omega \to 2^{<\omega} \cup \{\varnothing\}$. We will code $Y$ into $R^\mc{B}$ in the following way. We will ensure that if $k_1 < \cdots < k_n$ are such that $\bar{b}_{k_1},\ldots,\bar{b}_{k_n} \notin R^\mc{B}$ and $|g(k_i)| = i$ and $g(k_1) \prec g(k_2) \prec \cdots \prec g(k_n)$ then $g(k_n) \prec Y$. Moreover, we will ensure that there is an infinite sequence $k_1,k_2,\ldots$ with this property. Thus $R^\mc{B}$ will be able to compute $Y$ by reconstructing such a sequence. On the other hand, since $Y$ will be able to compute the isomorphism, it will be able to compute $R^\mc{B}$. Note that the empty string $\varnothing$ has length zero, so it can never appear in such a sequence, and thus marks a position that never does any coding.

We will promise that if at some stage we are mapping some tuple $\bar{b} \in B$ to a constrained tuple in $R$, $\bar{b}$ will be mapped to a constrained tuple of $R$ at every later stage.

At a stage $s+1$, let $F_s:\{\bar{b}_0,\ldots,\bar{b}_{\ell}\} \to \mc{A}$ be the partial isomorphism determined in the previous stage, and let $\mc{B}_s$ be the finite part of the diagram of $\mc{B}$ which has been determined so far. We will also have numbers $i_{1,s}, \ldots, i_{n_s,s}$ with $i_{1,s} < \cdots < i_{n_s,s}$ which indicate tuples $\bar{b}_{i_{k,s}}$. For each $k$, $g(i_{k,s})$ will code a string of length $k$. We are trying to ensure that if $g(i_{k,s}) \prec Y_t$ for every stage $t \geq s$, then we keep $\bar{b}_{i_{k,s}} \notin R^\mc{B}$, and otherwise we put $\bar{b}_{i_{k,s}} \in R^\mc{B}$. Once the first $k$ entries of $Y$ have stabilized, $i_{k,s}$ will stabilize.

We define a partial isomorphism $G$ extending $F_s$ which, potentially with some corrections, will be $F_{s+1}$. Let $G(\bar{b}_i) = F_s(\bar{b}_i)$ for $0 \leq i \leq \ell$. We may assume, by extending $G$ by adding constrained tuples, that $g(\ell+1) = \varnothing$. Let $\bar{a}_{\ell + 1}$ be the first $r$ new elements not yet in the image of $F_s$. Now let $k > \ell$ be first such that $g(k) = Y_{s+1} \restriction_{n_s + 1}$. Find new tuples $\bar{a}_{\ell+2},\ldots,\bar{a}_{k} \in R$ which are constrained. Also Find $\bar{a}_{k+1} \in \neg R$ which is free over $G(\bar{b}_0),\ldots,G(\bar{b}_{\ell}),\bar{a}_{\ell+1},\ldots,\bar{a}_{k}$. Set $G(\bar{b}_i) = \bar{a}_i$ for $\ell+1 \leq i \leq k$.

Let $\mc{B}_{s+1} \supseteq \mc{B}_s$ be the atomic formulas of G\"odel number at most $s$ which are true of the images of $\bar{b}_0,\ldots,\bar{b}_k$ in $\mc{A}$.

Now we will act to ensure that for each $m$, $g(i_{m,{s+1}}) \prec Y_{s+1}$. Find the first $m$, if any exists, such that $g(i_{m,s}) \nprec Y_{s+1}$. If such an $m$ exists, using the fact that $g(i_{m,s})$ is free over the previous elements, choose $\bar{a}_{m}',\ldots,\bar{a}_{k}'$ such that:
\begin{enumerate}
	\item $G(\bar{b}_0),\ldots,G(\bar{b}_{m-1}),\bar{a}_{m}',\ldots,\bar{a}_{k}'$ satisfy the same existential formula that $\bar{b}_0,\ldots,\bar{b}_{k}$ does in $\mc{B}_{s+1}$,
	\item for any $m' > m$ such that $\bar{a}_{m'} \in R$ is constrained, so is $\bar{a}_{m'}' \in R$, and
	\item $\bar{a}_{i_{m,s}}' \in R$.
\end{enumerate}
Set $F_{s+1}(\bar{b}_j) = G(\bar{b}_j)$ for $j < m$ and $F_{s+1}(\bar{b}_j)=\bar{a}_j'$ for $m+1 \leq j \leq k$. Also set $n_{s+1} = m - 1$ and $i_{0,s+1} = i_{0,s},\ldots,i_{m-1,s+1} = i_{m-1,s}$.

On the other hand, if no such $m$ exists, set $F_{s+1} = G$, set $n_{s+1} = n_s + 1$, $i_{n_s+1,s+1} = k$, and $i_{j,s+1} = i_{j,s}$ for $0\leq j \leq n_s$.

This completes the construction. It is a standard finite-injury construction and it is easy to verify that the construction works as desired.
\end{proof}
\end{prop}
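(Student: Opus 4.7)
The plan is to build $\mc{B}$ in stages computably in $X$, with its domain partitioned into a computable list of disjoint $r$-tuples $\bar{b}_0, \bar{b}_1, \ldots$ serving as fixed coding locations. Fix a computable infinite-to-one bijection $g \colon \omega \to 2^{<\omega} \cup \{\varnothing\}$. Coding $Y$ into $R^{\mc{B}}$ is achieved by ensuring that there is a unique increasing sequence $k_0 < k_1 < \cdots$ with $|g(k_n)| = n+1$, $g(k_n) \prec g(k_{n+1})$, and $\bar{b}_{k_n} \notin R^{\mc{B}}$; this sequence then determines $Y = \bigcup_n g(k_n)$, so $Y \leq_T R^{\mc{B}}$. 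Conversely, $Y$ computes the enumeration $Y_s$ (recall $Y$ is c.e.\ in $X \leq_T Y$) and can therefore reproduce every action of the construction, recovering the isomorphism.

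The workhorse observation I would establish first is the following. Given $\bar{c} \supseteq \bar{d}$, a tuple $\bar{a} \notin R$ free over $\bar{c}$, constrained tuples $\bar{b}_1, \ldots, \bar{b}_m \in R$ over $\bar{d}$, auxiliary witnesses $\bar{e}$, and any quantifier-free $\varphi(\bar{c}, \bar{u}, \bar{v}_1, \ldots, \bar{v}_m, \bar{w})$ true of $\bar{a}, \bar{b}_1, \ldots, \bar{b}_m, \bar{e}$, there exist $\bar{a}' \in R$, constrained $\bar{b}'_1, \ldots, \bar{b}'_m \in R$, and $\bar{e}'$ witnessing $\varphi$. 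To see this, conjoin to $\varphi$ the existential formulas $\psi_i(\bar{d}, \bar{v}_i)$ that witness the constrainedness of each $\bar{b}_i$, then existentially quantify $\bar{v}_i$ and $\bar{w}$ to obtain an existential formula $\phi(\bar{c}, \bar{u})$ satisfied by $\bar{a}$; freeness of $\bar{a}$ supplies $\bar{a}' \in R$ satisfying $\phi$, and the witnesses give the required $\bar{b}'_i$ (automatically constrained, as they satisfy $\psi_i$) and $\bar{e}'$. This mechanism lets us flip a free $\neg R$-tuple to an $R$-tuple without disturbing the constrained $R$-tuples above it.

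At stage $s+1$, given $F_s \colon \{\bar{b}_0, \ldots, \bar{b}_\ell\} \to \mc{A}$ and indices $i_{1,s} < \cdots < i_{n_s, s}$ approximating the active sequence (with $g(i_{k,s}) = Y_s \restriction k$), I extend $F_s$ to a partial isomorphism $G$ by assigning fresh constrained $R$-tuples to each $\bar{b}_j$ for $\ell < j \leq k$, except to a single $\bar{b}_k$ with $g(k) = Y_{s+1} \restriction (n_s + 1)$, which receives a tuple free over everything chosen so far. If $Y_{s+1}$ differs from $Y_s$ at some least position $m$, so $g(i_{m,s}) \not\prec Y_{s+1}$, I apply the workhorse lemma at the free tuple mapped to $\bar{b}_{i_{m,s}}$: this produces $\bar{a}'_m, \ldots, \bar{a}'_k$ preserving the finite existential diagram of $\mc{B}_{s+1}$ and the constrainedness of already-constrained entries, but placing the former free tuple into $R$, thereby permanently retiring that coding location from any future active sequence. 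I then reset $n_{s+1} = m-1$ and begin rebuilding the active sequence below $Y_{s+1}$ at later stages. If no injury occurs, I set $F_{s+1} = G$ and $n_{s+1} = n_s + 1$.

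Verification splits into three parts. First, standard bookkeeping (enumerating fresh elements into the range of $F$ and sealing atomic facts of small G\"odel number at each stage) ensures that the limit $F = \lim_s F_s$ is a bijection onto $\mc{A}$ and that $\mc{B}$ is $X$-computable. Second, since each initial segment of $Y$ eventually stabilizes and stabilization requires only finitely much action, the limits $i_k = \lim_s i_{k,s}$ exist, giving a genuine active sequence with $g(i_k) = Y \restriction k$; every other coding location eventually sits on a constrained $R$-tuple, so no competing active sequence can arise and $R^{\mc{B}}$ decodes $Y$ uniquely. Third, $Y$ computes $F$ by simulating the construction using the enumeration $Y_s$, giving $R^{\mc{B}} \leq_T Y$. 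The main obstacle I expect is setting up constrainedness so robustly that the workhorse lemma simultaneously preserves $R$-membership and extends the existential diagram of every tuple we do not wish to move; assumption~(III) together with the closure of constrainedness under conjunction with further existential formulas over $\bar{d}$ is precisely what makes this go through, and is the reason the reductions in the framework section require $R \cap \langle A \rangle^r$ to be thick rather than merely nonempty.
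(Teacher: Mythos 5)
Your proposal is correct and follows essentially the same route as the paper's proof: the same fixed coding locations with the infinite-to-one map $g$ and the unique active sequence, the same key lemma (flipping a free tuple into $R$ while preserving constrained tuples by conjoining their constrainedness witnesses before applying freeness), and the same finite-injury handling when $Y_{s+1}$ changes below the current approximation. No gaps to report.
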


\section{The Third Case} 

We may suppose that, for each $n$ and restricting to tuples in $\noreps{A}^{rn}$, there is no thick subset of $R^{i_1} \times \cdots \times R^{i_n}$ where $i_1,\ldots,i_n \in \{-1,1\}$ (or the complement of such a set) definable by a $\Sigma^\comp_1$ formula. Moreover, we may assume that the same is true for particular fibers of such a set; we may assume that for any $\bar{c}_2,\ldots,\bar{c}_n$, the fiber
\[ S = \{(\bar{x},\bar{y}_2,\ldots,\bar{y}_n) | \bar{x} \in R^{i_1}, \bar{y}_j\bar{c}_j \in R^{i_j}\} \]
has no thick subset. If there was a thick subset, then since $R$ is not definable by any $\Sigma^\comp_1$ or $\Pi^\comp_1$ formula, $S$ is not definable by any $\Sigma^\comp_1$ or $\Pi^\comp_1$ formula (if it was definable in such a way, then as either $R$ or its complement is a projection of $S$ onto the initial coordinates, $R$ would be definable by either a $\Sigma^\comp_1$ or $\Pi^\comp_1$ formula as well). Since we considered only tuples of $A^{rn}$ with no repeated entries, $S$ satisfies assumption (I). As $R$ and $\neg R$ are thick by assumption (II), $S$ and $\neg S$ are also thick. Moreover, the restriction of $S$ to those tuples which are disjoint from some particular tuple $\bar{c}$ is not intrinsically computable as the restriction of $R$ is not. So $S$ satisfies assumptions (I), (II), and (III), and falls under either case one or case two. Note that we are not using the induction hypothesis here (and indeed it does not apply since $S$ is possibly of higher arity than $R$) because $S$ is already understood as a relation on tuples with no repeated entries satisfying the assumptions, so we can appeal directly to Proposition \ref{CEProp}. Thus we may make the assumption that each such set $S$ has no thick subset.

Now the remainder of the proof is an analysis of the definable sets in order to run the construction in the previous case even without constrained elements.

\begin{lem}\label{scott-family}
Let $\bar{c}$ be a tuple. Suppose that every tuple $\bar{a} \in \noreps{A}^{r}$ satisfies some finitary existential formula $\varphi(\bar{c},\bar{u})$ with $\varphi(\bar{c},\noreps{A}^r) = \{ \bar{b} \in \noreps{A}^r : \mc{A} \models \varphi(\bar{c},\bar{b})\}$ thin. Then there is a tuple $\bar{d}$ over which every $\bar{a} \in \noreps{A}^{r}$ satisfies a finitary existential formula with only finitely many solutions.
\end{lem}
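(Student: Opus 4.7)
My plan is to prove this by induction on the arity $r$. The base case $r = 1$ is immediate: in $\noreps{A}^1 = A$, a thin set has no infinite pairwise-disjoint subfamily, which for single elements just means no infinite subset, i.e.\ the set is finite. So the hypothesis already gives each $a \in A$ a finite-solution existential formula over $\bar{c}$, and $\bar{d} = \bar{c}$ works.

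For the inductive step I would exploit the following local decomposition of thin sets (which follows from the bound lemma preceding the statement). Given any thin existential formula $\varphi(\bar{c},\bar{u})$, pick a maximal pairwise-disjoint family of solutions $\bar{a}_1,\ldots,\bar{a}_n$; this family is finite. Every other solution $\bar{b}$ must share some entry with some $\bar{a}_i$, for otherwise $\bar{b}$ could be appended to the family. Hence
\[
\varphi(\bar{c},\noreps{A}^r) \;\subseteq\; \{\bar{a}_1,\ldots,\bar{a}_n\} \;\cup\; \bigcup_{i,j,k}\bigl\{\bar{b}\in\noreps{A}^r : b_k = (\bar{a}_i)_j\bigr\},
\]
so after adjoining the entries of $\bar{a}_1,\ldots,\bar{a}_n$ as parameters, $\varphi$ splits into a finite list (a handful of explicit solutions, or pinning one coordinate $u_k$ to a fixed parameter $e$). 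The crucial observation is that each slice $\varphi(\bar{c},\bar{u}) \wedge u_k = e$, viewed as an $(r{-}1)$-ary existential formula over $\bar{c},e$ in the remaining coordinates (restricted to $(r{-}1)$-tuples disjoint from $e$), has a thin solution set in $\noreps{A}^{r-1}$: any infinite pairwise-disjoint family of $(r{-}1)$-tuples there would re-inflate by inserting $e$ in position $k$ to an infinite pairwise-disjoint family of $r$-tuples satisfying $\varphi$, contradicting its thinness.

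The main obstacle is uniformity: different tuples may be governed by different thin $\varphi_{\bar{a}}$, and we need a single finite $\bar{d}$ serving all of $\noreps{A}^r$. My plan is to argue by contradiction, building a rooted tree of reductions of bounded depth $r$. Assume no finite $\bar{d} \supseteq \bar{c}$ suffices; then for every candidate $\bar{d}$ we can produce a witness $\bar{a}_{\bar{d}} \in \noreps{A}^r$ satisfying no finite-solution existential formula over $\bar{d}$. Picking the thin formula given by the hypothesis and applying the local decomposition at the root, the witness must lie in some slice $u_k = e$, which forces us (when trying to refute the conclusion of the inductive hypothesis) into an $(r{-}1)$-ary thin problem over $\bar{c}$ extended by the maximal family's entries. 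Iterating this decomposition at most $r$ times strips off all coordinates and reaches the base case, where thin already equals finite; the resulting finitely-branching recursion produces, by König's lemma, a bad tuple at arity $1$, contradicting the base case of the hypothesis. The induction hypothesis at $r{-}1$ is then invoked slice-by-slice along this finite tree to assemble the uniform $\bar{d}$. The most delicate point I anticipate is setting up the inductive hypothesis in a strong enough form to handle the restricted domains (tuples disjoint from the pinned element $e$) that arise naturally from the decomposition, which I would handle by formulating the inductive claim slightly more generally, over an arbitrary subset $T \subseteq \noreps{A}^{r-1}$ satisfying the hypothesis.
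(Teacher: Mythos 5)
Your base case and your observation that each slice $\varphi(\bar{c},\bar{u}) \wedge u_k = e$ is thin as an $(r-1)$-ary set are both correct, but the inductive architecture does not close the uniformity gap you identify, and the K\"onig's lemma step is where it breaks. The tree of reductions is not a fixed finitely-branching object: the decomposition at the root depends on the witness $\bar{a}_{\bar{d}}$ and on the particular thin formula $\varphi_{\bar{a}_{\bar{d}}}$ chosen for it, and both of these change as the candidate $\bar{d}$ changes, so there is no single tree to which K\"onig's lemma applies. Moreover, the parameters you adjoin at each node (the entries of a maximal pairwise-disjoint family of solutions of $\varphi_{\bar{a}}$) depend on $\varphi_{\bar{a}}$; as $\bar{a}$ ranges over $\noreps{A}^r$ the total parameter set is unbounded, and the finite set $\{(\bar{a}_i)_j\}$ into which a coordinate gets pinned is not, on its face, existentially definable over $\bar{c}$ alone, so pinning a coordinate does not by itself yield a finite-solution existential formula over any parameter set you control. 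Finally, the generalized inductive hypothesis over an arbitrary $T \subseteq \noreps{A}^{r-1}$ only covers tuples lying in the slices of the particular formulas you have decomposed, not all of $\noreps{A}^{r-1}$, so it cannot be invoked to assemble a single $\bar{d}$.

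The missing idea is a coordinate-wise statement proved for a single formula at a time (Lemma \ref{fin-many-solns} in the paper): if $\varphi(\bar{c},\bar{u})$ defines a thin subset of $\noreps{A}^n$ and $\bar{a}$ satisfies it, then already some single entry $a_i$ satisfies a finitary existential formula over $\bar{c}$ itself --- with no new parameters --- having only finitely many solutions. This is where the induction on arity belongs, and it needs more than the maximal-disjoint-family decomposition: one must show, for instance, that only finitely many $d$ have $\varphi(\bar{c},\noreps{A}^{n-1},d)$ containing many disjoint tuples and that this finite set of $d$'s \emph{is} existentially definable over $\bar{c}$. Once you have that lemma, uniformity is immediate by counting: fewer than $r$ elements of $A$ can fail to lie in a finite existentially definable set over $\bar{c}$, since any $r$ distinct such elements form a tuple of $\noreps{A}^r$, which by hypothesis satisfies a thin formula, forcing one of its entries into a finite set. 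Taking $\bar{d}$ to be $\bar{c}$ together with these finitely many exceptional elements, every element of $A$ lies in a finite existentially definable set over $\bar{d}$, and every $r$-tuple satisfies the conjunction of its entries' formulas. No induction on $r$ at the level of the lemma itself, and no K\"onig's lemma, is needed.
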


First, we need another lemma.

\begin{lem}\label{fin-many-solns}
If $\varphi(\bar{c},u)$ is a finitary existential formula such that $S = \varphi(\bar{c},\noreps{A}^n)$ is a thin set, and $\bar{a} \in S$ where $\bar{a} = (a_1,\ldots,a_n)$, then for some $i$, $a_i$ satisfies a finitary existential formula over $\bar{c}$ with finitely many solutions.
\begin{proof}
The proof is by induction on $n$. For $n = 1$, a thin set is just a finite set and the result is clear. Now suppose that we know the result for $n$. Let $\varphi(\bar{c},\bar{u})$ be a finitary existential formula with $|\bar{u}| = n+1$, and suppose that $\varphi(\bar{c},\noreps{A}^{n+1})$ is thin. Let $k$ be maximal such that there are $k$ pairwise disjoint tuples satisfying $\varphi(\bar{c},\bar{u})$. Write $\bar{u} = \bar{v},w$ with $|\bar{v}| = n$.

If $\exists\bar{v} \varphi(\bar{c},\bar{v},\mc{A})$ is finite (including the case where $\varphi(\bar{c},\noreps{A}^{n+1})$ is finite) then we are done. If the set of solutions of $\exists w \varphi(\bar{c},\bar{v},w)$ is thin, then we are done by the induction hypothesis.

We claim that there are only finitely many (in fact at most $k$) elements $d$ with $\varphi(\bar{c},\noreps{A}^n,d)$ containing $n \cdot k+1$ or more disjoint tuples. If not, then choose $d_1,\ldots,d_{k+1}$ distinct. Then, for $d_1$, choose $\bar{e}_1$ satisfying $\varphi(\bar{c},\bar{v},d_1)$. Now $d_2$ has at least $n+1$ disjoint tuples satisfying $\varphi(\bar{c},\bar{v},d_2)$, so it must have some tuple disjoint from $\bar{e}_1$. Continuing in this way, we contradict the choice of $k$ by constructing $k+1$ disjoint solutions $d_i\bar{e}_i$ of $\varphi(\bar{c},\bar{u},\bar{v})$. So there are finitely many $d$ such that $\varphi(\bar{c},\noreps{A}^n,d)$ contains $n \cdot k+1$ or more disjoint tuples, and the set of such $d$ is definable by an existential formula. If for our given tuple $\bar{a}$, $a_{n+1}$ is one of these $d$, then we are done.

Otherwise, since the set is finite, say there are exactly $m$ such $d$, the set of $\bar{v},w$ with $\varphi(\bar{c},\bar{v},w)$ and $w$ not one of these $d$ is also existentially definable and thin. Add to $\varphi(\bar{c},\bar{v},w)$ the existential formula which says that there $d_1,\ldots,d_m$ with $\varphi(\bar{c},\noreps{A}^n,d_i)$ containing $n \cdot k + 1$ or more disjoint tuples, and that $w$ is not one of the $d_i$. So we may assume that for all $d$, $\varphi(\bar{c},\noreps{A}^n,d)$ contains at most $n \cdot k$ many disjoint tuples and that the set $\exists w \varphi(\bar{c},\bar{v},w)$ is not thin. Choose $\bar{e}_1,d_1$ satisfying $\varphi(\bar{c},\bar{v},w)$. Choose $\bar{f}_1,\ldots,\bar{f}_{n \cdot k}$ pairwise disjoint from each other and also disjoint from $\bar{e}_1$, and $g_1,\ldots,g_{n \cdot k}$ with $\bar{f}_i,g_i$ satisfying $\varphi$. Then we cannot have $g_1 = \cdots = g_{n \cdot k} = d_1$, so we can choose $\bar{e}_2,d_2$ pairwise disjoint and disjoint from from $\bar{e}_1,d_1$. Now choose $\bar{f}_1,\ldots,\bar{f}_{2 n \cdot k}$ disjoint from $\bar{e}_1,\bar{e}_2$, and $g_1,\ldots,g_{2 n \cdot k}$. Then some $g_i$ must be distinct from $d_1$ and $d_2$. Continue in this way; we contradict the fact that $\varphi(\bar{c},\noreps{A}^{n+1})$ is thin. This exhausts the possibilities.
\end{proof}
\end{lem}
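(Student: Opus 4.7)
The plan is to prove Lemma \ref{fin-many-solns} by induction on the arity $n$ of the tuple $\bar{a}$, with the induction hypothesis giving us the conclusion for any thin existentially definable set of $n$-tuples.

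For the base case $n=1$, a thin subset of $\noreps{A}^1$ is simply a finite set (since pairwise disjointness of $1$-tuples is just distinctness), so the formula $\varphi(\bar{c},u)$ itself exhibits $a_1$ as satisfying a finitary existential formula over $\bar{c}$ with finitely many solutions. For the inductive step I would split the variable tuple as $\bar{u} = \bar{v} \concat w$ with $|\bar{v}|=n$, and let $k$ be the maximum size of a pairwise disjoint family in $S = \varphi(\bar{c},\noreps{A}^{n+1})$; this exists because $S$ is thin. I would then dispose of two easy sub-cases first: if $\exists \bar{v}\,\varphi(\bar{c},\bar{v},w)$ has only finitely many solutions in $w$, then $a_{n+1}$ is itself witnessed by a finitary existential formula with finitely many solutions; if instead the projection $\exists w\,\varphi(\bar{c},\bar{v},w)$ is thin as a subset of $\noreps{A}^n$, then the induction hypothesis applied to $(a_1,\ldots,a_n)$ delivers the required $a_i$.

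The heart of the argument is ruling out the remaining case by a pigeonhole analysis of the fibers over $w$. Call an element $d \in \mc{A}$ \emph{heavy} if the fiber $\varphi(\bar{c},\noreps{A}^n,d)$ contains at least $nk+1$ pairwise disjoint tuples. I would show there are at most $k$ heavy elements: given $k+1$ heavy $d_1,\ldots,d_{k+1}$, I can pick $\bar e_1$ in the fiber of $d_1$, then since $d_2$'s fiber has $nk+1 \geq n+1$ disjoint tuples, pick $\bar e_2$ in it disjoint from $\bar e_1$, and so on, producing $k+1$ pairwise disjoint solutions of $\varphi$ and contradicting the choice of $k$. Being heavy is existentially definable over $\bar{c}$, so if $a_{n+1}$ happens to be one of the finitely many heavy witnesses we are done. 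Otherwise, strengthen $\varphi$ by conjoining the existential assertion ``there exist heavy witnesses $d_1,\ldots,d_m$ and $w$ is not among them,'' reducing to the situation in which every fiber over $w$ has at most $nk$ pairwise disjoint tuples.

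In this reduced situation I would derive a contradiction to thinness, showing this case cannot arise for any $\bar a$. Build pairwise disjoint solutions $(\bar e_1,d_1),(\bar e_2,d_2),\ldots$ with all $d_j$ distinct and all $\bar e_j$ pairwise disjoint from one another and from the previous $d_i$'s, inductively: once $(\bar e_1,d_1),\ldots,(\bar e_j,d_j)$ are chosen, use the fact that $\exists w\,\varphi(\bar c,\bar v,w)$ is not thin (we already handled the thin case) to find $jnk+1$ many $\bar v$-witnesses pairwise disjoint and disjoint from the previous data; each existing $d_i$ contributes at most $nk$ of them back via its fiber, so by counting some new witness pairs with a fresh $d_{j+1}$. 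Iterating yields arbitrarily many pairwise disjoint solutions of $\varphi(\bar c,\bar u)$, contradicting $k$. The main obstacle I anticipate is getting this final counting step right: one must carefully track which previously chosen elements constrain the next choice, and justify the strengthening of $\varphi$ by the ``$w$ is not heavy'' clause in a way that is still a finitary existential formula over $\bar c$ (which is fine because ``heavy'' itself unfolds to a finite existential disjunction).
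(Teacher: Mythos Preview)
Your proposal is correct and follows essentially the same route as the paper's proof: the same induction on arity, the same two easy projection sub-cases, the same ``heavy fiber'' notion with the $nk+1$ threshold and the $k$-bound on heavy elements, the same existential strengthening to eliminate heavy $w$'s, and the same final pigeonhole build of arbitrarily many disjoint solutions. Your bookkeeping in the last step (using $jnk+1$ witnesses and explicitly keeping them disjoint from the earlier $d_i$'s as well as the $\bar e_i$'s) is in fact a bit more careful than the paper's sketch.
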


We return to the proof of Lemma \ref{scott-family}.

\begin{proof}[Proof of Lemma \ref{scott-family}]
There must be fewer than $r$ elements of $\mc{A}$ which do not satisfy some existential formula over $\bar{c}$ with finitely many solutions. If not, then there are $a_1,\ldots,a_r \in \mc{A}$ that are not contained in any such existential formula. Let $\varphi(\bar{c},\bar{u})$ define a thin set containing $(a_1,\ldots,a_r)$; then by the previous lemma one of $a_1,\ldots,a_r$ must be contained in some existential formula over $\bar{c}$ with finitely many solutions.

Let $\bar{d}$ be $\bar{c}$ together with these finitely many exceptions. Then every tuple $\bar{a} \in \noreps{A}^r$ satisfies some existential formula $\varphi(\bar{d},\bar{u})$ with $\varphi(\bar{d},\noreps{A})$ finite.
\end{proof}

\begin{lem}
For each tuple $\bar{c}$, there is a tuple $\bar{a}$ such that the set of solutions of each existential formula over $\bar{c}$ satisfied by $\bar{a}$ is thick.
\end{lem}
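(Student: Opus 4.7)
The plan is to prove this by contradiction, appealing to the previous Lemma~\ref{scott-family} together with assumption (III). Suppose, for a contradiction, that for some tuple $\bar{c}$ every $\bar{a} \in \noreps{A}^r$ satisfies some finitary existential formula $\varphi(\bar{c},\bar{u})$ whose solution set $\varphi(\bar{c},\noreps{A}^r)$ is thin. Then the hypothesis of Lemma~\ref{scott-family} holds, so there is a tuple $\bar{d}$ (which we may take to extend $\bar{c}$) over which every $\bar{a} \in \noreps{A}^r$ satisfies some finitary existential formula $\varphi_{\bar{a}}(\bar{d},\bar{u})$ with only \emph{finitely many} solutions in $\mc{A}$.

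From here I would show that the restriction of $R$ to tuples disjoint from $\bar{d}$ is intrinsically computable, contradicting assumption (III). Fix any $\bar{a}\in\noreps{A}^r$ disjoint from $\bar{d}$ and let $\bar{b}_1,\ldots,\bar{b}_k$ enumerate the finitely many solutions of $\varphi_{\bar{a}}(\bar{d},\bar{u})$. For each $\bar{b}_i$ not in the $\mathrm{Aut}(\mc{A}/\bar{d})$-orbit of $\bar{a}$, choose an existential formula satisfied by $\bar{a}$ but not by $\bar{b}_i$; conjoining finitely many such formulas with $\varphi_{\bar{a}}$ yields an existential formula $\psi_{\bar{a}}(\bar{d},\bar{u})$ whose solutions in $\mc{A}$ consist precisely of the $\bar{d}$-orbit of $\bar{a}$. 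Since $R$ is invariant, it is constant on this orbit, so $\psi_{\bar{a}}$ either forces membership in $R$ or forces non-membership in $R$.

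Taking the disjunction of the $\psi_{\bar{a}}$ over all $\bar{a} \in R$ disjoint from $\bar{d}$ gives a $\Sigma^\infi_1$ definition of $R$ on such tuples, and the analogous disjunction over $\bar{a} \notin R$ gives a $\Sigma^\infi_1$ definition of its complement; the effectiveness condition of Theorem~\ref{HarizanovRel} (computability of the $\exists_1$-diagram and of the finite/infinite alternative for solution sets, together with computability of $R$) makes these disjunctions $\Sigma^\comp_1$. Hence $R$ restricted to tuples disjoint from $\bar{d}$ is both $\Sigma^\comp_1$ and $\Pi^\comp_1$ definable over $\bar{d}$, so it is intrinsically computable in every copy of $\mc{A}$, directly contradicting assumption (III).

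The main obstacle is showing that finitely-satisfiable existential formulas really do isolate orbits rather than merely finite sets: we need to verify that, given a tuple $\bar{a}$ and its finitely many siblings $\bar{b}_1,\ldots,\bar{b}_k$ under $\varphi_{\bar{a}}$, we can computably recognize which siblings lie in the $\bar{d}$-orbit of $\bar{a}$ and separate the others by an existential formula. This is where the effectiveness hypotheses of Theorem~\ref{HarizanovRel} are used most heavily, and one must be careful that the separating formulas remain existential (so that membership in $\psi_{\bar{a}}$'s solution set is c.e.\ rather than merely arithmetical); a standard back-and-forth argument over the $\exists_1$-diagram handles this, much as in Ash--Nerode \cite{AshNerode81}.
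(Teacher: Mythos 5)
Your overall strategy matches the paper's: assume the conclusion fails for some $\bar{c}$, invoke Lemma~\ref{scott-family} to get a tuple $\bar{d}$ over which every tuple satisfies an existential formula with finitely many solutions, and derive a contradiction with the case-three hypotheses. The cosmetic difference in which hypothesis you contradict ((III) versus the ``no thick $\Sigma^\comp_1$ subset'' condition) is harmless, since establishing that $R$ is both $\Sigma^\comp_1$- and $\Pi^\comp_1$-definable over $\bar d$ yields both contradictions at once.

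However, there is a genuine gap exactly where you flag your ``main obstacle,'' and the fix you gesture at does not work. You propose that for each sibling $\bar b_i$ of $\bar a$ (among the finitely many solutions of $\varphi_{\bar a}$) that is not in the $\bar d$-orbit of $\bar a$, one can ``choose an existential formula satisfied by $\bar a$ but not by $\bar b_i$.'' That is false in general: the $\exists_1$-types of $\bar a$ and $\bar b_i$ could be strictly comparable, with $\bar a$'s type a proper subset of $\bar b_i$'s, in which case no existential formula separates $\bar a$ from $\bar b_i$ in the direction you need. A ``standard back-and-forth over the $\exists_1$-diagram'' cannot conjure such a formula into existence. What the paper does instead is choose $\varphi_{\bar a}$ to have a solution set that is not only finite but \emph{of minimal cardinality} among finite-solution existential formulas containing $\bar a$, and then shows this minimality already forces all solutions of $\varphi_{\bar a}$ to have the same $\exists_1$-type as $\bar a$. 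The key device is a counting formula: if $\bar b$ satisfies $\varphi_{\bar a}$ and $\psi(\bar d,\bar b)$ holds while $\psi(\bar d,\bar a)$ fails, set $n = |\varphi_{\bar a}(\bar d,\noreps{A}^m) \cap \psi(\bar d,\noreps{A}^m)|$ and consider
\[ \varphi_{\bar a}(\bar d,\bar x)\;\wedge\; \exists_{\geq n}\bar u\bigl(\varphi_{\bar a}(\bar d,\bar u)\wedge\psi(\bar d,\bar u)\wedge \bar u\neq\bar x\bigr). \]
This is still existential, is satisfied by $\bar a$ (since $\bar a\notin\psi(\bar d,\noreps{A}^m)$, so all $n$ witnesses are distinct from $\bar a$), and is \emph{not} satisfied by $\bar b$ (since $\bar b$ is one of the $n$ tuples in $\varphi_{\bar a}\cap\psi$, leaving only $n-1$ that differ from $\bar b$). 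Thus it has strictly fewer solutions than $\varphi_{\bar a}$, contradicting minimality. With that established, Proposition~6.10 of \cite{AshKnight00} gives that $\Phi = \{\varphi_{\bar a}\}$ is a Scott family, each $\varphi_{\bar a}(\bar d,\noreps{A}^r)$ lies entirely in $R$ or entirely in $\neg R$, and the contradiction follows. You correctly isolated the crux of the lemma, but you need this asymmetric counting trick rather than a direct separation; without it the argument does not go through.
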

\begin{proof}
Suppose not. Then Lemma \ref{scott-family} applies. For each $\bar{a} \in \noreps{A}^m$, let $\varphi_{\bar{a}}$ be such that $\varphi_{\bar{a}}(\bar{d},\noreps{A}^m)$ is finite and as small as possible and contains $\bar{a}$. Suppose that $\bar{b} \in \noreps{A}^m$ and $\mc{A} \models \varphi_{\bar{a}}(\bar{d},\bar{b})$. Also suppose that $\mc{A} \models \psi(\bar{d},\bar{b})$ but $\mc{A} \nmodels \psi(\bar{d},\bar{a})$ where $\psi$ is existential. Let $n = |\varphi_{\bar{a}}(\bar{d},\noreps{A}^m) \cap \psi(\bar{d},\noreps{A}^m)|$. Then
\[
	\mc{A} \models \varphi_{\bar{a}}(\bar{d},\bar{a}) \wedge \exists_{\geq n} \bar{u} (\varphi_{\bar{a}}(\bar{d},\bar{u}) \wedge \psi(\bar{d},\bar{u}) \wedge \bar{u} \neq \bar{a})
\]
but this formula has fewer solutions than $\varphi(\bar{d},\bar{x})$ since $\bar{b}$ is not a solution. So every tuple $\bar{a}$ in $\noreps{A}^m$ has some existential formula $\varphi_{\bar{a}}$ it satisfies over $\bar{d}$, with the property that each pair of tuples satisfying $\varphi_{\bar{a}}$ satisfy all the same existential formulas. By Proposition 6.10 of \cite{AshKnight00}, $\Phi = \{ \varphi_{\bar{a}} : \bar{a} \in \noreps{A}^m, m \in \omega\}$ is a Scott family. In particular, $\varphi_{\bar{a}}(\bar{d},\noreps{A}^r) \subseteq R$ or $\varphi_{\bar{a}}(\bar{d},\noreps{A}^r) \subseteq \neg R$ for each $\bar{a} \in \noreps{A}^r$, and so $R$ is defined by both a $\Sigma^\comp_1$ formula and a $\Pi^\comp_1$ formula. This is a contradiction.
\end{proof}

\begin{corollary}
For each tuple $\bar{c}$, there is a tuple $\bar{a} \in \noreps{A}^r$ such that the set of solutions of each existential formula over $\bar{c}$ satisfied by $\bar{a}$ is thick. Also, $\bar{a}$ is free over $\bar{c}$.
\end{corollary}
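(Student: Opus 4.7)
My plan is to combine the previous lemma, which supplies a tuple $\bar{a} \in \noreps{A}^r$ all of whose satisfied finitary existential formulas over $\bar{c}$ have thick solution set, with the case-3 hypothesis to arrange additionally that $\bar{a} \notin R$; the freeness condition then follows automatically.

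The easy half goes as follows: suppose $\bar{a} \in \noreps{A}^r \cap \neg R$ has the thickness property. For any finitary existential formula $\psi(\bar{c},\bar{u})$ satisfied by $\bar{a}$, its solution set is a thick $\Sigma_1^\comp$-definable set, and by the case-3 hypothesis it cannot be contained in $\neg R$. Hence $\psi$ has a solution in $R$, which is precisely the definition of $\bar{a}$ being free over $\bar{c}$.

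The main obstacle is therefore to ensure that $\bar{a}$ may be chosen in $\neg R$. I plan to argue this by contradiction, assuming every $\bar{a} \in \noreps{A}^r \cap \neg R$ satisfies some finitary existential formula $\varphi_{\bar{a}}(\bar{c},\bar{u})$ with thin solution set. Adapting the proof of Lemma \ref{scott-family} to this restricted setting, using Lemma \ref{fin-many-solns} together with the thickness of $\neg R$ (assumption~(II)), I will find a finite tuple $\bar{d} \supseteq \bar{c}$ such that every $\bar{a} \in \noreps{A}^r \cap \neg R$ satisfies an existential formula $\varphi_{\bar{a}}(\bar{d},\bar{u})$ with finite solution set. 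Invoking Proposition 6.10 of \cite{AshKnight00} exactly as in the previous lemma, each such finite solution set is a single automorphism orbit over $\bar{d}$, and automorphism-invariance of $R$ confines the entire orbit to $\neg R$. Writing $\neg R = \bigdoublevee_{\bar{a} \in \neg R} \varphi_{\bar{a}}(\bar{d},\bar{u})$ then exhibits $\neg R$ as $\Sigma_1^\comp$-definable, equivalently $R$ as $\Pi_1^\comp$-definable. Since the case-3 hypothesis (together with thickness of $\neg R$, which would otherwise make $\neg R$ itself a thick $\Sigma_1^\comp$ subset of $\neg R$) rules out $R$ being $\Pi_1^\comp$, this is a contradiction.

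The most delicate step will be the $\neg R$-restricted version of Lemma \ref{scott-family}: its original proof selects an arbitrary $r$-tuple of elements that fail to be pinned down by any finite existential formula over $\bar{c}$, whereas here the chosen tuple must lie in $\neg R$ for the contradiction hypothesis to apply. My plan is to exploit thickness of $\neg R$---which supplies infinitely many pairwise disjoint $\neg R$-tuples---by a pigeonhole argument, possibly invoking the stronger fibre form of the case-3 assumption (for $n \geq 2$), to locate an $r$-tuple in $\neg R$ all of whose entries are unpinned. Once such a tuple is secured, the contradiction hypothesis forces it to satisfy a thin existential formula over $\bar{c}$, and Lemma \ref{fin-many-solns} then forces one of its entries to be pinned after all, closing the argument.
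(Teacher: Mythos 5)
Your ``easy half'' is exactly the paper's argument for freeness: in the third case a thick $\Sigma^\comp_1$-definable set can be contained in neither $R$ nor $\neg R$, so every existential formula over $\bar{c}$ satisfied by $\bar{a}$ has solutions on both sides of $R$. However, you skip the part of the proof that the paper actually spends most of its effort on: the previous lemma only supplies a tuple $\bar{a}_1$ of \emph{unspecified} arity with the thickness property, not one in $\noreps{A}^r$. The paper gets an $r$-tuple by, if $|\bar{a}_1| < r$, choosing $\bar{a}_2$ with the thickness property over $\bar{c}\bar{a}_1$ and then proving (by a counting argument producing $k+1$ pairwise disjoint solutions $\bar{b}_i\bar{d}_i$) that every existential formula satisfied by the concatenation $\bar{a}_1\bar{a}_2$ is still thick over $\bar{c}$ alone; one then truncates. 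This step needs an argument and your proposal does not supply one.

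The ``main obstacle'' you identify --- arranging $\bar{a} \notin R$ --- is not addressed in the paper's proof at all (the corollary is applied later only through the ``not contained in any thin set'' property and the membership-flipping lemma, which is symmetric in $R$ and $\neg R$), and your proposed route to it has a genuine gap. Proposition 6.10 of Ash--Knight produces a Scott family, and hence the conclusion that each finite solution set is an automorphism orbit, only when the family of formulas covers \emph{all} tuples of \emph{all} arities with the ``any two solutions have the same existential type'' property; that is what drives the back-and-forth. Your contradiction hypothesis gives finite-solution formulas only for the $r$-tuples lying in $\neg R$, which is not enough to conclude that the solutions of $\varphi_{\bar{a}}(\bar{d},\bar{u})$ are automorphic to $\bar{a}$, and hence not enough to conclude they all lie in $\neg R$; without that, $\neg R$ need not equal the union of these solution sets and the claimed $\Sigma^\comp_1$-definability of $\neg R$ does not follow. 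The preliminary step is also not secured: to run the analogue of Lemma \ref{fin-many-solns} you need a $\neg R$-tuple all of whose entries avoid every finite existentially definable set over $\bar{c}$, and thickness of $\neg R$ only gives infinitely many pairwise disjoint $\neg R$-tuples, each of which may well contain a pinned entry (the pinned elements can be infinite in number, spread over infinitely many finite definable sets), so the pigeonhole you sketch does not close. Since the statement as used does not require $\bar{a} \notin R$, the cleanest repair is to drop this half entirely and instead supply the missing arity-upgrade argument.
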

\begin{proof}
Let $\bar{c}$ be a tuple. We know that there is some $\bar{a}_1$ such that every existential formula over $\bar{c}$ satisfied by $\bar{a}_1$ is thick. If $|\bar{a}_1| \geq r$, then we can just truncate $\bar{a}_1$. Otherwise, if $|\bar{a}_1| < r$, we can find $\bar{a}_2$ such that every existential formula over $\bar{c}\bar{a}_1$ satisfied by $\bar{a}_2$ is thick. Let $\varphi(\bar{c},\bar{x},\bar{y})$ be an existential formula satisfied by $\bar{a}_1,\bar{a}_2$. We claim that the set of solutions of $\varphi$ is thick. Suppose not, and say that there are at most $k$ disjoint solutions. Now the solution set of $\varphi(\bar{c},\bar{a}_1,\bar{y})$ is thick, so there is an existential formula $\psi(\bar{c},\bar{x})$ true of $\bar{a}_1$ over $\bar{c}$ which says that there are at least $(k+1) \cdot (|\bar{a}_1| + |\bar{a}_2|) + 1$ disjoint solutions $\bar{y}$ to $\varphi(\bar{c},\bar{x},\bar{y})$. Then the solution set of $\psi(\bar{c},\bar{x})$ is thick, so we can choose $k+1$ disjoint solutions $\bar{b}_1,\ldots,\bar{b}_{k+1}$. Then choose $\bar{d}_1$ a solution to $\varphi(\bar{c},\bar{b}_1,\bar{y})$. Now there are $2 |\bar{a}_1| + |\bar{a}_2|$ entries in $\bar{b}_1 \bar{d}_1$, so one of the $(k+1) \cdot (|\bar{a}_1| + |\bar{a}_2|) + 1$ solutions to $\varphi(\bar{c},\bar{b}_2,\bar{y})$ is disjoint from $\bar{b}_1$, $\bar{d}_1$, and $\bar{b}_2$. We can pick some such solution $\bar{d}_2$. Continuing in this way, we get $k+1$ disjoint solutions $\bar{b}_1\bar{d}_1,\ldots,\bar{b}_{k+1},\bar{d}_{k+1}$ to $\varphi(\bar{c},\bar{x},\bar{y})$, a contradiction.

Now let $\bar{a} \in \noreps{A}^r$ be such that every existential formula over $\bar{c}$ satisfied by $\bar{a}$ is thick. If $\bar{a}$ is not free over $\bar{c}$, then there is some existential formula $\varphi_{\bar{a}}(\bar{c},\bar{x})$ true of $\bar{a}$ and not true of any $\bar{a}' \notin R$. Then $\varphi(\bar{c},\noreps{A}^r)$ is a $\Sigma^0_1$-definable subset of $R$, and hence thin, a contradiction.
\end{proof}

We also want another pair of corollaries of the above lemma.
\begin{corollary}
Let $S$ be any existentially defined set, and $\bar{a} \in S$. Then there is an existentially defined $S' \subseteq S$ containing $\bar{a}$ and $\varphi(\bar{c},\bar{u},\bar{v})$ defining $S'$ such that $\exists \bar{u} \varphi (\bar{c},\bar{u},\bar{v})$ is thick and $\exists \bar{v} \varphi(\bar{c},\bar{u},\bar{v})$ is finite.
\begin{proof}
Let $\bar{a} = \bar{a}' \bar{a}''$ where $\bar{a}'$ is contained in some finite existentially definable set over $\bar{c}$, and no entry of $\bar{a}''$ is. Let $\psi(\bar{c},\bar{u})$ be a defining formula of this finite set, and let $\chi(\bar{c},\bar{u},\bar{v})$ define $S$. Let $S'$ be the set of solutions to $\chi(\bar{c},\bar{u},\bar{v}) \wedge \psi(\bar{c},\bar{u})$. Then by Lemma \ref{fin-many-solns} since no entry of $\bar{a}''$ is in a finite existentially definable set over $\bar{c}$, $\exists \bar{u} (\chi(\bar{c},\bar{u},\bar{v}) \wedge \psi(\bar{c},\bar{u}))$ is thick.
\end{proof}
\end{corollary}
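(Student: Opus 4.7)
The plan is to split $\bar{a}$ into the coordinates that are individually ``pinned down'' by finite existentially definable sets over $\bar{c}$ and those that are not, and then read off the two thickness/finiteness properties from this decomposition, with Lemma \ref{fin-many-solns} doing the real work in one direction.

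First, I would write $\bar{a} = \bar{a}' \bar{a}''$ (after a harmless permutation of coordinates) so that every entry of $\bar{a}'$ lies in some finite existentially definable set over $\bar{c}$ while no entry of $\bar{a}''$ does. For each entry $a'_i$ of $\bar{a}'$ I would fix an existential $\psi_i(\bar{c},u)$ with finite solution set containing $a'_i$, and then form $\psi(\bar{c},\bar{u}) := \bigwedge_i \psi_i(\bar{c},u_i)$; this is existential, is satisfied by $\bar{a}'$, and has finite solution set (its solutions are contained in a product of finite sets). Let $\chi(\bar{c},\bar{u},\bar{v})$ be an existential formula defining $S$, where the variables $(\bar{u},\bar{v})$ are arranged to match the split $\bar{a} = \bar{a}'\bar{a}''$, and set
\[ \varphi(\bar{c},\bar{u},\bar{v}) \;:=\; \chi(\bar{c},\bar{u},\bar{v}) \wedge \psi(\bar{c},\bar{u}). \]
Then $S' := \varphi(\bar{c}, \mathcal{A})$ is existentially defined, is contained in $S$, and contains $\bar{a}$.

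The condition on $\exists \bar{v}\,\varphi(\bar{c},\bar{u},\bar{v})$ is immediate: any $\bar{u}$ satisfying it also satisfies $\psi(\bar{c},\bar{u})$, so the solution set is contained in the finite set defined by $\psi$. The condition on $\exists \bar{u}\,\varphi(\bar{c},\bar{u},\bar{v})$ is the content of the corollary: the tuple $\bar{a}''$ is one of its solutions, so if it were thin, Lemma \ref{fin-many-solns} would hand us an entry of $\bar{a}''$ sitting inside a finite existentially definable set over $\bar{c}$, contradicting the defining property of $\bar{a}''$.

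The main point to double-check is the hypothesis ``$\bar{a}'' \in \noreps{A}^{|\bar{v}|}$'' needed to invoke Lemma \ref{fin-many-solns}; this is ensured by assumption (I), which keeps us restricted to tuples with no repeated entries throughout the proof. Beyond that, the argument is purely book-keeping: the decomposition of $\bar{a}$ into a ``pinned'' part and a ``free'' part is dictated by the finiteness/thickness requirements, and once the split is made each of the two conclusions falls out of exactly one of the two halves.
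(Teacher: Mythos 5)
Your proof is correct and follows essentially the same route as the paper's: the same decomposition of $\bar{a}$ into a ``pinned'' part $\bar{a}'$ and an ``unpinned'' part $\bar{a}''$, the same formula $\varphi = \chi \wedge \psi$, and the same appeal to the contrapositive of Lemma \ref{fin-many-solns} to get thickness of $\exists \bar{u}\,\varphi(\bar{c},\bar{u},\bar{v})$. The extra details you supply (building $\psi$ as a conjunction of the individual finite formulas, and noting the no-repeated-entries hypothesis) are correct bookkeeping that the paper leaves implicit.
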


\begin{corollary}
Let $S$ be any existentially defined set, and $\bar{a} \in S$. Then we can write $\bar{a} = \bar{a}'\bar{a}''$ where $\bar{a}'$ is in an existential formula over $\bar{c}$ with finitely many solutions and there is an existential formula $\varphi(\bar{c},\bar{a}',\bar{u})$ which is thick and contains $\bar{a}''$.
\begin{proof}
Let $\bar{a} = \bar{a}' \bar{a}''$ be as in the previous corollary, and let $\chi(\bar{c},\bar{u},\bar{v})$ define $S$. Now, let $\psi(\bar{c},\bar{u})$ be the formula defining the finite set containing $\bar{a}'$; we may choose $\psi$ so that every solution has the same existential type over $\bar{c}$ by the argument on the previous page. Since $\exists \bar{u} (\chi(\bar{c},\bar{u},\bar{v}) \wedge \psi(\bar{c},\bar{u}))$ is thick, and is the union of $\chi(\bar{c},\bar{b},\bar{v})$ for each $\bar{b}$ satisfying $\psi(\bar{c},\bar{u})$, $\chi(\bar{c},\bar{b},\bar{v})$ is thick for some $\bar{b}$. This is witnessed by (a family of) existential formulas about $\bar{b}$ saying that there are arbitrarily many disjoint solutions to $\chi(\bar{c},\bar{b},\bar{v})$. But all such formulas are true of each other solution of $\psi(\bar{c},\bar{u})$, and in particular of $\bar{a}'$. So $\chi(\bar{c},\bar{a}',\bar{v})$ is thick.
\end{proof}
\end{corollary}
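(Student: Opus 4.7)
The plan is to bootstrap from the previous corollary, whose output already splits $\bar{a}$ as $\bar{a}'\bar{a}''$ with $\bar{a}'$ lying in a finite existentially definable set over $\bar{c}$ and with $S'\subseteq S$ defined by some $\chi(\bar{c},\bar{u},\bar{v})$ such that the projection $\exists\bar{u}\,\chi(\bar{c},\bar{u},\bar{v})$ is thick. What remains to upgrade is to show that we can arrange the thickness to occur in the \emph{specific} fiber over $\bar{a}'$, not merely after projecting out $\bar{u}$.

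First I would select $\psi(\bar{c},\bar{u})$ defining the finite set containing $\bar{a}'$, and then refine $\psi$ so that every solution has exactly the same finitary existential type over $\bar{c}$ as $\bar{a}'$. This is the same trick used on the previous page when building the would-be Scott family: whenever a solution of the current $\psi$ satisfies an existential formula that $\bar{a}'$ does not, one conjoins into $\psi$ a counting statement $\exists_{\geq n}\bar{u}\,(\psi(\bar{c},\bar{u})\wedge\neg\theta(\bar{c},\bar{u}))$ to shrink the solution set while still containing $\bar{a}'$, and iterate; the finiteness of the solution set guarantees this terminates (and in fact by going to an equivalent formulation, one can simply take $\psi$ to say ``$\bar{u}$ is in this specific list of $n$ tuples'' chosen minimally).

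Next I would use a pigeonhole argument. Consider $\chi(\bar{c},\bar{u},\bar{v})\wedge\psi(\bar{c},\bar{u})$, whose projection onto $\bar{v}$ is thick by the previous corollary. Since the $\bar{u}$-coordinate ranges over the finite set $\psi(\bar{c},\mc{A})$, this thick projection is the finite union $\bigcup_{\bar{b}\models\psi}\chi(\bar{c},\bar{b},\bar{v})$; a finite union of thin sets is thin (because the maximum size of a pairwise disjoint family is additive up to a constant), so at least one fiber $\chi(\bar{c},\bar{b},\bar{v})$ is itself thick.

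The final step is the type-transfer: thickness of $\chi(\bar{c},\bar{b},\bar{v})$ is witnessed by an infinite family of existential statements ``there are $k$ pairwise disjoint solutions $\bar{v}$ to $\chi(\bar{c},\bar{u},\bar{v})$'' holding at $\bar{u}=\bar{b}$ for every $k$; because $\bar{a}'$ and $\bar{b}$ realize the same existential type over $\bar{c}$ by our refinement of $\psi$, each of these statements also holds at $\bar{u}=\bar{a}'$, so $\chi(\bar{c},\bar{a}',\bar{v})$ is thick. Setting $\varphi(\bar{c},\bar{a}',\bar{u}):=\chi(\bar{c},\bar{a}',\bar{u})$ finishes the job, and $\bar{a}''$ is a solution by construction. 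The only subtle point to watch is making the existential-type refinement of $\psi$ rigorous without circularity (we are \emph{not} assuming a Scott family exists globally; we only need it on one finite set), which is why I would lean on the ``minimal defining formula'' argument used on the previous page rather than try to import anything external.
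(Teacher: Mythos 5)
Your proposal is correct and follows essentially the same route as the paper: take the decomposition from the previous corollary, refine $\psi$ so that all of its finitely many solutions share the same existential type over $\bar{c}$, observe that a finite union of thin sets is thin so some fiber $\chi(\bar{c},\bar{b},\bar{v})$ must be thick, and transfer the existential witnesses of thickness from $\bar{b}$ to $\bar{a}'$. The only additions are explicit justifications (the pigeonhole bound for finite unions of thin sets, the termination of the $\psi$-refinement) that the paper leaves implicit.
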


\begin{lem}
Let $\bar{c}$ be a tuple, and $\bar{a},\bar{b}_1,\ldots,\bar{b}_n \in \noreps{A}$, and $\bar{a}$ is contained in no thin set over $\bar{c}$. Let $\varphi(\bar{c},\bar{u},\bar{v}_1,\ldots,\bar{v}_n)$ be an existential formula true of $\bar{a},\bar{b}_1,\ldots,\bar{b}_n$. Then there are $\bar{a}',\bar{d}_1,\ldots,\bar{d}_n$ satisfying $\varphi$ with $\bar{a}' \in R \Leftrightarrow \bar{a} \notin R$ and $\bar{d}_i \in R \Leftrightarrow \bar{b}_i \in R$.
\begin{proof}
Using the above lemma, for each $i$ write $\bar{b}_i = \bar{b}_i' \bar{b}_i''$ be contained in finite definable sets over $\bar{c}$ with $\varphi(\bar{c},\bar{u},\bar{b}_1',\bar{v}_1,\ldots,\bar{b}_n',\bar{v}_n)$ defining a thick set. Note that $\bar{a}$ is not contained in any thin set over $\bar{c}$, and in particular, none of its entries are in any finite existentially definable sets over $\bar{c}$. Using the fact that $R \times S$ (where $S$ is a fiber over some tuple of a product of $R$ and $\neg R$) has no thick subset, we can choose $\bar{d}_i' = \bar{b}_i'$ for $1 \leq i \leq m$ and choose $\bar{a}'$ and $\bar{d}=\bar{d}_i'\bar{d}_i''$ to satisfy $\bar{a}' \in R \Leftrightarrow \bar{a} \notin R$ and $\bar{d}_i \in R \Leftrightarrow \bar{b}_i \in R$.
\end{proof}
\end{lem}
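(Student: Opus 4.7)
The plan is to reduce the claim to an application of the case~3 hypothesis that no fiber of a product of $R^\pm$'s contains a thick $\Sigma^{\comp}_1$-definable subset. The preceding corollaries already let us separate out the "rigid" and "free" parts of each $\bar{b}_i$, and the fact that $\bar{a}$ lies in no thin set over $\bar{c}$ plays the same role for $\bar{a}$ that the ``free'' $\bar{b}_i''$'s play for the $\bar{b}_i$'s.

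First I would apply the last corollary once to each $\bar{b}_i$, writing $\bar{b}_i=\bar{b}_i'\bar{b}_i''$ where the entries of $\bar{b}_i'$ sit in a finite existentially definable set over $\bar{c}$ and those of $\bar{b}_i''$ sit in no such set. Iterating the corollary and conjoining the resulting refinements, I can replace $\varphi$ by a stronger finitary existential formula $\tilde{\varphi}(\bar{c},\bar{u},\bar{v}_1'\bar{v}_1'',\ldots,\bar{v}_n'\bar{v}_n'')$ still satisfied by $(\bar{a},\bar{b}_1,\ldots,\bar{b}_n)$ with the property that after the substitution $\bar{v}_i'\mapsto\bar{b}_i'$ the resulting existential formula $\tilde{\varphi}(\bar{c},\bar{u},\bar{b}_1',\bar{v}_1'',\ldots,\bar{b}_n',\bar{v}_n'')$ defines a thick subset $E$ of $\noreps{A}^{r+|\bar{b}_1''|+\cdots+|\bar{b}_n''|}$; since no entry of $\bar{a}$ lies in a finite existentially definable set over $\bar{c}$, the $\bar{u}$-variable group is already on the "free" side of this split, so no further work is needed on $\bar{a}$.

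Next I would set $\bar{d}_i'=\bar{b}_i'$ and look for $\bar{a}'$ together with $\bar{d}_1'',\ldots,\bar{d}_n''$ such that $(\bar{a}',\bar{d}_1'',\ldots,\bar{d}_n'')\in E$ and, in addition, the sign constraints $\bar{a}'\in R^{-\epsilon_{\bar{a}}}$ and $\bar{b}_i'\bar{d}_i''\in R^{\epsilon_i}$ hold, where $\epsilon_{\bar{a}}$ and $\epsilon_i$ record the $R$-membership of $\bar{a}$ and $\bar{b}_i$. The sign constraints cut out a fiber $F$ of $R^{-\epsilon_{\bar{a}}}\times R^{\epsilon_1}\times\cdots\times R^{\epsilon_n}$ over the tuple $(\bar{b}_1',\ldots,\bar{b}_n')$, so by the fiber form of the case~3 hypothesis $F$ contains no thick $\Sigma^{\comp}_1$-definable set; the goal becomes the much weaker statement $E\cap F\neq\varnothing$.

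The main obstacle, and the place where I expect to have to work, is establishing $E\cap F\neq\varnothing$. If instead $E\subseteq\neg F$, then $E$ is covered by the $n+1$ sets $D_0,\ldots,D_n$ obtained by flipping one sign constraint at a time, each of which is itself a fiber of a product of $R^\pm$'s. Because $E$ is thick I can find an infinite pairwise disjoint family in $E$, and by pigeonhole this yields an infinite pairwise disjoint family inside some $E\cap D_j$; the delicate step is to convert this into a thick $\Sigma^{\comp}_1$-definable subset of a fiber of $R^\pm$'s, which would contradict the case~3 hypothesis. My plan here is to project onto the coordinate where $D_j$ imposes its sign constraint — the $\bar{b}_i'$'s are fixed so projecting preserves pairwise disjointness on the non-prefix part, and since $\tilde{\varphi}$ is existential the projection is $\Sigma^{\comp}_1$-definable. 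The resulting projection lives inside the fixed-prefix fiber $\{\bar{v}:\bar{b}_j'\bar{v}\in R^{-\epsilon_j}\}$ (or, for $j=0$, inside $R^{\epsilon_{\bar{a}}}$), giving the needed contradiction.
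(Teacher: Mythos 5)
Your setup is the right one and matches the paper: decompose each $\bar{b}_i$ as $\bar{b}_i'\bar{b}_i''$ using the preceding corollary, substitute the finite parts $\bar{b}_i'$ to get a thick $\Sigma^\comp_1$-definable set $E$, observe that $\bar{a}$ needs no splitting since none of its entries lie in a finite existentially definable set over $\bar{c}$, and reduce the lemma to showing $E \cap F \neq \varnothing$ for the fiber $F$ cut out by the sign constraints. But the final step has a genuine gap. Your pigeonhole gives an infinite pairwise disjoint family inside $E \cap D_j$ for some $j$, and this set is \emph{not} $\Sigma^\comp_1$-definable (the constraint $\bar{b}_j'\bar{v}\in R^{-\epsilon_j}$ is not existential, since in case 3 neither $R$ nor $\neg R$ is existentially definable). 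The projection you then take conflates two different sets: the projection of the disjoint family (equivalently, of $E\cap D_j$) does lie inside the fiber $\{\bar{v} : \bar{b}_j'\bar{v}\in R^{-\epsilon_j}\}$ but is not definable, while the projection of $E$ is definable but does not lie inside that fiber --- it contains the $j$-th coordinates of \emph{all} elements of $E$, including those in $F$ and in the other $D_k$'s. Neither set is a thick $\Sigma^\comp_1$-definable \emph{subset} of a fiber, which is what the case-3 hypothesis forbids, so no contradiction is reached.

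The detour through the cover $D_0,\ldots,D_n$ is also unnecessary, and avoiding it closes the gap: the case-3 assumption, as set up at the start of that section, explicitly excludes thick $\Sigma^\comp_1$-definable subsets of the \emph{complements} of fibers of products of $R^{\pm 1}$'s (this is the parenthetical ``or the complement of such a set,'' which propagates to fibers because a fiber whose complement contains a thick definable set falls under case 2 and is handled by Proposition \ref{CEProp}). So if $E \subseteq \neg F$, then $E$ itself is already a thick $\Sigma^\comp_1$-definable subset of the complement of a fiber of $R^{-\epsilon_{\bar{a}}}\times R^{\epsilon_1}\times\cdots\times R^{\epsilon_n}$, which is a direct contradiction; no decomposition, pigeonhole, or projection is needed.
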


These lemmas are exactly what is required for the construction in Proposition \ref{CEProp}. Instead of choosing elements which are free, we choose elements which are not contained in any thin set, and use the above lemma to move them into $R$ while keeping later elements from $R$ in $R$. We need to know that we can effectively find such elements. We can do this using the effectiveness condition and Lemma \ref{fin-many-solns}.

\backmatter
\bibliographystyle{amsalpha}
\bibliography{harrisontrainor-references}

\end{document}